\documentclass[a4paper, 12pt]{article}
\usepackage[margin=2.45cm]{geometry}
\usepackage[utf8]{inputenc}
\usepackage[nosumlimits]{amsmath}
\usepackage{amssymb,stmaryrd,amsthm,MnSymbol}
\usepackage{amsfonts,amssymb,amsopn}
 \usepackage[colorlinks=true,
            linkcolor=red,
            filecolor=red,
            citecolor=blue,
            urlcolor=blue,]{hyperref}
\usepackage[dvipsnames]{xcolor}
\usepackage{bbold}

\usepackage{comment}
\usepackage{graphicx}
\usepackage{mathtools}
\usepackage{multicol}
\usepackage{tikz, tikz-cd}
\usepackage{adjustbox}

\usepackage{aligned-overset}
\usepackage{enumerate}
\usepackage[ruled]{algorithm2e}
\usepackage{wrapfig}
\usepackage{placeins}

\usepackage{nicematrix}

\usepackage{caption, subcaption}
\newlength{\twosubht}
\newsavebox{\twosubbox}

\usepackage{natbib}
\let\OLDthebibliography\thebibliography
\renewcommand\thebibliography[1]{
  \OLDthebibliography{#1}
  \setlength{\parskip}{1pt}
  \setlength{\itemsep}{0pt plus 0.3ex}
}
\usepackage{multirow}

\title{Subdivision $k$-Form Spaces within the Finite Element Exterior Calculus Framework}
\author{Robert Piel\footnote{University of Surrey, School of Mathematics and Physics, UK (r.piel@surrey.ac.uk), ORCID: \url{https://orcid.org/0009-0001-8681-2493}} \ and
Werner Bauer\footnote{University of Surrey, School of Mathematics and Physics, UK (w.bauer@surrey.ac.uk), ORCID: \url{https://orcid.org/0000-0002-5040-4287}}
}

\newcommand{\green}{\color{ForestGreen}}

\newcommand{\T}{\mathcal{T}}
\newcommand{\TFE}{{\T_{\mathrm{FE}}}}
\newcommand{\K}{\mathcal{K}}
\newcommand{\V}{\mathcal{V}}
\newcommand{\E}{\mathcal{E}}
\newcommand{\F}{\mathcal{F}}

\newcommand{\simplexname}{\sigma}
\newcommand{\simplex}[3]{\freeQ{\simplexname}{#1}{#2}{#3}}

\newcommand{\simplexsubset}[2]{\freeVec{\Delta}{#1}{#2}}

\newcommand{\vertex}[2]{\freeQ{v}{}{#1}{#2}}
\newcommand{\vertexpos}[2]{\freeQ{\Vec{v}}{}{#1}{#2}}
\newcommand{\edge}[2]{\freeQ{e}{}{#1}{#2}}
\newcommand{\face}[2]{\freeQ{f}{}{#1}{#2}}

\newcommand{\KmKnl}[3]{\K^{#1}\K^{#2}_{#3}}
\newcommand{\VFl}[1]{\V\F_{#1}}

\newcommand{\FFl}[1]{\F\F_{#1}}

\newcommand{\FEl}[1]{\F\E_{#1}}

\newcommand{\FVl}[1]{\F\V_{#1}}

\newcommand{\FFltoL}[2]{\F\F_{#1 \to #2}}

\newcommand{\coarseIdxone}{{ i}}
\newcommand{\coarseIdxtwo}{{ m}}
\newcommand{\fineIdxone}{{ j}}
\newcommand{\fineIdxtwo}{{ n}}

\renewcommand{\l}{{\green\ell}}
\renewcommand{\L}{{\green L}}

\newcommand{\lzero}{{\green 0}}
\newcommand{\lone}{{\green {\l'}  }}
\newcommand{\ltwo}{{\green {\l''} }}

\newcommand{\nr}{{n_r}}
\newcommand{\ns}{{n_s}}
\newcommand{\nrcrit}{{n_r^\mathrm{crit}}}
\newcommand{\nscrit}{{n_s^\mathrm{crit}}}

\newcommand{\constrQ}[5]{ ( {#1}^{#2}_{#3 \vert #4} )_{#5}} 
\newcommand{\constrVec}[4]{ {#1}^{#2}_{#3 \vert #4} }
\newcommand{\freeQ}[4]{ ( {#1}^{#2}_{#3} )_{#4}}
\newcommand{\freeVec}[3]{ {#1}^{#2}_{#3}}

\newcommand{\Sop}[3]{\mathcal{S}^{#1}_{{#2} \to {#3}}}
\newcommand{\Smat}[3]{\mathbf{\mathrm{S}}^{#1}_{#2 \to #3}}
\newcommand{\Smatij}[5]{\left(\mathbf{\mathrm{S}}^{#1}_{#2 \to #3}\right)_{#4 #5}}

\newcommand{\Aop}[3]{\mathcal{A}^{#1}_{{#2} \to {#3}}}
\newcommand{\Amat}[3]{\mathbf{\mathrm{A}}^{#1}_{#2 \to #3}}
\newcommand{\Amatij}[5]{\left(\mathbf{\mathrm{A}}^{#1}_{#2 \to #3}\right)_{#4 #5}}

\newcommand{\primalAop}[4]{ {\mathcal{A}^{#1}_{{#2} \to {#3}|{#4} }}  }
\newcommand{\dualAop}[4]{ {\mathcal{B}^{#1}_{{#2}|\,{#3} \rightarrow {#4}} }}

\newcommand{\Whit}{\mathrm{Whit}}
\newcommand{\Loop}{\mathrm{Loop}}
\newcommand{\AopWhit}[2]{\Aop{\Whit}{#1}{#2}}
\newcommand{\AopLoop}[2]{\Aop{\Loop}{#1}{#2}}

\newcommand{\extd}{\mathbf{d}}
\newcommand{\auxextd}{\overset{\sim}{\vphantom{a}\smash{\mathbf{d}}}} 
\newcommand{\Dmat}[2]{\mathbf{D}^{#1}_{#2}} 
\newcommand{\Dmatij}[4]{\big(\mathbf{D}^{#1}_{#2}\big)_{#3 #4}} 

\newcommand{\FEECspace}[2]{\Lambda^{#1}_{#2}}
\newcommand{\FEECbasis}{\psi}
\newcommand{\FEECbases}{\Psi}
\newcommand{\FEECspaceZero}[2]{\mathring{\Lambda}^{#1}_{#2}}

\newcommand{\SDFspace}[3]{\mathcal{S}\Lambda^{#1}_{#2 \vert #3}}
\newcommand{\SDFbasis}{\phi}
\newcommand{\SDFbases}{\Phi}
\newcommand{\SDFbasesZero}{\mathring{\SDFbases}}
\newcommand{\SDFspaceZero}[3]{\mathring{\mathcal{S}}\Lambda^{#1}_{#2 \vert #3}}

\newcommand{\trace}{\mathrm{tr}}
\newcommand{\Vint}{\mathring{\V}}
\newcommand{\Eint}{\mathring{\E}}
\newcommand{\Fint}{\mathring{\F}}
\newcommand{\Kint}{\mathring{\K}^k}
\newcommand{\BCAop}[3]{\mathring{\mathcal{A}} {}^{#1}_{#2 \to #3}}
\newcommand{\BCAmat}[3]{\mathring{\mathrm{A}} {}^{#1}_{#2 \to #3}}
\newcommand{\BCBEop}[4]{\mathring{\mathcal{B}} {}^{#1}_{#2 \vert #3 \to #4}}

\renewcommand{\powerset}{\mathbb{P}}

\newcommand{\auxincl}{\overset{\sim}{\vphantom{a}\smash{\iota}}} 
\newcommand{\support}{\mathrm{supp}}
\newcommand{\pfrac}[2]{\frac{\partial #1}{\partial #2}}
\newcommand{\M}{\mathcal{M}}
\newcommand{\vvert}[1]{\left\vert #1 \right\vert}
\newcommand{\dimlk}[2]{\big\vert \K^{#2}_{#1} \big\vert}
\newcommand{\sumdimlk}[2]{\vert \K^{#2}_{#1} \vert}

\newcommand{\CG}{\mathrm{CG}}
\newcommand{\NED}{\mathrm{NED}}
\newcommand{\DG}{\mathrm{DG}}

\newcommand{\HcurlM}[1]{H\left(\text{curl}, #1\right)}
\newcommand{\HocurlM}[1]{\mathring{H}\left(\text{curl}, #1\right)}

\newtheorem{theorem}{Theorem}[section]
\newtheorem{definition}[theorem]{Definition}
\newtheorem{lemma}[theorem]{Lemma}
\newtheorem{proposition}[theorem]{Proposition}
\newtheorem{corollary}[theorem]{Corollary}

\newtheorem{remark}[theorem]{Remark}
\newtheorem{assumption}[theorem]{Assumption}
\newtheorem{numberedproof}[theorem]{Proof}

\numberwithin{equation}{section}

\begin{document}

\maketitle

\vspace{-2.0em}

\begin{abstract}
  This paper introduces discrete differential form spaces over two-dimensional manifold meshes that feature enhanced subdivision-induced inter-element regularity compared to conventional finite element (FE) spaces. This increase in smoothness is achieved by pulling back refined subdivision basis functions along a hierarchy of increasingly fine meshes that are generated by a subdivision algorithm. We introduce a framework that casts several known instances of $k$-form subdivision schemes in the language of FE and derive conditions under which the resulting subdivision-induced hierarchy of FE function spaces satisfies a discrete de Rham complex. The paper further illustrates the enforcing of zero boundary conditions by discarding basis functions close to the mesh boundary and shows that this does not compromise the de Rham complex.

  To analyse our novel subdivision $k$-form spaces we solve the Maxwell eigenvalue problem to confirm the absence of spurious modes and to study the accuracy of the computed eigenvalues. Recovering accurately the expected analytic eigenvalue spectrum shows that our novel subdivision $k$-form spaces indeed preserve the de Rham complex, since this test case is known to be challenging for methods not preserving this structure. Further, we numerically investigate the approximation errors of these subdivision spaces for given analytic functions.

  The presented study shows that our method can be employed in two ways. Upon a suitable choice of parameters, the subdivision $k$-form spaces are up to $1.5$ orders of magnitude more accurate in the $L^2$ norm than conventional lowest-order FE spaces with the same number of degrees of freedom. Alternatively, for a given target accuracy, the number of required degrees of freedom can be significantly reduced, resulting in a speed-up by a factor of up to 6 for the discussed test cases.

\end{abstract}

\tableofcontents

\vspace{-0.5em}

\section{Introduction}

Many foundational principles in physics are most naturally described through geometry, often via symmetries and conservation laws, such as conservation of energy or mass. To model these systems consistently, the corresponding partial differential equations (PDEs) should inherently reflect these geometric structures. To obtain consistent numerical solutions of these systems, it is crucial
to preserve these structures during the discretisation process of PDEs to accurately capture the physical behaviour of the underlying continuous systems. Such structure-preserving methods generally
enable long-term stability, prevention of a systematic drift in the stationary solutions, and consistency in the statistical properties, see e.g. \citep{Leimkuhler_Reich_2005,hairer2006,wan2018}.

A crucial structure to be preserved in many numerical simulations in order to guarantee consistency and stability of the approximated solution is the de Rham complex; a structure related to preserving the vector identities on the discrete level. The exterior calculus of differential forms provides a powerful framework to describe this structure, and significant research has been undertaken to devise approaches that discretise this framework and preserve its key features in a suitable way. For example, Discrete Exterior Calculus (DEC) \citep{Hirani.2003,Desbrun.2006} provides discrete analogs of $k$-forms and consistent derivative operators that form a discrete de Rham complex. The DEC idea allows one to derive, for example, consistent discretisations of the equations of geophysical fluid dynamics \citep{ThuburnDEC} that preserve energy over very long integration periods \citep{Ringler.2010}. The principles of DEC can also be applied to address various inconsistencies in existing operational forecasting models \citep{ChrisWernerDEC}. However, DEC relies on dual meshes, cochains and matrix representations rather than function spaces and operators, which can limit its flexibility in numerical analysis or certain applications.

Alternatively, Finite Element Exterior Calculus (FEEC), developed by \cite{Arnold.2006,Arnold.FEEC}, combines the FE method that is based upon the language of functional analysis with the tools from topology and cohomology to obtain stable and convergent FE discretisations. It constructs sequences of FE spaces that preserve the structure of the de Rham complex while 
featuring flexibility in handling complex geometries and arbitrary polynomial degrees. A notable drawback of standard FE methods is their restriction to globally $\mathcal{C}^0$ continuous basis functions, unless higher continuity is explicitly incorporated, as e.g. in the Argyris element \citep{Argyris.1968}. This lack of smoothness poses challenges when discretising, for example,  fourth-order PDEs like the biharmonic equation, or when working with complexes involving second-order differential operators such as the Hessian or the elasticity complex \citep{arnold_complexes_2021}.

This lack of smoothness can be addressed using Isogeometric Analysis (IGA) (see e.g. \citep{Hughes.2005_IGA}), a generalized FE discretisation method that constructs and applies smooth function spaces, namely spline spaces. Spline spaces feature higher smoothness at the cell interfaces than only $\mathcal{C}^0$.
In recent years, a wide variety of spline families have been added to the numerical toolbox of IGA to solve various PDEs.
Among them, B-splines are particularly popular. Introduced into a FE setting in \cite{Hoellig.2003}, they have been used to build discrete de Rham complexes from tensor-product B-spline spaces, as shown in \cite{Sonnendruecker.2011} and \cite{Buffa.2011_IGA_with_splines}. These spaces can achieve the maximum regularity of $\mathcal{C}^{p-1}$ for B-splines of degree $p$. However, traditional spline methods are typically limited to regular (quadrilateral) meshes.

Several generalizations for irregular mesh geometries (such as general polygonal meshes) have been proposed, for instance Powell-Sabin splines \citep{Powell.1977} or T-splines \citep{Sederberg.2003}. While these are effective for scalar functions or 0-forms, extending them to $k$-forms while preserving compatibility is non-trivial and remains so far an unsolved problem.

One promising way to achieve sequences of compatible spline function spaces that approximate $k$-forms on irregular geometries is the so-called \emph{subdivision method}. In this context, a subdivision algorithm constructs a hierarchy of nested meshes that allows for smooth spline function spaces on irregular meshes.
Subdivision splines reduce to classical splines on regular patches and are defined on irregular ones as the limit of an infinite sequence of classical spline patches obtained from a refinement process, see \cite{Peters.2008}. For instance, Loop subdivision \citep{Loop.1987} generalizes the $\mathcal{C}^2$ quartic box spline to arbitrary triangulations. On regular mesh patches, it reproduces the quartic box spline, but drops to $\mathcal{C}^1$ regularity exactly at irregular vertices. This limiting process, while enabling compatibility with arbitrary triangulations like in FEM, makes evaluation and integration of the resulting subdivision spline more challenging. Despite this, the increased smoothness offered by subdivision splines has proven valuable in areas where the smoothness of traditional FE is insufficient. Notably, \cite{Cirak.2000} and \cite{Burkhart.2010} applied subdivision splines to problems such as plate bending (using Loop subdivision) and linear elasticity (using Catmull-Clark subdivision). Building on these, \cite{IGA_to_do_list} surveyed key open questions in the field of subdivision-based IGA, particularly its extension to three-dimensional domains.

Unlike closed-form spline generalizations such as Powell-Sabin or T-splines, \cite{Wang.2006} and \cite{Wang.2008} demonstrated a path to generalizing subdivision to $k$-forms while maintaining compatibility in a DEC sense (i.e. preserving a discrete de Rham complex). The authors enforced a commutative relationship between subdivision and differentiation to construct a 1-form subdivision scheme from given 0- and 2-form schemes, thereby producing spaces that satisfy a discrete de Rham complex. Later, \cite{Goes.2016} explored the potential of these spaces for IGA. To circumvent the challenge of evaluating the smooth limit functions, they approximated the limit by applying a few iterations of the subdivision algorithm and then using Whitney interpolants. While these approximations are only $\mathcal{C}^0$, they retain a certain subdivision-induced inter-element regularity and thus share some important properties of the genuinely smooth limit functions like increased convergence rates under mesh refinement.

In this work, we build on the foundational ideas of \cite{Wang.2006} to develop a FE framework for subdivision-based $k$-form spaces. The idea is to develop a subdivision FE method for irregular, here arbitrary triangular meshes, that features subdivision-induced smoothness and allows us to study the method's convergence and approximation properties in terms of FE analysis. This has not been done so far in the literature; either subdivision has been done for DEC methods or FE-like methods have only been done for regular quadrilateral meshes using tensor-product splines.

The main contribution of this work is a FEEC-compatible reinterpretation of subdivision $k$-form constructions, realizing them as subspaces of standard FE spaces on refined simplicial meshes. To this end, we introduce explicit subdivision operators that enable the refinement and smoothing of discrete differential forms and show that these operators commute with the exterior derivative, thereby preserving the de Rham complex structure across all levels. The resulting hierarchy of approximation spaces admits an explicit parametrization in terms of refinement, corresponding to the introduction of additional degrees of freedom, and smoothing, in the sense of increasing a certain subdivision-induced regularity across element interfaces.

The remainder of the paper is organized as follows. In Section~\ref{sec_subdiv}, we briefly review the FE exterior calculus framework and introduce the subdivision algorithm to construct mesh hierarchies. Section~\ref{sec:subdiv_k_forms} generalises the description of the subdivision algorithm to discrete differential $k$-forms defined on the mesh hierarchy. Based on that, the section defines the subdivision $k$-form spaces and establishes their structural properties, including nestedness, commutativity with exterior differentiation, and preservation of the de Rham complex. We also outline the notion of subdivision-induced regularity. In Section~\ref{sec:implementation}, we discuss implementation aspects, including the construction of FE matrices and the approximation of the physical domain using the mesh hierarchy. Section~\ref{sec:numerics} presents numerical experiments, including convergence studies for projection operators, an investigation of the effect of subdivision-induced regularity on approximation performance, and simulations of the Maxwell eigenvalue problem on a simply connected domain as a structure-sensitive test case. Finally, Section~\ref{sec_conclusion} concludes with a discussion of the results and outlines directions for future work, including hierarchical and adaptive extensions of the proposed framework.

\section{Background: FEEC and the subdivision algorithm}
\label{sec_subdiv}

This section establishes the necessary theoretical background of this work. First, we review some standard notions of Finite Element Exterior Calculus (FEEC). After that, we introduce the notation used in the context of meshes and simplices and their adjacency used throughout this work. This allows us to introduce a model of the subdivision algorithm that enables us to build a mesh hierarchy in a way that is compatible with the FE setting. The formalisation of the subdivision algorithm will provide the foundation upon which we construct the subdivision $k$-form spaces in Section~\ref{sec:subdiv_k_forms}.

\subsection{Finite Element Exterior Calculus}

Compatible finite elements consist of a sequence of finite element spaces that approximate differential forms representing the variables of a PDE at hand and that, in turn, comprise a de Rham complex. As shown by \cite{Arnold.2006,Arnold.FEEC} such Finite Element Exterior Calculus (FEEC) discretisations guarantee convergence and stability of the discretised PDE. In this section, we will recall the definition of differential forms to then discuss the de Rham complex and its significance. Afterwards, we highlight finite-dimensional approximation spaces for differential forms that comprise discrete de Rham complexes.

Let $\M$ be a manifold and let $V$ denote the space of tangent vector fields of $\M$. This work will consider differential $k$-forms (denoted henceforth simply as $k$-forms) $\omega^k$ to be multi-linear, anti-symmetric, covariant tensors of the form
\begin{equation}
    \begin{aligned}
        \omega^k: \;\; \underbrace{V \times \hdots \times V}_{k \; \text{times}} \to \mathbb{R}.
    \end{aligned}
\end{equation}
The sets of $k$-forms over $\M$ constitute vector spaces under point-wise vector addition and scalar multiplication. We denote these spaces by $\Omega^k\big(\M\big)$. We can invoke the metric tensor of $\M$ to obtain well-known objects from vector calculus. For our case of $2$-dimensional manifolds, $0$-, $1$- and $2$-forms correspond to scalar functions, vector fields and density fields, respectively. For a discussion on the relation of differential forms and vector fields, refer to standard textbooks on differential geometry or consult \cite{Werner2016} for a concise overview.

The differential operator acting on $k$-forms is called the exterior derivative $\extd^k: \Omega^k\big(\M\big) \to \Omega^{k+1}\big(\M\big)$. Being a derivative, $\extd^k$ is a linear operator that satisfies the Leibniz rule. Additionally, $\extd^k$ is required to preserve the anti-symmetry of $k$-forms and to satisfy the relation $\extd^{k+1} \circ \extd^{k} = 0$. This implies that the $k$-form spaces $\Omega^k\big(\M\big)$ comprise a differential complex, i.e.
\begin{equation}
    \label{eq:de_rham_complex}
    \begin{tikzcd}
        0 \arrow[r] & \Omega^0\big(\M\big) \arrow[r, "\extd^0"] & \Omega^1\big(\M\big) \arrow[r, "\extd^1"] & \Omega^2\big(\M\big) \arrow[r] & 0.
    \end{tikzcd}
\end{equation}

This particular complex is valid for two-dimensional manifolds $\M$. The vector calculus equivalents of $\extd^0$ and $\extd^1$ are the gradient $\nabla$ and the rotation operator $\nabla \times$, respectively. The latter is a two-dimensional analog of the curl operator which is given by
\begin{equation}
    \nabla \times \begin{bmatrix} u_1 \\ u_2 \end{bmatrix} = \pfrac{u_2}{x^1} - \pfrac{u_1}{x^2}.
\end{equation}
Thus, $\extd^1 \extd^0 f = 0$ for $f \in \Omega^0\big(\M\big)$ recovers the well-known identity $\nabla \times \nabla f = 0$ for functions $f$.
Note that $\extd^{k+1} \circ \extd^{k} = 0$ is equivalent to $\mathrm{im}\big( \extd^{k-1} \big) \subset \mathrm{ker}\big( \extd^{k} \big)$, where $\mathrm{im}$ denotes the image and $\mathrm{ker}$ the kernel of the exterior derivative operator.

A differential complex is called a \textit{de Rham} complex iff the dimensions of the cohomology groups $\mathcal{H}^k \coloneq \mathrm{ker}(\extd^{k})/\mathrm{im}( \extd^{k-1})$ are identical to the $k^{\mathrm{th}}$ Betti number of $\M$. This ensures that the discrete $k$-form spaces faithfully capture the harmonic forms and that the images and kernels of the discrete differential operators are well-behaved.
Extensive research has been undertaken to discretise the exterior calculus of $k$-forms, culminating in the framework of FEEC \citep{Arnold.FEEC}, for example.

Finite element methods usually seek to find weak solutions in Sobolev spaces. In this work, we consider, for example,
$H^1(\M)$, $\HcurlM{\M}$ and~$L^2(\M)$ defined as
\begin{align}
    L^2(\M) &= \big\{ f: \M \to \mathbb{R} \;\; \big\vert \;\; \vert\vert f \vert\vert^2 = \int_\M f^2 \; \mathrm{d}x = (f, f)_\M \; < \infty \big\}, \\
    \HcurlM{\M} &= \big\{ u \in \big[ L^2(\M) \big]^2 \;\; \big\vert \;\; \nabla \times u \in L^2(\M) \big\}, \\
    H^1(\M) &= \big\{ f \in L^2(\M) \;\; \big\vert \;\; \nabla f \in \big[ L^2(\M) \big]^2 \big\}.
\end{align}
These (still infinite dimensional) spaces have less regularity than the ones used in the smooth complex in Eq.~\eqref{eq:de_rham_complex}. The Sobolev spaces comprise the de Rham complex
\begin{equation}
    \label{eq:infinite_dim_de_Rham}
    \begin{tikzcd}[row sep=small, column sep=normal]
        0 \arrow[r] & H^1(\M) \arrow[r, "\extd", "\nabla"'] & \HcurlM{\M} \arrow[r, "\extd", "\nabla \times"'] & L^2(\M) \arrow[r] & 0.
    \end{tikzcd}
\end{equation}
FEEC also constructs families of compatible FE spaces that mimic the complex in Eq.~\eqref{eq:infinite_dim_de_Rham}.
Namely, by approximating these Sobolev spaces by finite dimensional FE spaces denoted as
$\FEECspace{0}{} \subset H^1(\M)$, $\FEECspace{1}{} \subset \HcurlM{\M}$ and $\FEECspace{2}{} \subset L^2(\M)$,
we obtain (discrete) FEEC de Rham complexes as
\begin{equation}
\label{eq:FEEC_de_Rham}
\begin{tikzcd}
    0 \arrow[r] & \FEECspace{0}{} \arrow[r, "\extd^0"] & \FEECspace{1}{} \arrow[r, "\extd^1"] & \FEECspace{2}{} \arrow[r] & 0.
\end{tikzcd}
\end{equation}
In the following, we will often omit the superscript $k$ of $\extd^k$ and simply denote it by $\extd$ if its domain becomes clear from the context. 
For later use, we denote the set of basis functions of $\FEECspace{k}{}$ by $\freeVec{\FEECbases}{k}{}$, i.e.
\begin{equation}
    \label{eq:FEECbases_definition}
    \FEECspace{k}{} = \mathrm{span}\big( \freeVec{\FEECbases}{k}{} \big)
\end{equation}
and a single basis function by $\freeQ{\FEECbasis}{k}{}{\coarseIdxone} \in \freeVec{\FEECbases}{k}{}$.
Further, the dimensions of the spaces $\FEECspace{k}{}$ are denoted by $\mathrm{dim}\big(\FEECspace{k}{}\big)$.
Our construction of the subdivision $k$-form spaces will build upon the lowest-order versions of the spaces $\FEECspace{k}{}$, see Eq.~\eqref{eq:choice_of_feec_space}. This is because we will see that subdivision naturally operates on the lowest order
two-dimensional complex with N\'ed\'elec spaces ($\NED_1$) as the $1$-form space.

The spaces $\FEECspace{k}{}$ will be used in Section~\ref{sec:subdiv_k_forms} to construct the subdivision $k$-form spaces. The de Rham complex structure in Eq.~\eqref{eq:FEEC_de_Rham} will carry over to the subdivision $k$-form spaces through commutativity of subdivision operators and derivatives.

\subsection{Meshes, simplices and adjacency operators}

In the context of this work, we think about subdivision schemes as a map between function spaces on a mesh hierarchy. That is, starting from an initial triangular mesh
$\T_{0}$, the finer meshes $\T_{\l}$ on integer levels $\l \in \{ 0, 1, ..., \L \}$ are iteratively generated via a so-called subdivision algorithm until we reach a finest mesh $\T_{\L}$. Before we can properly introduce subdivision algorithms, we need to establish the following notation and operators.

The triangulation of the domain $\M$ on level $\l$ leaves us with a set of $k$-simplices $\K^k$ for $k \in \{ 0,1,2 \}$
of the mesh $\T_\l$. In general, we denote a $k$-simplex
by $\simplex{k}{\l}{\coarseIdxone} \in \K^k\big(\T_\l\big)$,
where the notation indicates that $\simplex{k}{\l}{\coarseIdxone}$ is the $i^{\text{th}}$ $k$-simplex at mesh level $\l$. For example, we will deal with the following sets of mesh entities of the mesh $\T_{\l}$ at level $\l$:
\begin{align}
    \K^0(\T_{\l}) &= \V\big(\T_{\l}\big) \;\; \text{is the set of all vertices $\vertex{\l}{\coarseIdxone}$ of the mesh } \T_{\l}, \\
    \K^1(\T_{\l}) &= \E\big(\T_{\l}\big) \;\; \text{is the set of all edges $\edge{\l}{\coarseIdxone}$ of the mesh } \T_{\l}, \\
    \K^2(\T_{\l}) &= \F\big(\T_{\l}\big) \;\; \text{is the set of all faces $\face{\l}{\coarseIdxone}$ of the mesh } \T_{\l}.
\end{align}
We typically use the short-hand notation $\K^k_{\l} = \K^k \big(\T_{\l}\big)$ and denote the cardinality of a set by $\vert \cdot \vert$. For example, we write $\V_{\l} = \V\big(\T_{\l}\big)$ for the set of mesh vertices on level $\l$
where $ \vert \V_{\l} \vert$ is the total number of vertices on this mesh level.

The adjacency relations between simplices are denoted consistently with the naming conventions in \cite{Ringler.2010}, i.e. the operators will be denoted by
\begin{equation}
    \KmKnl{p}{q}{\l}: \K^q_{\l} \to \powerset\big(\K^p_{\l}\big)
\end{equation}
with $p,q \in \{0,1,2\}$, meaning that they map $n$-simplices to their adjacent $m$-simplices.
Here, we introduced the power set $\powerset\big(U\big)$ of a set $U$ as $\powerset\big(U\big) \coloneqq \big\{ V\colon \; V \subset U \big\}$.
Some concrete realizations of the adjacency operator at mesh level $\l$ are the following relations
for an arbitrary index $\coarseIdxone$:
\begin{alignat}{2}
    \FFl{\l} &: \F_{\l} \to \powerset\big(\F_{\l}\big), \quad && \face{\l}{\coarseIdxone} \mapsto \{ \face{\l}{\coarseIdxone} \} \\
    \VFl{\l} &: \F_{\l} \to \powerset\big(\V_{\l}\big), \quad && \face{\l}{\coarseIdxone} \mapsto \{ \text{all vertices $\vertex{\l}{\coarseIdxone_\coarseIdxtwo}$ that constitute the face } \face{\l}{\coarseIdxone} \}, \\
    \FEl{\l} &: \E_{\l} \to \powerset\big(\F_{\l}\big), \quad && \edge{\l}{\coarseIdxone_\coarseIdxtwo} \mapsto \{ \text{all faces $\face{\l}{\coarseIdxone_\coarseIdxtwo}$ that contain the edge } \edge{\l}{\coarseIdxone} \}.
\end{alignat}
The notation $\coarseIdxone_\coarseIdxtwo$ indicates that the adjacency operators return sets of $p$-simplices (indexed by $\coarseIdxtwo$) adjacent to the $q$-simplex $\simplex{q}{\l}{\coarseIdxone}$, see Section~\ref{sec_subdivalg_formal} for more information.

The previous adjacency operators accept only single simplices as their arguments. For example, we can map one face to its three vertices. However, we will need them to act on sets of simplices too, allowing us, for example, to map a set of faces to the union of all of their vertices.
To formalise this idea, we extend the above definition of the adjacency operators as follows:
\begin{equation}
    \label{eq:adjacency_for_sets}
    \KmKnl{p}{q}{\l}: \; \powerset\big(\K^q_{\l}\big) \to \powerset\big(\K^p_{\l}\big), \quad
    \simplexsubset{q}{\l} \mapsto \bigcup_{\simplex{q}{\l}{\coarseIdxone} \in \simplexsubset{q}{\l} } \KmKnl{p}{q}{\l} \big( \simplex{q}{\l}{\coarseIdxone} \big).
\end{equation}
Here, the argument of $\KmKnl{p}{q}{\l}$ is a set $\simplexsubset{q}{\l}$ of simplices and the operator
returns the union of the individual contribution of every simplex in the set. The notation here is slightly
abusive as the notations of these technically slightly different adjacency operators coincide.
However, we may drop the powerset notation from now on when the context makes it clear which one we are applying.

So far, these operations are restricted to one individual mesh $\T_{\l}$ of the mesh hierarchy with no mapping across meshes on different levels. Further below we will introduce analogous operators that extend these definitions to the whole mesh hierarchy, i.e. they allow for mappings between different mesh levels $\l$.

\subsection{Subdivision algorithm and mesh subdivision}
\label{sec_subdivalg}

In this subsection, we first give a general description of a subdivision algorithm. This algorithm will be crucial for this work as it provides a means to construct a hierarchy of nested meshes and to move between different refinement levels across this mesh hierarchy. Then,
we formulate the Loop subdivision algorithm \citep{Loop.1987} applied in this work
in a way that makes it suitable for the use in FE methods.

\subsubsection{General description of subdivision}

A subdivision algorithm maps a coarse mesh to a fine mesh by composing two ideas: topological mesh refinement and averaging of the mesh geometry. What these steps look like concretely depends on the chosen subdivision scheme. For the Loop subdivision scheme \citep{Loop.1987} used in this work, the topological refinement splits every coarse triangle into four fine triangles. It keeps the three coarse vertices and adds three fine vertices, one on each formerly coarse edge, see Figure~\ref{fig:subdiv_algorithm}. The formerly coarse vertices are usually called even vertices and the newly added vertices are called odd vertices.

After the topological refinement, the geometric averaging step computes new positions for the fine vertices using a subdivision rule, usually given as a weighted average of select coarse vertices. Both the choice of coarse vertices and their averaging weights depend on the chosen subdivision scheme, the number of adjacent coarse vertices and how close to the boundary the fine vertex is. This leads to new positions for the fine vertices, see Figure~\ref{fig:subdiv_algorithm}.

Traditionally, subdivision algorithms are used to obtain more smooth-looking approximations of surfaces in computer graphics. That means that the finer mesh is the desired end product of the subdivision algorithm and subdivision schemes are designed such that certain properties are guaranteed for the (formal) limit surface $\T_{\infty}$, usually regarding the smoothness.

\begin{figure}[t!]
    \centering

    \begin{tikzpicture}
    \node[inner sep=0pt] (initial_mesh) at (0,0)
         [opacity=0.3]{\includegraphics[trim=15cm 9.6cm 7.5cm 10.8cm, clip, width=.25\textwidth]{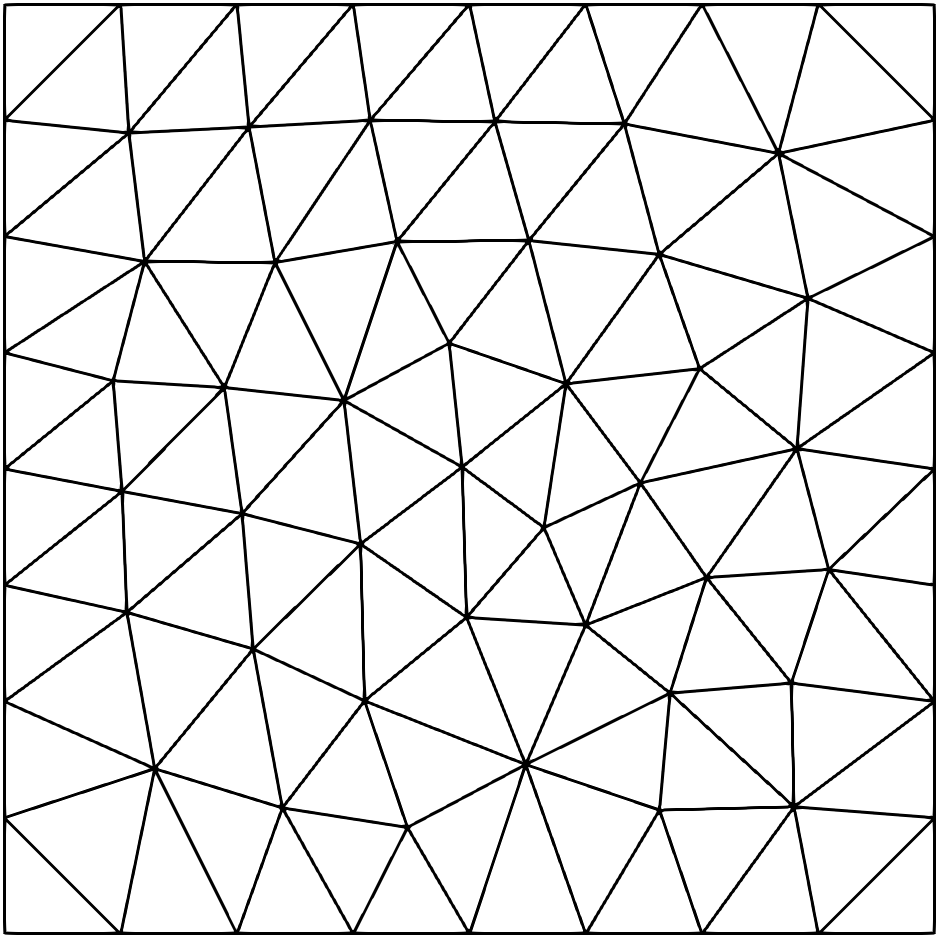}};
    \node[inner sep=0pt] (topological_ref) at (6,0)
         [opacity=0.3]{\includegraphics[trim=15cm 9.6cm 7.5cm 10.8cm, clip, width=.25\textwidth]{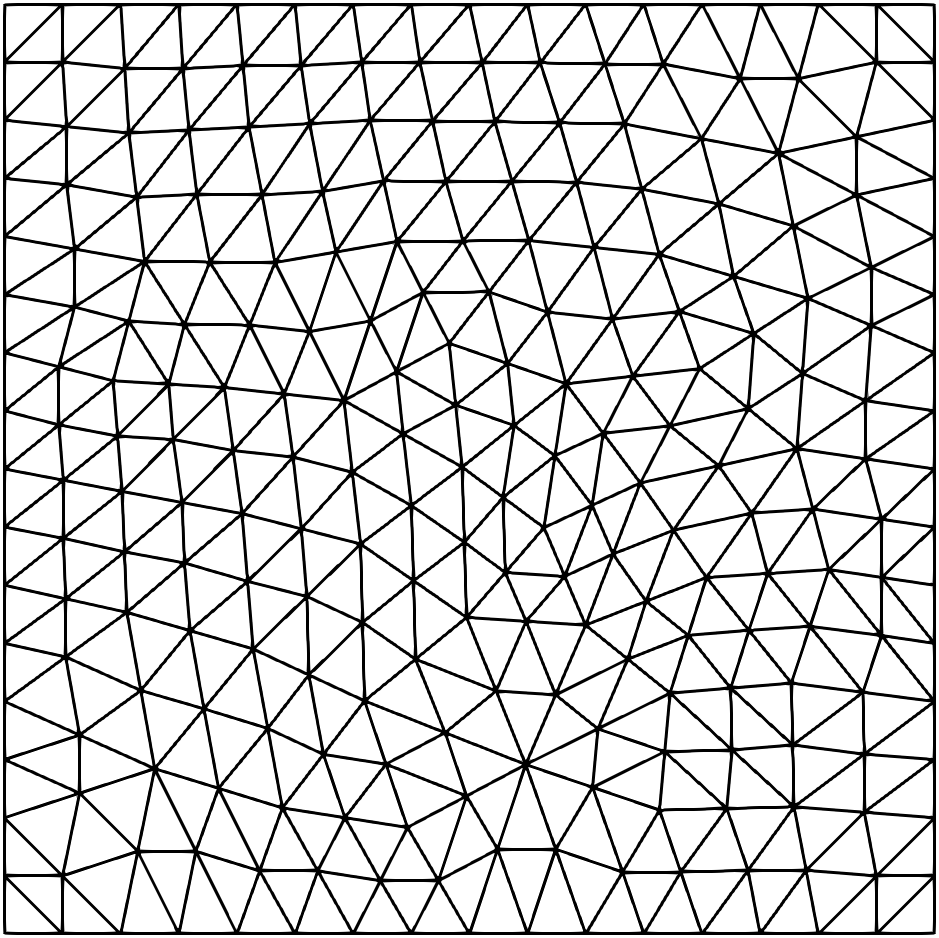}};
    \node[inner sep=0pt] (geometrical_ref) at (12,0)
         [opacity=0.3]{\includegraphics[trim=15cm 9.6cm 7.5cm 10.8cm, clip, width=.25\textwidth]{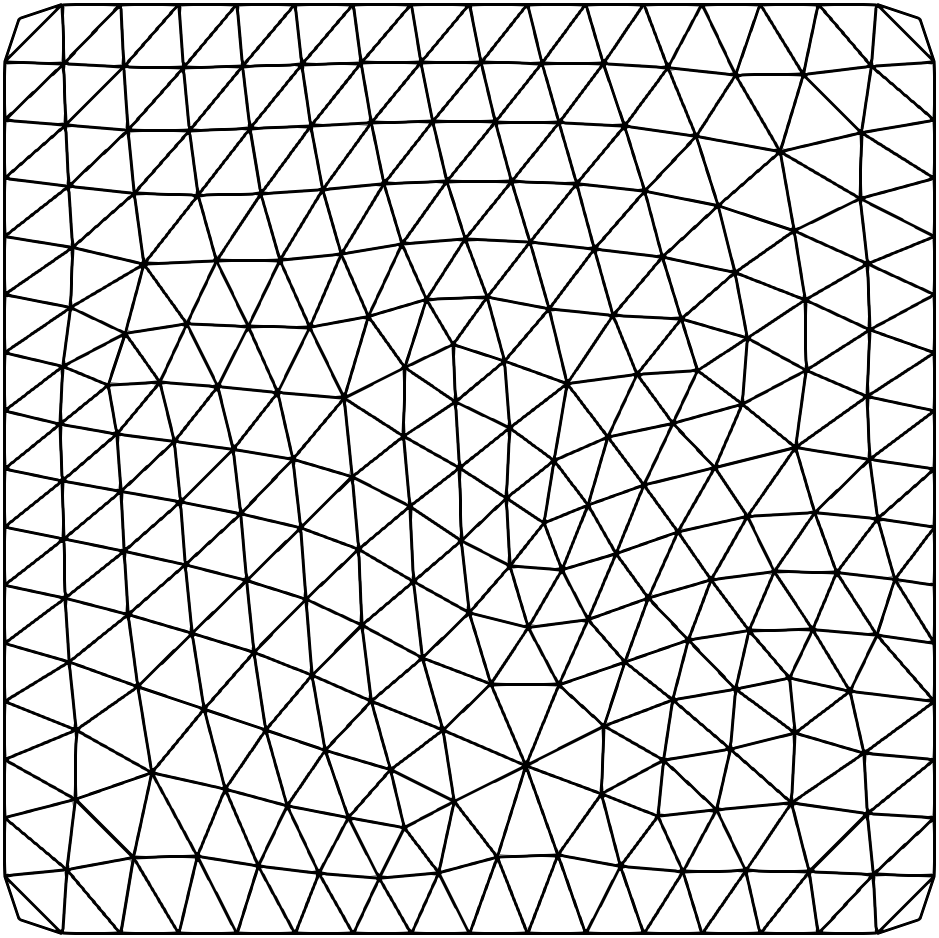}};

    \draw[->, ultra thick, color=blue] ([xshift=0.3cm, yshift=0.3cm] initial_mesh.north) to [bend left = 30] node[below, color=blue, label={[align=center, below]topological\\refinement}] {} ([xshift=-0.3cm, yshift=0.3cm] topological_ref.north);

    \draw[->, ultra thick, color=red] (5.35, -0.63) to [bend left = 20] node[below, color=red, label={[align=center]subdivision\\rule}] {} (-0.2, -0.63);

    \node[circle, draw=red, fill=red, inner sep=0pt, minimum size=7pt, label={[yshift=-5mm,font={\Large \bfseries},text=red]30:
    $\vertex{\l+1}{\fineIdxone}$}] (vlia) at (5.58, -0.545) {};

    \node[circle, draw=red, fill=red, inner sep=0pt, minimum size=7pt] at (-0.425, -0.53) {};
    \node[circle, draw=red, fill=red, inner sep=0pt, minimum size=7pt] at (-0.12, 1.37) {};
    \node[circle, draw=red, fill=red, inner sep=0pt, minimum size=7pt] at (-1.51, 0.28) {};
    \node[circle, draw=red, fill=red, inner sep=0pt, minimum size=7pt] at (-1.45, -1.75) {};
    \node[circle, draw=red, fill=red, inner sep=0pt, minimum size=7pt] at (0.13, -1.85) {};
    \node[circle, draw=red, fill=red, inner sep=0pt, minimum size=7pt] at (0.87, 0.063) {};

    \node[label={[yshift=-5mm,font={\Large \bfseries},text=red]30:$\vertex{\l}{\coarseIdxone_\coarseIdxtwo} \in \mathfrak{S}\big( \vertex{\l+1}{\fineIdxone}\big)$}] at (-1.5, 0.8) {};

    \draw[->, ultra thick, color=ForestGreen] ([xshift=1.0cm, yshift=-0.3cm] initial_mesh.south) to [bend right = 20] node[below, color=ForestGreen, label={[align=center]geometric\\averaging step}] {} ([xshift=-1.0cm, yshift=-0.3cm] geometrical_ref.south);

    \node[circle, draw=ForestGreen, fill=ForestGreen, inner sep=0pt, minimum size=7pt, label={[yshift=-5mm,font={\Large \bfseries},text=ForestGreen]30:$\vertexpos{\l+1}{\fineIdxone}$}] (vlia_pos) at (11.575, -0.463) {};

    \end{tikzpicture}

    \caption{The Loop subdivision algorithm maps a coarse mesh (left) to a fine mesh (right). First, the topology is refined by quadrisecting all faces (centre). The subdivision rule then maps any fine vertex to a set of coarse vertices and assigns a weight to each of them. Finally, the geometric averaging step computes the positions of the fine vertices as a weighted average of the positions of the coarse vertices.}
    \label{fig:subdiv_algorithm}
\end{figure}

\subsubsection{Formal description of a subdivision algorithm}
\label{sec_subdivalg_formal}

Next, we give a detailed introduction to the mesh subdivision algorithm and fix some important nomenclature and notation. The implications of the mesh subdivision schemes on differential $k$-forms defined on the mesh hierarchy will be discussed after we have laid these foundations.

Regarding the notation, we denote simplices on the coarse mesh level $\l$ with an index $\coarseIdxone$ and
on the next finer level $\l +1 $ with $\fineIdxone$. Further, when using $\fineIdxone_\fineIdxtwo$ we
refer to a set (over the subindex $\fineIdxtwo$) of simplices on the next finer mesh level $\l +1$
that are related to the coarse-mesh simplex $\coarseIdxone$ via, for example, the face splitting operation
introduced below in Eq.~\eqref{facesplitting}.
In turn, when using $\coarseIdxone_{\coarseIdxtwo}$ we
refer to a set (over the subindex ${\coarseIdxtwo}$) of simplices on the coarse mesh level $\l $
that are related to the fine-mesh simplices $\fineIdxone$ via, for example, the subdivision rule of Definition~\ref{def:subdivrule}
(see also Figure~\ref{fig:subdiv_algorithm}).

In the case of Loop subdivision, the topological refinement step quadrisects every face as depicted in Figure~\ref{fig:subdiv_algorithm}. The following definition provides a formal description of the face splitting step.

\begin{definition}[Topological refinement step]
  \label{def:refinement_step}
  The topological refinement step splits a coarse face $\face{\l}{\coarseIdxone}$ of a mesh $\T_\l$ into finer faces $\face{\l+1}{\fineIdxone_\fineIdxtwo}, \fineIdxtwo \in \{1,\dots,N\}$ that are part of $\T_{\l+1}$. The face splitting operator $\FFltoL{\l}{\l+1}$ formalises this relationship by
\begin{equation}\label{facesplitting}
    \FFltoL{\l}{\l+1} \big(\face{\l}{\coarseIdxone}\big) = \bigcup_{\fineIdxtwo = 1}^{N} \big\{ \face{\l+1}{\fineIdxone_\fineIdxtwo} \big\} \subset \F\big(\T_{\l+1}\big).
\end{equation}
\end{definition}

For example, Loop subdivision~\citep{Loop.1987} splits every coarse face $\face{\l}{\coarseIdxone}$ into $N=4$ fine faces, while double-iterations of $\sqrt{3}$ subdivision~\citep{Kobbelt.2000} would split it into $N=9$ fine faces.

This splitting operation can be generalised to act on arbitrary sets of faces on level $\l$.
Denoting these sets by $\simplexsubset{2}{\l} \in \F_{\l}$, we will write
\begin{equation}
    \label{facesplitting_union}
    \FFltoL{\l}{\l+1} \big( \simplexsubset{2}{\l} \big) \coloneqq \bigcup_{\face{\l}{\coarseIdxone} \in \simplexsubset{2}{\l}} \FFltoL{\l}{\l+1} \big(\face{\l}{\coarseIdxone}\big),
\end{equation}
where the union is over all faces on level $\l$ that are part of the set $\simplexsubset{2}{\l}$ similarly to Eq.~\eqref{eq:adjacency_for_sets}.

Next, the positions of the vertices of the resulting fine faces are then determined by the geometrical averaging step. The following definitions of subdivision rules and subdivision schemes provide the formal background for this geometric averaging procedure.

\begin{definition}[Subdivision rule]\label{def:subdivrule}
    Let $\T_{\l}$ and $\T_{\l +1}$ be a coarse and a fine mesh, respectively. A subdivision rule $\mathfrak{S}$ is a map that assigns to each fine vertex $\vertex{\l+1}{\fineIdxone} \in \V_{\l +1}$ a set of coarse vertices $\vertex{\l}{\coarseIdxone_\coarseIdxtwo}$ and associated weights
    $(w_\l)_{\coarseIdxone_\coarseIdxtwo}$ for $\coarseIdxtwo = 1, \dots, M$ by
    \begin{equation}
    \begin{split}
        \mathfrak{S}: \; \V_{\l+1} \to \big[ \V_\l \times \mathbb{R} \big]^M, \qquad \vertex{\l+1}{\fineIdxone} \mapsto \Big\{ \big( \vertex{\l}{\coarseIdxone_1}, \; (w_\l)_{\coarseIdxone_1} \big), \hdots, \big( \vertex{\l}{\coarseIdxone_M}, \; (w_\l)_{\coarseIdxone_M} \big) \Big\}.
    \end{split}
    \end{equation}
    Note that the number $M$ of involved coarse vertices and their respective weights depend on the chosen subdivision scheme
    and the properties of the fine vertex $\vertex{\l+1}{\fineIdxone}$.

    Denoting by $\vertexpos{\l}{\coarseIdxone_\coarseIdxtwo}$ the positions of the $\coarseIdxone_\coarseIdxtwo^{th}$ coarse vertices (related to the fine mesh index $\fineIdxone$), the position $\vertexpos{\l+1}{\fineIdxone}$ of the fine vertex $\vertex{\l+1}{\fineIdxone}$ is given by
    \begin{equation}
        \label{eq:subdiv_rule_def}
        \vertexpos{\l+1}{\fineIdxone} = \sum_{\coarseIdxtwo = 1}^M \; (w_\l)_{\coarseIdxone_\coarseIdxtwo} \cdot \vertexpos{\l}{\coarseIdxone_\coarseIdxtwo} \,.
    \end{equation}
    This step is typically referred to as \emph{geometric averaging}.
\end{definition}

The topological refinement operations and subdivision rules define relationships between individual coarse and fine faces and vertices, respectively. Collecting these across the entire mesh gives rise to a \emph{subdivision scheme}.

\begin{definition}[Subdivision scheme]\label{def:subdivscheme}
    A subdivision scheme consists of a topological refinement operation (see Definition~\ref{def:refinement_step}) that is consistently applied across all coarse faces $\face{\l}{\coarseIdxone}$, and a collection of subdivision rules (see Definition~\ref{def:subdivrule}), one for every fine vertex $\vertex{\l+1}{\fineIdxone}$ in $\V_{\l+1}$.
\end{definition}

Typically, subdivision rules differ qualitatively between even and odd vertices, vertices on the domain boundary and interior vertices, and between vertices with different valence, i.e. the number of adjacent vertices. Hence, most subdivision schemes have several classes of subdivision rules to account for these differences in the vertices.

Various subdivision schemes have been devised throughout the history of the field. In this work, we focus on Loop's subdivision scheme \citep{Loop.1987} and its associated subdivision algorithm.
Nevertheless, the formal framework we are going to introduce here applies to a wide range of subdivision schemes, see Remark~\ref{remark_othersubdivschemes}.
The following section focuses on the construction of mesh hierarchies using mesh subdivision operators.

\subsubsection{Construction of mesh hierarchies using subdivision}
\label{sec_subdivalg_hierarchy}

The previous Definitions~\ref{def:subdivrule} and~\ref{def:subdivscheme} establish the subdivision algorithm that allows us to construct a finer mesh from a coarser mesh. Using this algorithm repeatedly leads to a hierarchy of increasingly fine meshes, beginning from an initial mesh at level $\lzero$ to the finest mesh at level $\L$. Formally, the mesh hierarchy can be constructed through the subdivision operator introduced in the following.

\begin{definition}[Mesh subdivision operator]\label{def:mesh_subdiv_operator}
Given a mesh $\T_{\l}$ on level $\l$, applying the subdivision scheme of Definition~\ref{def:subdivscheme},
i.e. carrying out topological refinement and geometric averaging once according to
Definition~\ref{def:subdivrule} (see also Figure~\ref{fig:subdiv_algorithm}),
results in a finer mesh $\T_{\l+1}$ on level $\l+1$.
We denote this mapping as \emph{mesh subdivision operator} $\Sop{}{\l}{\l+1}$ and write formally
    \begin{equation}
        \T_{\l +1} \coloneqq \Sop{}{\l}{\l+1} \T_{\l}.
    \end{equation}

Further, given an initial mesh $\T_{\lzero}$ on level $\lzero$ and a maximum level $\L$, repeatedly applying the subdivision algorithm yields the mesh $\T_{L} = \Sop{}{\L-1}{\L} \; \dots \; \Sop{}{\lzero}{1} \T_{\lzero}$, which motivates the introduction of the \emph{accumulated mesh subdivision operator}
\begin{equation}
    \Aop{}{\lzero}{\L} \coloneqq \Sop{}{\L-1}{\L} \circ \hdots \circ \Sop{}{\lzero}{1}.
\end{equation}
Finally, we adopt the convention that $\Aop{}{\l}{\l} = \mathrm{Id}$ is the identity map for any $\l$ of the mesh hierarchy.
\end{definition}

Note that the mesh subdivision operators can also be accumulated between any two intermediate levels $\lone,\ltwo$ with $\lzero \leq \lone \leq \ltwo \leq \L$ to obtain $\Aop{}{\lone}{\ltwo}$.
As $\Aop{}{\lone}{\lone+1} \equiv \Sop{}{\lone}{\lone+1}$ for all levels, we will only use the accumulated subdivision operators in the following, even when they map between \emph{neighbouring levels} $\lone$ and $\lone\!+\!1$.

In the same way, the face splitting operator $\FFltoL{\l}{\l+1}$ of Eq.~\eqref{facesplitting}
can also be extended to act across multiple levels, i.e.
\begin{equation}\label{facesplitting_levels}
    \FFltoL{\l}{\L} \coloneqq \FFltoL{\L-1}{\L} \circ \hdots \circ \FFltoL{\l+1}{\l+2} \circ \FFltoL{\l}{\l+1}.
\end{equation}

The framework we have developed so far to capture the mechanics of the subdivision algorithm is rather generic. So far, any primal subdivision schemes, i.e., ones that use a face-split like Loop's scheme~\citep{Loop.1987}, can be described in this way. Other popular primal schemes that can be described in the framework above include Catmull-Clark~\citep{Catmull.1978} or double-iterations of $\sqrt{3}$ subdivision~\citep{Kobbelt.2000}.

\section{Subdivision $k$-form spaces}
\label{sec:subdiv_k_forms}

This section introduces subdivision schemes for differential $k$-forms on the subdivision-induced mesh hierarchy constructed above. To this end, we first define subdivision $k$-form spaces as the images of $k$-form subdivision operators acting on lowest-order FEEC spaces and present some of their properties. Using these results, we then construct a basis for the subdivision $k$-form spaces and introduce the notion of subdivision-induced regularity. These two different perspectives on the subdivision $k$-form spaces allow us to establish conditions under which the spaces constitute discrete de Rham complexes. Additional details on the construction of these spaces can be found in Appendix~\ref{app:subdiv_spaces}.

\subsection{Definition of subdivision $k$-forms via subdivision operators}
\label{subsec:subdiv_k_forms_through_subdiv_algorithm}

The construction of subdivision $k$-forms is based on the lowest-order FEEC spaces $\FEECspace{k}{}$ with
\begin{equation}
    \label{eq:choice_of_feec_space}
    \FEECspace{0}{} = \CG_1, \qquad \FEECspace{1}{} = \NED_1, \qquad \FEECspace{2}{} = \DG_0,
\end{equation}
where $\CG_1$ stands for a piecewise linear continuous Galerkin space, $\NED_1$ for the lowest-order N\'ed\'elec space,
and $\DG_0$ for a piecewise constant discontinuous Galerkin space. In the following, we will refer to these spaces as the \emph{FEEC spaces}.
For a given mesh hierarchy of $\T_{\lzero}, \dots, \T_\L$ constructed by the subdivision operator in Definition~\ref{def:mesh_subdiv_operator}, we define the FEEC spaces
\begin{equation}
  \label{eq:FEEC_space_on_lvl_l}
  \FEECspace{k}{\l} \coloneq \FEECspace{k}{} \big( \T_\l \big)  
\end{equation}
on level $\l$ with $\lzero \leq \l \leq \L$. We can represent any discrete $k$-form $\omega^k_\l \in \FEECspace{k}{\l}$ as
\begin{eqnarray}\label{equ_FEEC_basis}
    \omega^k_\l = \sum_{\coarseIdxone = 1}^{\sumdimlk{\l}{k}} \freeQ{c}{k}{\l}{\coarseIdxone} \; \freeQ{\FEECbasis}{k}{\l}{\coarseIdxone},
\end{eqnarray}
where $\freeQ{\psi}{k}{\l}{\coarseIdxone}$ is the $\coarseIdxone^{\text{th}}$ basis function of $\FEECspace{k}{\l}$ with $\mathrm{dim}\big( \FEECspace{k}{\l} \big) = \dimlk{\l}{k}$ on mesh level $\l$. As with vertices, we use the index $\coarseIdxone$ for basis functions on mesh level $\l$ and the index $\fineIdxone$ for basis functions on mesh level $\l +1$.

The subdivision $k$-forms introduced in this work are the images of $k$-form subdivision operators. The following definition introduces these operators as a linear map from the coarse function space $\FEECspace{k}{\l}$ to the finer function space $\FEECspace{k}{\l+1}$. Note the use of the additional superscript $k$ to distinguish this new operator from the mesh subdivision operator of Definition~\ref{def:mesh_subdiv_operator}.

\begin{definition}[$k$-form subdivision operator with matrix representation]
\label{def:subdiv_operator}
The $k$-form subdivision operator~$\Sop{k}{\l}{\l+1}$ is defined as the linear map
\begin{equation}
\label{eq:alternative_subdiv_operator_mapping}
\begin{aligned}
    \Sop{k}{\l}{\l+1}: \; \FEECspace{k}{\l} \;\; & \to \;\; \FEECspace{k}{\l+1}, \quad
    \freeVec{\omega}{k}{\l} = \sum_{\coarseIdxone=1}^{\dimlk{\l}{k}} \freeQ{c}{k}{\l}{\coarseIdxone} \; \freeQ{\FEECbasis}{k}{\l}{\coarseIdxone} \;\; \mapsto \;\; \constrVec{\omega}{k}{\l+1}{\l} = \sum_{\fineIdxone=1}^{\dimlk{\l+1}{k}} \constrQ{c}{k}{\l+1}{\l}{\fineIdxone} \; \freeQ{\FEECbasis}{k}{\l+1}{\fineIdxone} ,
     \\
     & \quad \text{with \emph{constrained} coefficients} \quad \constrQ{c}{k}{\l+1}{\l}{\fineIdxone}  = \sum_{\coarseIdxone=1}^{\dimlk{\l}{k}} \Smatij{k}{\l}{\l+1}{\fineIdxone}{\coarseIdxone} \; \freeQ{c}{k}{\l}{\coarseIdxone} \, ,
\end{aligned}
\end{equation}
where the sparse $\dimlk{\l+1}{k} \times \dimlk{\l}{k}$ matrix
$\Smat{k}{\l}{\l +1}$ is the matrix representation of the subdivision operator $\Sop{k}{\l}{\l+1}$
with entries determined through the $k$-form subdivision rules using the relationship in Eq.~\eqref{eq:subdiv_matrix_from_subdiv_rule}
in Appendix~\ref{app:subdiv_spaces}.
The subdivision rules for mesh subdivision and $0$-form subdivision coincide.

This operator maps $k$-forms on level $\l$ represented with respect to the basis functions $\freeQ{\FEECbasis}{k}{\l}{\coarseIdxone}$ of $\FEECspace{k}{\l}$ to corresponding $k$-forms on level $\l+1$ represented with respect to the basis functions $\freeQ{\FEECbasis}{k}{\l+1}{\fineIdxone}$ of $\FEECspace{k}{\l+1}$.
In terms of its \emph{matrix representation}, the operator acts via the matrix $\Smat{k}{\l}{\l +1}$ directly on the coefficient
vector $\constrQ{c}{k}{\l+1}{\l}{\fineIdxone}$ while the basis vectors of level $\l$ are replaced with those of $\l +1$.
\end{definition}

\begin{remark}
\label{remark:constraint_notation}
Note the difference in notation for the coefficients of $k$-forms. From now on, the vertical bar in the expression $\l+1\vert\l$ indicates that a quantity on the finer level $\l +1$ has to satisfy some constraint inherited from the coarser level $\l$. Typically, these constraints are also lower-dimensional.
For example, the coefficients $\freeQ{c}{k}{\l}{\coarseIdxone}$ of $\freeVec{\omega}{k}{\l}$ in Eq.~\eqref{eq:alternative_subdiv_operator_mapping} are \emph{free} while the coefficients $\constrQ{c}{k}{\l+1}{\l}{\fineIdxone}$ are \emph{constrained} in the sense that they are restricted to the image of the subdivision matrix according to Definition~\ref{def:subdiv_operator}.
\end{remark}

Analogously to the mesh subdivision operators $\Sop{}{\l}{\l+1}$, the $k$-form subdivision operators
$\Sop{k}{\l}{\l +1}$ can be subsequently applied to connect all levels of the mesh hierarchy as done next.

\begin{definition}[Accumulated $k$-form subdivision operator]
    \label{def:accumulated_subdiv_operator}
    The accumulated $k$-form subdivision operator $\Aop{k}{\l}{\L}$ maps discrete $k$-forms from level $\l$ to level $\L$ through
    \begin{equation}
        \label{eq:accumulated_subdiv_operator}
        \Aop{k}{\l}{\L} \coloneqq \Sop{k}{\L-1}{\L} \, \circ \, \hdots \, \circ \, \Sop{k}{\l+1}{\l+2} \, \circ \, \Sop{k}{\l}{\l+1} \; \colon \;\; \FEECspace{k}{\l} \to  \FEECspace{k}{\L}.
    \end{equation}
    Its coordinate representation with respect to the bases $\freeQ{\FEECbasis}{k}{\l}{\coarseIdxone}$ of $\FEECspace{k}{\l}$ and $\freeQ{\FEECbasis}{k}{\L}{\fineIdxone}$
    of $\FEECspace{k}{\L}$ is given by the accumulated (sparse) subdivision matrix
    \begin{equation}
        \label{eq:accumulated_subdiv_matrix}
        \Amat{k}{\l}{\L} \coloneqq \Smat{k}{\L-1}{\L} \, \cdot \, \dots \, \cdot \, \Smat{k}{\l+1}{\l+2} \, \cdot \, \Smat{k}{\l}{\l+1},
    \end{equation}
    with dimensions~$\dimlk{\L}{k} \times \dimlk{\l}{k}$, where the centered dots denote matrix multiplication.
\end{definition}

Definition~\ref{def:accumulated_subdiv_operator} introduces the accumulated subdivision operator $\Aop{k}{\l}{\L}$. This operator can be split with respect to an intermediate level, as formulated in the next proposition.

\begin{proposition}
    \label{prop:split_subdiv_operators}
    Given an intermediate mesh level $\l$ with $\lzero \leq \l \leq \L$, the accumulated
    $k$-form subdivision operator can be decomposed relative to level
    $\l$ by $\Aop{k}{\lzero}{\L} = \Aop{k}{\l}{\L} \circ \Aop{k}{\lzero}{\l}$.
\end{proposition}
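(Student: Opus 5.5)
The plan is to argue directly from Definition~\ref{def:accumulated_subdiv_operator}, using only that the accumulated operator is a finite composition of the one-step operators $\Sop{k}{\lone}{\lone+1}$ and that composition of maps is associative.

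First I dispose of the boundary cases. If $\l = \lzero$, then $\Aop{k}{\lzero}{\l} = \mathrm{Id}$, extending the convention $\Aop{}{\l}{\l} = \mathrm{Id}$ of Definition~\ref{def:mesh_subdiv_operator} to $k$-forms as the empty composition. The identity then reads $\Aop{k}{\lzero}{\L} = \Aop{k}{\lzero}{\L} \circ \mathrm{Id}$, which is trivial. The case $\l = \L$ is handled symmetrically.

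For $\lzero < \l < \L$ I write out both factors from Eq.~\eqref{eq:accumulated_subdiv_operator}:
\[
\Aop{k}{\l}{\L} = \Sop{k}{\L-1}{\L} \circ \dots \circ \Sop{k}{\l}{\l+1}, \qquad
\Aop{k}{\lzero}{\l} = \Sop{k}{\l-1}{\l} \circ \dots \circ \Sop{k}{\lzero}{1}.
\]
The domains and codomains match: the second map goes from $\FEECspace{k}{\lzero}$ to $\FEECspace{k}{\l}$, and the first from $\FEECspace{k}{\l}$ to $\FEECspace{k}{\L}$. By associativity, their composition is the ordered chain $\Sop{k}{\L-1}{\L} \circ \dots \circ \Sop{k}{\lzero}{1}$, which by definition is $\Aop{k}{\lzero}{\L}$.

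Alternatively, I can make this rigorous by induction on $\L - \l$. The base case $\l = \L$ is the trivial one above. For the inductive step I use $\Aop{k}{\l}{\L} = \Aop{k}{\l+1}{\L} \circ \Sop{k}{\l}{\l+1}$, which holds by definition, together with $\Aop{k}{\lzero}{\l+1} = \Sop{k}{\l}{\l+1} \circ \Aop{k}{\lzero}{\l}$. For the matrix statement, the same argument goes through because matrix multiplication is associative, giving $\Amat{k}{\lzero}{\L} = \Amat{k}{\l}{\L} \cdot \Amat{k}{\lzero}{\l}$ from Eq.~\eqref{eq:accumulated_subdiv_matrix}. This is consistent with the operator identity, since each $\Smat{k}{\lone}{\lone+1}$ represents $\Sop{k}{\lone}{\lone+1}$ in the bases of Eq.~\eqref{equ_FEEC_basis}.

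There is no real obstacle here. The only point needing care is fixing the convention $\Aop{k}{\l}{\l} = \mathrm{Id}$ for the degenerate endpoints, since Definition~\ref{def:accumulated_subdiv_operator} states that convention only for the mesh operator.
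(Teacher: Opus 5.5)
Your proposal is correct and takes the same route as the paper, which simply says the splitting follows directly from defining $\Aop{k}{\l}{\L}$ in Eq.~\eqref{eq:accumulated_subdiv_operator} as the ordered composition of the one-step operators. Your associativity/induction argument and your explicit extension of the convention $\Aop{k}{\l}{\l} = \mathrm{Id}$ to $k$-forms (which the paper uses only implicitly, e.g.\ in the proof of Proposition~\ref{prop:subspace_of_Vk}) just spell out that one-line argument in full.
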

\begin{proof}
The property follows directly from Eq.~\eqref{eq:accumulated_subdiv_operator}.
\end{proof}

This accumulated $k$-form subdivision operator $\Aop{k}{\l}{\L}$ induces the \emph{space of subdivision $k$-forms} on level $\L$ relative to level $\l$ as the image of $\FEECspace{k}{\l}$ under the action of $\Aop{k}{\l}{\L}$. Denoting the image of a set $X$ under the map $f:X \to Y$ by
\begin{equation}\label{equ:image_map}
    f\big[X \big] \coloneqq \big\{ y \in Y: \;\; y = f(x),\; x \in X \big\},
\end{equation}
we introduce the spaces of subdivision $k$-forms in the following manner.

\begin{definition}[Subdivision $k$-form spaces]
    \label{def:uniform_subdiv_spaces}
    Consider an initial triangulation $\T_{\lzero}$ and a mesh hierarchy $\T_{\l} = \Aop{}{\lzero}{\l} \T_{\lzero}$ with $\lzero \leq \l \leq \L$ constructed using the mesh subdivision operator $\Aop{}{\lzero}{\l}$. Then, we define the subdivision $k$-form spaces $\SDFspace{k}{\L}{\l}(\T_{\lzero})$ on level $\L$ relative to level $\l$ as
    \begin{equation}\label{eq:SLambda_definition_subspace_PLambda}
    \begin{split}
        \SDFspace{k}{\L}{\l}(\T_{\lzero})
        \coloneqq \Aop{k}{\l}{\L} \big[ \FEECspace{k}{\l} \big] \, ,
     \end{split}
    \end{equation}
    where $\FEECspace{k}{\l} = \FEECspace{k}{}\big( \T_{\l} \big)$ according to Eq.~\eqref{eq:FEEC_space_on_lvl_l}. Fully explicitly, the right-hand side reads
    \begin{equation}\label{eq:SLambda_definition_subspace_PLambda_v1}
    \begin{split}
     \Aop{k}{\l}{\L} \big[ \FEECspace{k}{\l}  \big]
     = \big\{ \constrVec{\omega}{k}{\L}{\l} \in \FEECspace{k}{\L} \;\; \colon \;\; \constrVec{\omega}{k}{\L}{\l}  =  \Aop{k}{\l}{\L}  \; \freeVec{\omega}{k}{\l} , \;\; \freeVec{\omega}{k}{\l} \in \FEECspace{k}{\l} \big\}.
     \end{split}
    \end{equation}
\end{definition}

Recall that the notation ${\L|\l}$ reflects the idea outlined in Remark~\ref{remark:constraint_notation}. In the following, we will often omit the explicit dependence of the subdivision $k$-form spaces on the initial mesh $\T_{\lzero}$ if this dependency is not crucial or becomes clear from the context.

Next, let us consider the properties of the subdivision $k$-form spaces of Definition~\ref{def:uniform_subdiv_spaces}.
\begin{proposition}
    \label{prop:subspace_of_Vk}
    The subdivision $k$-form spaces $\SDFspace{k}{\L}{\l}$ have the following properties: \\
    (i) $\SDFspace{k}{\l}{\l}  \equiv \FEECspace{k}{\l}$ for all $\l \in \{\lzero, \hdots, \L\}$,\\
    (ii) $\SDFspace{k}{\L}{\l} \subset \FEECspace{k}{L}$, i.e. $\SDFspace{k}{\L}{\l}$ is a $\dimlk{\l}{k}$-dimensional subspace of $\FEECspace{k}{L}$. 
\end{proposition}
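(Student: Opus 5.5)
Part (i) follows from the convention $\Aop{}{\l}{\l} = \mathrm{Id}$ of Definition~\ref{def:mesh_subdiv_operator}, carried over to the $k$-form setting. Setting $\L = \l$ in Definition~\ref{def:accumulated_subdiv_operator} gives an empty composition, which I interpret as $\Aop{k}{\l}{\l} = \mathrm{Id}$ on $\FEECspace{k}{\l}$; correspondingly, $\Amat{k}{\l}{\l}$ is the identity matrix. Then Definition~\ref{def:uniform_subdiv_spaces} yields $\SDFspace{k}{\l}{\l} = \mathrm{Id}\big[\FEECspace{k}{\l}\big] = \FEECspace{k}{\l}$. This is immediate; it only requires stating the convention explicitly for the $k$-form operators.

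For part (ii), the inclusion $\SDFspace{k}{\L}{\l} \subset \FEECspace{k}{\L}$ holds by construction. Each $\Sop{k}{\l'}{\l'+1}$ maps $\FEECspace{k}{\l'}$ into $\FEECspace{k}{\l'+1}$ by Definition~\ref{def:subdiv_operator}, so the composition $\Aop{k}{\l}{\L}$ maps $\FEECspace{k}{\l}$ into $\FEECspace{k}{\L}$. As the image of a linear map, $\SDFspace{k}{\L}{\l}$ is a linear subspace. Its dimension equals the rank of $\Amat{k}{\l}{\L}$, and by rank–nullity it equals $\dimlk{\l}{k}$ exactly when $\Aop{k}{\l}{\L}$ is injective. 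Since an injective composition follows from injective factors, it suffices, via Proposition~\ref{prop:split_subdiv_operators}, to show that every single-level matrix $\Smat{k}{\l'}{\l'+1}$ has full column rank $\dimlk{\l'}{k}$.

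The main obstacle is the injectivity of each $\Smat{k}{\l'}{\l'+1}$, because it depends on the concrete subdivision rules, which are deferred to the appendix. My approach is to exhibit, for each coarse degree of freedom, a fine degree of freedom whose subdivision row is dominated by it, and then extract a square submatrix that is invertible.

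For $k=0$, I restrict to the rows of even vertices. In the Loop scheme, an even fine vertex is a weighted average of its coarse counterpart, with weight $1-n\beta$, and its coarse one-ring. The resulting square block is $(1-n\beta)I + \beta\,\mathrm{Adj}$, which I would show is invertible. The natural route is strict diagonal dominance, which holds when $1-n\beta > n\beta$, i.e. $\beta<1/(2n)$, and this is satisfied by Loop's weights. Boundary rules, which are one-dimensional averages with weights $3/4$ and $1/8$, are handled in the same way.

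For $k=2$, each coarse face has a fine centre face whose coefficient depends only on nearby coarse faces with a dominant self-weight, and the same diagonal-dominance argument applies. For $k=1$, I would pair each coarse edge with one of its two fine half-edges and again check dominance.

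If diagonal dominance fails for some scheme, the fallback is weaker. One option is to use the commuting property with $\extd$ together with injectivity on $0$- and $2$-forms. Otherwise I would note that the dimension count in (ii) holds under the standing assumption that the subdivision matrices have full column rank, which is generic for the schemes considered.
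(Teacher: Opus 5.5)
Your part (i) and the skeleton of (ii) coincide with the paper's proof: the inclusion holds by construction, $\dim \SDFspace{k}{\L}{\l} = \mathrm{rank}\,\Amat{k}{\l}{\L}$, and the question reduces to full column rank of the single-level matrices. The paper, however, never proves full column rank; it only asserts that subdivision matrices ``(typically)'' have it. Your diagonal-dominance argument is the one place you try to go further, and it fails as stated. For $k=0$, strict row dominance of the even block requires $\beta_n < 1/(2n)$. Loop's weights violate this at valence $3$: $\beta_3 = \tfrac13\bigl(\tfrac58-(\tfrac38+\tfrac14\cos\tfrac{2\pi}{3})^2\bigr)=\tfrac{3}{16}>\tfrac16$, so the row has self-weight $\tfrac{7}{16}$ against off-diagonal mass $\tfrac{9}{16}$. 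Warren's variant has the same $\beta_3$. Interior valence-$3$ vertices are admissible in arbitrary triangulations. Even vertices also keep their valence under refinement, so such rows occur in every $\Smat{0}{\l'}{\l'+1}$.

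Column dominance does not rescue this. At an equatorial vertex of the triangular bipyramid, the column has $\tfrac{33}{64}$ on the diagonal against $2\cdot\tfrac{3}{16}+2\cdot\tfrac{31}{256}=\tfrac{158}{256}$ off it. So a Gershgorin argument cannot certify injectivity, and you need a different tool. One option is a definiteness argument for $\mathrm{diag}(c_i)+\mathrm{Adj}$ with $c_i=(1-n_i\beta_{n_i})/\beta_{n_i}$; on a closed mesh the even block equals $\mathrm{diag}(\beta_{n_i})$ times this symmetric matrix. Another is an argument that also uses the odd rows. For $k=1,2$ you only assert dominance, without giving any weights. The paper does not list the Wang weights, and the $k=0$ case shows such claims break exactly at extraordinary vertices.

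Your commutation fallback is the most valuable idea in the proposal. Write $S^k\coloneqq\Smat{k}{\l'}{\l'+1}$ and suppose $S^0$ and $S^2$ are injective and $S^0$ reproduces constants. Then $\Dmat{1}{\l'+1}S^1=S^2\Dmat{1}{\l'}$ makes every $x\in\ker S^1$ closed. If moreover $x=\Dmat{0}{\l'}y$, then $\Dmat{0}{\l'+1}S^0y=S^1x=0$, so $S^0y$ is constant on a connected mesh, hence $y$ is constant and $x=0$. This settles $k=1$ when the first cohomology vanishes. In general you additionally need $S^1$ to be injective on cohomology, and you still need the cases $k=0$ and $k=2$.

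As written, the dimension count in (ii) rests on your last-resort assumption of full column rank. That assumption is exactly, and only, what the paper's proof uses.
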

\begin{proof}
See Appendix~\ref{proof:subspace_of_Vk}.
\end{proof}

Crucially, $\SDFspace{k}{\L}{\l} \subset \FEECspace{k}{\L}$ implies that the subdivision $k$-form spaces do not exhibit increased Sobolev regularity but only a certain kind of subdivision-induced regularity, see Remark~\ref{remark:subdiv_smoothness}.

Because $\SDFspace{k}{\L}{\l}$ are subspaces of $\FEECspace{k}{\L}$ for all $\lzero \leq \l \leq \L$
according to Proposition~\ref{prop:subspace_of_Vk}, we can define the following inclusion map.
\begin{definition}[Inclusion map]
    \label{def:inclusion_map}
    The inclusion map $\iota$ is defined as the function
    \begin{equation}
        \iota\colon \; \SDFspace{k}{\L}{\l} \to \FEECspace{k}{\L}, \qquad \iota (\constrVec{\omega}{k}{\L}{\l}) = {\constrVec{\omega}{k}{\L}{\l}} = \freeVec{\omega}{k}{\L}.
    \end{equation}
\end{definition}
This definition allows us to treat elements of $\SDFspace{k}{\L}{\l}$ as elements of $\FEECspace{k}{\L}$ by forgetting that the coefficients of ${\constrVec{\omega}{k}{\L}{\l}}$ were constrained through the subdivision matrix. This will become useful
later when defining an exterior derivative that acts on these subdivision $k$-forms.

\smallskip

Proposition~\ref{prop:split_subdiv_operators} allows us to split the subdivision operators between $\l$ and $\L$ on an intermediate level $\lone$. This can be interpreted as applying the subdivision operator on an intermediate subdivision $k$-form space. For the purpose of further specifying the domains of the subdivision operators, we introduce the following subdivision operator $\primalAop{k}{\lone}{\ltwo}{\l}$. The notation now includes the initial level $\l$ to preserve information about the level $\l$ on which the degrees of freedom (DoFs) are defined.

\begin{definition}\label{def:primalAop}
  For a fixed level $\l$ and varying intermediate levels $\lone, \ltwo$ with $\lzero \leq \l \leq \lone \leq \ltwo \leq \L$,
  the (extended) accumulated $k$-form subdivision operator
  $\primalAop{k}{\lone}{\ltwo}{\l}$ is a mapping of subdivision $k$-forms from
  $\SDFspace{k}{\lone}{\l}$ to $\SDFspace{k}{\ltwo}{\l}$, i.e.
  \begin{equation}\label{equ:primalAop}
  \begin{aligned}
    \primalAop{k}{\lone}{\ltwo}{\l}: & \  \SDFspace{k}{\lone}{\l}  \to \SDFspace{k}{\ltwo}{\l} \, ,
     \quad \constrVec{\omega}{k}{\lone}{\l} = \sum_{\fineIdxone = 1}^{\sumdimlk{\lone}{k}} \constrQ{c}{k}{\lone}{\l}{\fineIdxone} \, \freeQ{\FEECbasis}{k}{\lone}{\fineIdxone} \mapsto
   \constrVec{\omega}{k}{\ltwo}{\l} = \sum_{\fineIdxone = 1}^{\sumdimlk{\ltwo}{k}} \constrQ{c}{k}{\ltwo}{\l}{\fineIdxone} \, \freeQ{\FEECbasis}{k}{\ltwo}{\fineIdxone} \\
   & \quad \text{with \emph{constrained} coefficients} \quad \constrQ{c}{k}{\ltwo}{\l}{\fineIdxone}
   = \sum_{\coarseIdxone=1}^{\dimlk{\lone}{k}}  \Amatij{k}{\lone}{\ltwo}{\fineIdxone}{\coarseIdxone} \; \constrQ{c}{k}{\lone}{\l}{\fineIdxone}\, ,
  \end{aligned}
  \end{equation}
  where $\constrQ{c}{k}{\lone}{\l}{\fineIdxone}$ is given as in Definition~\ref{def:subdiv_operator}
  and where $\Amatij{k}{\lone}{\ltwo}{\fineIdxone}{\coarseIdxone}$ is a sparse $\dimlk{\ltwo}{k} \times \dimlk{\lone}{k}$ matrix.
  The matrix representation of this operator follows the guideline of Definition~\ref{def:subdiv_operator}.

  For the special case in which $\lone =\l$, we recover Definition \ref{def:accumulated_subdiv_operator}
  and we use the simplified notation $\Aop{k}{\lone}{\ltwo}$.
\end{definition}

\begin{remark}\label{remark_exclude_l2geql1}
    Throughout the paper, we will assume $\lzero \leq \l \leq \lone \leq \ltwo \leq \L$ whenever we write
    $\Aop{k}{\lone}{\ltwo}$ or $\primalAop{k}{\lone}{\ltwo}{\l}$
    since the subdivision operator is not defined for $\lone > \ltwo$.
\end{remark}

The following computation confirms that the generalised picture of the accumulated $k$-form subdivision operator introduced in Definition~\ref{def:primalAop} is compatible with previous definitions. 
Consider the operator
$\primalAop{k}{\lone}{\ltwo}{\l}$ acting on
$\constrVec{\omega}{k}{\lone}{\l} \in \SDFspace{k}{\lone}{\l}$ like
$\primalAop{k}{\lone}{\ltwo}{\l} \  \constrVec{\omega}{k}{\lone}{\l} = \constrVec{\omega}{k}{\ltwo}{\l} \in \SDFspace{k}{\ltwo}{\l}$.
Note how the first entry in the $\cdot | \cdot$ notation is affected by this operation while
the second entry remains unmodified. Using the explicit matrix representation,
this action reads
\begin{equation}\label{equ:explicit_primal_op}
\begin{aligned}
 \primalAop{k}{\lone}{\ltwo}{\l} \  \constrVec{\omega}{k}{\lone}{\l}
& = \primalAop{k}{\lone}{\ltwo}{\l} \sum_{\fineIdxone = 1}^{\sumdimlk{\lone}{k}} \constrQ{c}{k}{\lone}{\l}{\fineIdxone} \, \freeQ{\FEECbasis}{k}{\lone}{\fineIdxone} \\
 & = \sum_{\coarseIdxone=1}^{\sumdimlk{\l}{k}} \freeQ{c}{k}{\l}{\coarseIdxone} \;
 \primalAop{k}{\lone}{\ltwo}{\l} \big(\sum_{\fineIdxone = 1}^{\sumdimlk{\lone}{k}}  \Amatij{k}{\l}{\lone}{\fineIdxone}{\coarseIdxone}  \freeQ{\FEECbasis}{k}{\lone}{\fineIdxone} \big) \\
 & = \sum_{\coarseIdxone=1}^{\sumdimlk{\l}{k}} \freeQ{c}{k}{\l}{\coarseIdxone} \;
 \big( \sum_{\fineIdxone = 1}^{\sumdimlk{\ltwo}{k}}(\Amat{k}{\lone}{\ltwo} \cdot \Amat{k}{\l}{\lone})_{\fineIdxone\coarseIdxone}
 \freeQ{\FEECbasis}{k}{\ltwo}{\fineIdxone} \big) \\
& = \sum_{\coarseIdxone=1}^{\sumdimlk{\l}{k}} \freeQ{c}{k}{\l}{\coarseIdxone} \;
 \big( \sum_{\fineIdxone = 1}^{\sumdimlk{\ltwo}{k}}(\Amat{k}{\l}{\ltwo})_{\fineIdxone\coarseIdxone}
 \freeQ{\FEECbasis}{k}{\ltwo}{\fineIdxone} \big) \\
&  = \sum_{\fineIdxone = 1}^{\sumdimlk{\ltwo}{k}} \constrQ{c}{k}{\ltwo}{\l}{\fineIdxone} \, \freeQ{\FEECbasis}{k}{\ltwo}{\fineIdxone}
= \constrVec{\omega}{k}{\ltwo}{\l}.
\end{aligned}
\end{equation}
Note that in line three the action of $\primalAop{k}{\lone}{\ltwo}{\l}$ changes the summation index to
$\sumdimlk{\ltwo}{k}$ and the basis functions to $\freeQ{\FEECbasis}{k}{\ltwo}{\fineIdxone} $,
according to Definition~\ref{def:subdiv_operator} of the
subdivision operator and its matrix representation.

This explicit computation illustrates how
$\primalAop{k}{\lone}{\ltwo}{\l}$ acts on the constrained coefficients $\constrQ{c}{k}{\lone}{\l}{}$ of elements
of $\SDFspace{k}{\lone}{\l}$ and that the operation aligns with Definitions \ref{def:subdiv_operator}
and \ref{def:accumulated_subdiv_operator}.

\begin{remark}\label{remark_shorthand}
As the preceding computation reveals, the action of $\primalAop{k}{\lone}{\ltwo}{\l}$
on $\constrVec{\omega}{k}{\lone}{\l} $ depends only on $\lone$ and $\ltwo$ but not on $\l$. In particular, note that the matrix expression $\Amat{k}{\lone}{\ltwo}$ is independent of $\l$. Nevertheless, the $\cdot | \l$ notation allows us to immediately identify on which
level the coefficients of the subdivision $k$-forms are constrained, i.e., it contains information about the level on which the DoFs were defined.

When using $\Aop{k}{\lone}{\ltwo}$ instead of $\primalAop{k}{\lone}{\ltwo}{\l}$ we assume
that either $\l = \lone$ or that the action of the operator
(especially when acting on other subdivision matrices) becomes clear from the context.
\end{remark}

Next, let us discuss the properties of the subdivision $k$-form spaces
$\SDFspace{k}{\L}{\l}(\T_\lzero)$ as we move along the hierarchy. The following definitions allow us to parameterise the hierarchy based on the level $\l$ relative to the initial and final levels $\lzero$ and $\L$.

\begin{definition}
    \label{def:refinement_and_coarsening_numbers}
    Given a hierarchy of meshes and subdivision operators from $\lzero$ to $\L$, the refinement number $\nr$ and smoothing number $\ns$ associated to the spaces $\SDFspace{k}{\L}{\l}\big(\T_\lzero\big)$ from Definition~\ref{def:uniform_subdiv_spaces} are defined as
    \begin{equation}
        \nr \coloneq \l \qquad \text{and} \qquad \ns \coloneq \L - \l.
    \end{equation}
\end{definition}

Note that these definitions imply that the spaces of the hierarchy can be parameterised using
\begin{equation}
    \label{eq:re-param_of_spaces}
    \SDFspace{k}{\L}{\l} \equiv \SDFspace{k}{ \nr + \ns}{\nr}.
\end{equation}

According to Definition~\ref{def:uniform_subdiv_spaces}, the refinement number $\nr$ represents the number of mesh refinement steps carried out before fixing the DoFs of the subdivision $k$-form spaces on level $\l$. Thus, $\nr$ is proportional to the number of DoFs of the space. In contrast, the smoothing number $\ns$ measures the number of refinement steps carried out after fixing $\l$ to construct the subdivision $k$-form spaces. Thus, as we increase $\ns$, the basis functions $\constrQ{\SDFbasis}{k}{\l}{\L}{\coarseIdxone}$ exhibit increased subdivision-induced regularity, see Remark~\ref{remark:subdiv_smoothness}. In the limit $\ns \to \infty$, the basis functions $\constrQ{\SDFbasis}{k}{\l}{\L}{\coarseIdxone}$ would become genuinely smooth,  as defined in Remark~\ref{remark:limit_basis_functions}.

\smallskip

The following propositions investigate the effect of varying $\nr$ and $\ns$.
This allows us to understand how the properties of the $k$-form spaces $\SDFspace{k}{\L}{\l}$ change as we move along the hierarchy of approximation spaces. First, we consider the case of increasing $\ns$, i.e. the relation of the spaces $\SDFspace{k}{\lone}{\l}$ with fixed $\l$ and varying finest levels $\lone$ and $\ltwo$.

\begin{proposition}
    \label{prop:subdiv_operator_refined}
    Let $\lzero \leq \l \leq \lone \leq \ltwo$. The subdivision $k$-form spaces $\SDFspace{k}{\lone}{\l}$
    are closed under the action of the accumulated subdivision operator $\primalAop{k}{\lone}{\ltwo}{\l}$ from Definition~\ref{def:primalAop} in the sense that
    \begin{equation}
        \label{eq:subdiv_operator_refined}
        \SDFspace{k}{\ltwo}{\l} = \primalAop{k}{\lone}{\ltwo}{\l} \big[ \SDFspace{k}{\lone}{\l} \big].
    \end{equation}
\end{proposition}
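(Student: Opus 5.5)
The plan is to prove the set equality in Eq.~\eqref{eq:subdiv_operator_refined} by two inclusions, working entirely with the definition of the spaces as images, $\SDFspace{k}{\lone}{\l} = \Aop{k}{\l}{\lone}\big[\FEECspace{k}{\l}\big]$ (Definition~\ref{def:uniform_subdiv_spaces}), together with the factorisation of accumulated operators in Proposition~\ref{prop:split_subdiv_operators}. The key identity I would establish first is
\begin{equation*}
    \primalAop{k}{\lone}{\ltwo}{\l} \circ \Aop{k}{\l}{\lone} = \Aop{k}{\l}{\ltwo} \quad \text{on } \FEECspace{k}{\l},
\end{equation*}
which is essentially the computation in Eq.~\eqref{equ:explicit_primal_op}. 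At the level of matrices it reads $\Amat{k}{\lone}{\ltwo}\cdot\Amat{k}{\l}{\lone} = \Amat{k}{\l}{\ltwo}$, and this follows immediately from Eq.~\eqref{eq:accumulated_subdiv_matrix} by associativity of matrix products. This is the same splitting as in Proposition~\ref{prop:split_subdiv_operators}, applied with $\l$ playing the role of $\lzero$ and $\lone$ as the intermediate level. By Remark~\ref{remark_shorthand}, the action of $\primalAop{k}{\lone}{\ltwo}{\l}$ is just that of $\Amat{k}{\lone}{\ltwo}$, independent of $\l$.

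For the inclusion ``$\supseteq$'', I take any element of $\primalAop{k}{\lone}{\ltwo}{\l}\big[\SDFspace{k}{\lone}{\l}\big]$. It has the form $\primalAop{k}{\lone}{\ltwo}{\l}\,\constrVec{\omega}{k}{\lone}{\l}$ with $\constrVec{\omega}{k}{\lone}{\l} = \Aop{k}{\l}{\lone}\freeVec{\omega}{k}{\l}$ for some $\freeVec{\omega}{k}{\l}\in\FEECspace{k}{\l}$. By the identity above it equals $\Aop{k}{\l}{\ltwo}\freeVec{\omega}{k}{\l}$, which lies in $\SDFspace{k}{\ltwo}{\l}$. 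For ``$\subseteq$'', I take $\constrVec{\omega}{k}{\ltwo}{\l} = \Aop{k}{\l}{\ltwo}\freeVec{\omega}{k}{\l}$. Setting $\constrVec{\omega}{k}{\lone}{\l}\coloneqq\Aop{k}{\l}{\lone}\freeVec{\omega}{k}{\l}\in\SDFspace{k}{\lone}{\l}$, the same identity gives $\constrVec{\omega}{k}{\ltwo}{\l} = \primalAop{k}{\lone}{\ltwo}{\l}\,\constrVec{\omega}{k}{\lone}{\l}$, so it lies in the image.

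The degenerate cases $\lone=\l$ or $\lone=\ltwo$ need a brief check. These are covered by the convention $\Aop{k}{\l}{\l}=\mathrm{Id}$ and by Proposition~\ref{prop:subspace_of_Vk}(i), which gives $\SDFspace{k}{\l}{\l}=\FEECspace{k}{\l}$.

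The only step needing care is that $\primalAop{k}{\lone}{\ltwo}{\l}$ is well defined as a map on $\SDFspace{k}{\lone}{\l}$, i.e.\ that it does not depend on the choice of free coefficients $\freeQ{c}{k}{\l}{\coarseIdxone}$ representing a given element. This is not a real obstacle. The operator is defined by the matrix $\Amat{k}{\lone}{\ltwo}$ acting directly on the coefficients $\constrQ{c}{k}{\lone}{\l}{}$ with respect to the basis of $\FEECspace{k}{\lone}$, so it is simply the restriction of the linear map $\Aop{k}{\lone}{\ltwo}\colon\FEECspace{k}{\lone}\to\FEECspace{k}{\ltwo}$ to the subspace $\SDFspace{k}{\lone}{\l}$. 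With that observation, everything reduces to associativity of composition of the one-step operators $\Sop{k}{\cdot}{\cdot+1}$.
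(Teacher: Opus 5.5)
Your proposal is correct and follows essentially the same route as the paper. Both rest on the factorisation $\Aop{k}{\l}{\ltwo} = \primalAop{k}{\lone}{\ltwo}{\l}\circ\Aop{k}{\l}{\lone}$ from Proposition~\ref{prop:split_subdiv_operators}, together with the fact that the image of a composition is the image of the image; the paper writes this as a single chain of equalities where you prove two inclusions. The paper also adds a dimension count via Proposition~\ref{prop:subspace_of_Vk} to conclude that $\primalAop{k}{\lone}{\ltwo}{\l}$ is an isomorphism, but the stated set equality does not need this.
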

\begin{proof}[Proof of Proposition~\ref{prop:subdiv_operator_refined}]
    \label{proof:subdiv_operator_refined}
    Fixing $\l$ and using the shorthand notation $\Aop{k}{\lone}{\ltwo}$ instead
    of $\primalAop{k}{\lone}{\ltwo}{\l}$ whenever $\lone = \l$ (cf. Remark~\ref{remark_shorthand}),
    we can directly compute
    \begin{equation}
        \SDFspace{k}{\ltwo}{\l}
        = \Aop{k}{\l}{\ltwo} \big[ \FEECspace{k}{\l} \big]
        = \big(\primalAop{k}{\lone}{\ltwo}{\l}  \circ \Aop{k}{\l}{\lone} \big)\big[ \FEECspace{k}{\l} \big]
        = \primalAop{k}{\lone}{\ltwo}{\l} \circ \big(\Aop{k}{\l}{\lone} \big[ \FEECspace{k}{\l} \big] \big)
        = \primalAop{k}{\lone}{\ltwo}{\l} \big[\SDFspace{k}{\lone}{\l} \big],
    \end{equation}
    where we used Definition~\ref{def:uniform_subdiv_spaces} in the first and fourth equality, and Proposition~\ref{prop:split_subdiv_operators}
    in the second.
    Observe that $\mathrm{dim}\big( \SDFspace{k}{\lone}{\l} \big)
    = \mathrm{rank}\big(  \Amat{k}{\l}{\lone} \big)
    = \dimlk{\l}{k}
    = \mathrm{rank}\big( \Amat{k}{\l}{\ltwo} \big)
    = \mathrm{dim}\big(  \SDFspace{k}{\ltwo}{\l} \big)$
    by Proposition~\ref{prop:subspace_of_Vk}. Therefore, $\primalAop{k}{\lone}{\ltwo}{\l}$ is an isomorphism
    and the spaces $\SDFspace{k}{\lone}{\l}$ are closed under the action of $\primalAop{k}{\lone}{\ltwo}{\l}$
    for $\l \leq \lone \leq \ltwo \leq L$.
\end{proof}

Proposition~\ref{prop:subspace_of_Vk} showed that, for a fixed level $\l$, the spaces $\SDFspace{k}{\lone}{\l}$ have the same dimensions
for all $\lone$ with $\l \leq \lone \leq \L$. According to Proposition~\ref{prop:subdiv_operator_refined}, increasing $\L$ and thus the smoothing number $\ns$ keeps the dimension of the spaces constant.
In contrast, the next proposition investigates how increasing the refinement number $\nr$ affects the spaces $\SDFspace{k}{\L}{\l}$.

\begin{proposition}
\label{prop:uniform_spaces_nested}
Given an initial mesh $\T_{\lzero}$ and a hierarchy with $\lzero \leq \l \leq \L$, the
corresponding subdivision $k$-form spaces $\SDFspace{k}{\L}{\l}$ are nested, i.e.
\begin{equation}
    \label{eq:nested_spaces}
    \SDFspace{k}{\L}{\lzero} \subset  \SDFspace{k}{\L}{1} \subset \; \hdots \; \subset \SDFspace{k}{\L}{\l} \subset \; \hdots \; \subset \SDFspace{k}{\L}{\L} = \FEECspace{k}{\L},
\end{equation}
and have the dimensions $\dimlk{\lzero}{k} < \dimlk{1}{k} < \hdots < \dimlk{\L}{k}$.
\end{proposition}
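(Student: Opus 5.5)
Our plan is to prove the inclusion $\SDFspace{k}{\L}{\l} \subset \SDFspace{k}{\L}{\l+1}$ for every $\lzero \leq \l < \L$ and then chain these inclusions. The last equality, $\SDFspace{k}{\L}{\L} = \FEECspace{k}{\L}$, is exactly Proposition~\ref{prop:subspace_of_Vk}(i).

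For the single-step inclusion we factor the accumulated operator through level $\l+1$. By Proposition~\ref{prop:split_subdiv_operators}, $\Aop{k}{\l}{\L} = \Aop{k}{\l+1}{\L} \circ \Aop{k}{\l}{\l+1}$. Taking images of $\FEECspace{k}{\l}$ and using Definition~\ref{def:uniform_subdiv_spaces} gives
\begin{equation*}
\SDFspace{k}{\L}{\l} = \Aop{k}{\l+1}{\L}\big[ \Aop{k}{\l}{\l+1}\big[\FEECspace{k}{\l}\big]\big] \subset \Aop{k}{\l+1}{\L}\big[\FEECspace{k}{\l+1}\big] = \SDFspace{k}{\L}{\l+1}.
\end{equation*}
Here we used only that $\Sop{k}{\l}{\l+1}$ maps $\FEECspace{k}{\l}$ into $\FEECspace{k}{\l+1}$ (Definition~\ref{def:subdiv_operator}) and that taking images preserves inclusion. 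Iterating over $\l$ yields the full chain in Eq.~\eqref{eq:nested_spaces}.

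For the dimension statement, Proposition~\ref{prop:subspace_of_Vk}(ii) gives $\dim \SDFspace{k}{\L}{\l} = \dimlk{\l}{k}$, so it remains to show that $\dimlk{\l}{k} < \dimlk{\l+1}{k}$. This is purely combinatorial. The topological refinement of Definition~\ref{def:refinement_step} splits each face into $N>1$ fine faces, so $\vert\F_{\l+1}\vert = N\vert \F_\l\vert > \vert \F_\l\vert$, assuming the mesh is nonempty. For Loop subdivision, the even vertices are kept and one odd vertex is added per edge, so $\vert\V_{\l+1}\vert = \vert\V_\l\vert + \vert\E_\l\vert$. Each coarse edge becomes two fine edges and each coarse face contributes three interior edges, so $\vert \E_{\l+1}\vert = 2\vert\E_\l\vert + 3\vert\F_\l\vert$. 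Since the dimensions of $\CG_1$, $\NED_1$ and $\DG_0$ equal the numbers of vertices, edges and faces respectively, all three dimensions strictly increase.

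The inclusion step is routine, so the only delicate point is the strictness of the dimension growth for a general primal scheme. There we would rely on Proposition~\ref{prop:subspace_of_Vk}(ii) (injectivity of the subdivision matrices, hence the exact dimension count) together with the fact that any face split with $N \geq 2$ strictly increases the number of simplices of each dimension. We would state the argument explicitly for Loop's scheme and note that it carries over to the other primal schemes mentioned.

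Strictness of the inclusions themselves then follows, since the dimensions differ.
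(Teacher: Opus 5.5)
Your proof is correct and follows essentially the paper's route. The paper also factors through level $\l+1$, phrased via Proposition~\ref{prop:subdiv_operator_refined} as $\SDFspace{k}{\L}{\l} = \primalAop{k}{\l+1}{\L}{\l}\big[\SDFspace{k}{\l+1}{\l}\big]$ with $\SDFspace{k}{\l+1}{\l} \subset \FEECspace{k}{\l+1}$, and uses Proposition~\ref{prop:subspace_of_Vk}(ii) for the dimensions and for strictness; your explicit Loop counts $\vert\V_{\l+1}\vert = \vert\V_\l\vert + \vert\E_\l\vert$, $\vert\E_{\l+1}\vert = 2\vert\E_\l\vert + 3\vert\F_\l\vert$, $\vert\F_{\l+1}\vert = 4\vert\F_\l\vert$ additionally justify the inequality $\dimlk{\l}{k} < \dimlk{\l+1}{k}$, which the paper simply takes for granted.
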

\begin{proof}[Proof of Proposition~\ref{prop:uniform_spaces_nested}]
    \label{proof:uniform_spaces_nested}
    We prove the claim for $\SDFspace{k}{\L}{\l} \subset \SDFspace{k}{\L}{\l+1}$. This is sufficient because $\l$ is an arbitrary level between $\lzero$ and $\L$ which implies that the entire sequence of spaces is nested.
    We recall from Proposition~\ref{prop:subdiv_operator_refined} that
    \begin{equation}
        \SDFspace{k}{\L}{\l} = \primalAop{k}{\l+1}{\L}{\l} \, \big[ \SDFspace{k}{\l+1}{\l} \big]
    \end{equation}
    while Definition~\ref{def:uniform_subdiv_spaces} gives
    \begin{equation}
        \SDFspace{k}{\L}{\l+1} = \primalAop{k}{\l+1}{\L}{\l+1} \big[\FEECspace{k}{\l+1} \big].
    \end{equation}
    Proposition~\ref{prop:subspace_of_Vk} implies that $\SDFspace{k}{\l+1}{\l} \subset \FEECspace{k}{\l+1}$. Applying subdivision to both spaces results in
    $\primalAop{k}{\l+1}{\L}{\l}\big[\SDFspace{k}{\l+1}{\l}\big] \subset \primalAop{k}{\l+1}{\L}{\l+1}[\FEECspace{k}{\l+1}]$. Consequently,
    \begin{equation}
        \SDFspace{k}{\L}{\l} = \primalAop{k}{\l+1}{\L}{\l}\big[\SDFspace{k}{\l+1}{\l} \big] \; \subset \; \primalAop{k}{\l+1}{\L}{\l+1}\big[\FEECspace{k}{\l+1}\big] =  \SDFspace{k}{\L}{\l+1}.
    \end{equation}
    Finally, observe that by Proposition~\ref{prop:subspace_of_Vk}, $\mathrm{dim}\big( \SDFspace{k}{\L}{\l} \big) = \dimlk{\l}{k} < \dimlk{\l+1}{k} = \mathrm{dim}\big( \SDFspace{k}{\L}{\l+1} \big)$ and thus $\SDFspace{k}{\L}{\l}$ has to be a proper subspace of $\SDFspace{k}{\L}{\l+1}$.
\end{proof}

Note that the refinement number $\nr$ increases by $1$ every time we move from $\l$ to $\l +1$ in Eq.~\eqref{eq:nested_spaces}, where $\nr$ can take any value from $0$ to $\L - \l$. Thus, the proposition above shows that increasing the refinement number also increases the number of DoFs.

Since $\L$ needs to remain constant in order for this inclusion to work, this also implies that the smoothing number $\ns$ decreases by one with every inclusion into a superspace. This is the prevalent pattern for the approximation spaces as we move along the hierarchy. We will explore the consequences of that in more detail in Section~\ref{subsec:projection}. 

Section~\ref{sec:numerics} typically compares the approximations obtained using the subdivision $k$-form spaces $\SDFspace{k}{\ltwo}{\lone}$ with the approximations obtained from the ambient FEEC spaces $\FEECspace{k}{\ltwo}$. Under these circumstances, we consider these FEEC spaces $\FEECspace{k}{\ltwo}$ to represent the ground truth. Since the subdivision $k$-form spaces are subspaces of these FEEC spaces by Proposition~{\ref{prop:uniform_spaces_nested}}, they present an option to reduce the dimensionality of the approximation spaces without sacrificing too much accuracy. Section~\ref{subsec:projection} investigates this ability to reduce the number of DoFs, i.e. to coarsen the subdivision $k$-form spaces.

\subsection{Constructing a basis for the subdivision $k$-form spaces}
\label{subsec:basis_for_subdiv_spaces}

The previous section defined the subdivision $k$-form spaces in terms of the basis $\freeQ{\FEECbasis}{k}{\L}{\fineIdxone}$ of $\FEECspace{k}{\L}$ at the finest mesh level $\L$ with a linear constraint on the coefficients given through the subdivision matrix (cf. Eq.~\eqref{eq:alternative_subdiv_operator_mapping}).

The aim of this section is to remove this constraint on the coefficients by constructing a basis for the spaces $\SDFspace{k}{\L}{\l}$ that satisfies the constraints intrinsically. This is important for numerical computations, where we numerically determine the coefficients of a basis expansion of the approximate solution. Additional constraints on the coefficients are likely to make solving these systems cumbersome and inefficient.

To this end, we first introduce an alternative definition for subdivision $k$-form spaces and then show that
these spaces are equivalent to those defined in Definition~\ref{def:uniform_subdiv_spaces}.

\begin{definition}
    \label{def:alternative_char_for_subdiv_spaces}
    Fix an initial mesh $\T_{\lzero}$ and let $\T_{\l} = \Aop{}{\lzero}{\l} \T_{\lzero}$ for any $\l \in \{\lzero, \dots, \L\}$. Further, let $\freeQ{\FEECbasis}{k}{\l}{\coarseIdxone}$ denote the $i^{\text{th}}$ basis function of $\FEECspace{k}{\l}$ and construct \emph{constrained basis functions} as follows:
    \begin{equation}\label{def:basis_constraint}
        \constrVec{\Phi}{k}{\l}{\L}
        \coloneqq \bigcup_{\coarseIdxone = 1}^{\sumdimlk{\l}{k}}  \constrQ{\SDFbasis}{k}{\l}{\L}{\coarseIdxone}
        \quad \text{with} \quad \constrQ{\SDFbasis}{k}{\l}{\L}{\coarseIdxone} \coloneqq \Aop{k}{\l}{\L}  \; \freeQ{\FEECbasis}{k}{\l}{\coarseIdxone} \;\; \in \FEECspace{k}{\L}.
    \end{equation}
    Then, the space $\SDFspace{k}{\l}{\L}$ is defined as
    \begin{equation}
        \SDFspace{k}{\l}{\L}\big( \T_{\lzero} \big) \coloneqq \mathrm{span}\big( \constrVec{\Phi}{k}{\l}{\L} \big).
    \end{equation}
\end{definition}
As done previously, the dependency on the mesh $\T_\lzero$ will be suppressed whenever it is clear from the context. The subscript $\l|\L$ indicates that the space and its basis on level $\l$ depend, via the subdivision operator
$\Aop{k}{\l}{\L}$, on the basis coefficients from level $\L$. This contrasts Definition~\ref{def:uniform_subdiv_spaces} where the subscript $\L|\l$ is inverted. Still, both spaces are equivalent as shown next.

\begin{theorem}
    \label{theo:equivalence_of_spaces}
    The spaces $\SDFspace{k}{\l}{\L}$ and $\SDFspace{k}{\L}{\l}$ are equivalent.
\end{theorem}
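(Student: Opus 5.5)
The plan is to prove the two inclusions directly, using only the linearity of $\Aop{k}{\l}{\L}$ and the fact that $\{\freeQ{\FEECbasis}{k}{\l}{\coarseIdxone}\}$ is a basis of $\FEECspace{k}{\l}$. Recall from Definition~\ref{def:uniform_subdiv_spaces} that $\SDFspace{k}{\L}{\l} = \Aop{k}{\l}{\L}\big[\FEECspace{k}{\l}\big]$. From Definition~\ref{def:alternative_char_for_subdiv_spaces}, $\SDFspace{k}{\l}{\L} = \mathrm{span}\big(\constrVec{\Phi}{k}{\l}{\L}\big)$ with $\constrQ{\SDFbasis}{k}{\l}{\L}{\coarseIdxone} = \Aop{k}{\l}{\L}\freeQ{\FEECbasis}{k}{\l}{\coarseIdxone}$. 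Equivalence will be understood as equality of these two subsets of $\FEECspace{k}{\L}$.

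\emph{First inclusion.} Take $\constrVec{\omega}{k}{\L}{\l} \in \SDFspace{k}{\L}{\l}$. Then $\constrVec{\omega}{k}{\L}{\l} = \Aop{k}{\l}{\L}\freeVec{\omega}{k}{\l}$ for some $\freeVec{\omega}{k}{\l} = \sum_{\coarseIdxone} \freeQ{c}{k}{\l}{\coarseIdxone}\freeQ{\FEECbasis}{k}{\l}{\coarseIdxone}$, expanded as in Eq.~\eqref{equ_FEEC_basis}. The accumulated operator is a composition of linear maps (Definition~\ref{def:accumulated_subdiv_operator}), hence linear, so
\[
\constrVec{\omega}{k}{\L}{\l} = \sum_{\coarseIdxone} \freeQ{c}{k}{\l}{\coarseIdxone}\,\Aop{k}{\l}{\L}\freeQ{\FEECbasis}{k}{\l}{\coarseIdxone} = \sum_{\coarseIdxone}\freeQ{c}{k}{\l}{\coarseIdxone}\constrQ{\SDFbasis}{k}{\l}{\L}{\coarseIdxone}.
\]
This lies in $\mathrm{span}\big(\constrVec{\Phi}{k}{\l}{\L}\big)$.

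\emph{Second inclusion.} Conversely, any element of the span has the form $\sum_{\coarseIdxone} a_{\coarseIdxone}\constrQ{\SDFbasis}{k}{\l}{\L}{\coarseIdxone}$. By linearity this equals $\Aop{k}{\l}{\L}\big(\sum_{\coarseIdxone} a_{\coarseIdxone}\freeQ{\FEECbasis}{k}{\l}{\coarseIdxone}\big)$, and the argument lies in $\FEECspace{k}{\l}$. Hence the element belongs to $\SDFspace{k}{\L}{\l}$. In coordinates, both inclusions say the same thing: the column space of $\Amat{k}{\l}{\L}$ is spanned by its columns, and the $\coarseIdxone$-th column is the coefficient vector of $\constrQ{\SDFbasis}{k}{\l}{\L}{\coarseIdxone}$ with respect to $\freeQ{\FEECbasis}{k}{\L}{\fineIdxone}$.

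\emph{Basis property.} Since the section aims to construct a \emph{basis}, I would also show that $\constrVec{\Phi}{k}{\l}{\L}$ is linearly independent. Proposition~\ref{prop:subspace_of_Vk}(ii) gives $\dim\SDFspace{k}{\L}{\l} = \dimlk{\l}{k}$, which is exactly the number of functions in $\constrVec{\Phi}{k}{\l}{\L}$. A spanning set of that cardinality in a space of that dimension is a basis; equivalently, $\Amat{k}{\l}{\L}$ has full column rank.

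I expect the set equality itself to be routine. The only substantive point is this independence, which rests entirely on the full-rank statement for the subdivision matrices. I take that as already established in Proposition~\ref{prop:subspace_of_Vk} (Appendix~\ref{proof:subspace_of_Vk}), and so treat it as given rather than reproving it.
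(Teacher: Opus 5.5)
Your proof is correct and matches the paper's argument: the paper also expands $\Aop{k}{\l}{\L}$ through the matrix $\Amat{k}{\l}{\L}$ and swaps the order of summation, so the subdivision weights move from the coefficients onto the fine basis functions, which is exactly your linearity step. You are slightly more careful than the paper in writing out both inclusions (the paper states only one direction and relies on the chain of equalities being reversible); your linear-independence remark goes beyond the theorem and rests on the same full-column-rank assumption the paper invokes in Proposition~\ref{prop:subspace_of_Vk}.
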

\begin{proof}
    Let $\constrVec{\omega}{k}{\L}{\l} \in \SDFspace{k}{\L}{\l}$ and $\constrVec{\omega}{k}{\l}{\L} \in \SDFspace{k}{\l}{\L}$. From direct computations using the matrix representation
    of Definition~\ref{def:subdiv_operator} and Eq.~\eqref{eq:accumulated_subdiv_matrix} follows:
    \begin{equation}
    \label{eq:equivalence_of_spaces}
    \begin{aligned}
        \constrVec{\omega}{k}{\L}{\l}
        & = \sum_{\fineIdxone=1}^{\sumdimlk{\L}{k}} \constrQ{c}{k}{\L}{\l}{\fineIdxone} \; \freeQ{\FEECbasis}{k}{\L}{\fineIdxone}
        = \sum_{\fineIdxone=1}^{\sumdimlk{\L}{k}} \Big( \sum_{\coarseIdxone=1}^{\sumdimlk{\l}{k}} \Amatij{k}{\l}{\L}{\fineIdxone}{\coarseIdxone} \;\freeQ{c}{k}{\l}{\coarseIdxone} \Big) \; \freeQ{\FEECbasis}{k}{\L}{\fineIdxone} \\
        &= \sum_{\fineIdxone=1}^{\sumdimlk{\L}{k}} \freeQ{c}{k}{\l}{\coarseIdxone} \; \Big( \sum_{\coarseIdxone=1}^{\sumdimlk{\l}{k}} \Amatij{k}{\l}{\L}{\fineIdxone}{\coarseIdxone} \; \freeQ{\FEECbasis}{k}{\L}{\fineIdxone} \Big)
        = \sum_{\coarseIdxone=1}^{\sumdimlk{\L}{k}} \freeQ{c}{k}{\l}{\coarseIdxone} \; \constrQ{\SDFbasis}{k}{\l}{\L}{\coarseIdxone}
        = \constrVec{\omega}{k}{\l}{\L},
    \end{aligned}
    \end{equation}
    where we applied the definition of the constrained basis functions in Eq.~\eqref{def:basis_constraint}.
    This implies that every $\constrVec{\omega}{k}{\L}{\l}$ corresponds to an equivalent $\constrVec{\omega}{k}{\l}{\L}$ and thus $\SDFspace{k}{\l}{\L} \equiv \SDFspace{k}{\L}{\l}$.
\end{proof}

The equivalence of the spaces $\SDFspace{k}{\l}{\L}$ and $\SDFspace{k}{\L}{\l}$ implies that,
analogously to $\SDFspace{k}{\L}{\l} \subset \FEECspace{k}{\L}$ from Proposition~\ref{prop:subspace_of_Vk}, also $\SDFspace{k}{\l}{\L} \subset \FEECspace{k}{\L}$.
Therefore, we can define an inclusion map, similarly to above, as follows.
\begin{definition}
    \label{def:tilde_inclusion_map}
    The inclusion map $\auxincl$ is the map
    \begin{equation}
        \label{eq:tilde_inclusion}
        \auxincl: \; \SDFspace{k}{\l}{\L} \to \FEECspace{k}{\L},
        \qquad \auxincl \big( \constrVec{\omega}{k}{\l}{\L} \big)
        = \constrVec{\omega}{k}{\l}{\L}  \equiv  \constrVec{\omega}{k}{\L}{\l}
        = \iota \big( \constrVec{\omega}{k}{\L}{\l} \big),
    \end{equation}
    where $\constrVec{\omega}{k}{\l}{\L}$ and $\constrVec{\omega}{k}{\L}{\l}$ are
    equivalent according to Eq.~\eqref{eq:equivalence_of_spaces}.
\end{definition}

Note how in Eq.~\eqref{eq:equivalence_of_spaces} the summation index shifts from the coarse index $\coarseIdxone$ to the fine index $\fineIdxone$. Instead of contracting the subdivision matrix $\Amatij{k}{\l}{\L}{\fineIdxone}{\coarseIdxone}$ with the coefficients $\freeQ{c}{k}{\l}{\coarseIdxone}$, we now contract it with the fine basis functions $\freeQ{\FEECbasis}{k}{\L}{\fineIdxone}$ to obtain the basis functions $\constrQ{\SDFbasis}{k}{\l}{\L}{\coarseIdxone}$.

This observation motivates the following definition of a basis-exchange operator
across the subdivision-induced mesh hierarchy.

\begin{definition}[Basis-exchange (BE) operator]
    \label{def:adjoint_subdiv_operator}
    Let $\lzero \leq \l \leq \lone \leq \ltwo \leq \L$. For a given level $\l$,
    the basis-exchange (BE) operator $\dualAop{k}{\l}{\lone}{\ltwo}$ is defined as the map
    \begin{equation}
    \begin{aligned}
        \label{eq:adoint_subdiv_operator}
        & \dualAop{k}{\l}{\lone}{\ltwo} \,  \colon \; \SDFspace{k}{\l}{\lone} \to \SDFspace{k}{\l}{\ltwo} \; , \quad \constrVec{\omega}{k}{\l}{\lone} = \sum_{\coarseIdxone}^{\sumdimlk{\l}{k}} \freeQ{c}{k}{\l}{\coarseIdxone} \, \constrQ{\SDFbasis}{k}{\l}{\lone}{\coarseIdxone} \;\; \mapsto \;\; \constrVec{\omega}{k}{\l}{\ltwo} = \sum_{\coarseIdxone}^{\sumdimlk{\l}{k}} \freeQ{c}{k}{\l}{\coarseIdxone} \, \constrQ{\SDFbasis}{k}{\l}{\ltwo}{\coarseIdxone},
    \end{aligned}
    \end{equation}
    with \emph{constrained} basis functions
    $\constrQ{\SDFbasis}{k}{\l}{\lone}{\coarseIdxone}$ as
    introduced in Definition~\ref{def:alternative_char_for_subdiv_spaces}.
    \medskip

    The matrix representation of $\dualAop{k}{\l}{\lone}{\ltwo}$ is a $\dimlk{\l}{k} \times \dimlk{\l}{k}$ identity matrix since the operator's action leaves the coefficient vector $\freeQ{c}{k}{\l}{\coarseIdxone}$
    unmodified and only exchanges the predefined constrained basis functions $\constrQ{\SDFbasis}{k}{\l}{\lone}{\coarseIdxone}$ for $\constrQ{\SDFbasis}{k}{\l}{\ltwo}{\coarseIdxone}$.
\end{definition}

\begin{corollary}
  \label{cor:BE_op_is_isomorphism}
  The BE operator $\dualAop{k}{\l}{\lone}{\ltwo}$ from Eq.~\eqref{eq:adoint_subdiv_operator} is an isomorphism between the spaces $\SDFspace{k}{\l}{\lone}$ and $\SDFspace{k}{\l}{\ltwo}$ for $\lzero \leq \l \leq \lone \leq \ltwo \leq \L$.
\end{corollary}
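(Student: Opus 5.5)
The plan is to show that $\dualAop{k}{\l}{\lone}{\ltwo}$ is well defined and linear, and that it is both injective and surjective. The main tool is that, for every $\lone \geq \l$, the family $\constrVec{\Phi}{k}{\l}{\lone}$ of constrained basis functions from Definition~\ref{def:alternative_char_for_subdiv_spaces} is linearly independent, so that it is a genuine basis of $\SDFspace{k}{\l}{\lone}$.

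To obtain this independence, I would invoke Theorem~\ref{theo:equivalence_of_spaces} to identify $\SDFspace{k}{\l}{\lone}$ with $\SDFspace{k}{\lone}{\l}$. By Proposition~\ref{prop:subspace_of_Vk}(ii), the latter has dimension $\dimlk{\l}{k}$. The set $\constrVec{\Phi}{k}{\l}{\lone}$ spans $\SDFspace{k}{\l}{\lone}$ by definition and has exactly $\dimlk{\l}{k}$ elements, so it is linearly independent. Equivalently, the accumulated matrix $\Amat{k}{\l}{\lone}$ has full column rank, as already used in the proof of Proposition~\ref{prop:subdiv_operator_refined}. The same argument applies with $\ltwo$ in place of $\lone$.

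With both families known to be bases, the rest is routine. Every $\constrVec{\omega}{k}{\l}{\lone}$ has a unique coefficient vector $\freeQ{c}{k}{\l}{\coarseIdxone}$, so the map in Eq.~\eqref{eq:adoint_subdiv_operator} is well defined, and it is clearly linear. It sends one basis bijectively onto the other, $\constrQ{\SDFbasis}{k}{\l}{\lone}{\coarseIdxone} \mapsto \constrQ{\SDFbasis}{k}{\l}{\ltwo}{\coarseIdxone}$, so its matrix with respect to these bases is the identity matrix. Hence the operator is an isomorphism, and its inverse is given by exchanging the bases in the reverse direction.

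The only real obstacle is the well-definedness and independence step. If the constrained basis functions were linearly dependent, the coefficients would not be unique, and the prescription defining the operator could be ambiguous. I would settle this entirely through the rank and dimension statement of Proposition~\ref{prop:subspace_of_Vk}. I would also note that the isomorphism is compatible with the subdivision operator: on the fine level it coincides with $\primalAop{k}{\lone}{\ltwo}{\l}$ after identification via $\auxincl$, since $\Aop{k}{\lone}{\ltwo}\Aop{k}{\l}{\lone}=\Aop{k}{\l}{\ltwo}$ by Proposition~\ref{prop:split_subdiv_operators}.
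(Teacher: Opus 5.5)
Your proposal is correct and follows the paper's implicit argument (the paper states this corollary without proof). That argument is: the BE operator acts on coefficients by the $\dimlk{\l}{k}\times\dimlk{\l}{k}$ identity matrix, and both $\SDFspace{k}{\l}{\lone}$ and $\SDFspace{k}{\l}{\ltwo}$ have dimension $\dimlk{\l}{k}$ by Theorem~\ref{theo:equivalence_of_spaces} and Proposition~\ref{prop:subspace_of_Vk}, which is exactly how the paper argues for the restricted operator in Proposition~\ref{prop:BC_Aop_is_isomorphism}. Your explicit check that the constrained basis functions are linearly independent, so that the operator is well defined, is a welcome addition; like the paper, it rests on the full column rank of $\Amat{k}{\l}{\lone}$ assumed in the proof of Proposition~\ref{prop:subspace_of_Vk}.
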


The following theorem ties together the subdivision operator $\primalAop{k}{\lone}{\ltwo}{\l}$ and the basis-exchange operator $\dualAop{k}{\lone}{\ltwo}{\l}$ and shows that they are defined consistently. The proof of the theorem further illustrates the action of the basis-exchange operator $\dualAop{k}{\lone}{\ltwo}{\l}$ in terms of its coordinate representation similar to Eq.~\eqref{equ:explicit_primal_op} for the subdivision operator.

\begin{theorem}
  \label{theorem:consistency_of_BE_op}
  Let $\constrVec{\omega}{k}{\l}{\lone} \in \SDFspace{k}{\l}{\lone}$ and $\constrVec{\omega}{k}{\lone}{\l} \in \SDFspace{k}{\lone}{\l}$ with $\constrVec{\omega}{k}{\l}{\lone} = \constrVec{\omega}{k}{\lone}{\l}$. The basis exchange operators $\dualAop{k}{\l}{\lone}{\ltwo}$ and the subdivision operators $\primalAop{k}{\lone}{\ltwo}{\l}$ are consistent in the sense that
  \begin{equation}
    \auxincl \; \dualAop{k}{\l}{\lone}{\ltwo} \; \constrVec{\omega}{k}{\l}{\lone} = \iota \; \primalAop{k}{\lone}{\ltwo}{\l} \; \constrVec{\omega}{k}{\lone}{\l}.
  \end{equation}
\end{theorem}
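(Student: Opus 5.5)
The plan is to reduce both sides to explicit expansions in the fine FEEC basis $\freeQ{\FEECbasis}{k}{\ltwo}{\fineIdxone}$ of $\FEECspace{k}{\ltwo}$ and then compare coefficient vectors. The only tools needed are the coordinate representations already set up: Definition~\ref{def:alternative_char_for_subdiv_spaces} and Eq.~\eqref{eq:equivalence_of_spaces} on the basis-exchange side, and Eq.~\eqref{equ:explicit_primal_op} on the subdivision side. The matching relies on the accumulated matrices factorising, $\Amat{k}{\l}{\ltwo} = \Amat{k}{\lone}{\ltwo}\cdot\Amat{k}{\l}{\lone}$, which follows from Eq.~\eqref{eq:accumulated_subdiv_matrix} (equivalently Proposition~\ref{prop:split_subdiv_operators}).

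First I fix the free coefficients $\freeQ{c}{k}{\l}{\coarseIdxone}$ so that $\constrVec{\omega}{k}{\l}{\lone} = \sum_{\coarseIdxone} \freeQ{c}{k}{\l}{\coarseIdxone}\,\constrQ{\SDFbasis}{k}{\l}{\lone}{\coarseIdxone}$. By Theorem~\ref{theo:equivalence_of_spaces} and the hypothesis $\constrVec{\omega}{k}{\l}{\lone} = \constrVec{\omega}{k}{\lone}{\l}$, the same coefficients generate the constrained coefficients $\constrQ{c}{k}{\lone}{\l}{\fineIdxone} = \sum_{\coarseIdxone}\Amatij{k}{\l}{\lone}{\fineIdxone}{\coarseIdxone}\freeQ{c}{k}{\l}{\coarseIdxone}$. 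I should note that, because $\Amat{k}{\l}{\lone}$ has full column rank (Proposition~\ref{prop:subspace_of_Vk}), these free coefficients are uniquely determined. This uniqueness is needed for the left-hand side to be well defined.

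For the left-hand side, $\dualAop{k}{\l}{\lone}{\ltwo}$ leaves the coefficients unchanged and replaces each basis function, giving $\sum_{\coarseIdxone}\freeQ{c}{k}{\l}{\coarseIdxone}\,\constrQ{\SDFbasis}{k}{\l}{\ltwo}{\coarseIdxone}$. Applying $\auxincl$ and Eq.~\eqref{def:basis_constraint} with $\L$ replaced by $\ltwo$ turns this into
\[
\sum_{\fineIdxone}\Big(\sum_{\coarseIdxone}\Amatij{k}{\l}{\ltwo}{\fineIdxone}{\coarseIdxone}\freeQ{c}{k}{\l}{\coarseIdxone}\Big)\freeQ{\FEECbasis}{k}{\ltwo}{\fineIdxone},
\]
which mirrors the exchange of summations in Eq.~\eqref{eq:equivalence_of_spaces}. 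For the right-hand side, I apply Definition~\ref{def:primalAop}, whose coefficients are $\Amat{k}{\lone}{\ltwo}$ applied to $\constrQ{c}{k}{\lone}{\l}{}$. Substituting the expression for $\constrQ{c}{k}{\lone}{\l}{}$ yields coefficients $(\Amat{k}{\lone}{\ltwo}\cdot\Amat{k}{\l}{\lone}\,c)_{\fineIdxone}$. This is exactly the chain of equalities in Eq.~\eqref{equ:explicit_primal_op}, and $\iota$ merely forgets the constraint.

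Factorising the accumulated matrix then makes the two coefficient vectors in the basis $\freeQ{\FEECbasis}{k}{\ltwo}{\fineIdxone}$ coincide, which proves the identity. I expect no real obstacle; the care required is purely in the index bookkeeping. In particular, the summation in Definition~\ref{def:primalAop} runs over the level-$\lone$ index, despite being written with $\coarseIdxone$, and must be read that way. The other point to keep straight is that $\auxincl$ and $\iota$ both land in the same space $\FEECspace{k}{\ltwo}$, so equality there is equality of coefficient vectors.
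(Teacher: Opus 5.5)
Your proposal is correct and follows essentially the same route as the paper. The paper likewise expands the left-hand side through the constrained basis functions $\constrQ{\SDFbasis}{k}{\l}{\ltwo}{\coarseIdxone}$ into the fine basis of $\FEECspace{k}{\ltwo}$. It then splits $\Amat{k}{\l}{\ltwo} = \Amat{k}{\lone}{\ltwo}\cdot\Amat{k}{\l}{\lone}$ via Proposition~\ref{prop:split_subdiv_operators} and recognises the constrained coefficients $\constrQ{c}{k}{\lone}{\l}{}$, which yields $\iota\,\primalAop{k}{\lone}{\ltwo}{\l}\,\constrVec{\omega}{k}{\lone}{\l}$. Your remark on the uniqueness of the free coefficients, via full column rank, makes explicit a well-definedness point that the paper leaves implicit.
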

\begin{proof}
Letting the BE operator act on $\constrVec{\omega}{k}{\l}{\lone} \in \SDFspace{k}{\l}{\lone}$,
we can compute
  \begin{equation}
    \begin{aligned}
      \label{eq:BE_and_A_consistent}
      \auxincl \; \dualAop{k}{\l}{\lone}{\ltwo} \; \constrVec{\omega}{k}{\l}{\lone} \qquad 
      &\overset{\mathclap{\strut\text{Def. }\ref{def:adjoint_subdiv_operator}}}= \qquad \auxincl \; \big(\sum_{\coarseIdxone=1}^{\sumdimlk{\l}{k}} \freeQ{c}{k}{\l}{\coarseIdxone} \; \constrQ{\SDFbasis}{k}{\l}{\ltwo}{\coarseIdxone} \big) \\[-6pt]
      &\overset{\mathclap{\strut\text{Def. }\ref{def:alternative_char_for_subdiv_spaces}}}= \qquad \auxincl \; \Big( \sum_{\coarseIdxone=1}^{\sumdimlk{\l}{k}} \freeQ{c}{k}{\l}{\coarseIdxone} \; \big( \sum_{\fineIdxone = 1}^{\sumdimlk{\ltwo}{k}} \; (\Amat{k}{\l}{\ltwo})_{\fineIdxone\coarseIdxone} \; \freeQ{\FEECbasis}{k}{\ltwo}{\fineIdxone} \big) \Big) \\[-6pt]
      &\overset{\mathclap{\strut\text{Def. }\ref{def:tilde_inclusion_map}}}= \qquad \sum_{\fineIdxone = 1}^{\sumdimlk{\ltwo}{k}} \Big(   \sum_{\coarseIdxone=1}^{\sumdimlk{\l}{k}} (\Amat{k}{\l}{\ltwo})_{\fineIdxone\coarseIdxone} \; \freeQ{c}{k}{\l}{\coarseIdxone} \Big) \; \freeQ{\FEECbasis}{k}{\ltwo}{\fineIdxone} \\[-6pt]
      &\overset{\mathclap{\strut\text{Prop. }\ref{prop:split_subdiv_operators}}}= \qquad \sum_{\fineIdxone = 1}^{\sumdimlk{\ltwo}{k}} \Big( \sum_{\coarseIdxone=1}^{\sumdimlk{\l}{k}} \big( \sum_{\coarseIdxtwo = 1}^{\sumdimlk{\lone}{k}} (\Amat{k}{\lone}{\ltwo})_{\fineIdxone\coarseIdxtwo} (\Amat{k}{\l}{\lone})_{\coarseIdxtwo\coarseIdxone} \big) \; \freeQ{c}{k}{\l}{\coarseIdxone} \Big) \; \freeQ{\FEECbasis}{k}{\ltwo}{\fineIdxone} \\
      &= \qquad \sum_{\fineIdxone = 1}^{\sumdimlk{\ltwo}{k}} \Big( \sum_{\coarseIdxtwo=1}^{\sumdimlk{\lone}{k}} (\Amat{k}{\lone}{\ltwo})_{\fineIdxone\coarseIdxtwo} \; \constrQ{c}{k}{\lone}{\l}{\coarseIdxtwo} \Big) \; \freeQ{\FEECbasis}{k}{\ltwo}{\fineIdxone} \\[-8pt]
      &\overset{\mathclap{\strut\text{Def. }\ref{def:primalAop}}}= \qquad \iota \; \primalAop{k}{\lone}{\ltwo}{\l} \; \constrVec{\omega}{k}{\lone}{\l}. \hspace{70mm} \qedhere
    \end{aligned}
  \end{equation}
\end{proof}
The previous theorem confirms that the entire setup of the subdivision $k$-form spaces up until now is coherent. In particular, it shows that the two perspectives of constrained coefficients versus constrained basis functions are consistent since applying the subdivision operator $\primalAop{k}{\lone}{\ltwo}{\l}$ yields the same result as swapping basis functions with $\dualAop{k}{\l}{\lone}{\ltwo}$. The computation also reveals that the basis-exchange operator $\dualAop{k}{\l}{\lone}{\ltwo}$ introduces the subdivision matrix $\Amat{k}{\lone}{\ltwo}$ by
\begin{equation}
  \label{eq:BE_op_introduces_transposed_subdiv_matrix}
  \dualAop{k}{\l}{\lone}{\ltwo} \; \colon \;\; \sum_{\fineIdxone = 1}^{\sumdimlk{\lone}{k}} \; (\Amat{k}{\l}{\lone})_{\fineIdxone\coarseIdxone} \; \freeQ{\FEECbasis}{k}{\lone}{\fineIdxone} \;\; \mapsto \;\; \sum_{\fineIdxone = 1}^{\sumdimlk{\ltwo}{k}} \Big( \sum_{\coarseIdxtwo = 1}^{\sumdimlk{\lone}{k}} (\Amat{k}{\lone}{\ltwo})_{\fineIdxone\coarseIdxtwo} (\Amat{k}{\l}{\lone})_{\coarseIdxtwo\coarseIdxone} \Big) \; \freeQ{\FEECbasis}{k}{\ltwo}{\fineIdxone}
\end{equation}
according to the fourth line of Eq.~\eqref{eq:BE_and_A_consistent}. In contrast to the subdivision operator $\primalAop{k}{\lone}{\ltwo}{\l}$, where the subdivision matrix $\Amat{k}{\lone}{\ltwo}$ acts on the coefficient vector, applying the basis-exchange operator contracts the matrix with the fine basis functions $\freeQ{\FEECbasis}{k}{\ltwo}{\fineIdxone}$. The following remark gathers the results of this discussion.

\begin{remark}\label{remark_duality}
The BE operator  $\dualAop{k}{\l}{\lone}{\ltwo}$ can be considered to be
`dual' to the subdivision operator $\primalAop{k}{\lone}{\ltwo}{\l}$ of Definition~\ref{def:primalAop}
in the sense that, for any $\lone, \ltwo$ such that $\l \leq \lone \leq \ltwo \leq \L$, there is
\begin{equation}\label{equ:A_duality}
\begin{aligned}
    \primalAop{k}{\lone}{\ltwo}{\l} \; &\colon \;\; \SDFspace{k}{\lone}{\l} \to \SDFspace{k}{\ltwo}{\l}, \\
    \dualAop{k}{\l}{\lone}{\ltwo} \; &\colon \;\; \SDFspace{k}{\l}{\lone} \to \SDFspace{k}{\l}{\ltwo}.
\end{aligned}
\end{equation}
The `duality' expresses itself in the way that the basis-exchange operator introduces a (transposed) subdivision matrix in Eq.~\eqref{eq:BE_op_introduces_transposed_subdiv_matrix} when we consider it as a map between the ambient spaces $\FEECspace{k}{\lone}$ and $\FEECspace{k}{\ltwo}$.
\end{remark}

\begin{remark}\label{remark_on_lvslprime}
Note that according to Definition~\ref{def:adjoint_subdiv_operator}, a change in basis functions
could in principle work also for the case where $\ltwo \leq \lone$. However, this contrasts the requirement of
the subdivision operator mentioned in Remark~\ref{remark_exclude_l2geql1}.
Therefore, for the duality relation in Eq.~\eqref{equ:A_duality} to hold,
we assume $\lone \leq \ltwo$ for both $ \dualAop{k}{\l}{\lone}{\ltwo}$ and $\primalAop{k}{\lone}{\ltwo}{\l}$
throughout this work.
\end{remark}

\paragraph{Brief summary:}
Although the spaces $\SDFspace{k}{\l}{\L}$ and $\SDFspace{k}{\L}{\l}$ are equivalent,
we will formally distinguish the two spaces as they represent two ways of looking at the same subdivision $k$-form $\constrVec{\omega}{k}{\l}{\L} \equiv \constrVec{\omega}{k}{\L}{\l}$. Namely, the above definitions leave us with two different ways of
describing the same subdivision $k$-form:
\begin{enumerate}
 \item Any $k$-form $\constrVec{\omega}{k}{\L}{\l} \in \SDFspace{k}{\L}{\l}$ is written as
\begin{equation}
    \label{equ:kform_v3}
    \constrVec{\omega}{k}{\L}{\l} = \sum_{\fineIdxone = 1}^{\sumdimlk{L}{k}} \constrQ{c}{k}{\L}{\l}{\fineIdxone} \, \freeQ{\FEECbasis}{k}{\L}{\fineIdxone}
    \quad \text{with}
    \quad \constrQ{c}{k}{\L}{\l}{\fineIdxone} = \sum_{\coarseIdxone=1}^{\sumdimlk{\l}{k}} \Amatij{k}{\l}{\L}{\fineIdxone}{\coarseIdxone} \freeQ{c}{k}{\l}{\coarseIdxone}
\end{equation}
and treated as an object that is represented with respect to the basis of $\FEECspace{k}{L}$ with constrained coefficients. Because of this representation, $\constrVec{\omega}{k}{\L}{\l} \in \SDFspace{k}{\L}{\l}$ has a canonical inclusion $\iota\big(\constrVec{\omega}{k}{\L}{\l}\big)$ into $\FEECspace{k}{L}$. Considering $\iota\big(\constrVec{\omega}{k}{\L}{\l}\big)$ is convenient for everything that involves exterior derivatives and the de Rham complex because it allows us to apply existing knowledge from Finite Element Exterior Calculus \citep{Arnold.FEEC}.

 \item On the other hand, we may consider the same $k$-form $\constrVec{\omega}{k}{\l}{\L}$ as an element of
 space $\SDFspace{k}{\l}{\L}$. We then assume that the $k$-form is represented with respect to the basis $\constrQ{\SDFbasis}{k}{\l}{\L}{\coarseIdxone}$ of $\SDFspace{k}{\l}{\L}$ as
\begin{equation}\label{eq_subdiv_kform}
    \constrVec{\omega}{k}{\l}{\L} = \sum_{\coarseIdxone=1}^{\sumdimlk{\l}{k}} \freeQ{c}{k}{\l}{\coarseIdxone} \, \constrQ{\SDFbasis}{k}{\l}{\L}{\coarseIdxone}
    \quad \text{with}
    \quad \constrQ{\SDFbasis}{k}{\l}{\L}{\coarseIdxone} = \sum_{\fineIdxone = 1}^{\sumdimlk{L}{k}} \Amatij{k}{\l}{\L}{\fineIdxone}{\coarseIdxone} \freeQ{\FEECbasis}{k}{\L}{\fineIdxone} \, .
\end{equation}
The latter point of view is helpful for numerical purposes and whenever we discuss the hierarchical properties of the spaces $\SDFspace{k}{\l}{\L}$.

\end{enumerate}

\noindent In this subsection we have established the foundations of the subdivision $k$-form spaces $\SDFspace{k}{\l}{\L}$ and constructed bases that are suitable for FE simulations. Before we move on to connecting the spaces to establish differential complexes, we want to provide some additional intuition regarding the basis functions of the subdivision $k$-form spaces. The following proposition describes their support.

\begin{proposition}
    \label{prop:support}
    Consider any mesh levels $\l$ and $\lone$ with $\lzero \leq \l < \lone \leq L$. The support of the constrained basis function $\constrQ{\SDFbasis}{k}{\l}{\lone}{\coarseIdxone}$ associated to the simplex $\simplex{k}{\l}{\coarseIdxone} \in \K^k_{\l}$ is given by
    \begin{equation}
        \label{eq:support_of_basis_functions}
        \support\big( \constrQ{\SDFbasis}{k}{\l}{\lone}{\coarseIdxone} \big) = \mathrm{Int}\big( \FFltoL{\l}{\lone} \big( (\simplexsubset{2}{\l})_\coarseIdxone \big) \big) \; \quad \text{with} \quad (\simplexsubset{2}{\l})_\coarseIdxone := \FVl{\l} \circ \VFl{\l} \circ \F\K^k_{\l} \big( \simplex{k}{\l}{\coarseIdxone} \big),
    \end{equation}
    where $\mathrm{Int}$ denotes the interior of a set and where the mesh refinement $ \FFltoL{\l}{\lone}$
    is defined according to Eq.~\eqref{facesplitting_levels}. Note that for any $k \in \{0,1,2\}$,
    $(\simplexsubset{2}{\l})_\coarseIdxone $ is the \emph{face two-ring} around simplex
    $\simplex{k}{\l}{\coarseIdxone}$.
\end{proposition}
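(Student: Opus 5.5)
I will prove the claim by induction on the number of subdivision steps $\lone - \l \geq 1$. The property being propagated is a statement about the coefficient vector of $\constrQ{\SDFbasis}{k}{\l}{\lone}{\coarseIdxone}$ in the FEEC basis of $\FEECspace{k}{\lone}$. By Definition~\ref{def:alternative_char_for_subdiv_spaces}, this basis function is $\sum_{\fineIdxone} \Amatij{k}{\l}{\lone}{\fineIdxone}{\coarseIdxone} \freeQ{\FEECbasis}{k}{\lone}{\fineIdxone}$, i.e. the $\coarseIdxone$-th column of the accumulated matrix. So its support is the union of the supports of those fine FEEC basis functions with nonzero column entries. Here I will use the standard fact that the lowest-order FEEC basis functions ($\CG_1$ hats, $\NED_1$ Whitney edge functions, $\DG_0$ indicators) are supported exactly on the interior of the union of fine faces containing the associated simplex. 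I will also ignore accidental cancellations, assuming generic positivity of the Loop-type weights.

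\textbf{Base case.} For the single step $\lone = \l+1$, I read off the sparsity pattern of $\Smat{k}{\l}{\l+1}$ from the $k$-form subdivision rules in Appendix~\ref{app:subdiv_spaces}. For $k=0$, the Loop stencils show that a fine vertex depends on coarse vertex $\vertex{\l}{\coarseIdxone}$ exactly when the fine vertex lies in the closed one-ring of $\vertex{\l}{\coarseIdxone}$. Odd vertices on edges of the one-ring do depend on it, since the opposite-vertex weights $1/8$ reach one further. The fine faces touching such fine vertices, intersected with the refined region, are exactly $\FFltoL{\l}{\l+1}$ of the faces sharing a vertex with the one-ring, i.e. $\FVl{\l}\circ\VFl{\l}\circ\F\K^0_\l(\vertex{\l}{\coarseIdxone})$. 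For $k=1,2$, the same check is done with the edge and face stencils of the Wang et al.\ type schemes. Their stencils are built so that $\extd$ commutes with subdivision, so the edge stencil is contained in the union of the vertex stencils of its endpoints. This yields the face two-ring formula $(\simplexsubset{2}{\l})_\coarseIdxone$.

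\textbf{Induction step.} I will split $\Aop{k}{\l}{\lone+1} = \Sop{k}{\lone}{\lone+1}\circ\Aop{k}{\l}{\lone}$ using Proposition~\ref{prop:split_subdiv_operators}. Apply the one-step result at level $\lone$ to each fine basis function in the support. The new support is the union of level-$\lone$ two-rings of level-$\lone$ simplices inside the current region. The key geometric observation is that the two-ring of a simplex at level $\lone$ has diameter about $2\cdot 2^{-(\lone-\l)}$ in units of level-$\l$ faces. Summing this geometric series, the growth stays inside the refinement of the level-$\l$ two-ring and, in the limit, fills it. More precisely, I will show the support equals $\mathrm{Int}\,\FFltoL{\l}{\lone}((\simplexsubset{2}{\l})_\coarseIdxone)$ at every level. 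Faces outside that region get zero coefficients because their stencils only see fine simplices outside the region. Faces inside are reached because boundary layers of the two-ring are already present after one step.

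\textbf{Main obstacle.} I expect the hardest part to be equality rather than inclusion, and its interaction with the base-case claim. Inclusion follows cleanly from stencil locality. Equality requires that no fine coefficient vanishes inside the region, through cancellation or through stencils that fail to reach the outer layer. I would handle this first for $k=0$ using positivity of the Loop weights, which rules out any cancellation. For $k=1,2$ I would try to derive it from commutativity with $\extd$, or else verify it directly from the explicit stencils in the appendix, since signs can appear there.
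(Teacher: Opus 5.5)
\textbf{There is a genuine gap: the base case fails, and with it the claim of equality at every finite level.} This is not a cancellation issue that positivity could fix. The stencils genuinely do not reach the outer layer of the two-ring.

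Take $k=0$ (Loop) on a regular patch around $v=\simplex{0}{\l}{\coarseIdxone}$. Consider a two-ring face $(w,u_1,u_2)$ with $w$ a neighbour of $v$ and $u_1,u_2$ at distance two. Of the six level-$(\l+1)$ vertices of this face, only $w$ has a nonzero entry in column $\coarseIdxone$ of $\Smat{0}{\l}{\l+1}$:
\begin{itemize}
\item the even vertices $u_1,u_2$ do not have $v$ as a neighbour;
\item the odd vertices on $(w,u_1)$, $(w,u_2)$, $(u_1,u_2)$ do not have $v$ as an opposite vertex.
\end{itemize}
So only the corner subtriangle at $w$ lies in the support; the other three subtriangles do not. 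Hence $\support(\constrQ{\SDFbasis}{0}{\l}{\l+1}{\coarseIdxone})$ is a proper subset of $\mathrm{Int}\,\FFltoL{\l}{\l+1}\big((\simplexsubset{2}{\l})_\coarseIdxone\big)$. Your assertions that the fine faces touching the nonzero fine vertices are ``exactly'' the refined two-ring, and that ``boundary layers of the two-ring are already present after one step'', are therefore false.

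Your own geometric series shows the defect persists at every level. Measured in level-$\l$ rings, at level $\l+n$ the nonzero coefficients reach distance $1+\tfrac12+\dots+2^{1-n}=2-2^{1-n}$. The support extends one fine ring further, to $2-2^{-n}<2$. So ``fills it in the limit'' and ``equals it at every level'' are incompatible. The full two-ring is attained only by the limit basis functions of Remark~\ref{remark:limit_basis_functions}.

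\textbf{What the induction can legitimately give.} You can prove the inclusion $\support(\constrQ{\SDFbasis}{k}{\l}{\lone}{\coarseIdxone})\subseteq\mathrm{Int}\,\FFltoL{\l}{\lone}\big((\simplexsubset{2}{\l})_\coarseIdxone\big)$. To do so, track the set of simplices with nonzero coefficients, which grows by one level-$\lone$ ring per step, and add one fine ring only at the end. Attaching a level-$\lone$ two-ring to the current support at each step would not stay inside.

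That inclusion is also all the paper's argument really establishes, via its stencil-locality stage. Its first-stage claim, that the support is a union of whole refined coarse faces, fails on the same example. The stated equality should therefore be read for the limit functions. For $k=0$ one can also check that, at finite levels, the two-ring is the smallest union of refined coarse faces containing the support.

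\textbf{A separate problem with $k=1$.} Your derivation of the $k=1$ sparsity from commutativity does not work. The relations of Lemma~\ref{lemma:compatibility_condition_in_coordinates} do not pin down individual columns of $\Smat{1}{\l}{\l+1}$. The relation $\Dmat{0}{\l+1}\Smat{0}{\l}{\l+1}=\Smat{1}{\l}{\l+1}\Dmat{0}{\l}$ only fixes its action on the range of $\Dmat{0}{\l}$, i.e.\ on exact $1$-forms. The paper also notes that extra symmetry conditions are needed to make the Wang $1$-form weights unique. So the edge stencils, and their signs, have to be read off the explicit rules, as the paper does.
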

\begin{proof}
    See Appendix~\ref{proof:support}.
\end{proof}

\begin{figure}
\sbox\twosubbox{
  \resizebox{\dimexpr.95\textwidth-1em}{!}{%
    \includegraphics[height=0.1cm]{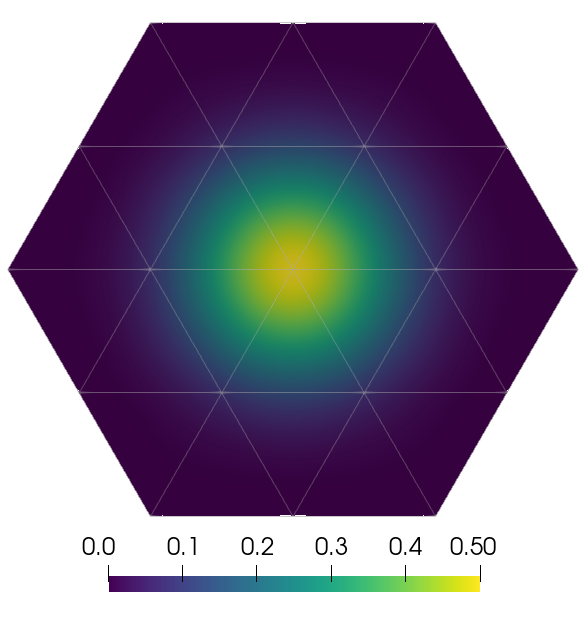}%
    \includegraphics[trim= 0 0 14cm 0, height=0.1cm]{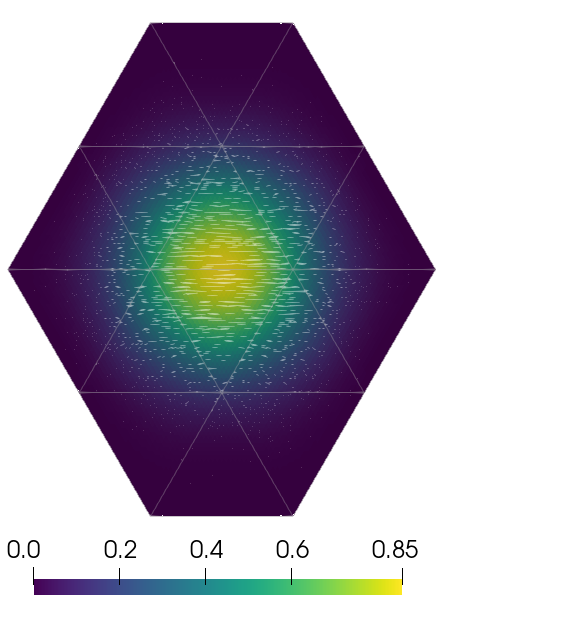}%
    \includegraphics[trim= 0 0 10cm 0, height=0.1cm]{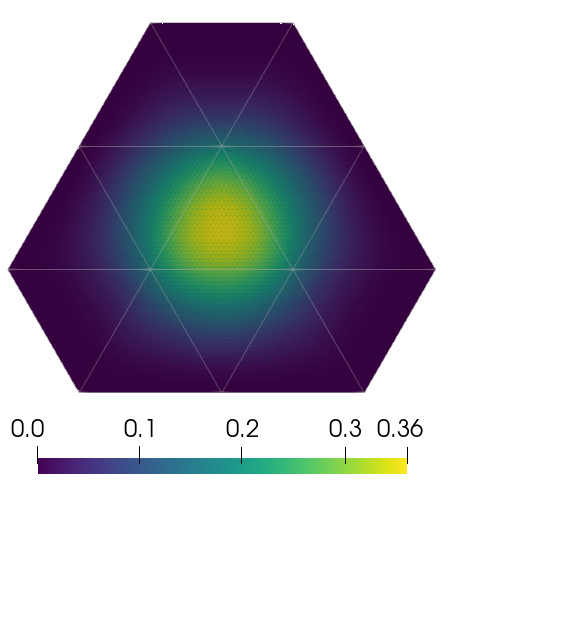}
  }
}
\setlength{\twosubht}{\ht\twosubbox}

\centering

\subcaptionbox{Basis function $\constrQ{\SDFbasis}{0}{0}{5}{\coarseIdxone}$ with support
$\support\big( \constrQ{\SDFbasis}{0}{0}{5}{\coarseIdxone} \big) $.\label{fig:0_form_limit}\bigskip}{
\includegraphics[trim= 0 0 -1cm 0, height=\twosubht]{0_form_basis_function_lvl5_with_mesh.png}
}\quad
\subcaptionbox{Basis function $\constrQ{\SDFbasis}{1}{0}{5}{\coarseIdxone}$ with support
$\support\big( \constrQ{\SDFbasis}{1}{0}{5}{\coarseIdxone} \big) $.\label{fig:1_form_limit}\bigskip}{
  \includegraphics[trim= -1.7cm 0 3cm 0, height=\twosubht]{1_form_basis_function_lvl5_with_mesh_and_glyphs.png}%
}\quad
\subcaptionbox{Basis function $\constrQ{\SDFbasis}{2}{0}{5}{\coarseIdxone}$ with support
$\support\big( \constrQ{\SDFbasis}{2}{0}{5}{\coarseIdxone} \big) $.\label{fig:2_form_limit}}{
\includegraphics[height=\twosubht]{2_form_basis_function_lvl5_with_mesh.png}
}
\caption{The subplots show $0$-, $1$- and $2$-form constrained basis functions according to Definition~\ref{def:alternative_char_for_subdiv_spaces} on level $\l =0$ with $\ns = 5$. Note that the smoothness increases in the sense of Remark~\ref{remark:subdiv_smoothness} despite the basis functions remaining in the same Sobolev space.
The colorbar indicates the scalar values of the $0$- and $2$-form basis functions. For the $1$-form basis function, the colorbar and the white lines indicate the magnitude and direction of the vector field. All basis functions are plotted on their respective supports according to Proposition~\ref{prop:support}.}
\label{fig:basis_function_plots}
\end{figure}

Figure~\ref{fig:basis_function_plots} displays an example of a sequence of
constrained basis functions $\constrQ{\SDFbasis}{k}{0}{5}{\coarseIdxone}$
for $k \in \{0,1,2\}$ according to Definition~\ref{def:alternative_char_for_subdiv_spaces}.
The constrained basis functions $\constrQ{\SDFbasis}{k}{0}{5}{\coarseIdxone}$ are obtained by subdividing the basis function $\freeQ{\FEECbasis}{k}{0}{\coarseIdxone}$ associated to the simplices $\simplex{k}{0}{\coarseIdxone}$ at the centre of the respective plot.

Figures~\ref{fig:0_form_limit} to~\ref{fig:2_form_limit} cover exactly the face two-rings $(\simplexsubset{2}{\l})_\coarseIdxone$ around these centre simplices $\simplex{k}{\l}{\coarseIdxone}$ (see Eq.~\eqref{eq:support_of_basis_functions}) and thus, they show the $k$-form basis functions $\constrQ{\SDFbasis}{k}{0}{5}{\coarseIdxone}$ on their respective supports. Note that we assumed that all vertices in the support of the basis functions are regular, i.e., they have exactly 6 neighbours. The supports would contain more or fewer faces if there were any irregular vertices but the stencil in Eq.~\eqref{eq:support_of_basis_functions} remains valid.

The following remark covers subdivision of constant $k$-forms and the partition of unity property of the basis functions $\constrQ{\SDFbasis}{k}{\l}{\L}{\coarseIdxone}$.

\begin{remark}
  Let $\mathbb{1}: \T_\l \to 1$ denote the constant scalar function on $\T_\l$. For $k \in \{0,2\}$, the spaces $\FEECspace{k}{\l}$ contain $\mathbb{1}$. Scalar $(k=0)$ subdivision schemes are typically designed to preserve constant functions, i.e. $\Aop{0}{\l}{\L} \mathbb{1} = \mathbb{1}$. For the Wang schemes, that is also true for constant $2$-forms. This implies $\mathbb{1} \subset \SDFspace{k}{\l}{\L}$ for $k \in \{0,2\}$. Moreover, since the bases $\freeQ{\FEECbasis}{0}{\l}{\coarseIdxone}$ and $\freeQ{\FEECbasis}{2}{\l}{\coarseIdxone}$ are partitions of unity, we find
  \begin{equation}
    \Aop{k}{\l}{\L} \mathbb{1} = \Aop{k}{\l}{\L} \Big( \sum_{\coarseIdxone = 1}^{\sumdimlk{\l}{k}} \freeQ{\FEECbasis}{k}{\l}{\coarseIdxone} \Big) = \sum_{\coarseIdxone = 1}^{\sumdimlk{\l}{k}} \constrQ{\SDFbasis}{k}{\l}{\L}{\coarseIdxone} = \mathbb{1} \qquad \text{for} \;\; k \in \{0,2\}.
  \end{equation}
  This shows that the basis functions $\constrQ{\SDFbasis}{k}{\l}{\L}{\coarseIdxone}$ form a (positive) partition of unity for $k \in \{0, 2\}$.

  The case of $k=1$ is similar. The space $\FEECspace{1}{\l}$ contains (component-wise) constant $1$-forms or vector fields and the Wang $1$-form subdivision scheme preserves these as well, implying that $\SDFspace{1}{\l}{\L}$ contains constant $1$-forms. However, the bases $\freeQ{\FEECbasis}{1}{\l}{\coarseIdxone}$ and thus $\constrQ{\SDFbasis}{1}{\l}{\L}{\coarseIdxone}$ do not form a partition of unity.
\end{remark}

Finally, we close this section by clarifying the sense in which we consider subdivision as a smoothing operation.

\begin{remark}[Subdivision-induced regularity]
    \label{remark:subdiv_smoothness}
    We emphasize that subdivision-induced regularity does not imply increased Sobolev regularity at any fixed level. The $k$-form subdivision spaces $\SDFspaceZero{k}{\l}{\L} \subset \FEECspace{k}{\L}$ remain subspaces of the standard FEEC spaces for finite $\L$ and do not possess increased Sobolev regularity. Nevertheless, the subdivision operators act as averaging operators across element interfaces because the induced basis functions $\constrQ{\SDFbasis}{k}{\l}{\L}{\coarseIdxone}$ are convex combinations of fine basis functions $\freeQ{\FEECbasis}{k}{\L}{\fineIdxone}$ of $\FEECspace{k}{\L}$. For this reason, repeated subdivision contracts suitable inter-element jump seminorms (e.g. jumps of tangential traces for $k=1$), so that any high-frequency interface mismatches are asymptotically suppressed under refinement.

    We refer to this suppression of inter-element jumps as \emph{subdivision-induced regularity}. This notion reflects improved approximation behaviour and stability properties without implying increased Sobolev smoothness at any fixed subdivision level.
\end{remark}

\begin{remark}[Limit basis functions]
  \label{remark:limit_basis_functions}
  In the following discussions, in particular to interpret our numerical results in Section~\ref{subsec:projection}, we will refer to the \emph{limit basis functions} associated to a given subdivision scheme. Formally, we define the limit basis functions as
  \begin{equation}
    \constrQ{\SDFbasis}{k}{\l}{\infty}{\coarseIdxone} \coloneqq \lim_{\L \to \infty} \constrQ{\SDFbasis}{k}{\l}{\L}{\coarseIdxone}.
  \end{equation}
  We assume that this limit exists. The resulting limit basis functions are genuinely smooth objects. For example, the limit basis functions of Loop subdivision are $\mathcal{C}^2$ everywhere except for at extraordinary vertices, where they are $\mathcal{C}^1$, see \cite{Loop.1987}. The spaces spanned by the limit basis functions are denoted by $\SDFspace{k}{\l}{\infty}$.
\end{remark}

\subsection{Differential complexes of spaces of subdivision $k$-forms}
\label{subsec:compatibility}

The spaces of subdivision $k$-forms derived in the previous sections were induced by independent (i.e. not connected in a sequence) $k$-form subdivision schemes. This section connects these $0$-, $1$- and $2$-form subdivision schemes into a sequence of spaces.
More concretely, below we will derive conditions and operators that ensure that the spaces of subdivision $k$-forms $\SDFspace{k}{\l}{\L}$ constitute a discrete de Rham complex such that the diagram in Figure~\ref{diag:1} commutes.

The derivation of these compatibility conditions is built upon the known discrete de Rham complex of the FEEC spaces $\FEECspace{k}{\l}$ on every level $\l$ in $\lzero\leq\l\leq\L$. To this end, we will introduce an auxiliary derivative operator $\auxextd$ for the subdivision $k$-form spaces which will be related to the weak derivative
$\extd$ of $\FEECspace{k}{\L}$ on the finest level $\L$ using the inclusion map $\auxincl$.

\begin{figure}[t!]
\begin{equation*}
\adjustbox{scale=0.85}{
\begin{tikzcd}[scale = 0.5, column sep = tiny, row sep = small]
    & \FEECspace{0}{\l} \arrow[rrrrr, color=gray, "\extd", dashed]                                                    &                         &                                                                         &  &                                                             & \FEECspace{1}{\l} \arrow[rrrrr, color=gray, "\extd", dashed]                                             &                         &                                                                  &  &                                                             & \FEECspace{2}{\l}                                          &      &                                            \\[4pt]
    & \SDFspace{0}{\l}{\l} \arrow[rrd, "\dualAop{0}{\l}{\l}{\lone}"] \arrow[ldd, "\Aop{0}{\l}{\lone}"] \arrow[u, leftrightarrow, "\equiv"'] \arrow[rrrrr, color=gray, "\Tilde{\extd}", dashed] &                         &                                                                         &  &                                                             & \SDFspace{1}{\l}{\l} \arrow[ldd, "\Aop{1}{\l}{\L}"] \arrow[rrd, "\dualAop{1}{\l}{\l}{\lone}"] \arrow[u, leftrightarrow, "\equiv"'] \arrow[rrrrr, color=gray, "\Tilde{\extd}", dashed] &                         &                                                                  &  &                                                             & \SDFspace{2}{\l}{\l} \arrow[ldd, "\Aop{2}{\l}{\lone}"] \arrow[rrd, "\dualAop{2}{\l}{\l}{\lone}"] \arrow[u, leftrightarrow, "\equiv"'] &      &                                            \\
    &                                                                          &                         & \SDFspace{0}{\l}{\lone} \arrow[ldd, "\auxincl", hook] \arrow[ddd, "{\dualAop{0}{\l}{\l}{\L}}"] \arrow[rrrrr, color=gray, "\Tilde{\extd}" near start, dashed] &  &                                                             &                                                                   &                         & \SDFspace{1}{\l}{\lone} \arrow[ldd, "\auxincl", hook] \arrow[ddd, "{\dualAop{1}{\l}{\l}{\L}}"] \arrow[rrrrr, color=gray, "\Tilde{\extd}" near start, dashed] &  &                                                             &                                             &      & \SDFspace{2}{\l}{\lone} \arrow[ldd, "\auxincl", hook] \arrow[ddd, "\dualAop{2}{\l}{\l}{\L}"] \\
\SDFspace{0}{\lone}{\l} \arrow[rrd, "\iota", hook] \arrow[ddd, "\Aop{0}{\lone}{\L}"] \arrow[rrru, leftrightarrow, "\equiv"'] &                                                                          &                         &                                                                         &  & \SDFspace{1}{\lone}{\l} \arrow[rrd, "\iota", hook] \arrow[ddd, "\Aop{1}{\lone}{\L}"] \arrow[rrru, leftrightarrow, "\equiv"'] &                                                                   &                         &                                                                  &  & \SDFspace{2}{\lone}{\l} \arrow[rrd, "\iota", hook] \arrow[ddd, "\Aop{2}{\lone}{\L}"] \arrow[rrru, leftrightarrow, "\equiv"'] &                                             &      &                                            \\
    &                                                                          & \FEECspace{0}{\lone} \arrow[rrrrr, color=gray, "\extd"' near start, dashed] &                                                                         &  &                                                             &                                                                   & \FEECspace{1}{\lone} \arrow[rrrrr, color=gray, "\extd"' near start, dashed] &                                                                  &  &                                                             &                                             & \FEECspace{2}{\lone} &                                            \\
    &                                                                          &                         & \SDFspace{0}{\l}{\L} \arrow[ldd, "\auxincl", hook] \arrow[rrrrr, color=gray, "\Tilde{\extd}", dashed]               &  &                                                             &                                                                   &                         & \SDFspace{1}{\l}{\L} \arrow[ldd, "\auxincl", hook] \arrow[rrrrr, color=gray, "\Tilde{\extd}", dashed]               &  &                                                             &                                             &      & \SDFspace{2}{\l}{\L} \arrow[ldd, "\auxincl", hook]               \\
\SDFspace{0}{\L}{\l} \arrow[rrd, "\iota", hook] \arrow[rrru, leftrightarrow, "\equiv"']               &                                                                          &                         &                                                                         &  & \SDFspace{1}{\L}{\l} \arrow[rrd, "\iota", hook] \arrow[rrru, leftrightarrow, "\equiv"']               &                                                                   &                         &                                                                  &  & \SDFspace{2}{\L}{\l} \arrow[rrd, "\iota", hook] \arrow[rrru, leftrightarrow, "\equiv"']               &                                             &      &                                            \\
    &                                                                          & \FEECspace{0}{\L} \arrow[rrrrr, color=gray, "\extd", dashed]   &                                                                         &  &                                                             &                                                                   & \FEECspace{1}{\L} \arrow[rrrrr, color=gray, "\extd", dashed]   &                                                                  &  &                                                             &                                             & \FEECspace{2}{\L}   &
\end{tikzcd}
}
\end{equation*}
\caption{Full commutative diagram of the spaces of subdivision $k$-forms including the subdivision, inclusion, and derivative operators.
The black lines connect the individual hierarchies of subdivision $k$-form spaces for $k = 0$ (left), $k=1$ (centre) and $k=2$ (right). The derivative operators $\extd$ and $\auxextd$ (dashed grey lines) join the spaces of the hierarchies to form de Rham complexes.}
\label{diag:1}
\end{figure}
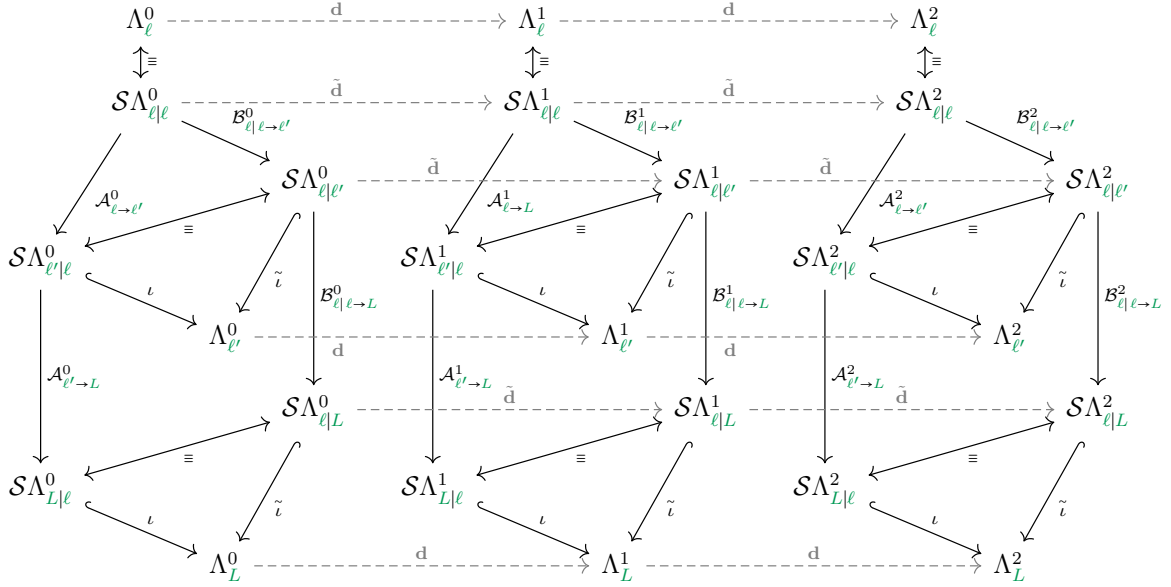

\subsubsection*{Matrix representation of the exterior derivative $\extd$}

Let us first consider the spaces $\FEECspace{k}{\l}$ for any $\l \in \{ \lzero, \dots, \L\}$ while recalling that we use
lowest order FEEC spaces (cf. Eq.~\eqref{eq:choice_of_feec_space}).
The fact that these spaces form a differential complex implies that there exists a
matrix $\Dmat{k}{\l}$ (the coordinate expression of $\extd$) that acts on the basis functions
of $\FEECspace{k}{\l}$ as
\begin{equation}
    \label{eq:derivative_matrix_introduced}
    \extd \freeQ{\FEECbasis}{k}{\l}{\coarseIdxtwo} = \sum_{\coarseIdxone = 1}^{ {\sumdimlk{\l}{k+1}  }}
    \Dmatij{k}{\l}{\coarseIdxone}{\coarseIdxtwo} \; \freeQ{\FEECbasis}{k+1}{\l}{\coarseIdxone} \qquad \forall \coarseIdxtwo = 1, \dots, \sumdimlk{\l}{k} \, ,
\end{equation}
where $\sumdimlk{\l}{k}$ and $\sumdimlk{\l}{k+1}$ coincide with the dimensions of the spaces $\FEECspace{k}{\l}$ and $\FEECspace{k+1}{\l}$, respectively.
Then, the derivative of $\freeVec{\omega}{k}{\l} \in \FEECspace{k}{\l}$ is given by
\begin{equation}   \label{eq:exterior_deriv_in_coordinates}
\begin{aligned}
    \extd \freeVec{\omega}{k}{\l}
    & =  \extd \Big( \sum_{\coarseIdxtwo = 1}^{ { \sumdimlk{\l}{k}     }} \freeQ{c}{k}{\l}{\coarseIdxtwo} \;  \freeQ{\FEECbasis}{k}{\l}{\coarseIdxtwo} \Big) = \sum_{\coarseIdxtwo = 1}^{ {\sumdimlk{\l}{k}  }} \; \freeQ{c}{k}{\l}{\coarseIdxtwo} \; \extd \freeQ{\FEECbasis}{k}{\l}{\coarseIdxtwo} \\
    & = \sum_{\coarseIdxone = 1}^{ { \sumdimlk{\l}{k+1}  }} \Big( \sum_{\coarseIdxtwo = 1}^{ { \sumdimlk{\l}{k}  }} \Dmatij{k}{\l}{\coarseIdxone}{\coarseIdxtwo} \; \freeQ{c}{k}{\l}{\coarseIdxtwo} \Big) \freeQ{\FEECbasis}{k+1}{\l}{\coarseIdxone}.
\end{aligned}
\end{equation}
With the particular choice of spaces $\FEECspace{k}{\l}$ made in Eq.~\eqref{eq:choice_of_feec_space}, we can explicitly compute the matrix $\Dmat{k}{\l}$ to find
\begin{equation}\label{Dmatrix}
    \Dmatij{k}{\l}{\coarseIdxtwo}{\coarseIdxone} =
    \begin{cases}
            &1, \;\; \text{if} \; \simplex{k}{\l}{\coarseIdxone} \in \K^{k} \K^{k+1}_{\l} \big( \simplex{k+1}{\l}{\coarseIdxtwo} \big) \; \text{and their orientations agree}, \\
            - \!\!\!\!\!\! &1, \;\; \text{if} \; \simplex{k}{\l}{\coarseIdxone} \in \K^{k} \K^{k+1}_{\l} \big( \simplex{k+1}{\l}{\coarseIdxtwo} \big) \; \text{and their orientations are opposite}, \\
              &0, \;\; \text{otherwise},
    \end{cases}
\end{equation}
where $ \Dmatij{k}{\l}{\coarseIdxtwo}{\coarseIdxone}$ is the transpose of  $ \Dmatij{k}{\l}{\coarseIdxone}{\coarseIdxtwo}$.
We will not go into detail about simplex orientations. For our purposes, it is enough to consider them a mechanism that ensures the alignment of two simplices such that Eq.~\eqref{eq:derivative_matrix_introduced} holds.

Next, we will precisely define the notion of compatibility in our context of subdivision $k$-forms
and break these resulting abstract compatibility requirements down to the level of coordinate representations.
This leads to concrete conditions that the subdivision matrices have to satisfy to achieve compatibility.

\subsubsection*{Definition of auxiliary exterior derivative $\auxextd$}

The following definition introduces an auxiliary derivative operator that acts on the spaces of subdivision $k$-forms using the inclusion map $\auxincl : \SDFspace{k}{\lone}{\ltwo} \to \FEECspace{k}{\ltwo}$ of Definition~\ref{def:inclusion_map}.

\begin{definition}[Auxiliary exterior derivative]
    \label{def:d_tilde}
    The auxiliary derivative operator $\auxextd$ is a map $\auxextd: \SDFspace{k}{\lone}{\ltwo} \to \SDFspace{k+1}{\lone}{\ltwo}$ that satisfies
    \begin{alignat}{2}
        &(i) \quad \auxincl \circ \auxextd = \extd \circ \auxincl && \quad \colon \;\; \SDFspace{k}{\lone}{\ltwo} \to \FEECspace{k+1}{\ltwo} \\
        &(ii) \quad \auxextd \circ \dualAop{k}{\l}{\lone}{\ltwo} = \dualAop{k+1}{\l}{\lone}{\ltwo} \circ \auxextd && \quad \colon \;\; \SDFspace{k}{\l}{\lone} \to \SDFspace{k+1}{\l}{\ltwo}
    \end{alignat}
    for $\l \leq \lone \leq \ltwo \leq \L$ and $k = 0,1$.
\end{definition}

The following lemma relates the auxiliary derivative $\auxextd$ to the exterior derivative $\extd$ across the mesh hierarchy.

\begin{lemma}
    \label{lemma:d_tilde_commuted_with_d}
    Let $\l \leq \lone \leq \ltwo \leq \L$ for any intermediate levels $\lone$ and $\ltwo$. Definition~\ref{def:d_tilde} is equivalent to both
    \begin{alignat}{4}
        &(i) \quad  &&\auxextd \circ \dualAop{k}{\lone}{{\lone}}{\ltwo} &&= \dualAop{k+1}{\lone}{{\lone}}{\ltwo} \circ \extd &&\quad
        \;\; \quad (\SDFspace{k}{\lone}{\lone} \equiv \FEECspace{k}{\lone} \to \SDFspace{k+1}{{\lone}}{\ltwo}), \\
        &(ii) &&\extd \circ \auxincl \circ \dualAop{k}{\lone}{{\lone}}{\ltwo} &&= \auxincl \circ \dualAop{k+1}{\lone}{{\lone}}{\ltwo} \circ \extd &&\quad \;\; \quad (\SDFspace{k}{\lone}{\lone} \equiv \FEECspace{k}{\lone} \to \FEECspace{k+1}{\ltwo}).
    \end{alignat}
\end{lemma}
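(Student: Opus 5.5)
The plan is to prove a chain of equivalences: Definition~\ref{def:d_tilde} $\Rightarrow$ (i) $\Rightarrow$ (ii) $\Rightarrow$ Definition~\ref{def:d_tilde}. Throughout, two facts are used freely. First, at the bottom level $\SDFspace{k}{\lone}{\lone} \equiv \FEECspace{k}{\lone}$ (Proposition~\ref{prop:subspace_of_Vk}(i)), and there $\auxincl$ is the identity. Second, $\auxincl$ is injective, being an inclusion.

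\emph{Definition $\Rightarrow$ (i).} Specialise property (ii) of Definition~\ref{def:d_tilde} to $\l = \lone$. This gives $\auxextd \circ \dualAop{k}{\lone}{\lone}{\ltwo} = \dualAop{k+1}{\lone}{\lone}{\ltwo} \circ \auxextd$ on $\SDFspace{k}{\lone}{\lone}$. On this space, property (i) with $\lone=\ltwo$ gives $\auxextd = \extd$, since $\auxincl = \mathrm{Id}$ there. Substituting this yields (i) of the lemma.

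\emph{(i) $\Rightarrow$ (ii).} Compose (i) with $\auxincl$ on the left to get $\auxincl\circ\auxextd\circ\dualAop{k}{\lone}{\lone}{\ltwo} = \auxincl\circ\dualAop{k+1}{\lone}{\lone}{\ltwo}\circ\extd$. Then use property (i) of the definition to replace $\auxincl\circ\auxextd$ by $\extd\circ\auxincl$. This step is circular if (i) of the lemma is the only hypothesis, so I would instead define $\auxextd$ on $\SDFspace{k}{\lone}{\ltwo}$ by (i) itself. This is legitimate because $\dualAop{k}{\lone}{\lone}{\ltwo}$ is an isomorphism (Corollary~\ref{cor:BE_op_is_isomorphism}), so $\auxextd := \dualAop{k+1}{\lone}{\lone}{\ltwo}\circ\extd\circ(\dualAop{k}{\lone}{\lone}{\ltwo})^{-1}$. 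The content of (ii) is then that this operator agrees with the fine-level $\extd$ after inclusion.

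\emph{(ii) $\Rightarrow$ Definition.} Given (ii), define $\auxextd$ as above. This is well defined and lands in $\SDFspace{k+1}{\lone}{\ltwo}$ precisely because (ii) shows that $\extd\circ\auxincl$ maps $\SDFspace{k}{\lone}{\ltwo}$ into $\auxincl[\SDFspace{k+1}{\lone}{\ltwo}]$. Injectivity of $\auxincl$ then gives property (i) of the definition.

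For property (ii) of the definition with general $\l\le\lone$, factor the basis exchanges through a common base using Theorem~\ref{theorem:consistency_of_BE_op} and Proposition~\ref{prop:split_subdiv_operators}:
\[
\dualAop{k}{\l}{\lone}{\ltwo}\circ\dualAop{k}{\l}{\l}{\lone} = \dualAop{k}{\l}{\l}{\ltwo}.
\]
Applying (i) once with $(\l,\lone)$ and once with $(\l,\ltwo)$ gives
\[
\auxextd\circ\dualAop{k}{\l}{\lone}{\ltwo}\circ\dualAop{k}{\l}{\l}{\lone} = \dualAop{k+1}{\l}{\l}{\ltwo}\circ\extd = \dualAop{k+1}{\l}{\lone}{\ltwo}\circ\auxextd\circ\dualAop{k}{\l}{\l}{\lone}.
\]
Cancelling the isomorphism $\dualAop{k}{\l}{\l}{\lone}$ on the right yields the required commutation.

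The main obstacle is bookkeeping rather than analysis. The lemma has to be read as an equivalence of characterisations of a \emph{single} operator $\auxextd$ on the family of spaces, and the implications only close up if $\auxextd$ is determined uniquely by any one of them. I would therefore open by noting uniqueness, namely that injectivity of $\auxincl$ together with property (i) pins down $\auxextd$. After that, each implication reduces to composing with, or cancelling, isomorphisms and inclusions, and to invoking the transitivity of the basis-exchange operators, which in turn follows from the transitivity of the accumulated subdivision matrices $\Amat{k}{\l}{\ltwo} = \Amat{k}{\lone}{\ltwo}\Amat{k}{\l}{\lone}$.
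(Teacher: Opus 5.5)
Your forward half is the paper's proof. Specialising property (ii) of Definition~\ref{def:d_tilde} to $\l=\lone$, and using $\auxextd=\extd$ on $\SDFspace{k}{\lone}{\lone}$, gives (i). Composing (i) with $\auxincl$ and invoking property (i) of the definition then gives (ii). The paper stops there, so it only shows that the definition implies (i) and (ii).

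Your converse is additional and correct. Define $\auxextd$ on $\SDFspace{k}{\lone}{\ltwo}$ by transport, $\dualAop{k+1}{\lone}{\lone}{\ltwo}\circ\extd\circ(\dualAop{k}{\lone}{\lone}{\ltwo})^{-1}$. Then (ii) together with injectivity of $\auxincl$ yields property (i) of the definition. The transitivity $\dualAop{k}{\l}{\lone}{\ltwo}\circ\dualAop{k}{\l}{\l}{\lone}=\dualAop{k}{\l}{\l}{\ltwo}$, plus cancelling the isomorphism $\dualAop{k}{\l}{\l}{\lone}$, yields property (ii). That transitivity follows directly from Definition~\ref{def:adjoint_subdiv_operator}, since every BE operator acts as the identity on coefficient vectors; you do not need Theorem~\ref{theorem:consistency_of_BE_op} for it.

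What your route buys is a precise meaning for ``equivalent''. The definition is equivalent to (i) and (ii) \emph{jointly}. Put differently, an operator satisfying Definition~\ref{def:d_tilde} exists iff (ii) holds, and it is then unique and given by (i).

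For the same reason, delete the link ``(i) $\Rightarrow$ (ii)'' from your announced chain instead of calling it circular, because it is false in general. The transported operator satisfies (i) for \emph{any} choice of subdivision schemes. By contrast, (ii) is equivalent to the matrix identity $\Dmat{k}{\ltwo}\Amat{k}{\lone}{\ltwo}=\Amat{k+1}{\lone}{\ltwo}\Dmat{k}{\lone}$; see the coefficient computation in Lemma~\ref{lemma:compatibility_condition_in_coordinates}, which runs in both directions. That identity fails for incompatible schemes.

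Likewise, with the transport definition, well-definedness and landing in $\SDFspace{k+1}{\lone}{\ltwo}$ are automatic. What (ii) actually supplies is that the transported operator agrees with $\extd$ on level $\ltwo$ after inclusion, which is exactly property (i) of the definition.
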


\begin{proof}
    The claim $(i)$ follows from Definition~\ref{def:d_tilde} by
    \begin{equation}
        \auxextd \big(\constrVec{\omega}{k}{\lone}{\ltwo}) = \auxextd \circ\dualAop{k}{\lone}{\lone}{\ltwo} \big( \constrVec{\omega}{k}{\lone}{\lone} \big) = \dualAop{k+1}{\lone}{\lone}{\ltwo} \circ \auxextd \big( \constrVec{\omega}{k}{\lone}{\lone} \big) = \dualAop{k+1}{\lone}{\lone}{\ltwo} \circ \extd \big( \constrVec{\omega}{k}{\lone}{\lone} \big),
    \end{equation}
   recalling the action of the BE operator from Definition~\ref{def:adjoint_subdiv_operator}
   and using the fact that $\auxincl = \mathrm{Id}$ when acting on $\SDFspace{k}{\lone}{\lone} \equiv \FEECspace{k}{\lone}$ and thus $\auxextd \equiv \extd$ in that case.
   Claim $(ii)$ follows by inclusion into $\FEECspace{k}{\ltwo}$, that is
    \begin{equation}
        \auxincl \circ \dualAop{k+1}{\lone}{{\lone}}{\ltwo} \circ \extd = \auxincl \circ \auxextd \circ \dualAop{k}{\lone}{{\lone}}{\ltwo} = \extd \circ \auxincl \circ \dualAop{k}{\lone}{{\lone}}{\ltwo} \, ,
    \end{equation}
    using property $(i)$ in the first equality and property (i) of Definition~\ref{def:d_tilde} in the second.
\end{proof}

The definition of $\auxextd$ in Definition~\ref{def:d_tilde} in combination with Lemma~\ref{lemma:d_tilde_commuted_with_d} implies that the following diagram commutes:
\begin{equation}
\begin{tikzcd}[column sep = large]
    \FEECspace{k}{\lone} \arrow[r, leftrightarrow, "\equiv"] \arrow[d, "\extd"] & \SDFspace{k}{\lone}{\lone} \arrow[r, "\dualAop{k}{\lone}{\lone}{\ltwo}"] \arrow[d, "\auxextd"] & \SDFspace{k}{\lone}{\ltwo} \arrow[r, "\auxincl"] \arrow[d, "\auxextd"] & \FEECspace{k}{\ltwo} \arrow[d, "\extd"] \\
    \FEECspace{k+1}{\lone} \arrow[r, leftrightarrow, "\equiv"] & \SDFspace{k+1}{\lone}{\lone} \arrow[r, "\dualAop{k+1}{\lone}{\lone}{\ltwo}"] & \SDFspace{k+1}{\lone}{\ltwo} \arrow[r, "\auxincl"] & \FEECspace{k+1}{\ltwo}
\end{tikzcd}
\end{equation}
Note that the two independent commutation relations (i) and (ii) of Definition~\ref{def:d_tilde}, together with
the fact that $\auxincl$ of Definition~\ref{def:tilde_inclusion_map} is the identity map on the initial level $\lone$, define the auxiliary exterior derivative $\auxextd$ in Definition~\ref{def:d_tilde} in terms of $\extd$.

Using this auxiliary derivative, the following definition specifies how to set up compatible $k$-form subdivision schemes such that the induced subdivision $k$-form spaces constitute a discrete de
Rham complex.

\begin{definition}[Compatible $k$-form subdivision schemes]
    \label{def:compatible_subdiv_schemes}
    Let $\primalAop{k}{\lone}{\ltwo}{\l}$, $k\in \{0,1,2\}$, be the subdivision operators for the $k$-form subdivision schemes with basis-exchange operators $\dualAop{k}{\l}{\lone}{\ltwo}$ as in Definition~\ref{def:adjoint_subdiv_operator}.
    Then, we call $k$-form subdivision schemes \emph{compatible} iff the spaces $\SDFspace{k}{\lone}{\ltwo}$ induced through
    $\primalAop{k}{\lone}{\ltwo}{\l}$ comprise a discrete de Rham complex, i.e.
    \begin{equation}
    \begin{tikzcd}[column sep = normal]
        0 \arrow[r] & \SDFspace{0}{\lone}{\ltwo} \arrow[r, "\auxextd"] & \SDFspace{1}{\lone}{\ltwo} \arrow[r, "\auxextd"] & \SDFspace{2}{\lone}{\ltwo} \arrow[r] & 0 \, ,
    \end{tikzcd}
    \end{equation}
    with $\auxextd \circ \auxextd = 0$ on all subdivision levels $\lone$ with $\l \leq \lone \leq \ltwo$.
\end{definition}

 The following theorem shows that the property $\auxextd \circ \auxextd = 0$ required in Definition~\ref{def:compatible_subdiv_schemes} follows from $\extd \circ \extd = 0$ for the FEEC spaces $\FEECspace{k}{\lone}$. Together with the fact that $\dualAop{k}{\lone}{\lone}{\ltwo}$ is a cochain isomorphism, this implies that the subdivision $k$-form spaces $\SDFspace{k}{\lone}{\ltwo}$ constitute discrete de Rham complexes.

\begin{theorem}[Compatibility condition]
\label{theo:uniform_de_rham_complex}
For the auxiliary exterior derivative $\auxextd$ of Definition~\ref{def:d_tilde},
the subdivision $k$-form spaces $\SDFspace{k}{\lone}{\ltwo}$ comprise a discrete de Rham complex and thus, the subdivision schemes are compatible in the sense of Definition \ref{def:compatible_subdiv_schemes}.
\end{theorem}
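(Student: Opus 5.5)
The plan is to establish two things: first, that $\auxextd\circ\auxextd=0$ on $\SDFspace{k}{\lone}{\ltwo}$; second, that the cohomology of the resulting complex coincides with that of the FEEC complex on level $\lone$, which is the de Rham cohomology of $\M$. Both will be obtained by transporting structure along the basis-exchange operator $\dualAop{k}{\lone}{\lone}{\ltwo}$. By Corollary~\ref{cor:BE_op_is_isomorphism} this operator is an isomorphism $\FEECspace{k}{\lone}\equiv\SDFspace{k}{\lone}{\lone}\to\SDFspace{k}{\lone}{\ltwo}$. By Lemma~\ref{lemma:d_tilde_commuted_with_d}(i) it intertwines $\extd$ and $\auxextd$, i.e. it is a cochain map.

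First I check that $\auxextd$ is well defined on all of $\SDFspace{k}{\lone}{\ltwo}$. Every element can be written as $\dualAop{k}{\lone}{\lone}{\ltwo}\,\omega$ with $\omega\in\FEECspace{k}{\lone}$, so Lemma~\ref{lemma:d_tilde_commuted_with_d}(i) forces
\[
\auxextd = \dualAop{k+1}{\lone}{\lone}{\ltwo}\circ \extd \circ \big(\dualAop{k}{\lone}{\lone}{\ltwo}\big)^{-1}.
\]
This gives a concrete formula for $\auxextd$. To confirm it is consistent with property (i) of Definition~\ref{def:d_tilde}, I would verify the coordinate identity
\[
\Dmat{k}{\ltwo}\,\Amat{k}{\lone}{\ltwo} = \Amat{k+1}{\lone}{\ltwo}\,\Dmat{k}{\lone}.
\]
This identity is exactly Lemma~\ref{lemma:d_tilde_commuted_with_d}(ii) written in the bases via Eq.~\eqref{eq:BE_op_introduces_transposed_subdiv_matrix} and Eq.~\eqref{eq:exterior_deriv_in_coordinates}. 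It is the compatibility condition on the subdivision matrices, and here I take it as the hypothesis encoded in the definition of $\auxextd$.

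Next, I obtain $\auxextd\circ\auxextd=0$ from the formula by telescoping:
\[
\auxextd\auxextd = \dualAop{k+2}{\lone}{\lone}{\ltwo}\,\extd\,\extd\,\big(\dualAop{k}{\lone}{\lone}{\ltwo}\big)^{-1}=0,
\]
since $\extd\circ\extd=0$ on the FEEC complex on level $\lone$. An equivalent route applies the injective map $\auxincl$: by Definition~\ref{def:d_tilde}(i), $\auxincl\,\auxextd\auxextd=\extd\extd\,\auxincl=0$, and injectivity gives the claim.

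Then, since $\dualAop{\bullet}{\lone}{\lone}{\ltwo}$ is a cochain isomorphism, it maps $\ker\extd^k$ onto $\ker\auxextd^k$ and $\mathrm{im}\,\extd^{k-1}$ onto $\mathrm{im}\,\auxextd^{k-1}$. It therefore induces isomorphisms on cohomology. The FEEC complex $\CG_1\to\NED_1\to\DG_0$ on $\T_{\lone}$ has cohomology dimensions equal to the Betti numbers of $\M$, which is a standard FEEC result \citep{Arnold.FEEC}. Hence the same holds for $\SDFspace{k}{\lone}{\ltwo}$, and the complex is de Rham for every $\lone\le\ltwo$.

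The main obstacle is the well-definedness and consistency step: checking that the $\auxextd$ defined through $\dualAop{}{}{}{}$ also satisfies $\auxincl\circ\auxextd=\extd\circ\auxincl$. This requires the matrix commutation $\Dmat{k}{\ltwo}\Amat{k}{\lone}{\ltwo}=\Amat{k+1}{\lone}{\ltwo}\Dmat{k}{\lone}$. I would reduce it to single-level commutation, $\Dmat{k}{\l+1}\Smat{k}{\l}{\l+1}=\Smat{k+1}{\l}{\l+1}\Dmat{k}{\l}$, and then chain it through Eq.~\eqref{eq:accumulated_subdiv_matrix}. The single-level relation is the property Wang's schemes are built to satisfy, and I would treat it as the structural assumption underlying Definition~\ref{def:d_tilde}.
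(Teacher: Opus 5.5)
Your proposal is correct and follows essentially the same route as the paper. In both, $\auxextd\circ\auxextd=0$ comes from commuting $\auxextd$ past the basis-exchange operators via Lemma~\ref{lemma:d_tilde_commuted_with_d} and using $\extd\circ\extd=0$ on $\FEECspace{k}{\lone}$, and the de Rham property follows because $\dualAop{k}{\lone}{\lone}{\ltwo}$ is a cochain isomorphism (Corollary~\ref{cor:BE_op_is_isomorphism} together with Lemma~\ref{lemma:d_tilde_commuted_with_d}) that identifies the cohomology with that of the FEEC complex on level $\lone$; your extra discussion of $\Dmat{k}{\ltwo}\Amat{k}{\lone}{\ltwo}=\Amat{k+1}{\lone}{\ltwo}\Dmat{k}{\lone}$ is correct but not needed, since the theorem presupposes an $\auxextd$ satisfying Definition~\ref{def:d_tilde} and the paper treats that matrix condition separately in Lemma~\ref{lemma:compatibility_condition_in_coordinates}.
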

\begin{proof}
    By Definition~\ref{def:d_tilde}, $\auxextd:\SDFspace{k}{\lone}{\ltwo} \to \SDFspace{k+1}{\lone}{\ltwo}$. For arbitrary intermediate levels $\lone$ and $\ltwo$, the auxiliary derivative $\auxextd$ satisfies
    \begin{equation}
        \auxextd \auxextd \constrVec{\omega}{0}{\lone}{\ltwo} = \auxextd \auxextd \dualAop{0}{\lone}{\lone}{\ltwo} \constrVec{\omega}{0}{\lone}{\lone} = \auxextd \dualAop{1}{\lone}{\lone}{\ltwo} \extd \constrVec{\omega}{0}{\lone}{\lone} = \dualAop{2}{\lone}{\lone}{\ltwo}\extd \extd \constrVec{\omega}{0}{\lone}{\lone} = 0
    \end{equation}
    using Lemma~\ref{lemma:d_tilde_commuted_with_d} and the fact that $(\FEECspace{k}{\lone}, \extd)$ is a differential complex.
    Further, the basis-exchange operators $\dualAop{k}{\lone}{\lone}{\ltwo}$ are isomorphisms between
    $\FEECspace{k}{\lone}$ and $\SDFspace{k}{\lone}{\ltwo}$ according to Cor.~\ref{cor:BE_op_is_isomorphism} and commute with the exterior derivative $\extd$, see Lemma~\ref{lemma:d_tilde_commuted_with_d}.
    Consequently, $\dualAop{k}{\lone}{\lone}{\ltwo}$ is a cochain isomorphism and induces an isomorphism of the cohomologies of the FEEC spaces $\FEECspace{k}{\lone}$ and the subdivision $k$-form spaces $\SDFspace{k}{\lone}{\ltwo}$. Thus, the spaces $\SDFspace{k}{\lone}{\ltwo}$ form a discrete de Rham complex under the derivative operator $\auxextd$.
\end{proof}

\subsubsection*{Matrix representations of auxiliary exterior derivative $\auxextd$}

Having characterised the auxiliary exterior derivative $\auxextd$ in terms of the subdivision operators $\primalAop{k}{\lone}{\ltwo}{\lone}$
and the exterior derivative $\extd$, we can specify the action of $\auxextd$ on an element of $\SDFspace{k}{\lone}{\ltwo}$ directly.
\begin{lemma}
    \label{lemma:d_tilde_action_in_coordinates}
    The auxiliary exterior derivative $\auxextd$ acts on $\constrVec{\omega}{k}{\lone}{\ltwo} \in \SDFspace{k}{\lone}{\ltwo}$ by
    \begin{equation}\notag
      \auxextd:  \constrVec{\omega}{k}{\lone}{\ltwo} = \sum_{\coarseIdxtwo = 1}^{\sumdimlk{\lone}{k}} \freeQ{c}{k}{\lone}{\coarseIdxtwo} \; \constrQ{\SDFbasis}{k}{\lone}{\ltwo}{\coarseIdxtwo}  \mapsto \auxextd \constrVec{\omega}{k}{\lone}{\ltwo} = \sum_{\coarseIdxone = 1}^{\sumdimlk{\lone}{k+1}} \Big( \sum_{\coarseIdxtwo = 1}^{\sumdimlk{\lone}{k}} \Dmatij{k}{\lone}{\coarseIdxone}{\coarseIdxtwo} \; \freeQ{c}{k}{\lone}{\coarseIdxtwo} \Big) \constrQ{\SDFbasis}{k+1}{\lone}{\ltwo}{\coarseIdxone}
    \end{equation}
    using the matrix representation of $\extd$ as specified in Eq.~\eqref{Dmatrix}.
\end{lemma}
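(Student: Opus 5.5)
The plan is to express $\constrVec{\omega}{k}{\lone}{\ltwo}$ as the image of a coarse FEEC form under the basis-exchange operator, then apply Lemma~\ref{lemma:d_tilde_commuted_with_d}(i) and finally write everything in coordinates using Eq.~\eqref{eq:exterior_deriv_in_coordinates}. Concretely, given $\constrVec{\omega}{k}{\lone}{\ltwo} = \sum_{\coarseIdxtwo} \freeQ{c}{k}{\lone}{\coarseIdxtwo} \constrQ{\SDFbasis}{k}{\lone}{\ltwo}{\coarseIdxtwo}$, I set $\freeVec{\omega}{k}{\lone} \coloneqq \sum_{\coarseIdxtwo} \freeQ{c}{k}{\lone}{\coarseIdxtwo} \freeQ{\FEECbasis}{k}{\lone}{\coarseIdxtwo} \in \FEECspace{k}{\lone} \equiv \SDFspace{k}{\lone}{\lone}$. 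Since $\constrQ{\SDFbasis}{k}{\lone}{\lone}{\coarseIdxtwo} = \Aop{k}{\lone}{\lone}\freeQ{\FEECbasis}{k}{\lone}{\coarseIdxtwo} = \freeQ{\FEECbasis}{k}{\lone}{\coarseIdxtwo}$ (the accumulated operator being the identity on equal levels), Definition~\ref{def:adjoint_subdiv_operator} gives $\dualAop{k}{\lone}{\lone}{\ltwo}\freeVec{\omega}{k}{\lone} = \constrVec{\omega}{k}{\lone}{\ltwo}$, i.e. the coefficient vector is carried over unchanged.

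Next, by Lemma~\ref{lemma:d_tilde_commuted_with_d}(i), $\auxextd \constrVec{\omega}{k}{\lone}{\ltwo} = \dualAop{k+1}{\lone}{\lone}{\ltwo}\, \extd \freeVec{\omega}{k}{\lone}$. Using Eq.~\eqref{eq:exterior_deriv_in_coordinates} on level $\lone$, $\extd \freeVec{\omega}{k}{\lone} = \sum_{\coarseIdxone} \big(\sum_{\coarseIdxtwo} \Dmatij{k}{\lone}{\coarseIdxone}{\coarseIdxtwo} \freeQ{c}{k}{\lone}{\coarseIdxtwo}\big) \freeQ{\FEECbasis}{k+1}{\lone}{\coarseIdxone}$, an element of $\FEECspace{k+1}{\lone} \equiv \SDFspace{k+1}{\lone}{\lone}$ with coefficients $\Dmat{k}{\lone} c$. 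Applying the basis-exchange operator $\dualAop{k+1}{\lone}{\lone}{\ltwo}$, whose matrix representation is the identity, simply replaces each $\freeQ{\FEECbasis}{k+1}{\lone}{\coarseIdxone}$ by $\constrQ{\SDFbasis}{k+1}{\lone}{\ltwo}{\coarseIdxone}$ while keeping these coefficients, which yields exactly the claimed formula, with $\Dmat{k}{\lone}$ given explicitly by Eq.~\eqref{Dmatrix}.

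The main (mild) obstacle is the well-definedness step: one must ensure the coefficients $\freeQ{c}{k}{\lone}{\coarseIdxtwo}$ are uniquely determined by $\constrVec{\omega}{k}{\lone}{\ltwo}$, i.e. that the constrained basis functions are linearly independent, so that $\auxextd$ is a genuine map on the space rather than on coefficient vectors. I would handle this by invoking Corollary~\ref{cor:BE_op_is_isomorphism} together with the dimension count of Proposition~\ref{prop:subspace_of_Vk} and Theorem~\ref{theo:equivalence_of_spaces} ($\dim \SDFspace{k}{\lone}{\ltwo} = \dimlk{\lone}{k}$), which shows the $\dimlk{\lone}{k}$ spanning functions form a basis. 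As a consistency check, one can also verify property (i) of Definition~\ref{def:d_tilde} via $\auxincl$, noting that applying $\auxincl$ to the result reproduces $\extd$ of the included fine form by Lemma~\ref{lemma:d_tilde_commuted_with_d}(ii).
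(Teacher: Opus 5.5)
Your proposal is correct and follows the paper's proof essentially step for step. The paper likewise writes $\constrVec{\omega}{k}{\lone}{\ltwo}$ as $\dualAop{k}{\lone}{\lone}{\ltwo}$ applied to the coarse FEEC form and commutes via Lemma~\ref{lemma:d_tilde_commuted_with_d}(i). It then expands $\extd$ with Eq.~\eqref{eq:exterior_deriv_in_coordinates} and uses that the basis-exchange operator only swaps $\freeQ{\FEECbasis}{k+1}{\lone}{\coarseIdxone} = \constrQ{\SDFbasis}{k+1}{\lone}{\lone}{\coarseIdxone}$ for $\constrQ{\SDFbasis}{k+1}{\lone}{\ltwo}{\coarseIdxone}$.

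Your extra well-definedness remark, that the coefficients are uniquely determined by linear independence of the constrained basis functions, is a point the paper leaves implicit. It holds only as firmly as the full-column-rank assumption behind Proposition~\ref{prop:subspace_of_Vk}.
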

\begin{proof}
    Starting from Lemma~\ref{lemma:d_tilde_commuted_with_d}, direct computation yields
    \begin{equation}
    \begin{aligned}
        \label{eq:d_tilde_in_coordinates}
        \auxextd \constrVec{\omega}{k}{\lone}{\ltwo} &= \auxextd \dualAop{k}{\lone}{\lone}{\ltwo} \constrVec{\omega}{k}{\lone}{\lone} = \dualAop{k+1}{\lone}{\lone}{\ltwo} \extd \constrVec{\omega}{k}{\lone}{\lone} = \dualAop{k+1}{\lone}{\lone}{\ltwo} \extd \big( \sum_{\coarseIdxtwo = 1}^{\sumdimlk{\lone}{k}} \freeQ{c}{k}{\lone}{\coarseIdxtwo} \; \freeQ{\FEECbasis}{k}{\lone}{\coarseIdxtwo} \big) \\
        &= \dualAop{k+1}{\lone}{\lone}{\ltwo} \Big( \sum_{\coarseIdxone = 1}^{\sumdimlk{\lone}{k+1}} \big( \sum_{\coarseIdxtwo = 1}^{\sumdimlk{\lone}{k}} \Dmatij{k}{\lone}{\coarseIdxone}{\coarseIdxtwo} \; \freeQ{c}{k}{\lone}{\coarseIdxtwo} \big) \; \freeQ{\FEECbasis}{k+1}{\lone}{\coarseIdxone} \Big) \\
        &= \sum_{\coarseIdxone = 1}^{\sumdimlk{\lone}{k+1}} \big( \sum_{\coarseIdxtwo = 1}^{\sumdimlk{\lone}{k}} \Dmatij{k}{\lone}{\coarseIdxone}{\coarseIdxtwo} \; \freeQ{c}{k}{\lone}{\coarseIdxtwo} \big) \; \constrQ{\SDFbasis}{k+1}{\lone}{\ltwo}{\coarseIdxone} \;,
    \end{aligned}
    \end{equation}
    where we used $\constrVec{\omega}{k}{\lone}{\lone} \in \SDFspace{k}{\lone}{\lone}\equiv \FEECspace{k}{\lone}$,
    the linearity of $\dualAop{k+1}{\lone}{\lone}{\ltwo}$, and
    the matrix representation of $\extd$ in Eq.~\eqref{eq:exterior_deriv_in_coordinates}. Since $\freeQ{\FEECbasis}{k+1}{\lone}{\coarseIdxone} = \constrQ{\SDFbasis}{k+1}{\lone}{\lone}{\coarseIdxone}$, the last equality follows from Definition~\ref{def:adjoint_subdiv_operator}.
\end{proof}

\begin{remark}
Observe the similarity in how the exterior derivative $\extd$ in Eq.~\eqref{eq:exterior_deriv_in_coordinates}
acts on $k$-forms $\freeVec{\omega}{k}{\lone} \in \FEECspace{k}{\lone}$ and how
the auxiliary exterior derivative $\auxextd$ in Eq.~\eqref{eq:d_tilde_in_coordinates} acts on subdivision $k$-forms
$\constrVec{\omega}{k}{\lone}{\ltwo} \in \SDFspace{k}{\lone}{\ltwo}$.
That is, in both cases the same matrix $\Dmat{k}{\lone}$ acts on the coefficients $ \freeQ{c}{k}{\lone}{\coarseIdxtwo} $
while only the basis functions are different.
\end{remark}

The following final lemma of this section breaks down the abstract commutativity requirements in Definition~\ref{def:d_tilde}
into easily verifiable conditions on the coordinate level in terms of the matrix representations of the subdivision
and derivative operators.

\begin{lemma}
    \label{lemma:compatibility_condition_in_coordinates}
    Condition (ii) of Lemma~\ref{lemma:d_tilde_commuted_with_d} for arbitrary $\lone$ with $\l \leq \lone \leq \ltwo$, i.e.
    \begin{equation}
        \extd \circ \auxincl \circ \dualAop{k}{\lone}{\lone}{\ltwo} = \auxincl \circ \dualAop{k+1}{\l}{\lone}{\ltwo} \circ \extd \;\; \qquad  \;\; (\SDFspace{k}{\lone}{\lone} \equiv \FEECspace{k}{\lone} \to \FEECspace{k+1}{\ltwo}) \ \ \ \text{for} \ \ \  k = 0,1,
    \end{equation}
    implies that the subdivision matrices have to satisfy
    \begin{equation}
        \label{eq:subdiv_matrix_commutativity}
        \Dmat{k}{\ltwo} \cdot \Amat{k}{\lone}{\ltwo} = \Amat{k+1}{\lone}{\ltwo} \cdot \Dmat{k}{\lone}, \qquad \text{for} \quad k = 0,1,
    \end{equation}
    where $\Dmat{k}{\lone}$ and $\Dmat{k}{\ltwo}$ are the respective matrix representations of $\extd$ on levels $\lone$ and $\ltwo$ according to Eq.~\eqref{eq:derivative_matrix_introduced}.
\end{lemma}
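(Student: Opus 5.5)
The plan is to evaluate both sides of the operator identity on each FEEC basis function $\freeQ{\FEECbasis}{k}{\lone}{\coarseIdxtwo}$ of $\FEECspace{k}{\lone} \equiv \SDFspace{k}{\lone}{\lone}$. Both sides are linear, so they agree on all of $\FEECspace{k}{\lone}$ iff they agree on this basis. Each side will then be written as an expansion in the fine basis $\freeQ{\FEECbasis}{k+1}{\ltwo}{\fineIdxone}$ of $\FEECspace{k+1}{\ltwo}$, and the coefficients compared. Since the $\freeQ{\FEECbasis}{k+1}{\ltwo}{\fineIdxone}$ are linearly independent, equality of the two functions forces equality of the coefficient arrays. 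That coefficient equality, for every column index $\coarseIdxtwo$, is exactly the matrix identity \eqref{eq:subdiv_matrix_commutativity}.

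For the left-hand side, Definition~\ref{def:adjoint_subdiv_operator} and Definition~\ref{def:alternative_char_for_subdiv_spaces} give
$\auxincl \, \dualAop{k}{\lone}{\lone}{\ltwo} \freeQ{\FEECbasis}{k}{\lone}{\coarseIdxtwo} = \constrQ{\SDFbasis}{k}{\lone}{\ltwo}{\coarseIdxtwo} = \sum_{\fineIdxtwo} \Amatij{k}{\lone}{\ltwo}{\fineIdxtwo}{\coarseIdxtwo} \freeQ{\FEECbasis}{k}{\ltwo}{\fineIdxtwo}$ as an element of $\FEECspace{k}{\ltwo}$. This is essentially Eq.~\eqref{eq:BE_op_introduces_transposed_subdiv_matrix} with $\l=\lone$. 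Applying $\extd$ on level $\ltwo$ through Eq.~\eqref{eq:derivative_matrix_introduced} yields
$\sum_{\fineIdxone} \big(\Dmat{k}{\ltwo}\cdot\Amat{k}{\lone}{\ltwo}\big)_{\fineIdxone\coarseIdxtwo}\, \freeQ{\FEECbasis}{k+1}{\ltwo}{\fineIdxone}$.

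For the right-hand side, Eq.~\eqref{eq:derivative_matrix_introduced} on level $\lone$ gives $\extd \freeQ{\FEECbasis}{k}{\lone}{\coarseIdxtwo} = \sum_{\coarseIdxone} \Dmatij{k}{\lone}{\coarseIdxone}{\coarseIdxtwo} \freeQ{\FEECbasis}{k+1}{\lone}{\coarseIdxone}$. The BE operator keeps these coefficients and replaces each $\freeQ{\FEECbasis}{k+1}{\lone}{\coarseIdxone}$ by $\constrQ{\SDFbasis}{k+1}{\lone}{\ltwo}{\coarseIdxone}$, as in Lemma~\ref{lemma:d_tilde_action_in_coordinates}. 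Including these via $\auxincl$ and expanding with $\Amat{k+1}{\lone}{\ltwo}$ gives
$\sum_{\fineIdxone} \big(\Amat{k+1}{\lone}{\ltwo}\cdot\Dmat{k}{\lone}\big)_{\fineIdxone\coarseIdxtwo}\, \freeQ{\FEECbasis}{k+1}{\ltwo}{\fineIdxone}$.

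The main subtlety is notational rather than mathematical. The statement writes the right-hand BE operator as $\dualAop{k+1}{\l}{\lone}{\ltwo}$, whereas on the domain $\FEECspace{k+1}{\lone}$ it must be read as $\dualAop{k+1}{\lone}{\lone}{\ltwo}$, as in Lemma~\ref{lemma:d_tilde_commuted_with_d}(ii). I will note this explicitly. I will also remark that, by Remark~\ref{remark_shorthand}, the matrix $\Amat{k+1}{\lone}{\ltwo}$ does not depend on the base level, so this reading does not affect the result.

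The final step is to swap the finite sums and match coefficients for every $\coarseIdxtwo$ and $\fineIdxone$, which gives \eqref{eq:subdiv_matrix_commutativity} for $k=0,1$. The argument runs equally well in reverse, so the matrix condition is in fact equivalent to condition (ii); I will state this as a side remark, since the lemma only asks for the implication.
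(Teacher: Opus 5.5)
Your proposal is correct and is essentially the paper's argument: both sides of condition (ii) are expanded in the fine basis $\freeQ{\FEECbasis}{k+1}{\ltwo}{\fineIdxone}$, and comparing coefficients gives $\Dmat{k}{\ltwo} \cdot \Amat{k}{\lone}{\ltwo} = \Amat{k+1}{\lone}{\ltwo} \cdot \Dmat{k}{\lone}$. The paper works with a general $\freeVec{\omega}{k}{\lone}$ rather than one basis function at a time, and it also silently uses $\dualAop{k+1}{\lone}{\lone}{\ltwo}$ on the right-hand side, so your reading of the superscript and your explicit appeal to the linear independence of the fine basis simply make precise what the paper leaves implicit.
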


\begin{proof}
    Let $\freeVec{\omega}{k}{\lone} = \sum_{\coarseIdxone = 1}^{\sumdimlk{\lone}{k}} \freeQ{c}{k}{\lone}{\coarseIdxone} \freeQ{\FEECbasis}{k}{\lone}{\coarseIdxone} \in \FEECspace{k}{\lone}$. Using Eq.~\eqref{eq:exterior_deriv_in_coordinates}, we can compute
    \begin{alignat}{3}
        \big(\extd \circ \auxincl \circ \dualAop{k}{\lone}{\lone}{\ltwo}\big) \, \freeVec{\omega}{k}{\lone} &= \sum_{\fineIdxtwo=1}^{\sumdimlk{\ltwo}{k+1}} \Big( &&\;\,\sum_{\fineIdxone=1}^{\sumdimlk{\ltwo}{k}} &&\sum_{\coarseIdxone=1}^{\sumdimlk{\lone}{k}} \Dmatij{k}{\ltwo}{\fineIdxtwo}{\fineIdxone} \, \Amatij{k}{\lone}{\ltwo}{\fineIdxone}{\coarseIdxone} \freeQ{c}{k}{\lone}{\coarseIdxone} \Big) \, \freeQ{\FEECbasis}{k+1}{\ltwo}{\fineIdxtwo}, \\
        \big(\auxincl \circ \dualAop{k+1}{\lone}{\lone}{\ltwo} \circ \extd \, \freeVec{\omega}{k}{\lone}\big) &= \sum_{\fineIdxtwo=1}^{\sumdimlk{\ltwo}{k+1}} \Big( &&\;\,\sum_{\coarseIdxtwo=1}^{\sumdimlk{\lone}{k+1}} &&\sum_{\coarseIdxone=1}^{\sumdimlk{\lone}{k}} \Amatij{k+1}{\lone}{\ltwo}{\fineIdxtwo}{\coarseIdxtwo} \Dmatij{k}{\lone}{\coarseIdxtwo}{\coarseIdxone} \,  \freeQ{c}{k}{\lone}{\coarseIdxone} \Big) \, \freeQ{\FEECbasis}{k+1}{\ltwo}{\fineIdxtwo}.
    \end{alignat}
    Hence, to guarantee $\big(\auxincl \circ \dualAop{k+1}{\lone}{\lone}{\ltwo} \circ \extd \big) \, \freeVec{\omega}{k}{\lone} =  \big(\extd \circ \auxincl \circ \dualAop{k}{\lone}{\lone}{\ltwo}\big) \, \freeVec{\omega}{k}{\lone}$, the subdivision matrices have to satisfy $\Dmat{k}{\ltwo} \cdot \Amat{k}{\lone}{\ltwo} = \Amat{k+1}{\lone}{\ltwo} \cdot \Dmat{k}{\lone}$ for $k = 0, 1$.
\end{proof}

There exist a few subdivision schemes that satisfy this requirement (see Remark~\ref{remark_othersubdivschemes}). We choose to adopt the subdivision schemes introduced in \cite{Wang.2006, Wang.2008} for triangular meshes. The authors devised subdivision schemes that satisfy Eq.~\eqref{eq:subdiv_matrix_commutativity} by fixing a subdivision scheme for $0$-forms and $2$-forms and used the commutation relation in Eq.~\eqref{eq:subdiv_matrix_commutativity} to design their $1$-form subdivision schemes. Besides these algebraic constraints on the subdivision weights, they also add certain symmetry constraints to make the $1$-form weights unique.

Whenever we consider a concrete subdivision scheme from now on and unless stated otherwise, we will consider Wang's compatible subdivision schemes \citep{Wang.2006, Wang.2008} for triangle meshes, because they fit into the framework established in this work without any need for modifications. We will refer to them as \emph{Wang schemes} in the following.

\begin{remark}\label{remark_othersubdivschemes}
Besides the Wang schemes, other subdivision schemes also satisfy the requirement of Lemma~\ref{lemma:compatibility_condition_in_coordinates}.
For example, \cite{Wang.2006, Wang.2008} also derived one for quadrilateral meshes where they start from the Catmull-Clark \citep{Catmull.1978} and the Doo-Sabin \citep{Doo.78} subdivision schemes.
As in the triangle case, they find commuting $1$-form schemes. The resulting schemes are comprehensively summarised in the supplemental material of \cite{Goes.2016}.

Alternatively, \cite{Huang2012} introduces a $1$-form subdivision scheme based on $\sqrt{3}$-subdivision \citep{Kobbelt.2000} for $0$-forms. Their $1$-form scheme is again determined from the commutation relation but does not lead to a unique $2$-form scheme a priori. However, their $2$-form scheme can be made unique by enforcing additional properties for the spectrum of the $2$-form subdivision matrix. Finally, \cite{Custers.2020} developed a $1$-form subdivision scheme based on a half-edge representation of $1$-forms. Their work is also based on the idea of commuting subdivision and derivative operators but employs the concept of a dual mesh.

There also exist ``trivial'' subdivision schemes with respect to lowest-order finite elements, see \cite{Wang.2008}. These schemes are called the Whitney subdivision schemes and they exactly reproduce the Whitney forms, i.e. the basis functions $\freeVec{\FEECbases}{k}{\lone}$ as defined in Eq.~\eqref{eq:FEECbases_definition}. This implies that they do not alter the mesh boundary during subdivision. For this reason, we will later use the Whitney subdivision schemes for purposes of domain approximation, see Section~\ref{subsec:domain_approximation} and Appendix~\ref{app:domain_approximation}.
\end{remark}

This closes our discussion regarding the compatibility of subdivision schemes and their induced function spaces. The essential finding is a derivative operator $\auxextd: \SDFspace{k}{\lone}{\ltwo} \to \SDFspace{k+1}{\lone}{\ltwo}$ that is consistent with the weak (exterior) derivative $\extd$ of FE methods if the subdivision matrices satisfy the commutation relation in Lemma~\ref{lemma:compatibility_condition_in_coordinates}.
Additionally, Theorem~\ref{theo:uniform_de_rham_complex} shows that the spaces of subdivision $k$-forms
induced by the subdivision operators comprise de Rham complexes.

\subsection{Subdivision k-form spaces with zero boundary conditions}
\label{subsec:complex_with_vanishing_trace}

This section presents subdivision $k$-form spaces with vanishing trace boundary conditions (BC). This section follows the most important steps of Appendix~\ref{app:vanishing_boundary_complex}, culminating in the main theorem that proves the preservation of the relative de Rham complex, cf. Eq.~\eqref{eq:infinite_dim_relative_de_Rham}.

The construction is analogous to the one we used for the complex $\big( \SDFspace{k}{\l}{\L}, \auxextd \big)$. Given that the chosen subdivision schemes behave well close to the boundary in the sense of Eq.~\eqref{eq:subdiv_preserves_vanishing_trace} in Appendix~\ref{app:vanishing_boundary_complex}, the only basis functions with non-zero trace are the ones associated to boundary vertices and edges of the mesh $\T_\l$. Thus, we can discard these basis functions to guarantee that the spaces satisfy the BC. The resulting spaces are then isomorphic to the analogous FEEC spaces and thus comprise a de Rham complex.

Propositions~\ref{prop:no_support_at_bdry} and~\ref{prop:Wang_subdiv_preserves_zero_trace} show that basis functions associated to interior simplices have vanishing traces. Denoting the sets of interior vertices $\Vint_{\l}$, edges $\Eint_{\l}$ and faces $\Fint_{\l}$ of the mesh $\T_{\l}$ by
\begin{equation}
    \label{def:interior_simplices}
    \Vint_{\l} \coloneq \V_{\l} \setminus \V (\partial \T_{\l}), \qquad \Eint_{\l} \coloneq \E_{\l} \setminus \E (\partial \T_{\l}), \qquad \Fint_\l = \F_\l
\end{equation}
lets us define the subdivision $k$-form spaces in the following way.
\begin{definition}
    \label{def:subdiv_spaces_with_vanishing_trace}
    The subdivision $k$-form spaces $\SDFspaceZero{k}{\l}{\L}$ with vanishing trace BC are defined as the span of all basis functions $\constrQ{\SDFbasis}{k}{\l}{\L}{\coarseIdxone}$ associated to non-boundary simplices $\simplex{k}{\l}{\coarseIdxone}$, i.e.,
    \begin{equation}
        \SDFspaceZero{k}{\l}{\L} \big(\T_\l\big) \coloneq \mathrm{span} \big( \constrVec{\SDFbasesZero}{k}{\l}{\L}\big) \qquad \text{with} \qquad \constrVec{\SDFbasesZero}{k}{\l}{\L} \coloneq \big\{ \constrQ{\SDFbasis }{k}{\l}{\L}{\coarseIdxone} \; : \; \simplex{k}{\l}{\coarseIdxone} \in \Kint_{\l} \big\}.
    \end{equation}
\end{definition}

Upon defining appropriate subdivision and basis exchange operators, Appendix~\ref{app:vanishing_boundary_complex} shows that these spaces are isomorphic to the lowest-order FEEC spaces with vanishing traces. This culminates in the following main theorem of the appendix.

\begin{theorem}[Complex of subdivision $k$-forms with vanishing trace BC]
    \label{theo:complex_with_vanishing_trace}
    The spaces $\SDFspaceZero{k}{\l}{\L}$ of subdivision $k$-forms with vanishing trace BC constitute the discrete differential complex
    \begin{equation}
        \label{eq:complex_of_trunc_spaces}
        \begin{tikzcd}
            0 \arrow[r] & \SDFspaceZero{0}{\l}{\L} \arrow[r, "\auxextd"] & \SDFspaceZero{1}{\l}{\L} \arrow[r, "\auxextd"] & \SDFspaceZero{2}{\l}{\L} \arrow[r] & 0.
        \end{tikzcd}
    \end{equation}
    The complex has the same cohomology as the relative de Rham complex.
\end{theorem}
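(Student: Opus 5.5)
The plan is to mirror the proof of Theorem~\ref{theo:uniform_de_rham_complex}, restricting everything to interior simplices. Let $\FEECspaceZero{k}{\l}$ denote the lowest-order FEEC spaces on $\T_\l$ with vanishing trace. These are spanned by the Whitney basis functions $\freeQ{\FEECbasis}{k}{\l}{\coarseIdxone}$ with $\simplex{k}{\l}{\coarseIdxone} \in \Kint_\l$, and it is standard that $(\FEECspaceZero{k}{\l}, \extd)$ is a complex whose cohomology is the relative cohomology $H^k(\M, \partial\M)$. I would introduce the restricted basis-exchange operator $\BCBEop{k}{\l}{\l}{\L}\colon \FEECspaceZero{k}{\l} \to \SDFspaceZero{k}{\l}{\L}$. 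It acts as the identity on coefficients and replaces $\freeQ{\FEECbasis}{k}{\l}{\coarseIdxone}$ by $\constrQ{\SDFbasis}{k}{\l}{\L}{\coarseIdxone}$ for interior simplices only. It is the restriction of $\dualAop{k}{\l}{\l}{\L}$, hence surjective by Definition~\ref{def:subdiv_spaces_with_vanishing_trace}. It is also injective, because the full family $\constrVec{\SDFbases}{k}{\l}{\L}$ is linearly independent (Proposition~\ref{prop:subspace_of_Vk}(ii): $\mathrm{rank}\,\Amat{k}{\l}{\L} = \dimlk{\l}{k}$). So it is an isomorphism.

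The key step is to show that $\auxextd$ maps $\SDFspaceZero{k}{\l}{\L}$ into $\SDFspaceZero{k+1}{\l}{\L}$. By Lemma~\ref{lemma:d_tilde_action_in_coordinates}, $\auxextd$ acts on the coefficient vector by the same matrix $\Dmat{k}{\l}$ as $\extd$ on $\FEECspace{k}{\l}$. It therefore suffices to check that if $c$ is supported on interior $k$-simplices, then $\Dmat{k}{\l} c$ is supported on interior $(k+1)$-simplices. Equivalently, this is the statement that $\extd$ maps $\FEECspaceZero{k}{\l}$ into $\FEECspaceZero{k+1}{\l}$, i.e.\ that the trace commutes with $\extd$. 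I would verify this combinatorially from Eq.~\eqref{Dmatrix}. For $k=1$ there is nothing to check, since $\Fint_\l = \F_\l$. For $k=0$, take a boundary edge $\edge{\l}{\coarseIdxtwo}$. Its row of $\Dmat{0}{\l}$ involves only its two endpoints, which are boundary vertices, so it vanishes on coefficient vectors supported on $\Vint_\l$. Hence $\auxextd\circ\BCBEop{k}{\l}{\l}{\L} = \BCBEop{k+1}{\l}{\l}{\L}\circ\extd$ on $\FEECspaceZero{k}{\l}$, which is the restriction of Lemma~\ref{lemma:d_tilde_commuted_with_d}(i).

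With this commutation in hand, $\auxextd\circ\auxextd = 0$ on $\SDFspaceZero{0}{\l}{\L}$ follows exactly as in the proof of Theorem~\ref{theo:uniform_de_rham_complex}. The restricted operators $\BCBEop{k}{\l}{\l}{\L}$ are then a cochain isomorphism between $(\FEECspaceZero{k}{\l},\extd)$ and $(\SDFspaceZero{k}{\l}{\L},\auxextd)$, so the cohomologies agree and equal the relative de Rham cohomology.

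I expect the main obstacle to be justifying that the spaces $\SDFspaceZero{k}{\l}{\L}$ really carry vanishing traces, so that the complex is a discrete version of the relative one and not merely an abstract subcomplex. This needs that subdivision of an interior basis function has zero trace on $\partial\T_\L$ (Propositions~\ref{prop:no_support_at_bdry} and~\ref{prop:Wang_subdiv_preserves_zero_trace}). I would argue level by level: the Wang boundary rules compute fine boundary vertex and edge coefficients only from coarse boundary coefficients, so $\Amat{k}{\l}{\L}$ maps vectors supported on interior simplices to vectors supported on interior simplices. The cohomology statement itself does not depend on this, only on the algebraic isomorphism above.
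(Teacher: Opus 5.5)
Your proposal is correct and follows essentially the same route as the paper. The steps match one for one: the restricted basis-exchange operator $\BCBEop{k}{\l}{\l}{\L}$ is an isomorphism $\FEECspaceZero{k}{\l}\to\SDFspaceZero{k}{\l}{\L}$; $\auxextd$ preserves interior-supported coefficient vectors via Lemma~\ref{lemma:d_tilde_action_in_coordinates}, which is the paper's Lemma~\ref{lemma:auxextd_with_BCs}; the commutation is the restriction of Lemma~\ref{lemma:d_tilde_commuted_with_d}, which is Lemma~\ref{lemma:commutativity_with_BCs}; and the resulting cochain isomorphism transfers the relative cohomology of $(\FEECspaceZero{k}{\l},\extd)$. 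Your remark that the cohomology statement needs only this algebraic isomorphism is accurate, and slightly sharper than the paper, which invokes the inclusion into $\FEECspaceZero{k}{\L}$ (i.e.\ the trace-preservation property of the Wang schemes) to conclude $\auxextd\circ\auxextd=0$.
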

\begin{proof}
    See \ref{proof:complex_with_vanishing_trace}.
\end{proof}

The previous theorem is the central statement of this section. We will confirm this result numerically in Section~\ref{subsec:maxwell_EV_simulation} by the absence of spurious eigenvalues in the Maxwell eigenvalue problem.

\section{Implementation of spaces of subdivision $k$-forms}
\label{sec:implementation}

This section provides details on how to practically implement the subdivision $k$-form spaces as well as the
derivative operators. First, we suggest a method to approximate the physical domain since in case of mesh refinement via subdivision, this
is a nontrivial task. Then, after introducing the Maxwell eigenvalue problem,
we address the assembly of FE matrices as they occur in the context of this test case.
The presented algorithms will be employed and verified in Section~\ref{sec:numerics}.

\subsection{Approximation of the physical domain}
\label{subsec:domain_approximation}

\begin{figure}[t!]
    \centering
    \begin{subfigure}[t]{0.3\textwidth}
        \centering
        \includegraphics[width=\textwidth]{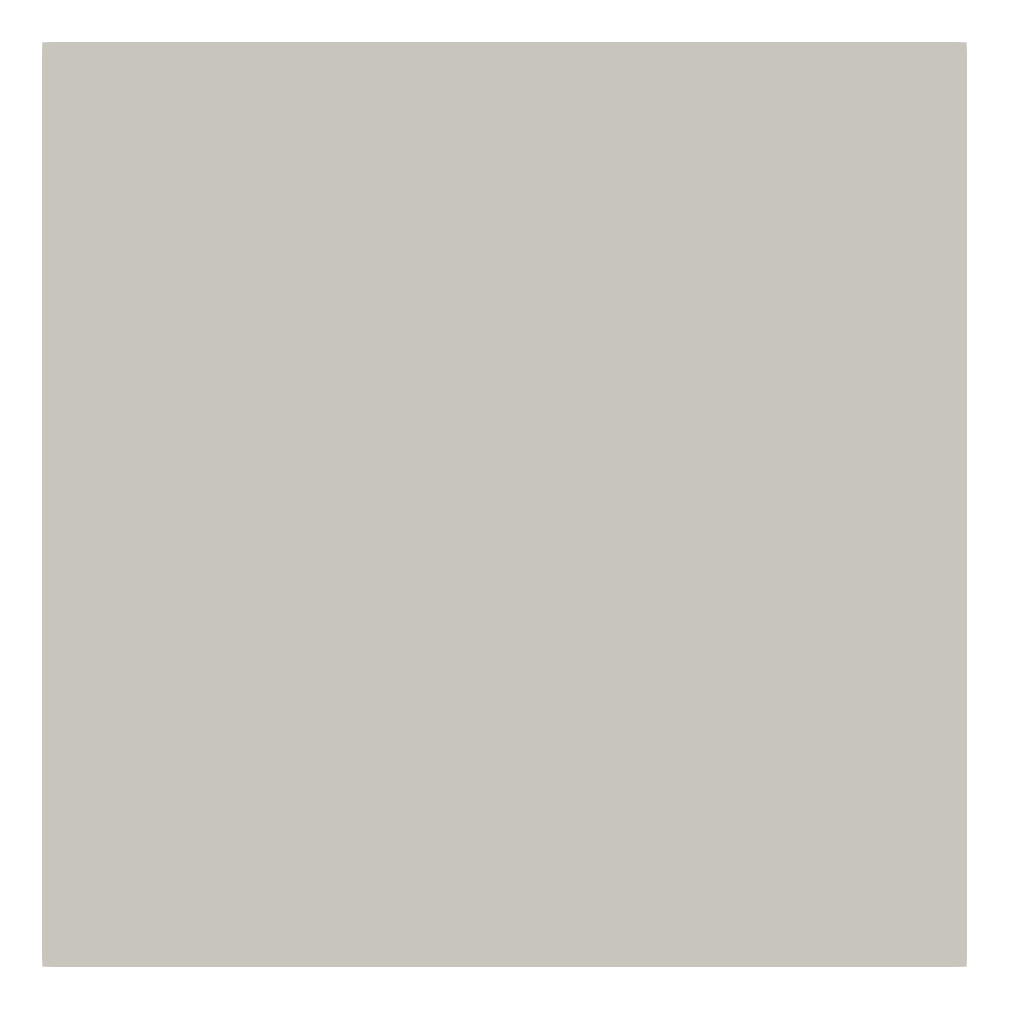}
        \caption{physical domain $\M$}
        \label{subfig:physical_domain}
    \end{subfigure}%
    ~
    \begin{subfigure}[t]{0.3\textwidth}
        \centering
        \includegraphics[width=\textwidth]{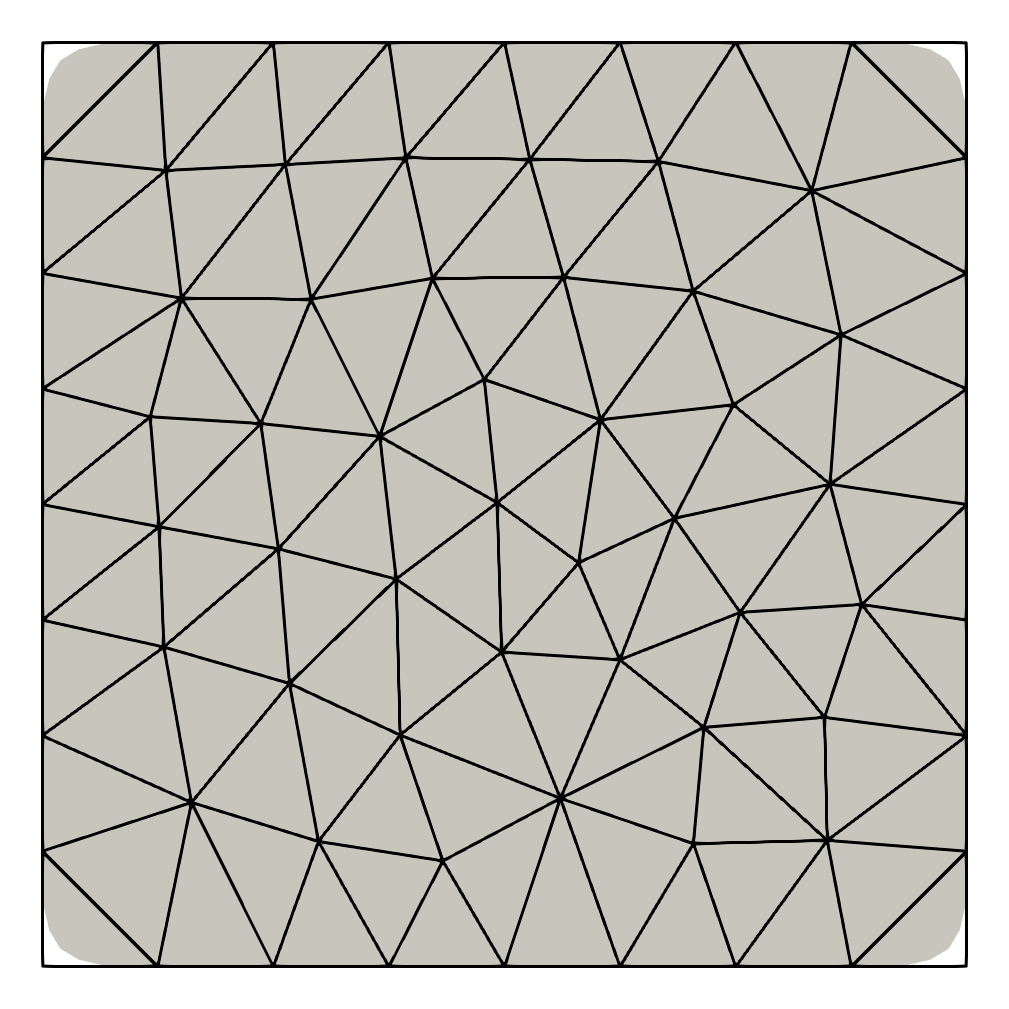}
        \caption{ Wireframe of $\TFE$ (black) covering $\M$ and the domain of its Loop refinement $\AopLoop{0}{\L} \; \TFE$ (grey). Note the shrinking around the corners of the domain.}
        \label{subfig:Loop_refined_physical_domain}
    \end{subfigure}
    ~
    \begin{subfigure}[t]{0.3\textwidth}
        \centering
        \includegraphics[width=\textwidth]{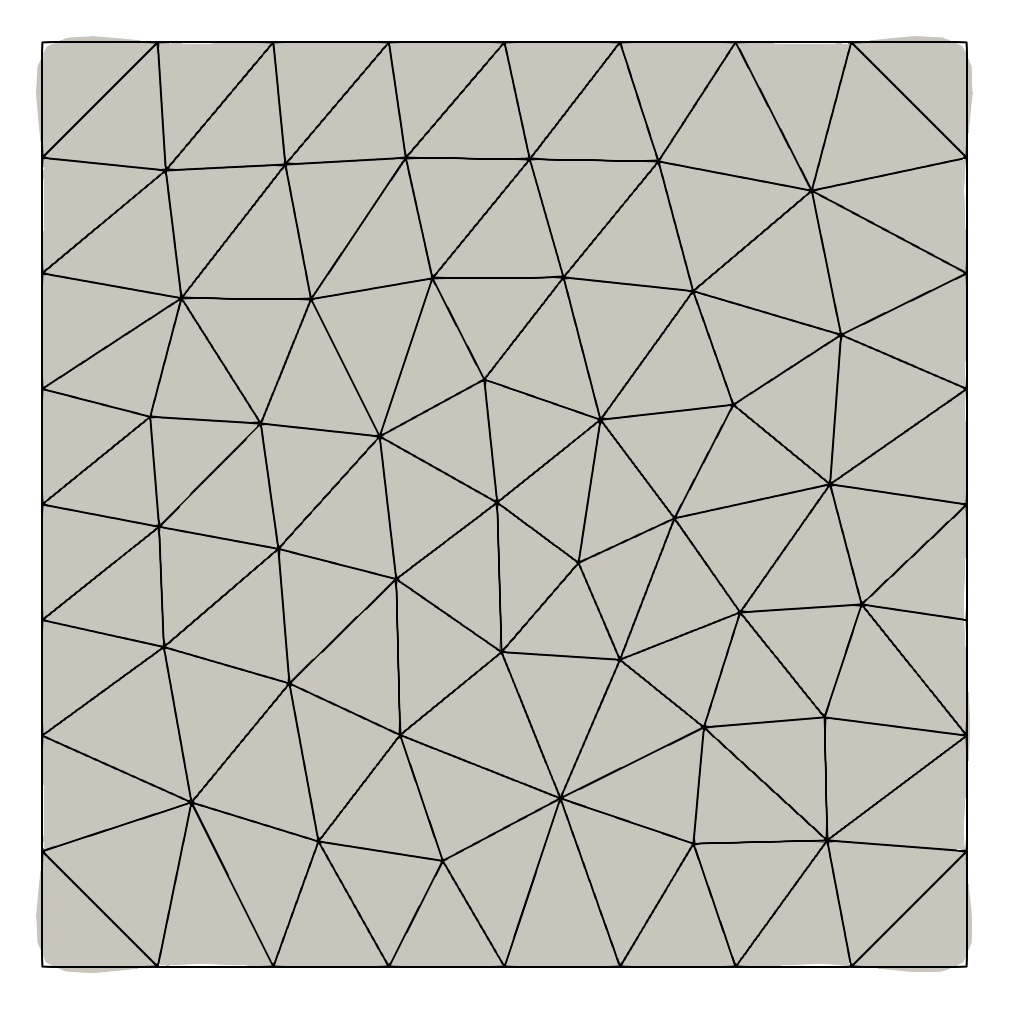}
        \caption{Wireframe of $\TFE$ (black) and domain of the Loop refinement of $\T_\lzero$ from Eq.~\eqref{mesh_construct} (grey). Corners are represented better at the cost of some overshoot.}
        \label{subfig:Loop_approximated_physical_domain}
    \end{subfigure}
    \caption{Approximation of the physical domain $\M$ through Loop subdivision.}
\end{figure}

To perform simulations with our subdivision $k$-form spaces, we need to construct suitable computational domains.
The domain of the system under consideration is typically given as a manifold $\M$ on which the partial differential equation must hold.
In standard FE methods, the domain $\M$ is approximated by a mesh $\TFE \approx \M$.
If we directly used the mesh $\TFE$ as the initial mesh $\T_{\lzero}$ to construct a subdivision mesh hierarchy, the finest mesh $\T_\L$ would typically no longer resemble $\M$ closely enough to be considered a good approximation because Loop subdivision changes the shape of $\T_{\lzero}$. Loop subdivision usually shrinks $\T_{\lzero}$ (cf. Figure~\ref{subfig:Loop_refined_physical_domain}) since the subdivision rules are convex linear combinations of coarse vertices to obtain finer vertices.

To overcome this challenge, Appendix~\ref{app:domain_approximation} describes an algorithm to obtain an initial mesh $\T_{\lzero}$ such that
the mesh
\begin{equation}\label{mesh_construct}
   \AopLoop{\lzero}{\L} \; \T_{\lzero} \eqqcolon \T^{\Loop}_\L  \approx \M
\end{equation}
can be considered a satisfying approximation of the physical domain $\M$ in an appropriate sense, see Figure~\ref{subfig:Loop_approximated_physical_domain}. The algorithm is based on a least squares approximation of the vertex positions of a fine mesh that resembles the domain sufficiently well. The resulting domain approximation method can be interpreted as a projection of the coordinate function into the subdivision $0$-form space $\SDFspace{0}{\lzero}{\L}$ with a lumped mass matrix. A detailed description of the algorithm can be found in Appendix~\ref{app:domain_approximation}.

We will discuss the impact of the domain approximation on the overall approximation performance of the subdivision $k$-form spaces in Section~\ref{sec:numerics}.

\subsection{Matrix assembly for Maxwell's eigenvalue problem}
\label{subsec:assembly}

This section introduces the Maxwell eigenvalue test case, a popular benchmark for testing the preservation of the de Rham complex.
For now, we focus on the assembly of the FE matrices of the test case. The results of our simulations will be discussed and interpreted in Section~\ref{subsec:maxwell_EV_simulation}.

\subsubsection{The weak form of the Maxwell eigenvalue problem}
\label{sec:max_EV_problem}

Let $\M = (0, \pi)^2 \subset \mathbb{R}^2$ be a two-dimensional region.
The weak formulation of the Maxwell eigenvalue problem is given by: Find an eigenvalue $\lambda_i \in \mathbb{R}$ and
its 1-form eigenmode $u \in \mathring{\mathrm{V}}^1$ such that
\begin{equation}
    \label{eq:weak_maxwell}
    ( \extd u, \, \extd v)_\M =  \lambda_i ( u, \, v)_\M \qquad \forall v \in \mathring{\mathrm{V}}^1,
\end{equation}
where $\mathring{\mathrm{V}}$ is a suitable infinite-dimensional function space that satisfies the zero trace boundary condition, i.e. $\trace \, u = 0$, cf. Appendix~\ref{app:vanishing_boundary_complex}. The analytical solution of this equation has the eigenvalues
\begin{equation}
    \label{eq:continuous_eigenvalues}
    \lambda_i = m_i^2 + n_i^2,
\end{equation}
where $m_i$ and $n_i$ are positive integers (including $0$). To discretise the weak formulation in Eq.~\eqref{eq:weak_maxwell}, we seek an approximation $((\lambda_i)_h, u_h)$ of the eigenpair $(\lambda_i, u)$ in $\mathbb{R} \times \mathring{\mathrm{V}}_h^1$, where $\mathring{\mathrm{V}}_h^1 \subset \mathring{\mathrm{V}}^1$ is a suitable finite-dimensional subspace of $\mathring{\mathrm{V}}^1$.
The spectrum of the discretised system should converge towards the continuous spectrum in Eq.~\eqref{eq:continuous_eigenvalues} to yield physically meaningful results. In particular, we should not observe any spurious modes. 

As computational mesh, we use the triangulation $\T^{\Loop}_\L$ from
the previous Section~\ref{subsec:domain_approximation} that approximates the domain $\M$ ($\T^{\Loop}_\L \approx \M$) which we obtain via
Algorithm~\ref{algo:domain_approximation}.

\begin{remark}
    \label{remark:maxwell_tests_entire_complex}
    The Maxwell eigenvalue problem tests the entire discrete complex even though it seems to involve only $1$-forms and their derivative, cf.~\cite{Boffi.2010}. For this reason, we restrict ourselves to the case of $k=1$ when presenting the results of our simulations of the Maxwell eigenvalue problem in Section~\ref{subsec:maxwell_EV_simulation}.
\end{remark}

\subsubsection{Assembly of matrices}

The matrices resulting from assembling weak FE formulations of PDEs depend on the choice of the approximation space $\mathring{V}^1_h \subset \mathring{V}^1$.
For the case of the Maxwell eigenvalue problem in Eq.~\eqref{eq:weak_maxwell}, \cite{Arnold.FEEC} has shown that choosing $\mathring{V}^1_h\big(\T_\L) = \mathring{\NED}_1\big(\T_\L)$, i.e. the lowest-order N\'ed\'elec spaces with vanishing trace (cf. Appendix~\ref{app:vanishing_boundary_complex}), leads to stable and convergent numerical schemes that preserve important structures of the system in Eq.~\eqref{eq:weak_maxwell}, such as the analytical spectrum in Eq.~\eqref{eq:continuous_eigenvalues}.

Initially, we will assemble the matrices for $V^1_h$ and later enforce the boundary conditions by discarding basis functions as described in Appendix~\ref{app:vanishing_boundary_complex}. Most standard FE libraries have an implementation of $\FEECspace{1}{} = \NED_1$ elements that allows us to assemble the matrices that arise from the discretisation of the Maxwell eigenvalue problem in Eq.~\eqref{eq:weak_maxwell}, i.e.
\begin{equation}
    \label{eq:NED_matrices}
    \freeVec{\mathbf{M}}{1}{\L} \  \text{with} \  \big( \freeVec{\mathbf{M}}{1}{\L} \big)_{\coarseIdxone \coarseIdxtwo} = \big( \freeQ{\FEECbasis}{1}{\L}{\coarseIdxone}, \;  \freeQ{\FEECbasis}{1}{\L}{\coarseIdxtwo} \big)_{\T_\L}  \  \ \text{and} \  \ \freeVec{\mathbf{C}}{1}{\L} \ \text{with} \ \big( \freeVec{\mathbf{C}}{1}{\L} \big)_{\coarseIdxone \coarseIdxtwo} = \big( \extd \freeQ{\FEECbasis}{1}{\L}{\coarseIdxone}, \; \extd \freeQ{\FEECbasis}{1}{\L}{\coarseIdxtwo} \big)_{\T_\L},
\end{equation}
where $\freeQ{\FEECbasis}{1}{\L}{\coarseIdxone}$ denotes the $i^{\text{th}}$ basis function of $\NED_1$.

To construct corresponding matrices associated to the spaces of subdivision $1$-forms, we can use the assembled matrices $\freeVec{\mathbf{M}}{1}{\L}$ and $\freeVec{\mathbf{C}}{1}{\L}$, referred to in the following as \emph{FE matrices}.
That is, in the simplest case when no subdivision is performed, we recover the $\NED_1$ finite elements since $\NED_1\big(\T_{\L}\big) = \SDFspace{1}{\L}{\L}$ and, therefore, can use them directly.
To assemble the \emph{subdivision FE matrices} associated to $\SDFspace{1}{\l}{\L}$, we need to compute the entries of the subdivision mass matrix $\constrVec{\mathbf{M}}{1}{\l}{\L}$ and the subdivision curl-curl matrix $\constrVec{\mathbf{C}}{1}{\l}{\L}$ defined as:
\begin{equation}
    \big( \constrVec{\mathbf{M}}{1}{\l}{\L} \big)_{\coarseIdxone \coarseIdxtwo} \coloneqq \Big( \constrQ{\SDFbasis}{1}{\l}{\L}{\coarseIdxone}, \; \constrQ{\SDFbasis}{1}{\l}{\L}{\coarseIdxtwo} \Big)_{\T_\L} \qquad \text{and} \qquad \big( \constrVec{\mathbf{C}}{1}{\l}{\L} \big)_{\coarseIdxone \coarseIdxtwo} \coloneqq \Big( \extd \constrQ{\SDFbasis}{1}{\l}{\L}{\coarseIdxone}, \; \extd \constrQ{\SDFbasis}{1}{\l}{\L}{\coarseIdxtwo} \Big)_{\T_\L}.
\end{equation}

By expanding the basis functions in terms of the subdivision operator
$\Aop{1}{\l}{\L}$ using the corresponding subdivision matrix $\Amat{1}{\l}{\L}$,
we can compute the subdivision mass matrix $\constrVec{\mathbf{M}}{1}{\l}{\L}$ as follows:
\begin{align}
    \big( \constrVec{\mathbf{M}}{1}{\l}{\L} \big)_{\coarseIdxone \coarseIdxtwo} &= \Big( \constrQ{\SDFbasis}{1}{\l}{\L}{\coarseIdxone}, \; \constrQ{\SDFbasis}{1}{\l}{\L}{\coarseIdxtwo} \Big)_{\T_\L} = \Big( \Aop{1}{\l}{\L} \freeQ{\FEECbasis}{1}{\l}{\coarseIdxone}, \;  \Aop{1}{\l}{\L} \freeQ{\FEECbasis}{1}{\l}{\coarseIdxtwo} \Big)_{\T_\L} \\
    &= \Big( \sum_{\fineIdxone = 1}^{\sumdimlk{\L}{1}} \Amatij{1}{\l}{\L}{\fineIdxone}{\coarseIdxone} \freeQ{\FEECbasis}{1}{\L}{\coarseIdxone}, \; \sum_{\fineIdxtwo = 1}^{\sumdimlk{\L}{1}} \Amatij{1}{\l}{\L}{\fineIdxtwo}{\coarseIdxtwo} \freeQ{\FEECbasis}{1}{\L}{\coarseIdxtwo} \Big)_{\T_\L} \\
    &= \sum_{\fineIdxone = 1}^{\sumdimlk{\L}{1}} \sum_{\fineIdxtwo = 1}^{\sumdimlk{\L}{1}} \Amatij{1}{\l}{\L}{\fineIdxone}{\coarseIdxone} \; \big( \freeQ{\FEECbasis}{1}{\L}{\coarseIdxone}, \;  \freeQ{\FEECbasis}{1}{\L}{\coarseIdxtwo} \big)_{\T_\L} \; \Amatij{1}{\l}{\L}{\fineIdxtwo}{\coarseIdxtwo}.
\end{align}
This is equivalent to computing
\begin{equation}
    \label{eq:subdiv_mass_mat}
    \constrVec{\mathbf{M}}{1}{\l}{\L} = \big( \Amat{1}{\l}{\L} \big)^T \; \cdot \; \freeVec{\mathbf{M}}{1}{\L} \; \cdot \; \Amat{1}{\l}{\L}
\end{equation}
using the $\NED_1$ mass matrix $\freeVec{\mathbf{M}}{1}{\L}$ from Eq.~\eqref{eq:NED_matrices}. Analogously, the subdivision curl-curl matrix $\constrVec{\mathbf{C}}{1}{\l}{\L}$ can be assembled by computing
\begin{equation}
    \label{eq:subdiv_curl_mat}
    \constrVec{\mathbf{C}}{1}{\l}{\L} = \big( \Amat{1}{\l}{\L} \big)^T \; \cdot \; \freeVec{\mathbf{C}}{1}{\L} \; \cdot \; \Amat{1}{\l}{\L}.
\end{equation}
Note that we need to use the $1$-form subdivision matrix $ \Amat{1}{\l}{\L}$ even though $\extd \constrQ{\SDFbasis}{1}{\l}{\L}{\coarseIdxone} \in \SDFspace{2}{\l}{\L}$ is a $2$-form. The reason for that is described in Remark~\ref{remark:extd_k_form_refinement}.

Finally, we need to enforce the boundary conditions for the spaces $\SDFspace{1}{\l}{\L}$. According to Definition~\ref{def:subdiv_spaces_with_vanishing_trace}, discarding all basis functions associated to boundary edges is sufficient to constrain the space $\SDFspace{1}{\l}{\L}$ to $\SDFspaceZero{1}{\l}{\L}$. On a matrix level, this can be achieved in two ways. First, we can delete the rows and columns corresponding to the boundary edges from the matrices $\constrVec{\mathbf{M}}{1}{\l}{\L}$ and $\constrVec{\mathbf{C}}{1}{\l}{\L}$. This means that we remove the DoFs associated to the boundary edges. The second option is to set these rows and columns to $0$ except for a $1$ on the main diagonal. We will then solve for the boundary DoFs but they will turn out to have coefficients of $0$ (given a $0$ initial guess for the coefficients). In either case, we are able to restrict to the spaces $\SDFspaceZero{1}{\l}{\L}$ through simple modifications of the subdivision FE matrices.

\begin{remark}
  \label{remark:denser_matrices}
  We refer to Eqs.~\eqref{eq:subdiv_mass_mat} and~\eqref{eq:subdiv_curl_mat} as \emph{unrefinement}. Effectively, the standard FE matrices on level $\L$ are pulled back to level $\l$ using the subdivision matrices. As a result, the dimension of the eigenvalue problem for intermediate levels
  $\l$ is reduced from $\vert\K^1_\L\vert$ to $\vert\K^1_\l\vert$. Simultaneously, the density of the subdivision FE matrices increases because of the larger supports of the basis functions $\constrQ{\SDFbasis}{k}{\l}{\L}{\coarseIdxone}$. Benchmarking the Maxwell eigenvalue problem in Section~\ref{subsubsubsec:computational_time} shows that this trade-off of dimensionality versus density leads to a significant efficiency gain of our approach compared to lowest-order FEEC methods on $\T_{\L}$.
\end{remark}

Eqs.~\eqref{eq:subdiv_mass_mat} and~\eqref{eq:subdiv_curl_mat} show that, on a matrix level, the discrete Maxwell eigenvalue problem from Eq.~\eqref{eq:weak_maxwell} is given by the following generalized eigenvalue problem: Find $\lambda_i \in \mathbb{R}$ and a coefficient vector $\bar{u}_{\l} \in \mathbb{R}^{\sumdimlk{\l}{1}}$ such that
\begin{equation}
    \constrVec{\mathbf{C}}{1}{\l}{\L} \, \bar{u}_{\l} = \lambda_i \, \constrVec{\mathbf{M}}{1}{\l}{\L} \, \bar{u}_{\l}.
\end{equation}
Section~\ref{subsec:maxwell_EV_simulation} presents solutions of this system obtained using the sparse generalized eigenproblem solver from \cite{Hernandez:SLEPC}.

\begin{remark}
    \label{remark:extd_k_form_refinement}
    We want to be able to use as many algorithms that are provided by standard FE libraries as possible to make our implementation more performant and robust. For example, the matrices of the subdivision $k$-form spaces $\SDFspace{k}{\l}{\L}$ can usually be assembled in terms of their ambient FEEC spaces $\FEECspace{k}{\L}$ and the subdivision matrices like in Eqs.~\eqref{eq:subdiv_mass_mat} and~\eqref{eq:subdiv_curl_mat}.
    
    The assembly of inner products with derivatives has one additional subtlety that explains Eq.~\eqref{eq:subdiv_curl_mat}. Standard FE libraries like FEniCS \citep{Logg.2012} compute the derivative of $\constrVec{\omega}{k}{\l}{\L} \in \SDFspace{k}{\l}{\L}$ by
    \begin{equation}
    \begin{split}
    \extd \, \auxincl \, \constrVec{\omega}{k}{\l}{\L} &= \extd \, \sum_{\fineIdxone = 1}^{\sumdimlk{\L}{k}} \Big( \sum_{\coarseIdxone = 1}^{\sumdimlk{\l}{k}} \Amatij{k}{\l}{\L}{\fineIdxone}{\coarseIdxone} \freeQ{c}{k}{\l}{\coarseIdxone} \Big) \freeQ{\FEECbasis}{k}{\L}{\fineIdxone} \\
    &= \sum_{\fineIdxone = 1}^{\sumdimlk{\L}{k}} \Big( \sum_{\coarseIdxone = 1}^{\sumdimlk{\l}{k}} \Amatij{k}{\l}{\L}{\fineIdxone}{\coarseIdxone} \freeQ{c}{k}{\l}{\coarseIdxone} \Big) \, \extd \freeQ{\FEECbasis}{k}{\L}{\fineIdxone} = \constrVec{\omega}{k+1}{\l}{\L},
    \end{split}
    \end{equation}
where the derivative operator acts on the basis functions $\freeQ{\FEECbasis}{k}{\L}{\fineIdxone}$ of the ambient space $\FEECspace{k}{\L}$. Note that the resulting {$(k+1)$}-form $\constrVec{\omega}{k+1}{\l}{\L}$ and $\extd \freeQ{\FEECbasis}{k}{\L}{\fineIdxone}$ are still related by the $k$-form subdivision matrix $\Amatij{k}{\l}{\L}{\fineIdxone}{\coarseIdxone}$. In fact, by Lemma~\ref{lemma:d_tilde_commuted_with_d} and the identity in Theorem~\ref{theorem:consistency_of_BE_op}, 
\begin{equation}
  \extd \, \auxincl \, \constrVec{\omega}{k}{\l}{\L} = \auxincl \, \dualAop{k+1}{\l}{\l}{\L} \, \extd \, \freeVec{\omega}{k}{\l} = \iota \, \primalAop{k+1}{\l}{\l}{\L} \, \extd \, \freeVec{\omega}{k}{\l},
\end{equation} 
which implies
\begin{equation}
    \label{eq:A_k_as_coordinates_for_mathcal_A_k+1}
    \primalAop{k+1}{\l}{\l}{\L} \Big( \sum_{\coarseIdxone = 1}^{\sumdimlk{\l}{k}} \freeQ{c}{k}{\l}{\coarseIdxone} \, \extd \freeQ{\FEECbasis}{k}{\l}{\coarseIdxone} \Big) = \sum_{\fineIdxone = 1}^{\sumdimlk{\L}{k}} \Big( \sum_{\coarseIdxone = 1}^{\sumdimlk{\l}{k}} \Amatij{k}{\l}{\L}{\fineIdxone}{\coarseIdxone} \freeQ{c}{k}{\l}{\coarseIdxone} \Big) \, \extd \freeQ{\FEECbasis}{k}{\L}{\fineIdxone}.
\end{equation}
By analogy with the action of the subdivision operator in Eq.~\eqref{equ:primalAop}, this means that the coordinate expression of $\primalAop{k+1}{\l}{\l}{\L}$ with respect to $\extd \freeVec{\FEECbases}{k}{\l}$ and $\extd \freeVec{\FEECbases}{k}{\L}$ is given by $\Amat{k}{\l}{\L}$. We can think of that as an equivalent condition to the one presented in Lemma~\ref{lemma:compatibility_condition_in_coordinates}.
\end{remark}

\section{Numerical experiments}
\label{sec:numerics}

This section presents numerical results of simulations using our subdivision $k$-form spaces.
We will first study the projection errors and convergence behaviour under uniform refinement of the subdivision $k$-form spaces.

Second, the simulations verify that the subdivision $k$-form spaces preserve the de Rham complex under discretisation. To this end, we apply a standard benchmark test case, namely the Maxwell eigenvalue problem introduced in Section~\ref{sec:max_EV_problem}, to illustrate
structure preservation that manifests itself by an accurate representation of the discrete eigenvalue spectrum of this test case.

\subsection{Approximation of $k$-forms}
\label{subsec:projection}

In this section we explore the capability of subdivision $k$-form spaces $\SDFspace{k}{\l}{\L}$ to approximate continuous differential forms. To this end, let us consider the following continuous differential $k$-forms $\freeVec{\omega}{k}{}$ for $k \in \{0,1,2\}$:
\begin{align}
    \label{eq:differential_forms_for_projection}
    \freeVec{\omega}{0}{}(x_1, x_2) = \freeVec{\omega}{2}{}(x_1, x_2) &= \sin(4 \pi x_1) + e^{2x_2}, \\
    \freeVec{\Vec{\omega}}{1}{}(x_1, x_2) &= \big[ \sin(2\pi x_1) \cos(2\pi x_1), \;\; -\cos(2\pi x_1) \sin(2\pi x_2) \big]^T,
\end{align}
where $\freeVec{\Vec{\omega}}{1}{}$ denotes the vector proxy of the $1$-form $\freeVec{\omega}{1}{}$.  Note that we use the same scalar function for the continuous 0- and 2-forms because this will allow us to better compare the convergence of the corresponding subdivision $k$-forms towards their continuous counterparts.

The approximations $\constrVec{\omega}{k}{\l}{\L}$ of the continuous $k$-forms $\freeVec{\omega}{k}{}$ are obtained by projecting the latter into our subdivision $k$-form spaces, i.e., by solving
\begin{equation}
    \big( \constrVec{\omega}{k}{\l}{\L}, \mu \big)_{\T_\L} = \big( \freeVec{\omega}{k}{}, \mu\big)_{\T_\L} \qquad \forall \mu \in \SDFspace{k}{\l}{\L}.
\end{equation}
The resulting projection error $\constrVec{e}{k}{\l}{\L}$ can be measured by computing the $L^2$ error
\begin{equation}\label{aproxerr}
    \constrVec{e}{k}{\l}{\L} \coloneqq \big\vert \big\vert \, \constrVec{\omega}{k}{\l}{\L} - \freeVec{\omega}{k}{} \,\big\vert \big\vert_{L^2} = \sqrt{\big( \constrVec{\omega}{k}{\l}{\L} - \freeVec{\omega}{k}{}, \; \constrVec{\omega}{k}{\l}{\L} - \freeVec{\omega}{k}{}\big)_{\T_\L}} ,
\end{equation}
with $\constrVec{\omega}{k}{\l}{\L}  \in \SDFspace{k}{\l}{\L} $, evaluated on the finest mesh $\T_\L$. In the following, we refer to the projection error in Eq.~\eqref{aproxerr} as the \emph{approximation error}. The following convergence study compares the approximation errors for varying parameters $\l$ and $\L$ across the form degrees $k$. The simulations are conducted with a fixed initial mesh $\T_{\lzero}$.

\begin{figure}[th!]
    \centering
    \begin{minipage}[t]{0.48\textwidth}
      \centering
      \includegraphics[trim=0.0cm 0.0cm 1.6cm 1.4cm, clip, width=\textwidth]{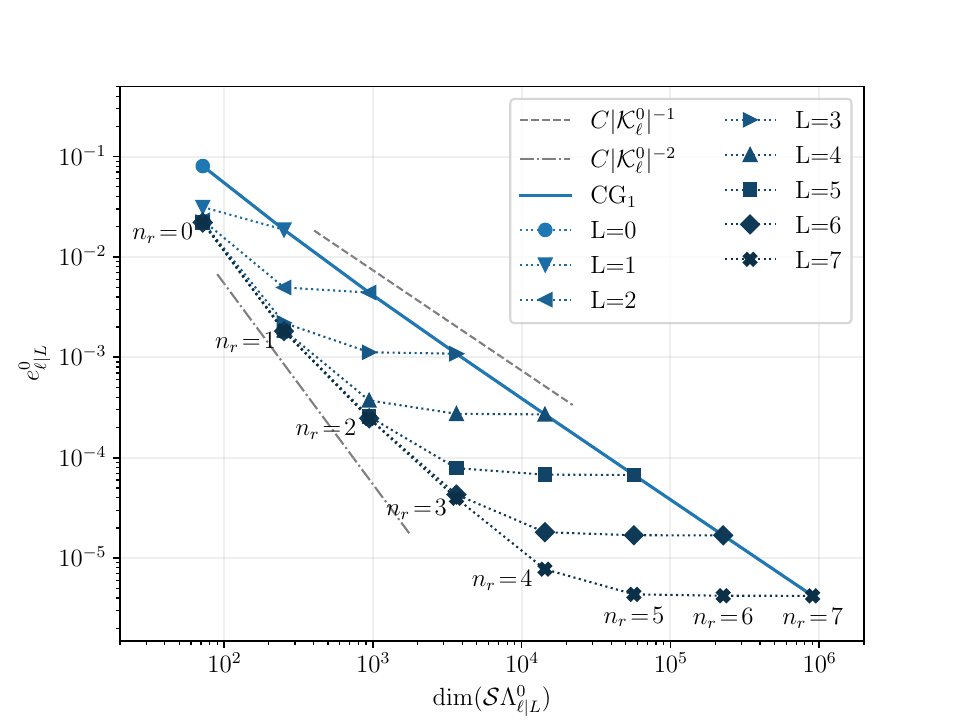}
      \captionsetup{width=\textwidth}
      \caption{Approximation error of $\freeVec{\omega}{0}{}$ from Eq.~\eqref{eq:differential_forms_for_projection} when projected into $\SDFspace{0}{\l}{\L}$ for fixed finest levels $\L$. The refinement number $\nr$ parametrises the nested spaces (according to Proposition~\ref{prop:uniform_spaces_nested})
      along a hierarchy. As an example, we annotated the hierarchy with $\L = 7$.}
      \label{fig:approx_errors_k_0_with_nr}
    \end{minipage}%
    \hspace{0.02\textwidth}
    \begin{minipage}[t]{0.48\textwidth}
      \centering
      \includegraphics[trim=0.0cm 0.0cm 1.6cm 1.4cm, clip, width=\textwidth]{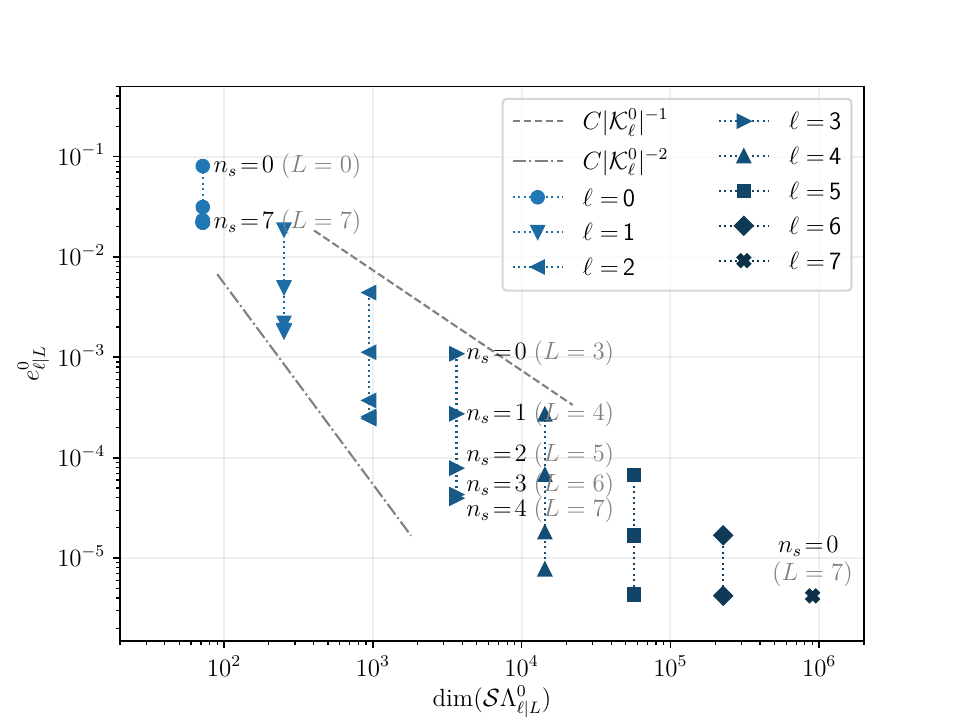}
      \captionsetup{width=\textwidth}
      \caption{Approximation error of $\freeVec{\omega}{0}{}$ from Eq.~\eqref{eq:differential_forms_for_projection} when projected into $\SDFspace{0}{\l}{\L}$ for fixed initial levels $\l$ and thus constant numbers of DoFs (cf. Prop~\ref{prop:subspace_of_Vk}). We chose the maximum value of $\L$ to be $7$. Varying the smoothing number $\ns \in \{ 0, \dots, 7-\l \}$ moves along the vertical lines and thus between hierarchies with different $\L \leq 7$.
      }
      \label{fig:approx_errors_k_0_with_ns}
    \end{minipage}%
\end{figure}

\begin{figure}[th!]
    \centering
    \begin{minipage}[t]{0.48\textwidth}
        \centering
        \includegraphics[trim=0.0cm 0.0cm 1.6cm 1.4cm, clip, width=\textwidth]{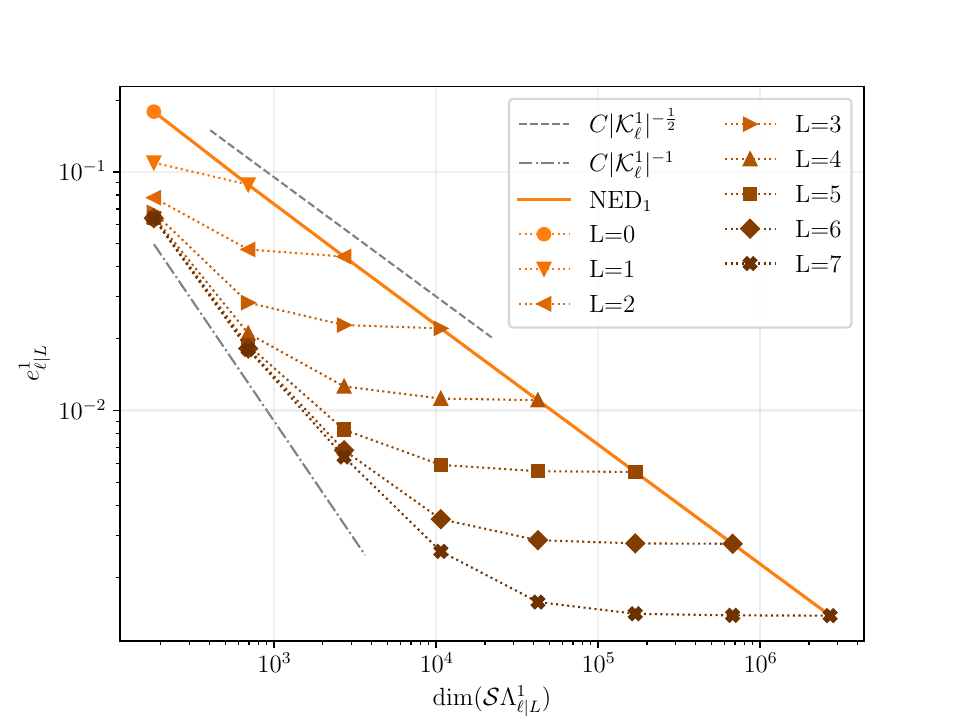}
        \caption{Approximation error of $\freeVec{\omega}{1}{}$ from Eq.~\eqref{eq:differential_forms_for_projection} into $\SDFspace{1}{\l}{\L}$ for fixed $\L$.}
        \label{fig:approx_errors_k_1}
    \end{minipage}%
    \hspace{0.02\textwidth}
    \begin{minipage}[t]{0.48\textwidth}
        \centering
        \includegraphics[trim=0.0cm 0.0cm 1.6cm 1.4cm, clip, width=\textwidth]{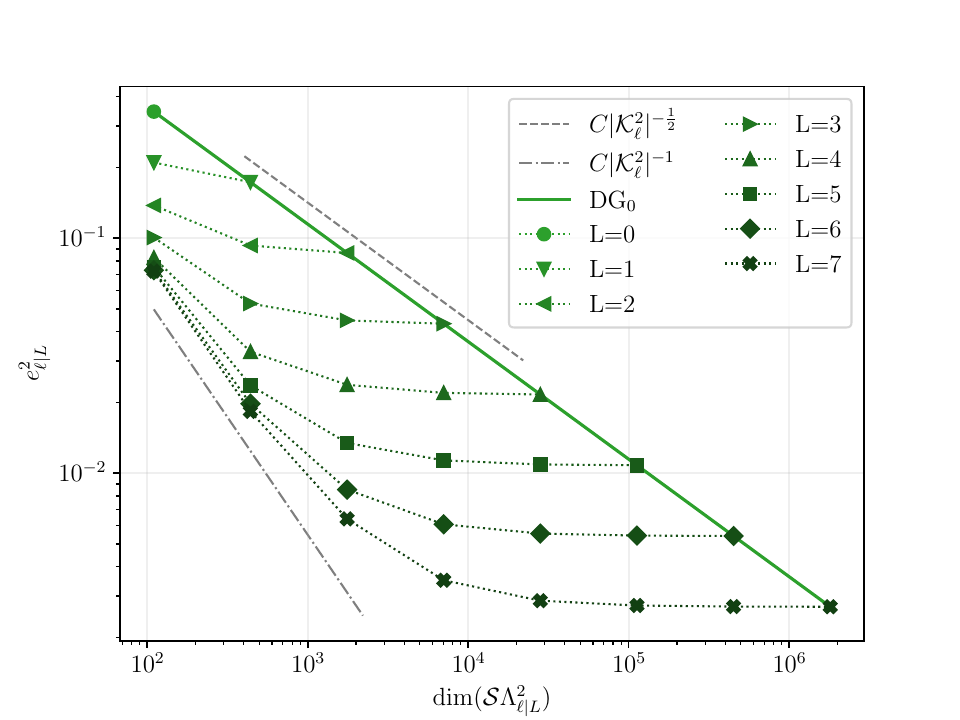}
        \caption{Approximation error of $\freeVec{\omega}{2}{}$ from Eq.~\eqref{eq:differential_forms_for_projection} into $\SDFspace{2}{\l}{\L}$ for fixed $\L$.}
        \label{fig:approx_errors_k_2}
    \end{minipage}%
\end{figure}

Figures~\ref{fig:approx_errors_k_0_with_nr},~\ref{fig:approx_errors_k_0_with_ns},~\ref{fig:approx_errors_k_1}, and~\ref{fig:approx_errors_k_2} show the observed approximation errors of the subdivision $k$-form spaces $\SDFspace{k}{\l}{\L}$ for $k=0$, $k=1$, and $k=2$, respectively, over the parameter values $\L\in\{ 0, \hdots, 7\}$ and $\l \in \{0, \hdots, \L\}$. The approximation errors of the FEEC spaces $\FEECspace{k}{\L} = \SDFspace{k}{\L}{\L}$ are represented by solid lines. The figures confirm the expected convergence rates under function space refinement for the FEEC spaces $\FEECspace{k}{\L}$. The errors obtained using subdivision $k$-forms with constant $\L$ and variable $\l$ are represented by dotted lines. Moving from a data point to its left or right neighbour along these curves represents decreasing or increasing $\l$ by $1$, respectively.

The number of DoFs of both FEEC and subdivision $k$-form spaces scales exponentially with $\l$ as indicated by the equidistance of the markers along the x-axis.
All values obtained from using our subdivision $k$-forms lie to the left and below the FEEC reference curve. This indicates that for any parameter set we explored, the subdivision methods either (i) have a lower approximation error with the same number of DoFs or (ii) require a lower number of DoFs while maintaining a comparable accuracy.

To understand the different kinds of convergence exhibited by the subdivision $k$-form spaces $\SDFspace{k}{\l}{\L}$, we are going to revisit the idea of parameterising the spaces as in
Definition~\ref{def:refinement_and_coarsening_numbers}, using the refinement number $\nr = \l$ and the smoothing number $\ns = \L - \l$. Figures~\ref{fig:approx_errors_k_0_with_nr} and~\ref{fig:approx_errors_k_0_with_ns} display examples of how both numbers count across the hierarchy. In the following, we first focus on the approximation error curves of the $0$-form spaces $\SDFspace{0}{\l}{\L}$ in Figures~\ref{fig:approx_errors_k_0_with_nr} and~\ref{fig:approx_errors_k_0_with_ns} and then extend the argument to $k \!= \! 1$ and $ k \!=\! 2$. Note that both figures contain the same data points but we will look at them from two different perspectives, namely by varying $\nr$ and $\ns$, respectively.

\paragraph{Approximation errors along a hierarchy with fixed $\L$.}

The first perspective on the hierarchies of approximation spaces $\SDFspace{0}{\l}{\L}$ is to consider $\L$ fixed and only $\l$ as variable. In this case, $\nr + \ns = \L$, and we can parameterise the entire hierarchy in terms of $\nr$ using $\SDFspace{0}{\nr}{\L}$. As illustrated in Figure~\ref{fig:approx_errors_k_0_with_nr}, varying the refinement number $\nr$ means moving up and down along the hierarchy in the sense of Proposition~\ref{prop:uniform_spaces_nested}, that is, the number of DoFs scales with $\nr$.

This perspective allows us to describe the shape of the approximation error curve in Figure~\ref{fig:approx_errors_k_0_with_nr}. For the annotated hierarchy with $\L = 7$, we observe almost quadratic convergence until about $\nr = 4$. As $\nr$ increases beyond $4$, the approximation error stagnates and thus the convergence rate plummets. We refer to the refinement number at which this shift happens as the \emph{critical refinement number} $\nrcrit$. The next paragraph will show that the value of $\nrcrit$ for a given $\L$ is tied to the smoothing number.

\paragraph{Determining the critical smoothing number $\nscrit$.}
\label{par:critical_ns}

Figure~\ref{fig:approx_errors_k_0_with_ns} highlights the second perspective on the spaces $\SDFspace{0}{\l}{\L} = \SDFspace{0}{\l}{\l+\ns}$, where we parametrise the spaces in terms of a fixed level $\l$ and a variable smoothing number $\ns$. The fixed $\l$ implies that the dimension of the spaces $\SDFspace{0}{\l}{\l+\ns}$ is constant according to Proposition~\ref{prop:subspace_of_Vk}. For this reason, the approximation error curves in Figure~\ref{fig:approx_errors_k_0_with_ns} become vertical lines. Every marker along a given error curve corresponds to a value of $\ns$ (see annotations). Note that $\ns$ is determined by $\L$ because it is defined as $\ns=\L-\l$, see Definition~\ref{def:refinement_and_coarsening_numbers}. In other words, since $\l$ is fixed, to increase $\ns$ we have to increase $\L$. The maximum level $\L$ we used for simulations is $7$. Therefore, the data points in Figures~\ref{fig:approx_errors_k_0_with_nr} and~\ref{fig:approx_errors_k_0_with_ns} are exactly the same.

For the annotated sequence with $\l=3$ (right-pointing triangles) in Figure~\ref{fig:approx_errors_k_0_with_ns}, we observe that the error decreases by around 1.5 orders of magnitude through smoothing steps alone. We also note that the approximation error converges to a non-zero value. The reason for that behaviour is that additional smoothing steps only let the basis functions converge closer towards the limit basis functions without introducing additional DoFs. Thus, the approximation error of $\SDFspace{0}{\l}{\L}$ converges towards the approximation error of the space $\SDFspace{0}{\l}{\infty}$ spanned by the smooth limit basis functions (cf. Remark~\ref{remark:limit_basis_functions}).

Figure~\ref{fig:approx_errors_k_0_with_ns} shows that the approximation error decreases significantly until around $\ns = 3$ and stagnates after that. That is consistent with the findings of \cite{Goes.2016}, who found that subdividing for more than 3 levels does not significantly increase the approximation quality. Hence, we find that the \emph{critical smoothing number} for $k\!=\!0$ is $\nscrit = 3$. This implies that the critical refinement number from the paragraph above satisfies $\nrcrit = \L-\nscrit$.

\paragraph{Changing convergence rates along a hierarchy.}

The critical smoothing number $\nscrit = 3$ allows us to understand the observed convergence rates of the approximation errors in Figure~\ref{fig:approx_errors_k_0_with_nr}. The observation of almost quadratic convergence until about $\nrcrit = \L- \nscrit = 4$ can be explained by the fact that, for all spaces with $\nr \leq \nrcrit$, the smoothing number satisfies $\ns \geq \nscrit = 3$. Thus, the basis functions can be considered sufficiently accurate
approximations of the limit basis functions and the convergence rate is enhanced by the subdivision-induced regularity, see Remark~\ref{remark:subdiv_smoothness}.

For $\nr \geq \nrcrit$ and fixed $\L$, the basis functions of the spaces $\SDFspace{0}{\nr}{\L}$ can no longer be considered good approximations of the limit basis functions since $\ns < \nscrit$. This is why the convergence rate begins to stagnate. In the resulting flat parts of the error curves, the loss of subdivision-induced regularity is counteracted by the increasing number of DoFs.

The approximation error curves with $\L > 7$ will look similar to the ones in Fig.~\ref{fig:approx_errors_k_0_with_nr}. Initially for small $\nr$, the approximation error converges at an enhanced rate because $\ns > \nscrit$. Since the last space of the hierarchy, $\SDFspace{k}{\L}{\L} = \FEECspace{k}{\L}$, is a FEEC space, the approximation error curves need to return to the regular FEEC errors for $\ns < \nscrit$. This leads to the flat part of the curve. The shift between these two dominant effects occurs at $\ns = \nscrit$.

\paragraph{Convergence rates for fixed $\ns$.}

Picking sequences of spaces $\SDFspace{0}{\l}{\l + \ns}$ that share the same $\ns$ reveals that the convergence rates of the approximation error over the number of DoFs increase as $\ns$ increases. To see this, consider first the case $\ns = 0$ which corresponds to the FEEC reference curve that has a linear convergence rate in the number of DoFs. On the other end of the spectrum, spaces with $\ns \geq 3$ feature an almost quadratic convergence rate due to the subdivision-enhanced regularity. The cases of $0 < \ns < \nscrit$ end up between these two cases.

\bigskip

\noindent All of the observations for $k\!=\!0$ also apply to the spaces of subdivision $1$- and $2$-forms. Figures~\ref{fig:approx_errors_k_1} and~\ref{fig:approx_errors_k_2} indicate that the critical smoothing numbers for $k\!=\!1$ and $k\!=\!2$ are $\nscrit=4$ and $\nscrit=5$, respectively. As for the $0$-form spaces, we observe that the convergence rates for $1$- and $2$-form spaces with $\ns \geq \nscrit$ are almost doubled, from $\tfrac{1}{2}$ (dashed line) to almost $1$ (dash-dotted line).

The following remark discusses implications that the above observations have on the practical implementation of the subdivision $k$-form spaces $\SDFspace{k}{\l}{\L}$.

\begin{remark}
  \label{remark:practical_implications}
  In practice, the finest level $\L$ will be fixed before the simulation so that the hierarchy of meshes and subdivision operators can be assembled as a precomputation. It is possible to increase $\L$ during the simulation but it would come at an additional computational cost since we would need to build the FE matrices in Eq.~\eqref{eq:NED_matrices} on the new finest level $\L$ again.

  The previous discussion shows that enhanced convergence rates can be observed for the spaces $\SDFspace{k}{\l}{\L}$ with $\lzero \leq \l \leq \L - \nscrit$. Hence, if we have some a priori insight about the finest scales of the problem under consideration, we can choose $\l$ and $\L = \l + \nscrit$ accordingly.
\end{remark}

\paragraph{Reduction in DoFs by coarsening the approximation spaces.}

The stagnation of the approximation errors for spaces with $\nr > \nrcrit$ (i.e. we are in the flat parts of the convergence curves) provides an opportunity to reduce simulation DoFs relative to a given FEEC space $\FEECspace{k}{\L}$, given that the desired solution is smooth enough (on the scale of the mesh). In the sense of Proposition~\ref{prop:uniform_spaces_nested}, this means we can move from a super- to a subspace with less DoFs without increasing the additional approximation error significantly.

Consider two simulations, one on $\T_\L$ using $\FEECspace{k}{\L}$ and one on $\T_{\L-\nscrit}$ using $\SDFspace{k}{\L-\nscrit}{\L}$. The discussion above implies that these two simulations have similar accuracies. However, the simulation using $\SDFspace{k}{\L-\nscrit}{\L}$ has only a fraction of the number of DoFs compared to $\FEECspace{k}{\L}$ because the mesh that controls the DoFs is $\nscrit$ times coarser. For example, for $k=2$, we only need $(1/4)^\nscrit$ times the number of DoFs. We observe similar scaling for $k=0$ and $k=1$. For instance, Section~\ref{subsec:eigenmodes} presents two $1$-form eigenmodes obtained with $\ns = 4$ levels of coarsening. This represents a significant potential for saving of computational resources. We will investigate these aspects in more detail in Section~\ref{subsubsubsec:computational_time}.

\FloatBarrier

\subsection{Simulations of the Maxwell problem}
\label{subsec:maxwell_EV_simulation}

This section employs the subdivision $k$-form spaces we constructed in the previous sections to simulate the Maxwell eigenvalue test case given in, e.g., \cite{Arnold.FEEC, Buffa.2011_IGA_with_splines}. The purpose of this example is not to assess the performance of the method for general electromagnetic simulations, but rather to provide a structure-sensitive test of the compatibility and exactness of the proposed subdivision $k$-form spaces on a simply connected domain. From a structure-preservation point of view, the crucial result is the absence of spurious modes. After confirming that, we will investigate the convergence of the eigenvalues under uniform mesh refinement, in particular with regards to the domain approximation described in Section~\ref{subsec:domain_approximation}.

The following discussions feature only approximations of $1$-forms. Recall that the Maxwell eigenvalue problem still tests the entire complex, cf. Remark~\ref{remark:maxwell_tests_entire_complex}.

\subsubsection{Convergence of eigenvalues with domain approximation}
\label{sec:geodomainerror}

As discussed in Section~\ref{subsec:domain_approximation}, the mesh $\T^{\Loop}_\L$, obtained by refining the adjusted mesh $\T_\lzero$ according to Eq.~\eqref{mesh_construct}, is a good approximation of the domain $\M$ but does not perfectly approximate it. We call the resulting error the \emph{geometric domain error}. As it turns out, this error has an impact on the accuracy of the computed eigenvalues (EV) and on their convergence behaviour.

To capture the impact of this geometric domain error, there are two cases to be distinguished for our convergence analysis: (i) we construct one mesh hierarchy with fixed initial and finest levels $\lzero$ and $\L$ and then study convergence towards the analytical eigenvalues when refining the intermediate levels $\l$; or
(ii) we fix only the finest level $\L$ and construct a new mesh hierarchy from $\l$ to $\L$ for every $\l$. Let us consider both cases in more detail.

\paragraph{Case (i).}
In case (i), always starting the hierarchy from $\lzero$
implies that $\T_{\lzero}$ depends only on $\L$, according to Algorithm~\ref{algo:domain_approximation},
and therefore $\T_{\lzero}$ is the same for all spaces $\SDFspace{k}{\l}{\L}(\T_{\lzero})$. In the following discussion, we will use $\T_\lzero$ whenever we consider case (i) and keep the dependency on $\T_\lzero$ explicit.
When increasing $\L$, the basis functions $\constrQ{\SDFbasis}{k}{\l}{\L}{\coarseIdxone}$
are smoothed out and gain subdivision-induced regularity but the approximation quality of the domain remains largely unchanged. In fact, we have
\begin{equation}
    \AopLoop{\lzero}{\L} \big( \T_{\lzero} \big) \not\to \M = [ 0, \pi]^2 \quad \text{as} \quad \L \to \infty,
\end{equation}
that is, even with increasingly fine domain meshes on level $\L$, the geometric domain error does not fully vanish.
This behaviour can be observed in Figure~\ref{fig:individual_eigenvalue}. The curves indicate that
with increasing $\L$ the simulated EV using the subdivision $1$-form spaces
$\SDFspaceZero{1}{\l}{\L}\big( \T_{\lzero} \big)$ converge to slightly different values as we refine the space, i.e. as $\l$ approaches $\L$.
To be more precise, for any $\L$, the approximations converge towards the eigenvalues obtained using
$\SDFspaceZero{1}{\L}{\L} \big( \T_{\lzero} \big) = \FEECspaceZero{1}{\L}$.

As such they are consistent with the finest FEEC approximation. Since we know that the latter converge to the correct spectrum on any simply connected domain, we can conclude that the observed small discrepancies in the computed eigenvalues in Figure~\ref{fig:individual_eigenvalue} from the value $\lambda_3 = 2$ are due to the geometric domain error, which leads to a slightly different analytical spectrum.

\smallskip

\paragraph{Case (ii).}
In case (ii), only the finest level $\L$ is fixed while, for every level $\l$, we use a different mesh hierarchy starting at $\l$ according to Algorithm~\ref{algo:domain_approximation}. We will denote the initial meshes of these different hierarchies by $\hat{\T}_\l$ to indicate that they are distinct from the meshes in the hierarchy in case (i) (i.e. $\hat{\T}_\l \neq \T_{\l}$ for any $0 < \l \leq \L$). With increasing values for $(\l, \L)$, the geometric domain error now becomes smaller and smaller since the finest mesh $\hat{\T}_\L$ approximates the domain $\Omega$  increasingly accurately, i.e.
\begin{equation}
    \AopLoop{\l}{\L} \big( \hat{\T}_\l \big) \to \Omega \quad \text{as} \quad \l, \L \to \infty.
\end{equation}
Therefore, to reduce the influence of the geometric domain error on the errors of the computed eigenvalues, we
investigate them here in case (ii) for the spaces $\SDFspaceZero{1}{\l}{\L} \big( \hat{\T}_\l \big)$. Using these spaces, we indeed observe a convergence of the eigenvalues against the corresponding analytical values, as shown in Figure~\ref{fig:individual_eigenvalue_with_domain_approx_on_every_level} for the third eigenvalue $\lambda_3=2$. A careful examination of our results reveals that the overall error is dominated by the geometry domain error and not by the actual approximation error. Therefore, we do not investigate the convergence rates of the eigenvalues in more detail. The quest to find better test cases where these two error contributions are decoupled is beyond the scope of this work.

\begin{figure}[!t]
    \centering
    \begin{minipage}[t]{.48\textwidth}
       \begin{center}
        \includegraphics[trim=0.8cm 0.0cm 1.6cm 1.4cm, clip, width=\textwidth]{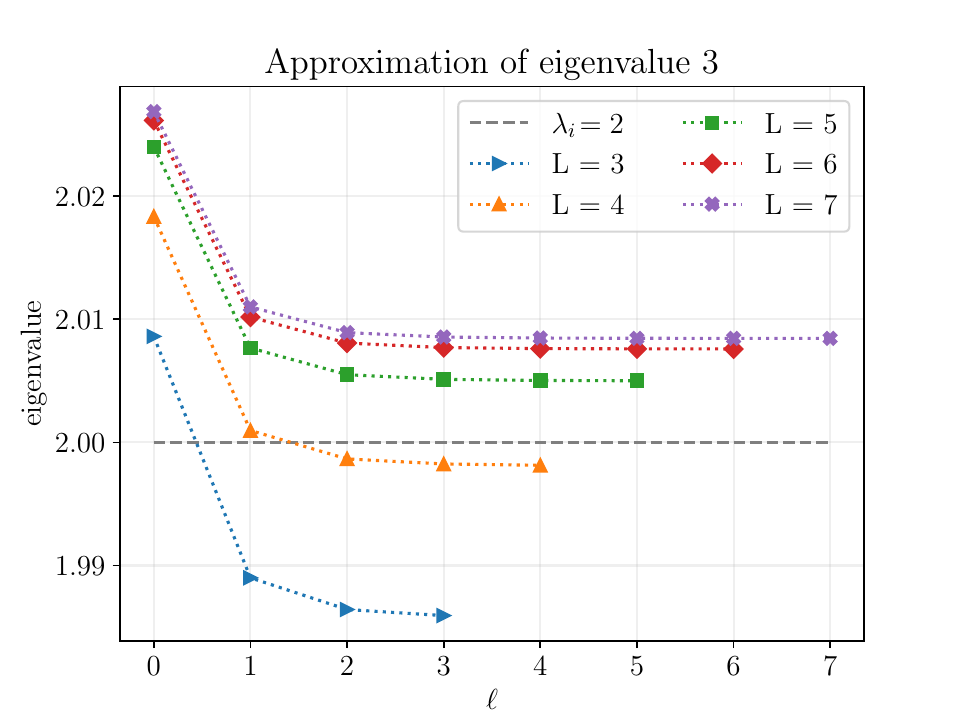}
       \end{center}
        \vspace{-8mm}
        \caption{Approximations of the eigenvalue $\lambda_3 = 2$ using $\SDFspaceZero{1}{\l}{\L}(\T_{\lzero})$ with fixed domain approximation for $(\lzero, \L)$
        (case (i) of Section~\ref{sec:geodomainerror}).
        }
        \label{fig:individual_eigenvalue}
    \end{minipage}
    \hfill
    \begin{minipage}[t]{0.484\textwidth}
        \begin{center}
        \includegraphics[trim=0.7cm 0.0cm 1.6cm 1.4cm, clip, width=\textwidth]{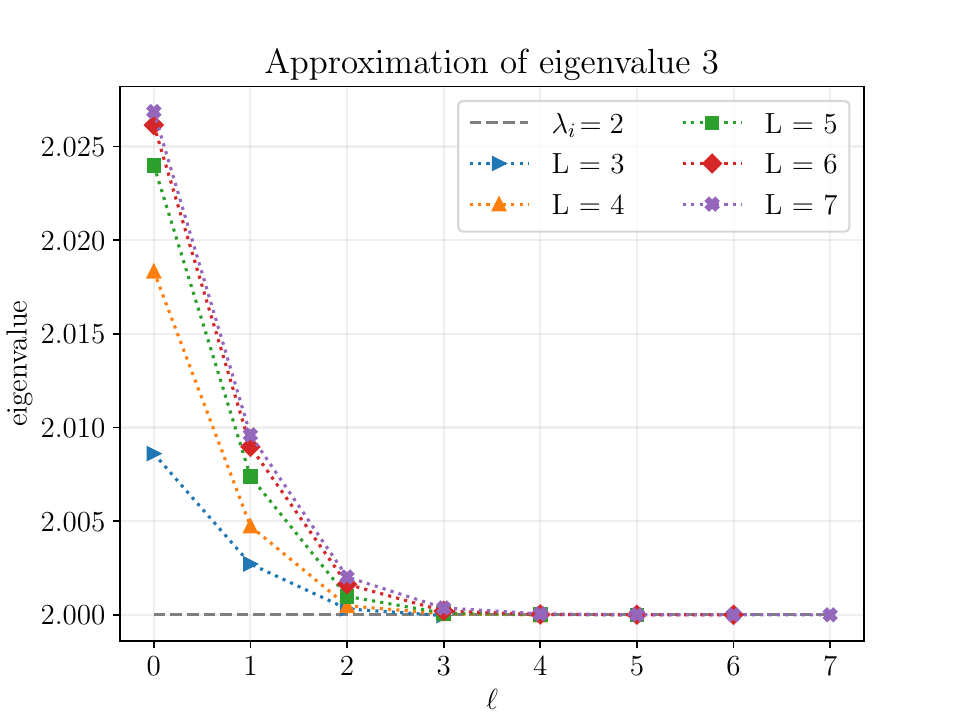}
    \end{center}
    \vspace{-8mm}
    \caption{Approximations of the eigenvalue $\lambda_3 = 2$ using $\SDFspaceZero{1}{\l}{\L}\big( \hat{\T}_\l \big)$ with
    domain approximation for every pair $(\l, \L)$ (case (ii) of Section~\ref{sec:geodomainerror}).
    }
    \label{fig:individual_eigenvalue_with_domain_approx_on_every_level}

    \end{minipage}
\end{figure}

\subsubsection{Representation quality of the discrete eigenvalue spectrum}

Let us next look at the discrete eigenvalue spectrum and compare it to the analytical values. Figure~\ref{fig:individual_spectra}
shows the first 50 discrete eigenvalues computed with our structure-preserving subdivision $k$-form spaces
$\SDFspaceZero{1}{\l}{\L}\big( \T_{\lzero} \big)$ (case (i)).
The results confirm that simulations based on subdivision $k$-forms reproduce the spectrum observed
for $\FEECspaceZero{1}{\L}$.

Hence, the match of discrete and analytic values in Figure~\ref{fig:individual_spectra} confirms the structure-preserving properties of the complex formed by the subdivision $k$-form spaces $\SDFspaceZero{k}{\l}{\L}\big( \T_{\lzero} \big)$ proven in Theorem~{\ref{theo:complex_with_vanishing_trace}}. This applies to both cases of refinement strategy; that is,
although the spaces $\SDFspaceZero{k}{\l}{\L}\big( \T_{\lzero} \big)$ of case (i) exhibit the above discussed geometric domain error, the computed eigenvalues still agree with the ones computed using $\FEECspaceZero{k}{\L}$.
Similarly, when approximating the domain using the strategy of case (ii), we observe convergence of the discrete EV spectrum towards the analytical eigenvalues computed on a square domain.

\begin{figure}[t]
        \centering
        \includegraphics[trim=0.6cm 0.0cm 1.0cm 1.4cm, clip, width=0.6\textwidth]{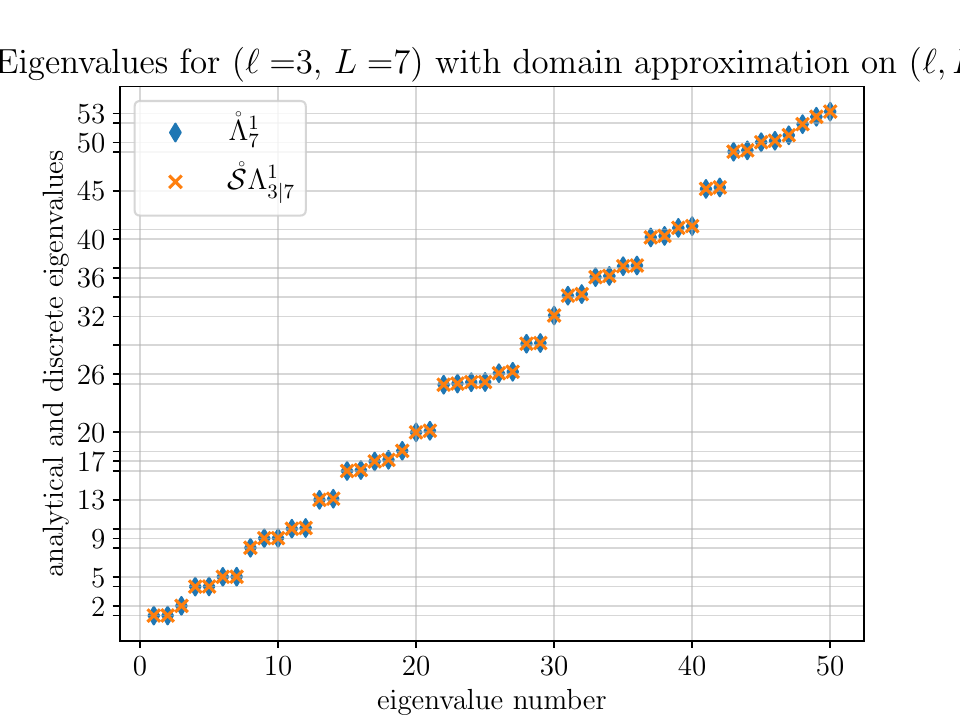}
        \caption{The first $50$ computed discrete EV $(\lambda_h)_i$ over eigenvalue number $i$.
        The grey horizontal lines indicate analytical EV on a square domain (cf. Eq.~\eqref{eq:continuous_eigenvalues}). The mesh hierarchy starts at $\T_{\lzero}$, i.e., we are in case (i) of Section~\ref{sec:geodomainerror}.
        }
        \label{fig:individual_spectra}
\end{figure}

\subsubsection{Subdivision-induced regularity of eigenmodes}
\label{subsec:eigenmodes}

This section investigates the approximation quality of the eigenmodes of the Maxwell eigenvalue problem.
We will only consider case (i), i.e. $\SDFspaceZero{k}{\l}{\L} \big(\T_\lzero\big)$, but similar results hold for case (ii) (not shown).
In the following, we omit the dependence on the mesh $\T_\lzero$ again.

Figures~\ref{fig:eigenmode_2} and~\ref{fig:eigenmode_58} contain plots of the eigenmodes corresponding to $\lambda_3 = 2$
and $\lambda_{52} = 58$, respectively, computed with coarse (left) and finer FEEC
spaces (right) and with the subdivision $k$-form space $\SDFspaceZero{1}{3}{7}$ (centre).
Comparing the three different figures illustrates the effects of adding subdivision to FEEC spaces
or, in other words, of using subdivision $k$-forms rather than standard FEEC ones.

\begin{figure}[!htb]

    \centering
    \hspace{0.00mm}
    \subcaptionbox{$\SDFspaceZero{1}{3}{3}$ ($\approx 10^4$ DoFs)}{%
      \includegraphics[width=0.29\textwidth]{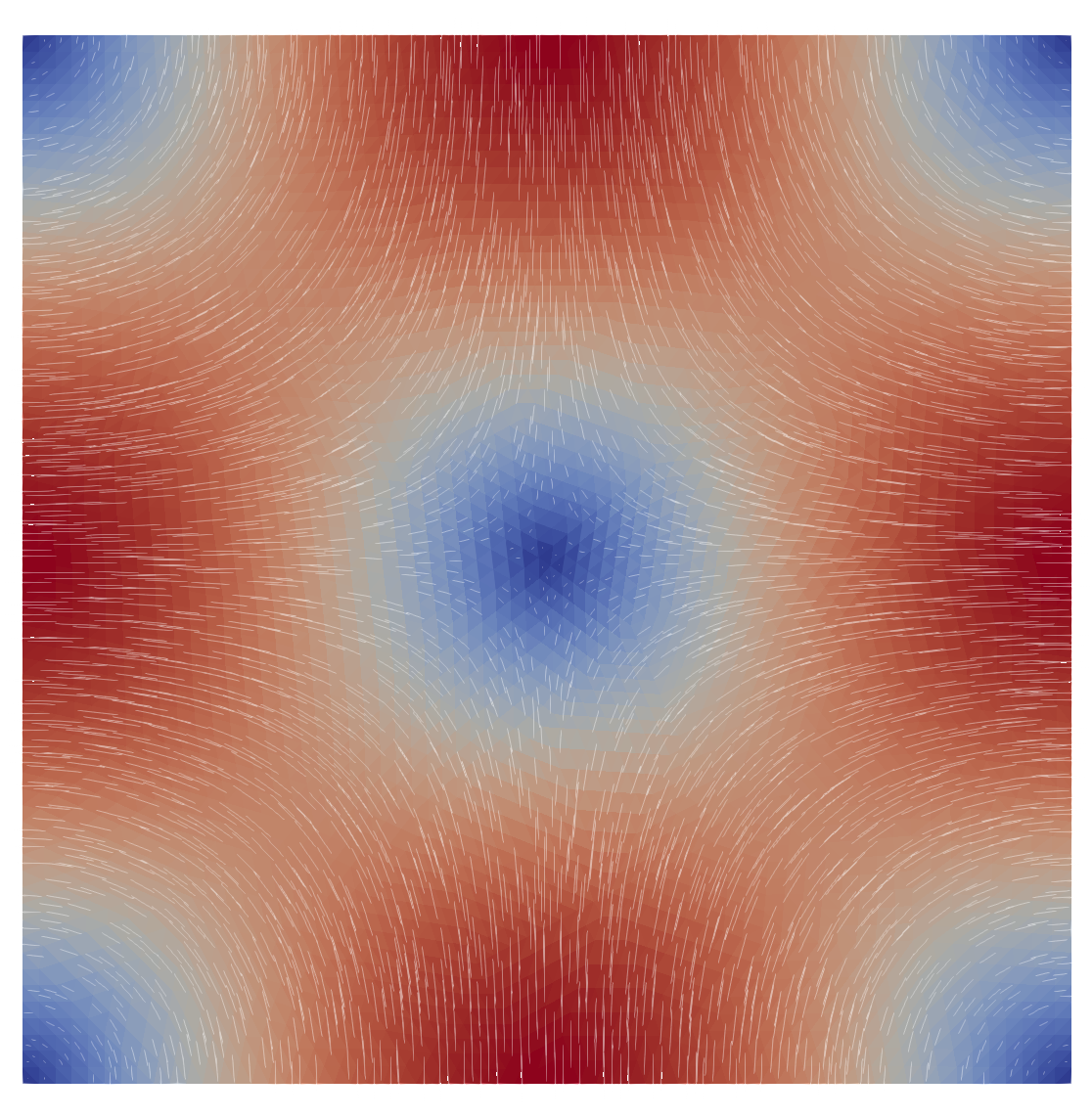}%
    }
    \!\!\!
    \subcaptionbox{$\SDFspaceZero{1}{3}{7}$ ($\approx 10^4$ DoFs)}{%
      \raisebox{1.3mm}{
       \includegraphics[width=0.28\textwidth]{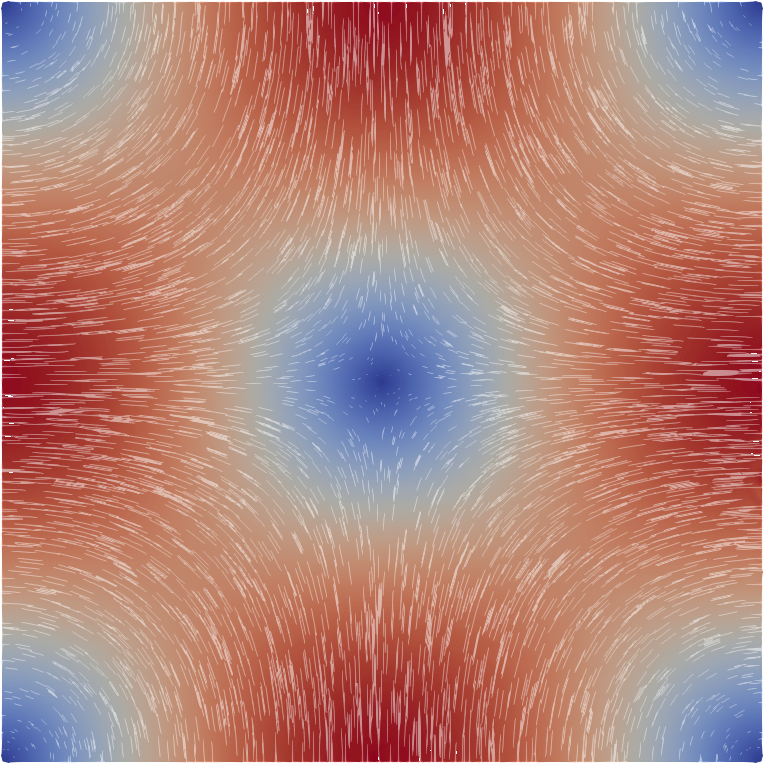}%
      }
    }
    \!\!
    \subcaptionbox{$\SDFspaceZero{1}{7}{7}$ ($\approx 2.7\cdot 10^6$ DoFs)}{%
      \includegraphics[width=0.29\textwidth]{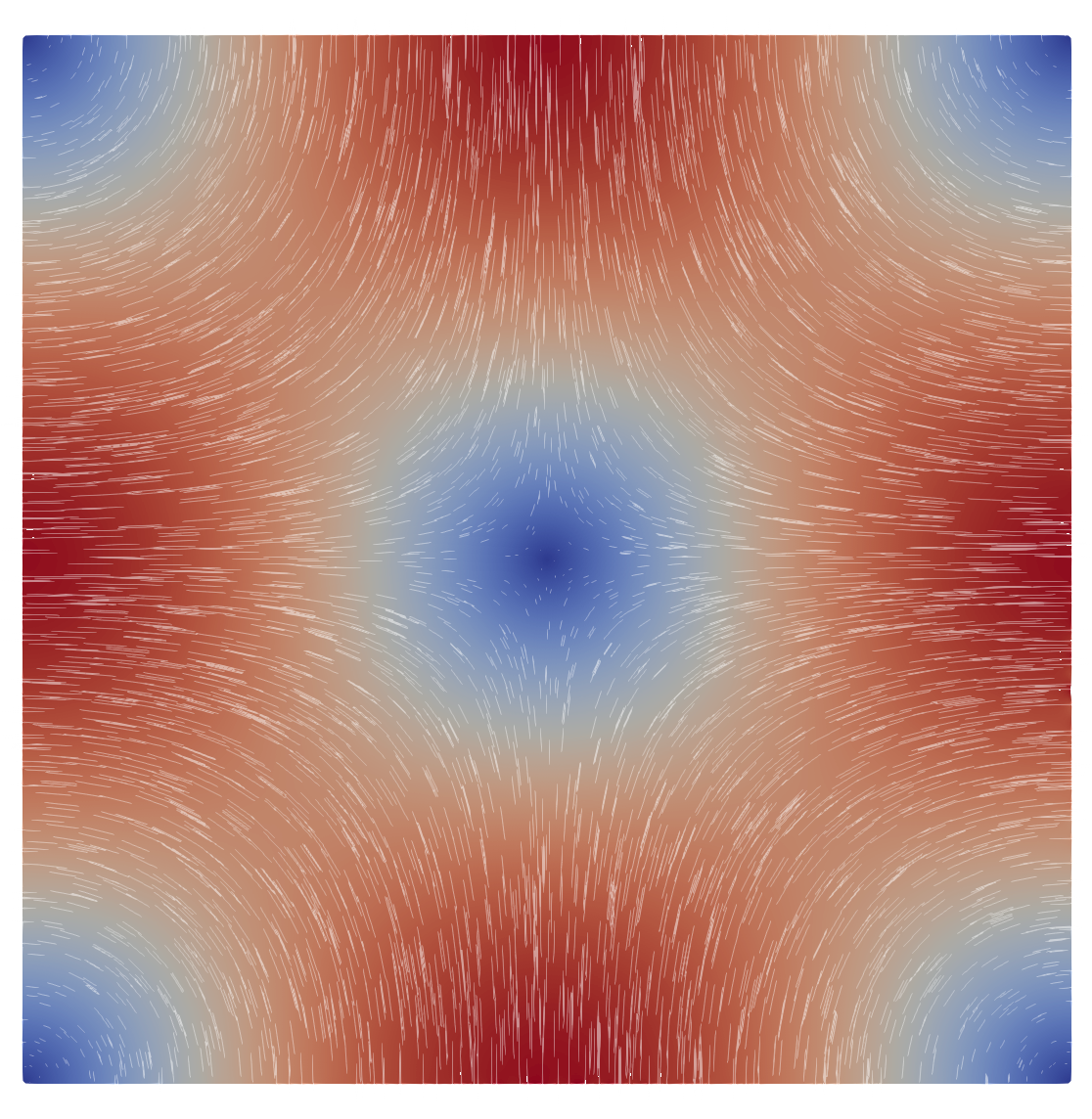}
    }
    \includegraphics[width=0.068\textwidth]{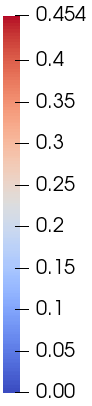}
    \caption{Approximation of the eigenmode corresponding to the eigenvalue $\lambda_3 = 2$ using subdivision and FEEC $k$-form spaces. The colorbar on the right displays the magnitude of the vector field while the white lines in the figures indicate its direction. The hierarchy of the approximation spaces starts at $\T_{\lzero}$, i.e., we are in case (i).
    }
    \label{fig:eigenmode_2}
\end{figure}

\begin{figure}
    \centering
    \subcaptionbox{$\SDFspaceZero{1}{3}{3}$ ($\approx 10^4$ DoFs)}{%
      \includegraphics[width=0.29\textwidth]{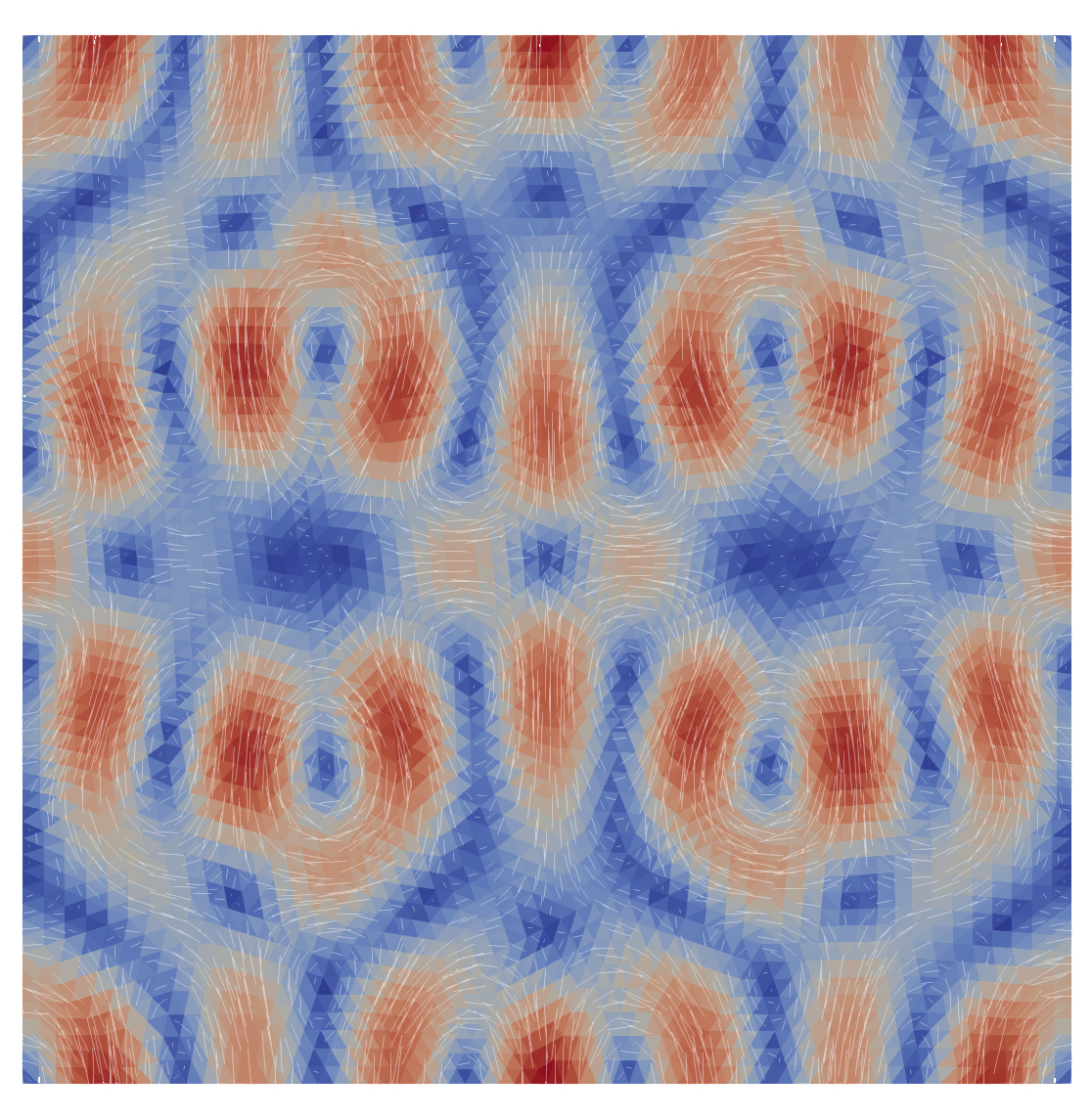}%
    }
    \!\!\!
    \subcaptionbox{$\SDFspaceZero{1}{3}{7}$ ($\approx 10^4$ DoFs)}{%
      \raisebox{1.3mm}{
        \includegraphics[width=0.28\textwidth]{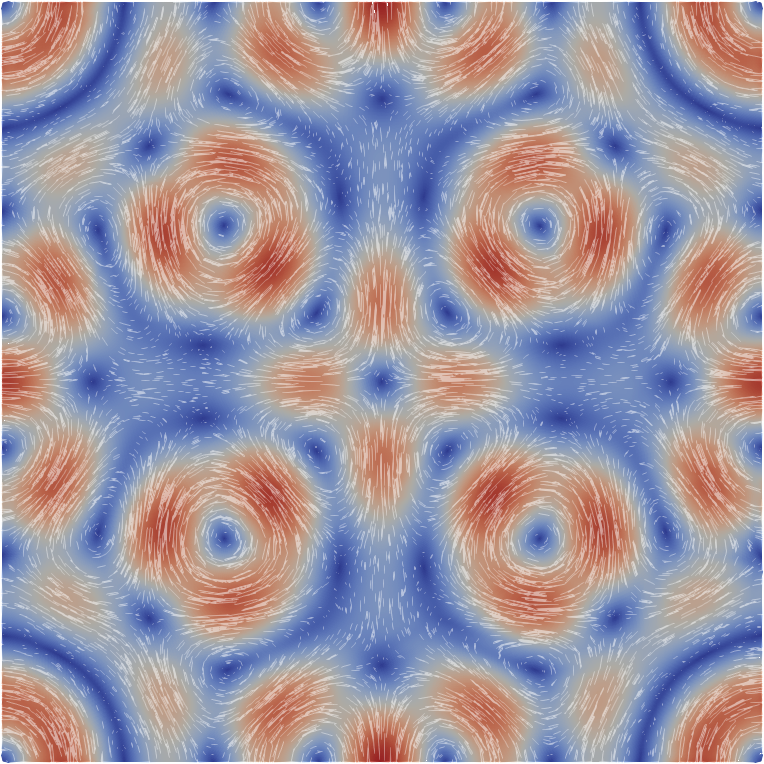}%
      }
    }
    \!\!
    \subcaptionbox{$\SDFspaceZero{1}{7}{7}$ ($\approx 2.7\cdot 10^6$ DoFs)}{%
      \includegraphics[width=0.29\textwidth]{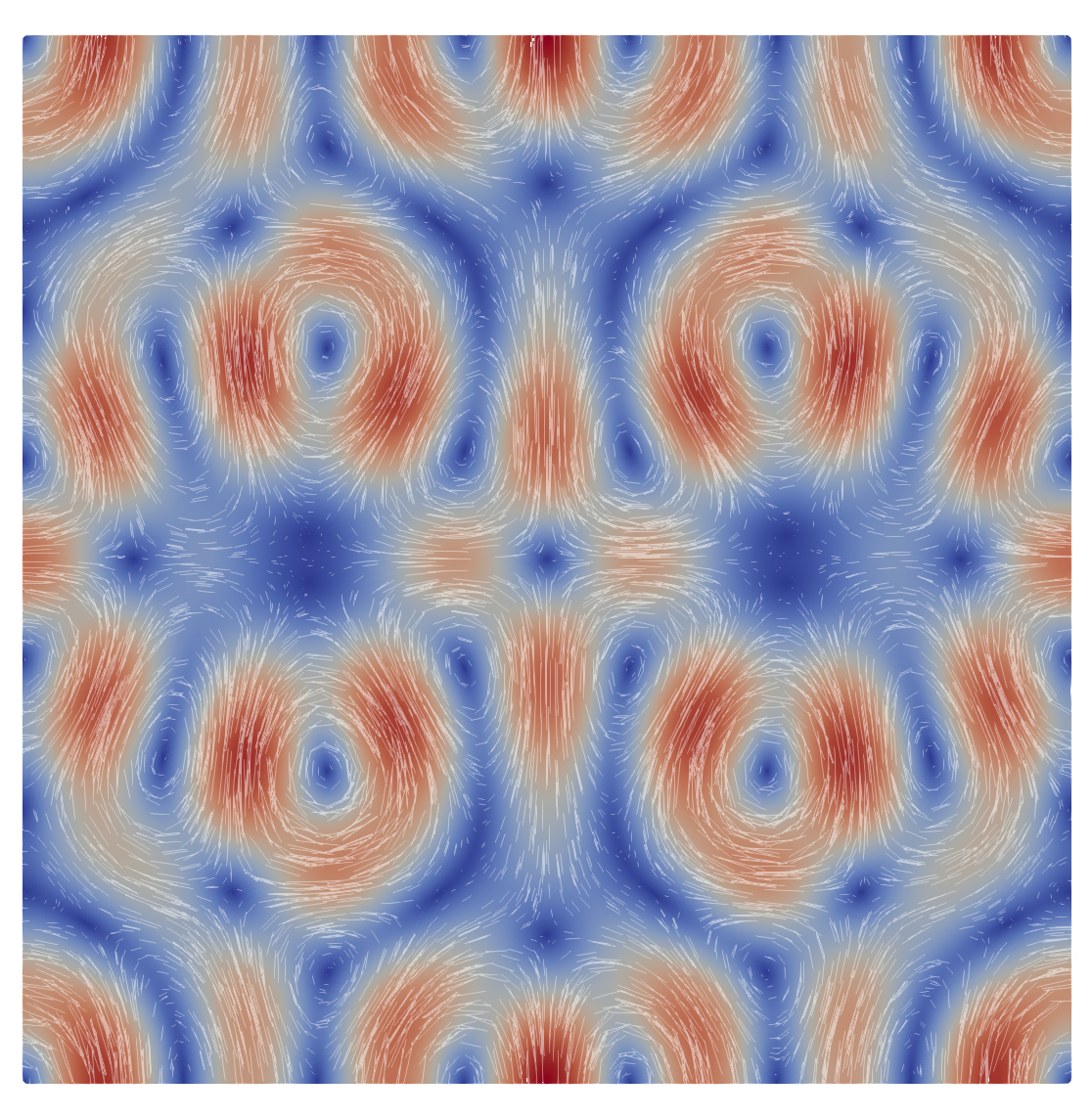}
    }
    \includegraphics[width=0.06\textwidth]{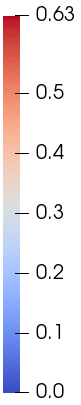}
    \caption{Approximation of the eigenmode corresponding to the eigenvalue $\lambda_{52} = 58$ using different $k$-form spaces. The colorbar on the right displays the magnitude of the vector field while the white lines in the figures indicate its direction. The hierarchy of the approximation spaces starts at $\T_{\lzero}$, i.e., we are in case (i).
    }
    \label{fig:eigenmode_58}
\end{figure}

The first sequence of computed $1$-form eigenmodes shown in Figure~\ref{fig:eigenmode_2} belongs to the third eigenvalue $\lambda_3 = 2$ and is obtained using the approximation spaces $\SDFspaceZero{1}{3}{3} = \FEECspaceZero{1}{3}$ (left), $\SDFspaceZero{1}{3}{7}$ (centre) and $\SDFspaceZero{1}{7}{7} = \FEECspaceZero{1}{7}$ (right).
We observe that the eigenmode of $\SDFspaceZero{1}{3}{7}$ appears visually smoother than the eigenmode of $\SDFspaceZero{1}{3}{3}$ even though both spaces have the same number of DoFs. This effect is a result of using the refined basis functions $\constrQ{\SDFbasis}{1}{3}{7}{\coarseIdxone}$ with increased subdivision-induced regularity (cf. Remark~\ref{remark:subdiv_smoothness}) instead of the standard FE basis functions $\freeQ{\FEECbasis}{1}{3}{\coarseIdxone}$. Further, comparing the approximate eigenmodes obtained using $\SDFspaceZero{1}{3}{7}$ and $\SDFspaceZero{1}{7}{7}$ shows no visible difference. This implies that the space $\SDFspaceZero{1}{3}{7}$ is capable of capturing the same level of detail of the solution of $\SDFspaceZero{1}{7}{7}$ even though it only has a fraction of the DoFs.

The differences between these plots are reflected in the convergence curves of Figure~\ref{fig:approx_errors_k_1}. For a comparison between the centre and right panels we first fix the finest level $\L =7$ and then coarsen by $4$ levels (hence $\l=3$, cf. Section~\ref{subsec:projection}) which gives us the corresponding spaces used for the plots. In terms of accuracy, the convergence plots in Figure~\ref{fig:approx_errors_k_1} reveal that this decreases the accuracy only marginally (i.e. we are at the flat part of the convergence curves), but significantly reduces the number of DoFs. Further, when moving from the left to the centre panel, we observe that the smoothing number $\ns = \L - \l$ increases from $0$ to $4$. Effectively, that means that the simulation has the same number of DoFs but the basis functions are smoother in the centre panel. Therefore, the result in the centre plot of Figure~\ref{fig:eigenmode_2} is more accurate than the one on the left plot.

The second sequence of computed $1$-form eigenmodes in Figure~\ref{fig:eigenmode_58} corresponds to the eigenvalue $\lambda_{52} = 58$ and is obtained by using the same approximation spaces as the first sequence in Figure~\ref{fig:eigenmode_2}. This time, the fields exhibit larger quantitative differences. We can see that the solution obtained using $\SDFspaceZero{1}{3}{7}$ is the most rotationally symmetric while for the two eigenmodes of $\SDFspaceZero{1}{3}{3}$ and $\SDFspaceZero{1}{7}{7}$ various features appear slightly vertically stretched.
As such, using our subdivision $k$-forms on an intermediate level $\l$ with about $\ns = 3,4$ seems to have, at least in this setting, a positive effect on the symmetry of the solution. In contrast, using the coarse and fine FEEC spaces ($\SDFspaceZero{1}{3}{3}$ and $\SDFspaceZero{1}{7}{7}$, respectively) we suspect that the orientation of the triangular grid cells (built up from similarly oriented nested triangles) might impact the symmetry of the fields, an effect that should decrease with higher mesh resolution. In fact, a comparison of the coarser fields in figure (a) with the finer one in (c) confirms that higher resolution indeed reduces this stretching effect. It is subject of ongoing work to better understand the impact of our subdivision $k$-forms on the overall quality (and symmetry) of these eigenmodes.

Hence, relative to comparable FEEC spaces, the subdivision $k$-forms on a suitable intermediate level allow us to save computational resources (using much less DoFs) while achieving comparably accurate numerical solutions for the Maxwell problem and potentially other problems with similarly smooth analytical solutions.

\subsubsection{Computational efficiency of the subdivision $k$-forms}
\label{subsubsubsec:computational_time}

Finally, we evaluate the computational efficiency of the novel subdivision $k$-forms relative to a standard FEEC method.
To this end, we compare the computational simulation times when using the spaces of subdivision $1$-forms $\SDFspaceZero{1}{\l}{\L} \big( \T_{\lzero} \big)$ for different levels $\l$ with each other and with the FEEC spaces $\FEECspaceZero{1}{\L}$.
We restrict ourselves to presenting results for case (i) of Section~\ref{sec:geodomainerror} where we compute only one mesh hierarchy starting from a fixed $\T_{\lzero}$. This is the case that is relevant in practical applications, where we regard the initial level as fixed and obtain nested spaces along the hierarchy.

In the following, we will split the different parts of the algorithm into pre-processing steps and runtime steps. This distinction is only relevant in the case of dynamical systems and hence, does not apply to the Maxwell eigenvalue problem. However, it still provides us with an idea of the potential of subdivision $k$-form spaces for simulations that need to be evolved in time.

\smallskip

When using FEEC, there are two computationally expensive parts: the assembly of the FE matrices and solving the generalized eigenvalue problem. We will denote the durations to carry out these operations by $t_{\text{Assem}}$ and $t_{\text{Solver}}$(FEEC), respectively.

For a fixed finest level $\L$, methods using the subdivision $k$-form spaces need to carry out the same assembly with duration $t_{\text{Assem}}$.
The duration of the unrefinement of the assembled FE matrices on level $\L$ to level $\l$ (cf. Remark~\ref{remark:denser_matrices}) is denoted
by $t_{\text{Unref}}$. Finally, we need to account for the duration $t_{\text{Subd}}$ of running the subdivision algorithm.

After these preprocessing steps, the duration of running the eigensolver when using the subdivision methods is denoted by $t_{\text{Solver}}$(Subd).
In total, i.e. including the preprocessing steps, we find that the subdivision methods save time if
\begin{equation*}
    t_{\text{Solver}}\text{(FEEC)} > t_{\text{Subd}} + t_{\text{Unref}} + t_{\text{Solver}}\text{(Subd)}.
\end{equation*}

\begin{figure}
    \begin{center}
        \includegraphics[trim=0.2cm 0.0cm 0.0cm 0.8cm, clip, width=0.75\textwidth]{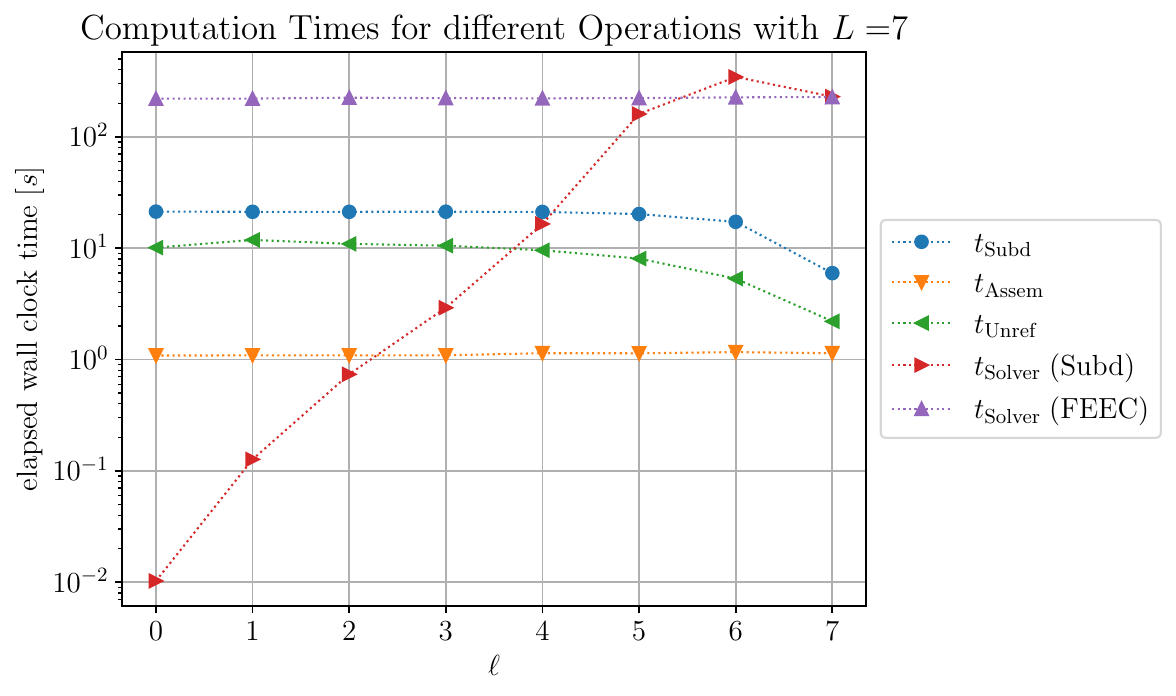}
    \end{center}
    \vspace{-8mm}
    \caption{Wall clock time of certain parts of the subdivision method for $\L=7$.}
    \label{fig:computational_times}
\end{figure}

Figure~\ref{fig:computational_times} displays the wall clock times for solving the Maxwell eigenvalue problem using the spaces $\SDFspaceZero{1}{\l}{7}\big( \T_{\lzero}\big)$ for a fixed finest level $\L =7$ and varying intermediate levels $\l$ and for the FEEC space $\FEECspaceZero{1}{7}$.
Let us first look at the duration $t_{\text{Solver}}$(FEEC) of the eigensolver for the FEEC spaces. The runtime is constant for different levels $\l$ because $\l$ is not a parameter of the FEEC spaces. Similarly, the assembly time $t_{\text{Assem}}$ is constant but about $100$ times smaller than the runtime of the eigensolver.
As mentioned above, this computationally cheap assembly step is shared by the subdivision method.

The computationally more expensive steps that have to be considered when using the subdivision spaces $\SDFspaceZero{1}{\l}{7}$ are the unrefinement and subdivision steps for which the measured durations $t_{\text{Unref}}$ and $t_{\text{Subd}}$, respectively, do depend on level $\l$,
as shown in Figure~\ref{fig:computational_times}. These times become smaller as $\l$ approaches $\L$ because the costs for unrefinement and subdivision are accumulated across fewer levels. These durations are significantly larger than the assembly time but, fortunately, these steps can be precomputed. Compared to the runtime of the FEEC solver,
both durations are at least 10 times smaller.

The last crucial step of the subdivision method (after the preprocessing steps) is running the subdivision eigensolver. The duration $t_{\text{Solver}}$(Subd) increases proportionally to level $\l$ as long as $\l \leq \L\!-\!1$. Notably, for $\l = \L \! - \! 1$, the eigensolver takes longer than for the FEEC method using $\FEECspaceZero{1}{\L}$. In this particular case, the benefit of reducing the matrix dimension does not compensate for the additional computational costs due to the increased density of the subdivision FE matrices. Finally, for $\l = \L$, we observe almost identical wall clock times because $\SDFspaceZero{1}{\L}{\L} = \FEECspaceZero{1}{\L}$.

Overall, for all $\l \leq \L-2$, we observe that subdivision methods save computational time while almost preserving the accuracy (recall Figures~\ref{fig:approx_errors_k_0_with_nr},~\ref{fig:approx_errors_k_1}, and~\ref{fig:approx_errors_k_2}).

\smallskip

Table~\ref{tab:compute_times} provides more details about the relative errors and computational times of the subdivision methods. 
The relative error $\vert\vert{e_{50}}\vert\vert$ measures the average deviation of the first $50$ eigenvalues of $\SDFspaceZero{1}{\l}{7}$ compared to $\FEECspaceZero{1}{7}$:
\begin{equation}
    \label{eq:average_error}
    \vert\vert e_{50} \vert\vert \coloneq \tfrac{1}{50} \; \sum\limits_{i = 1}^{50} \left\vert \frac{(\lambda_h)_i \big(\SDFspaceZero{1}{\l}{7}\big)}{(\lambda_h)_i  \big(\FEECspaceZero{1}{7}\big)} - 1 \right\vert.
\end{equation}
Since this error is defined relative to the FEEC space $\FEECspaceZero{1}{7}$ on the finest mesh $\T_\L$, both $\SDFspaceZero{1}{\l}{7}$ and $\FEECspaceZero{1}{7}$ have the same geometric domain error. This ensures that the eigenvalues computed with $\SDFspaceZero{1}{\l}{7}$ converge to those computed with $\FEECspaceZero{1}{7}$
and, hence, the eigenvalue errors $\vert\vert e_{50} \vert\vert$ according to Eq.~\eqref{eq:average_error} converge to $0$ as $\l$ increases. In fact, $\vert\vert e_{50} \vert\vert = 0$ for $\SDFspaceZero{1}{7}{7} = \FEECspaceZero{1}{7}$.

\renewcommand{\arraystretch}{1.4}
\begin{table}
\begin{footnotesize}

    \centering
    \begin{tabular}{|c|cc|ccc|c|cc|}
        \hline
        \multirow{2}{*}{space} & \multicolumn{2}{c|}{approximation properties} & \multicolumn{3}{c|}{pre-processing times $[s]$} & runtime $[s]$ & \multicolumn{2}{c|}{speed-ups} \\
         & $\left\vert\left\vert {e}_{50} \right\vert\right\vert$ & $\mathrm{dim}\big(\text{space}\big)$ & $t_{\text{Subd}}$ & $t_{\text{Assem}}$ & $t_{\text{Unref}}$ & $t_{\text{Solver}}$ & $\eta_{\text{run}}$ & $\eta_{\text{tot}}$ \\ \hline 
        $\FEECspaceZero{1}{7}$ & --- & $2701312$ & --- & $1.14$ & --- & $228.0$ & --- & --- \\ \hline
        $\SDFspaceZero{1}{0}{7}$ & $6.57 \cdot 10^{-3}$ & $149$ & $21.30$ & $1.09$ & $1.01$ & $0.01$ & $22118$ & $6.91$ \\
        $\SDFspaceZero{1}{1}{7}$ & $7.70 \cdot 10^{-4}$ & $628$ & $21.22$ & $1.09$ & $1.19$ & $0.13$ & $1797$ & $6.56$ \\
        $\SDFspaceZero{1}{2}{7}$ & $1.93 \cdot 10^{-4}$ & $2576$ & $21.22$ & $1.09$ & $1.09$ & $0.74$ & $309.3$ & $6.67$ \\
        $\SDFspaceZero{1}{3}{7}$ & $5.32 \cdot 10^{-5}$ & $10432$ & $21.14$ & $1.09$ & $1.05$ & $2.92$ & $78.1$ & $6.38$ \\
        $\SDFspaceZero{1}{4}{7}$ & $1.49 \cdot 10^{-5}$ & $41984$ & $21.17$ & $1.14$ & $0.96$ & $16.50$ & $13.81$ & $4.40$ \\
        $\SDFspaceZero{1}{5}{7}$ & $4.12 \cdot 10^{-6}$ & $168448$ & $20.26$ & $1.14$ & $0.81$ & $160.5$ & $1.420$ & $1.52$ \\
        $\SDFspaceZero{1}{6}{7}$ & $9.95 \cdot 10^{-7}$ & $674816$ & $17.25$ & $1.17$ & $0.53$ & $345.3$ & $0.621$ & $0.79$ \\
        $\SDFspaceZero{1}{7}{7}$ & $0$ & $2701312$ & $5.967$ & $1.14$ & $0.22$ & $229.5$ & $1.0$ & $0.96$ \\ \hline
    \end{tabular}
    \caption{Overview of computational times and approximation properties of the spaces $\SDFspaceZero{1}{\l}{\L}$ with $\L \leq 7$ compared to a reference solution obtained with $\FEECspaceZero{1}{7} = \SDFspaceZero{1}{7}{7}$. The row called $\left\vert\left\vert {e}_{50} \right\vert\right\vert$ contains the average deviation of the first 50 eigenvalues compared to $\FEECspaceZero{1}{7}$ as defined in Eq.~\eqref{eq:average_error}. The respective dimension of the approximation space can be found in the column $\mathrm{dim}(\text{space})$. The computational times are divided into two categories depending on whether they are considered a pre-processing step or happen at runtime of the numerical algorithm. The recorded wall clock times (in seconds) are the subdivision time $t_{\text{Subd}}$, the assembly time $t_{\text{Assem}}$, the unrefinement time $t_{\text{Unref}}$ and the solver time $t_{\text{Solve}}$. The last column shows speed-up ratios $\eta_{\text{run}}$ and $\eta_{\text{tot}}$ of the runtime and total time, respectively, as they were defined in Eq.~\eqref{eq:speed-ups}.}
    \label{tab:compute_times}
\end{footnotesize}
\end{table}

The table also presents the measured \emph{runtime speed-up} and \emph{total speed-up}, respectively,
\begin{equation}
    \label{eq:speed-ups}
    \eta_{\text{run}} \coloneqq \frac{t_{\text{Solver}}\text{(FEEC)}}{t_{\text{Solver}}\text{(Subd)}} \qquad \text{and} \qquad \eta_{tot} = \frac{t_{\text{Assem}} + t_{\text{Solver}}\text{(FEEC)}}{t_{\text{Subd}} + t_{\text{Assem}} + t_{\text{Unref}} + t_{\text{Solver}}\text{(Subd)}}.
\end{equation}
The table confirms that speed-ups can be achieved since subdivision spaces with intermediate mesh level $\l$ possess fewer DoFs, leading to a reduction in runtime of the eigensolver. A good trade-off between maintaining accuracy and speeding up the computations seems to be around $\l = \L-3$ or $\l=\L-4$ (i.e. a coarsening of $3$ or $4$ levels, respectively), where we achieve a total speed-up factor of $4$ to $6$ and an accuracy of still $10^{-5}$ (sacrificing only about an order of magnitude in accuracy compared to the finest FEEC solutions).
This sweet spot is consistent with the findings in Section~\ref{subsec:projection}.

With a factor of more than $10$, the runtime speed-up is even higher due to the significantly reduced number of degrees of freedom. Especially when solving problems that require many time steps, the computational time required for the preprocessing gets less important and thus the total speed-up approaches the runtime speed-up.
This suggests that the spaces of subdivision $k$-forms can significantly speed up the simulation of dynamical systems which are often integrated over a long time horizon.

\section{Conclusion}\label{sec_conclusion}

This paper introduced a sequence of compatible subdivision $k$-form spaces on irregular triangular meshes that feature enhanced subdivision-induced regularity over conventional $k$-forms. This sequence of spaces, which is built upon standard finite element (FE) spaces, is considered compatible if the spaces comprise a discrete de Rham complex. Therefore, discretisations of PDEs using such sequence of subdivision spaces are structure-preserving, stable and consistent while preserving important vector identity properties \citep{Arnold.2006,Arnold.FEEC}.
The subdivision-induced regularity is a consequence of applying a subdivision algorithm to construct refined basis functions along a hierarchy of increasingly fine meshes. While the subdivision $k$-form spaces do not possess higher Sobolev regularity, the basis functions of the subdivision spaces behave similarly to spline basis functions in the sense that their supports are larger, that they appear visually smoother, and that they exhibit enhanced convergence rates compared to the lowest-order FEEC complex.

The abstract framework around this approach applies to several known instances of compatible $k$-form subdivision schemes. That is, our framework for compatible subdivision $k$-form spaces on arbitrary triangulations can be applied to multiple other subdivision schemes besides the Discrete Exterior Calculus-compatible schemes established in \cite{Wang.2006}; for instance, to the second sequence of Wang schemes for quadrilateral faces, the $\sqrt{3}$-subdivision based sequence in \cite{Huang2012}, and the halfedge-based subdivision scheme in \cite{Custers.2020}. Further, we formalised the process of incorporating vanishing trace boundary conditions and showed that the resulting spaces also comprise discrete de Rham complexes because they are isomorphic to the lowest-order FEEC complex and thus to the relative simplicial complex.

We analysed the properties of the proposed subdivision $k$-form spaces numerically through a convergence study of the projection error of the subdivision $k$-forms against some analytical differential $k$-forms and a study of the Maxwell eigenvalue problem. We found that the observed convergence rate depends on the smoothing number, i.e. the number of levels between the level that defines the degrees of freedom (DoFs) and the level to which we refine the associated basis functions. There exists a critical smoothing number above which the approximation accuracy starts to stagnate. For the Wang schemes used in this work, these critical smoothing numbers are between $3$ and $5$ depending on the form degree $k$. Similarly, we can consider how often the mesh can be coarsened before defining the DoFs without losing too much accuracy. The Maxwell eigenvalue test case allowed coarsening of $3$ to $4$ levels.

These results imply that the subdivision $k$-form spaces can be employed for two purposes: either a coarse FE space is smoothed up to the critical smoothing number to increase the accuracy of the approximation without introducing additional DoFs, or a fine FE space can be coarsened up to the critical smoothing number to decrease the number of DoFs by up to two orders of magnitude while almost preserving the accuracy of the corresponding fine FE space.

The Maxwell eigenvalue problem was used to verify that the subdivision $k$-form spaces indeed comprise discrete de Rham complexes. We report convergence towards the correct spectrum and observe no spurious modes. Further, we were able to show that the subdivision $k$-form spaces also have the potential to reduce the wall clock time needed to simulate the Maxwell eigenvalue problem by a factor of up to $6$.

The results presented in this paper can be extended in several ways. One interesting avenue of future research would be to directly use the limit functions for simulations. This would lead to differential form spaces with increased smoothness. For example, the limit functions of Loop subdivision are $\mathcal{C}^2$ everywhere except for at the irregular vertices where they are $\mathcal{C}^1$. As a result, we would be able to construct FE spaces that discretise complexes with higher order derivative operators. The key problem that arises from using the limit functions is the numerical integration. The limit functions of the Loop subdivision scheme we used for $0$-forms are infinite sequences of polynomial patches that become increasingly dense around irregular vertices. There exist approaches to integrate such functions using quadrature and a truncation of the sequence of polynomial patches, see for example \cite{Juettler.2016}. However, it is not clear how these ideas extend to our $1$- and $2$-form subdivision scheme and how the truncation can be done consistently across the $k$-form spaces to maintain their compatibility.

Another future research direction could attempt to leverage our novel subdivision $k$-form spaces to construct hierarchical spaces. Since the basis functions satisfy a refinement relation, i.e., a coarse basis function can exactly be expressed as a linear combination of finer basis functions, one can consider spaces constructed by iteratively refining the function space by replacing coarse basis functions by a selection of finer basis functions. Doing this consistently across the $k$-form spaces may lead to nested hierarchical spaces that maintain a complex. Similar ideas exist for B-spline spaces \citep{Evans.2020}, where it was shown that the cohomology of the resulting complex is a rather subtle issue. For this reason, we would expect similarly interesting results for analogous hierarchical subdivision $k$-form spaces. Given that such conditions can always be met, the hierarchical spaces could provide a promising avenue to compatible, adaptive simulations on arbitrary triangulations.

\paragraph{Acknowledgements.}
RP acknowledges support from a PGR Studentship funded by the Engineering and Physical Sciences Research Council (EPSRC). The authors are grateful to C. Lessig and M. Desbrun for their support and insightful discussions during the early stages of this project.

\appendix

\renewcommand{\theHsection}{A\arabic{section}}

\section{Additional details about subdivision $k$-form spaces}
\label{app:subdiv_spaces}

This appendix provides additional details regarding the construction and properties of the subdivision $k$-form spaces introduced in Section~\ref{subsec:subdiv_k_forms_through_subdiv_algorithm}. Eq.~\eqref{eq:choice_of_feec_space} fixed the FE spaces upon which the subdivision $k$-form spaces are built. Definition~\ref{def:subdiv_operator} introduces the $k$-form subdivision operators and their coordinate expressions in the form of subdivision matrices $\Smat{k}{\l}{\L}$. The entries of these matrices are determined by the $k$-form subdivision rules.

\begin{definition}[$k$-form subdivision rule]
    \label{def:k_form_subdivrule}
    Analogous to the mesh subdivision rules derived in Section~\ref{sec_subdivalg_formal}, a $k$-form subdivision rule $\mathfrak{S}^k$ is a map
    \begin{equation}
    \begin{split}
        \mathfrak{S}^k: \; \K^k_{\l+1} \to \big[ \K^k_\l \times \mathbb{R} \big]^M, \qquad \simplex{k}{\l+1}{\fineIdxone} \mapsto \Big\{ \big( \simplex{k}{\l}{\coarseIdxone_1}, \; (w^k_\l)_{\coarseIdxone_1} \big), \hdots, \big( \simplex{k}{\l}{\coarseIdxone_M}, \; (w^k_\l)_{\coarseIdxone_M} \big) \Big\} .
    \end{split}
    \end{equation}
    As for mesh subdivision rules, the number $M$ and the indices $\coarseIdxone_\coarseIdxtwo, \; \coarseIdxtwo = 1, \hdots, M$ of the coarse simplices and their weights $(w^k_\l)_{\coarseIdxone_\coarseIdxtwo}$ depend on the chosen $k$-form subdivision scheme and the properties of the fine simplex $\simplex{k}{\l+1}{\fineIdxone}$.
\end{definition}

\begin{remark}
    \label{remark:mesh_and_0_form_subdiv_scheme_need_to_be_same}
    We can think of carrying out mesh subdivision as constructing a chart for the manifold mesh on which the subdivision $k$-forms are defined. For reasons of consistency, we always assume that the mesh subdivision scheme and the $0$-form scheme are the same, i.e. $\mathfrak{S}\big(\vertex{\l+1}{\fineIdxone}) = \mathfrak{S}^0\big(\vertex{\l+1}{\fineIdxone})$.
\end{remark}

A collection of $k$-form subdivision rules, one for each $k$-simplex, constitutes a \emph{$k$-form subdivision scheme}. The individual $k$-form subdivision rules from Definition~\ref{def:k_form_subdivrule} provide the entries of the $k$-form subdivision matrices defined in Definition~\ref{def:subdiv_operator} such that
\begin{equation}
    \label{eq:subdiv_matrix_from_subdiv_rule}
    \Smatij{k}{\l}{\l+1}{\fineIdxone}{\coarseIdxone_\coarseIdxtwo } = \begin{cases}
        (w^k_\l)_{\coarseIdxone_\coarseIdxtwo} \quad &\text{if} \quad \big( \simplex{k}{\l}{\coarseIdxone_\coarseIdxtwo}, \; (w^k_\l)_{\coarseIdxone_\coarseIdxtwo} \big) \in \mathfrak{S}^k \big(\simplex{k}{\l+1}{\fineIdxone} \big), \\
        0, &\text{otherwise}.
    \end{cases}
\end{equation}

The map induced by $\Aop{k}{\l}{\L}$ is \emph{not onto} whenever $\l < \L$ since it maps from the low-dimensional space $\FEECspace{k}{\l}$ to the higher-dimensional space $\FEECspace{k}{\L}$. This observation motivated Proposition~\ref{prop:subspace_of_Vk} which is proven next.

\begin{proof}[Proof of Proposition~\ref{prop:subspace_of_Vk}]
\label{proof:subspace_of_Vk}
Statement (i) follows trivially from Definition~(\ref{def:uniform_subdiv_spaces}) with $\Aop{k}{\l}{\l}$ the identity.
To prove statement (ii), we note that the space $\SDFspace{k}{\L}{\l}$ is induced by the operator $\Aop{k}{\l}{\L}$ with coordinate expression $\Amat{k}{\l}{\L}$. By Eq.~\eqref{eq:accumulated_subdiv_matrix}, $\Amat{k}{\l}{\L}$ has the dimensions $\dimlk{\L}{k} \times \dimlk{\l}{k}$ with
$\dimlk{\l}{k} < \dimlk{\L}{k}$ and thus
\begin{equation}
    \mathrm{dim}\big( \SDFspace{k}{\L}{\l} \big) = \mathrm{dim}\Big( \mathrm{img} \big( \Aop{k}{\l}{\L} \big) \Big)
    = \mathrm{rank}\big(  \Amat{k}{\l}{\L}  \big) = \dimlk{\l}{k} < \dimlk{\L}{k}
\end{equation}
because subdivision matrices (typically) have full column rank.
\end{proof}

\smallskip

In the main part of this paper, we construct a basis for the subdivision $k$-form spaces, see Section~\ref{subsec:basis_for_subdiv_spaces}. At the end of that section, we state Proposition~\ref{prop:support} that describes the support of the constructed basis functions $\constrQ{\SDFbasis}{k}{\l}{\lone}{\coarseIdxone}$. We omitted the proof in the main part but present it here.

\begin{numberedproof}[Proof of Proposition~\ref{prop:support}]
    \label{proof:support}

    The proof proceeds in two stages. First, we are going to show that the support of the basis function $\constrQ{\SDFbasis}{k}{\l}{\lone}{\coarseIdxone}$ cannot be an arbitrary set of fine faces but has to be a set of refined coarse faces. Once that has been established, we will identify the coarse faces whose refinements constitute the support of the basis function $\constrQ{\SDFbasis}{k}{\l}{\lone}{\coarseIdxone}$.

    \smallskip

    For the first stage of the proof, note that the support of any refined basis function $\constrQ{\SDFbasis}{k}{\l}{\lone}{\coarseIdxone}$ has to be a set of coarse faces, refined from level $\l$ to $\lone$, because every coefficient of $\auxincl \big(\constrQ{\SDFbasis}{k}{\l}{\lone}{\coarseIdxone}\big) \in \FEECspace{k}{\lone}$ (on $\lone$) is a linear combination of coefficients (on $\l$) associated to the coarse simplices $\simplex{k}{\l}{\coarseIdxone}$ according to Eq.~\eqref{eq:subdiv_rule_def}.
    Thus, if a fine face $f_{\lone} \in \FFltoL{\l}{\lone}\big( \face{\l}{\coarseIdxtwo}\big)$ is contained in the support of $\constrQ{\SDFbasis}{k}{\l}{\lone}{\coarseIdxone}$, then all fine faces in $\FFltoL{\l}{\lone}\big( \face{\l}{\coarseIdxtwo}\big) \subset \F_{\lone}$ associated to the same coarse face $\face{\l}{\coarseIdxtwo}$ have to be contained in the support of $\constrQ{\SDFbasis}{k}{\l}{\lone}{\coarseIdxone}$, cf. Figure~\ref{fig:basis_function_plots} in the main part.
    This fact allows us to write the support of the fine basis function as a
    set of refined coarse faces. We denote this set of coarse faces as $(\simplexsubset{2}{\l})_\coarseIdxone \subset \K^2_{\l}$ and we write
    \begin{equation}
        \support\big( \constrQ{\SDFbasis}{k}{\l}{\lone}{\coarseIdxone} \big) = \mathrm{Int}\big( \FFltoL{\l}{\lone} \big( (\simplexsubset{2}{\l})_\coarseIdxone \big) \big),
    \end{equation}
    where Int denotes the interior of a set and guarantees that the support is an open set.

    \smallskip

    It remains to identify all coarse faces of the set $(\simplexsubset{2}{\l})_\coarseIdxone \subset \K^2_{\l}$ that are included in $(\simplexsubset{2}{\l})_\coarseIdxone$. From a careful investigation of the Wang subdivision rules, see for example the supplemental material of \cite{Goes.2016}, follows that the support of the basis functions can not propagate over the two-ring $\FFltoL{\l}{\lone} \big(\FVl{\l} \circ \VFl{\l} \circ \F\K^k_{\l} \big( \simplex{k}{\l}{\coarseIdxone}\big)$ of refined faces (see Eqs.~\eqref{eq:adjacency_for_sets} and~\eqref{facesplitting_union} for definitions of the operators). This is because all subdivision rules only use the immediate neighbours of any simplex with non-zero coefficient. Repeating the process of taking neighbours and refining the mesh then results in the fine two-ring of faces. The corresponding coarse two-ring of faces constitutes the set $(\simplexsubset{2}{\l})_\coarseIdxone \subset \K^2_{\l}$ which finalizes the proof.
    \qed
\end{numberedproof}

\section{Subdivision k-form spaces with vanishing trace boundary conditions}
\label{app:vanishing_boundary_complex}

This appendix contains additional details regarding the definition of subdivision $k$-form spaces with vanishing trace boundary conditions (BC) introduced in Section~\ref{subsec:complex_with_vanishing_trace}. The appendix culminates in the proof of Theorem~\ref{theo:complex_with_vanishing_trace}, where we show that the spaces still comprise de Rham complexes after we enforce the vanishing trace BC.

Let $\M$ be a manifold with boundary $\partial \M$ and denote the inclusion map by $j: \partial\M \to \M$. The trace operator $\trace$ is defined as the pullback along the inclusion map, i.e. $\trace \coloneq j^*$ (cf.~\cite{Arnold.2006}). The trace operator restricts a differential $k$-form to the manifold boundary in a meaningful way. For any $\alpha \in \Omega^0(\M)$, $\beta \in \Omega^1(\M)$ and $\gamma \in \Omega^2(\M)$, the trace operator becomes
\begin{equation}
    \trace(\alpha) = \alpha \vert_{\partial \M}, \qquad \trace(\beta) = \vec{\beta} \vert_{\partial \M} \cdot \vec{t}, \qquad \trace(\gamma) = 0,
\end{equation}
where $\vec{\beta}$ and $\vec{t}(x)$ denote the proxy vector field of $\beta$ and the tangent vector of $\partial \M$ at the point $x \in \partial \M$, respectively. Enforcing vanishing trace BC for the spaces of the $L^2$ de Rham complex from Eq.~\eqref{eq:infinite_dim_de_Rham} effectively restricts the $0$- and $1$-form spaces to the subspaces
\begin{align}
    \mathring{H}^1(\M) &= \big\{ \alpha \in H^1(\M) \;\; \colon \;\; \trace(\alpha) = 0 \big\}, \\
    \HocurlM{\M} &= \big\{ \beta \in \HcurlM{\M} \;\; \colon \;\; \trace(\beta) = 0 \big\}.
\end{align}
The Sobolev spaces with vanishing trace BC comprise a \emph{relative} de Rham complex
\begin{equation}
    \label{eq:infinite_dim_relative_de_Rham}
    \begin{tikzcd}[row sep=small, column sep=normal]
        0 \arrow[r] & \mathring{H}^1(\M) \arrow[r, "\extd"] & \HocurlM{\M} \arrow[r, "\extd"] & L^2(\M) \arrow[r] & 0.
    \end{tikzcd}
\end{equation}
In our context, the relative de Rham complex is what we obtain after applying (vanishing trace) BC to the usual (absolute) de Rham complex in Eq.~\eqref{eq:infinite_dim_de_Rham}. Note that the cohomology groups $\mathcal{H}^k_{\mathrm{rel}}$ are different from the cohomology groups $\mathcal{H}^k$ of the absolute de Rham complex. For example, for a simply connected domain, the relative cohomology groups are $\mathcal{H}^0_{\mathrm{rel}} = \mathcal{H}^1_{\mathrm{rel}} = \{ 0\}$ and $\mathcal{H}^2_{\mathrm{rel}} \cong \mathbb{R}$ instead of $\mathcal{H}^0 \cong \mathbb{R}$ and $\mathcal{H}^1 = \mathcal{H}^2 = \{ 0 \}$ for the absolute de Rham complex.

The remainder of this section will first introduce the lowest-order (relative) FEEC complex with vanishing trace BC, and then build the subdivision $k$-form spaces with vanishing trace BC on top of them. The FEEC spaces with vanishing trace BC are defined as:
\begin{equation}
    \label{eq:FEEC_spaces_with_vanishing_trace}
    \FEECspaceZero{k}{\l} \coloneqq \big\{ \freeVec{\omega}{k}{\l} \in \FEECspace{k}{\l} \; \colon \;\; \trace\big( \freeVec{\omega}{k}{\l} \big) = 0 \big\}, \quad \text{for} \; k \in \{0, 1, 2\},
\end{equation}
according to FEEC~\citep{Arnold.FEEC}. 

Eq.~\eqref{def:interior_simplices} defines interior simplices $\Kint_\l$ and allows us to describe the spaces $\FEECspaceZero{k}{\l}$ in terms of interior basis functions:
\begin{equation}
    \label{eq:BCFEEC_spaces_in_basis}
    \FEECspaceZero{0}{\l} = \mathrm{span} \big( \bigcup\limits_{\vertex{\l}{\coarseIdxone} \in \Vint_{\l}} \freeQ{\FEECbasis}{0}{\l}{\coarseIdxone} \big), \qquad \FEECspaceZero{1}{\l} = \mathrm{span} \big( \bigcup\limits_{\edge{\l}{\coarseIdxtwo} \in \Eint_{\l}} \freeQ{\FEECbasis}{1}{\l}{\coarseIdxtwo} \big), \qquad \FEECspaceZero{2}{\l} = \FEECspace{2}{\l}.
\end{equation}

In other words, the spaces $\FEECspaceZero{k}{\l}$ are obtained by discarding the basis function that are associated to boundary vertices and edges, i.e., these spaces are spanned by all basis functions that have vanishing traces, according to Eq.~\eqref{eq:FEEC_spaces_with_vanishing_trace}. This means that we treat the vanishing trace BC as essential BC and incorporate them into the approximation spaces directly. Note that the resulting complex $(\FEECspaceZero{k}{\l}, \extd)$ is a sub-complex of the relative de Rham complex in Eq.~\eqref{eq:infinite_dim_relative_de_Rham} and thus, their cohomology groups are isomorphic.

The remainder of this section sets up subdivision $k$-form spaces that mimic the structure of the spaces $\FEECspaceZero{k}{\l}$. First, we need to highlight how subdivision schemes, in particular the Wang schemes, behave close to the boundary of the domain.

\begin{assumption}
    \label{ass:subdiv_preserves_vanishing_trace}
    In the following, we assume that the subdivision operators $\Aop{k}{\l}{\L}$ preserve vanishing traces, i.e.,
    \begin{equation}
        \label{eq:subdiv_preserves_vanishing_trace}
        \Aop{k}{\l}{\L} : \FEECspaceZero{k}{\l} \to \FEECspaceZero{k}{\L}.
    \end{equation}
\end{assumption}
This is a rather mild assumption because most subdivision schemes are designed to satisfy this property, for example by adopting 1D subdivision schemes for the boundary simplices, see remark~\ref{remark:remark_vanishing_trace} for more information.

\begin{remark}[Regarding the assumption in Eq.~\eqref{eq:subdiv_preserves_vanishing_trace}]
    \label{remark:remark_vanishing_trace}
    Different subdivision schemes behave differently close to the boundary of the mesh. Their boundary rules can vary with regards to the selected coarse simplices and their associated weights. In particular, there are two important cases for the purpose of enforcing boundary conditions: a fine boundary simplex can either depend only on coarse boundary simplices or on both boundary and interior simplices of the coarse mesh.

    The subdivision rules for boundary simplices are often chosen to reproduce e.g. one-dimensional B-spline subdivision schemes and thus only depend on coarse boundary simplices. From a subdivision perspective, this behaviour is desirable because two domains with a common boundary will still share the refined boundary after applying subdivision to each of the domains. Such contact between two objects happens abundantly in computer graphics. From a finite elements perspective, we can think of that as preserving vanishing traces under subdivision of $0$-forms. The commutation relations in Lemma~\ref{lemma:compatibility_condition_in_coordinates} imply that the $1$-form subdivision scheme cannot re-introduce any dependence of fine boundary edges on coarse interior edges.
\end{remark}

The following proposition shows that the Wang schemes for triangular meshes satisfy the assumption in Eq.~\eqref{eq:subdiv_preserves_vanishing_trace}.

\begin{proposition}
    \label{prop:no_support_at_bdry}
    Any Wang-scheme induced basis function $\constrQ{\SDFbasis}{k}{\l}{\lone}{\coarseIdxone}$ associated to an interior simplex $\simplex{k}{\l}{\coarseIdxone} \in \Kint_{\l}$ from Eq.~\eqref{def:interior_simplices} has a vanishing trace on the boundary $\partial\T_{\lone}$ of the finer mesh on level $\lone > \l$, i.e.
    \begin{equation}
        \trace \;\big( \constrQ{\SDFbasis}{k}{\l}{\lone}{\coarseIdxone} \big) = 0 \qquad \forall \simplex{k}{\l}{\coarseIdxone} \in \Kint_{\l}.
    \end{equation}
\end{proposition}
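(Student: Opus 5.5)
The plan is to reduce the statement to a property of the subdivision matrices near the boundary, and to prove it by induction on the number of subdivision levels. Since $\constrQ{\SDFbasis}{k}{\l}{\lone}{\coarseIdxone} = \Aop{k}{\l}{\lone}\,\freeQ{\FEECbasis}{k}{\l}{\coarseIdxone}$ and, by Proposition~\ref{prop:split_subdiv_operators}, $\Aop{k}{\l}{\lone}$ factors into single-level operators $\Sop{k}{\l'}{\l'+1}$, it suffices to show that each single-level operator maps $\FEECspaceZero{k}{\l'}$ into $\FEECspaceZero{k}{\l'+1}$. Indeed, $\freeQ{\FEECbasis}{k}{\l}{\coarseIdxone}\in\FEECspaceZero{k}{\l}$ for $\simplex{k}{\l}{\coarseIdxone}\in\Kint_\l$ by Eq.~\eqref{eq:BCFEEC_spaces_in_basis}. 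The case $k=2$ is trivial because $\trace(\gamma)=0$ for all $2$-forms, so only $k=0,1$ remain.

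For the single-level step I will use the characterisation in Eq.~\eqref{eq:BCFEEC_spaces_in_basis}: an element of $\FEECspace{k}{\l+1}$ lies in $\FEECspaceZero{k}{\l+1}$ if and only if its coefficients on boundary vertices (for $k=0$) or boundary edges (for $k=1$) vanish. This is because the traces of the Whitney basis functions associated to distinct boundary simplices are linearly independent on $\partial\T_{\l+1}$. In terms of Eq.~\eqref{eq:subdiv_matrix_from_subdiv_rule}, the claim therefore becomes: for every fine boundary simplex $\simplex{k}{\l+1}{\fineIdxone}$, the rule $\mathfrak{S}^k(\simplex{k}{\l+1}{\fineIdxone})$ assigns non-zero weight only to coarse boundary simplices, i.e.\ $\Smatij{k}{\l}{\l+1}{\fineIdxone}{\coarseIdxone}=0$ whenever $\simplex{k}{\l+1}{\fineIdxone}$ lies on the boundary and $\simplex{k}{\l}{\coarseIdxone}\in\Kint_\l$.

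For $k=0$ I will verify this directly from the Loop boundary rules, which the Wang scheme adopts for $0$-forms. A fine even boundary vertex is the weighted average $\tfrac18,\tfrac34,\tfrac18$ of the coarse vertex and its two boundary neighbours. A fine odd boundary vertex is the midpoint $\tfrac12,\tfrac12$ of a coarse boundary edge. Both are the 1D cubic B-spline rules, so no interior coarse vertex enters.

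For $k=1$ I will take two routes. The first is to check the Wang boundary $1$-form rules (as tabulated in the supplement of \cite{Goes.2016}): each fine boundary edge is a half of a coarse boundary edge, and its weights involve only coarse boundary edges, since these rules are the derivative of the 1D boundary $0$-form rule. The second, more structural, route uses the commutation relation of Lemma~\ref{lemma:compatibility_condition_in_coordinates}, in the spirit of Remark~\ref{remark:remark_vanishing_trace}. Take $\omega\in\FEECspaceZero{1}{\l}$ and restrict to the boundary curve, where traces form the 1D complex of vertex and edge cochains. The trace of $\Sop{1}{\l}{\l+1}\omega$ on a fine boundary edge is determined by the boundary $1$-form rule. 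That rule, restricted to the boundary, has to commute with the 1D derivative together with the boundary $0$-form rule.

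I expect this $k=1$ step to be the main obstacle. Commutation alone only controls the $1$-form rule up to the kernel of $\extd$, so it does not by itself exclude interior edge contributions on a fine boundary edge. I would therefore first try to settle it by explicit inspection of the Wang boundary stencils. If a cleaner argument is wanted, I would exploit locality. A fine boundary edge's stencil only involves edges adjacent to one coarse boundary edge. Testing the commutation $\Dmat{0}{\l+1}\Smat{0}{\l}{\l+1}=\Smat{1}{\l}{\l+1}\Dmat{0}{\l}$ against the hat functions of interior vertices, whose boundary rows vanish by the $k=0$ case, then forces the interior-edge weights in boundary rows to sum to zero around each interior vertex. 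Combined with the symmetry constraints the Wang scheme imposes, this should force those weights to vanish individually.
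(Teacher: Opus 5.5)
Your proposal is correct and follows essentially the paper's route. The paper also argues that the Wang boundary rules (tabulated in the supplement of \cite{Goes.2016}) assign non-zero weight only to coarse boundary simplices, so fine boundary coefficients stay zero level by level; you make the same argument explicit through the single-level reduction, Eq.~\eqref{eq:subdiv_matrix_from_subdiv_rule}, and the boundary-coefficient characterisation of $\FEECspaceZero{k}{\l}$. Your fallback structural argument for $k=1$ is not needed, and you are right not to rely on it: $\Dmat{0}{\l+1}\Smat{0}{\l}{\l+1}=\Smat{1}{\l}{\l+1}\Dmat{0}{\l}$ fixes a boundary row of $\Smat{1}{\l}{\l+1}$ only up to adding a local $1$-cycle (e.g.\ the boundary of an adjacent face), and such a cycle can contain interior edges.
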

\begin{proof}
    \label{proof:no_support_at_bdry}
    The claim follows from the subdivision rules. The subdivision rules for boundary simplices only depend on boundary simplices (cf. \cite{Goes.2016}) and thus they can only pick up fine non-zero coefficients if there was a non-zero coefficient associated to a coarse boundary simplex. As this is not the case for non-boundary basis functions, the claim follows.
\end{proof}

\begin{proposition}
    \label{prop:Wang_subdiv_preserves_zero_trace}
    The Wang subdivision schemes satisfy Eq.~\eqref{eq:subdiv_preserves_vanishing_trace}.
\end{proposition}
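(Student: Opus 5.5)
The plan is to reduce the claim to Proposition~\ref{prop:no_support_at_bdry} by linearity, working in the basis $\constrQ{\SDFbasis}{k}{\l}{\L}{\coarseIdxone}$ of Definition~\ref{def:alternative_char_for_subdiv_spaces}. By Eq.~\eqref{eq:BCFEEC_spaces_in_basis}, every $\freeVec{\omega}{k}{\l} \in \FEECspaceZero{k}{\l}$ can be written as
\begin{equation*}
\freeVec{\omega}{k}{\l} = \sum_{\simplex{k}{\l}{\coarseIdxone} \in \Kint_{\l}} \freeQ{c}{k}{\l}{\coarseIdxone} \, \freeQ{\FEECbasis}{k}{\l}{\coarseIdxone},
\end{equation*}
where the interior simplices are as in Eq.~\eqref{def:interior_simplices}. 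Applying the linear operator $\Aop{k}{\l}{\L}$ and using Eq.~\eqref{def:basis_constraint} gives
\begin{equation*}
\Aop{k}{\l}{\L}\freeVec{\omega}{k}{\l} = \sum_{\simplex{k}{\l}{\coarseIdxone} \in \Kint_{\l}} \freeQ{c}{k}{\l}{\coarseIdxone} \, \constrQ{\SDFbasis}{k}{\l}{\L}{\coarseIdxone} \in \FEECspace{k}{\L}.
\end{equation*}
Since the trace is linear, Proposition~\ref{prop:no_support_at_bdry} applied to each summand yields $\trace\big(\Aop{k}{\l}{\L}\freeVec{\omega}{k}{\l}\big) = 0$. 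By Eq.~\eqref{eq:FEEC_spaces_with_vanishing_trace}, this means $\Aop{k}{\l}{\L}\freeVec{\omega}{k}{\l} \in \FEECspaceZero{k}{\L}$.

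The case $k=2$ is trivial because $\FEECspaceZero{2}{\l} = \FEECspace{2}{\l}$ and the trace of a $2$-form vanishes. The case $\l = \L$ is also trivial, since $\Aop{k}{\l}{\l}$ is the identity. So only $k \in \{0,1\}$ with $\l < \L$ needs an argument, and there Proposition~\ref{prop:no_support_at_bdry} applies with $\lone = \L$.

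The main obstacle is that Proposition~\ref{prop:no_support_at_bdry} rests on the informal statement that the Wang boundary rules reference only boundary simplices. I would make this precise one level at a time, by induction on $\L$ using the factorisation $\Aop{k}{\l}{\L} = \Sop{k}{\L-1}{\L}\circ\Aop{k}{\l}{\L-1}$ of Proposition~\ref{prop:split_subdiv_operators}. The single-level statement to prove is that $\Smatij{k}{\l}{\l+1}{\fineIdxone}{\coarseIdxone} = 0$ whenever $\simplex{k}{\l+1}{\fineIdxone}$ lies on the boundary and $\simplex{k}{\l}{\coarseIdxone}$ is interior.

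For $k=0$, this follows from the Loop boundary rules. These are the one-dimensional cubic B-spline rules, which use only boundary vertices.

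For $k=1$, I would try the commutation relation of Lemma~\ref{lemma:compatibility_condition_in_coordinates}, as suggested in Remark~\ref{remark:remark_vanishing_trace}, namely $\Dmat{0}{\l+1}\Smat{0}{\l}{\l+1} = \Smat{1}{\l}{\l+1}\Dmat{0}{\l}$. I do not expect this relation to force the required block of $\Smat{1}{}{}$ to vanish by itself. If it does not, I would fall back on checking the explicit Wang boundary-edge rules tabulated in the supplemental material of \cite{Goes.2016}. Those rules show that fine boundary edges depend only on coarse boundary edges.

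Given this block structure of each $\Smat{k}{}{}$, the coefficients of $\Aop{k}{\l}{\L}\freeVec{\omega}{k}{\l}$ on fine boundary simplices vanish at every level. By Eq.~\eqref{eq:BCFEEC_spaces_in_basis}, this is exactly membership in $\FEECspaceZero{k}{\L}$.
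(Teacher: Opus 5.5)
Your proposal is correct and follows the paper's proof: you expand $\freeVec{\omega}{k}{\l} \in \FEECspaceZero{k}{\l}$ in interior basis functions, use linearity of $\Aop{k}{\l}{\L}$ to get a combination of only those $\constrQ{\SDFbasis}{k}{\l}{\L}{\coarseIdxone}$ attached to interior simplices, and conclude with Proposition~\ref{prop:no_support_at_bdry} and linearity of the trace. Your level-by-level block-structure induction only makes precise the justification of Proposition~\ref{prop:no_support_at_bdry}, which the paper likewise rests on the explicit Wang boundary rules in \cite{Goes.2016}. Your suspicion that the commutation relation alone cannot force the boundary--interior block of $\Smat{1}{\l}{\l+1}$ to vanish is well founded: adding a term $\Dmat{0}{\l+1} Z \, \Dmat{1}{\l}$ preserves both commutation relations but can couple fine boundary edges to coarse interior edges.
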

\begin{proof}
    For any $\freeVec{\omega}{k}{\l} \in \FEECspaceZero{k}{\l}$, we compute
    \begin{equation}
        \Aop{k}{\l}{\L} \big( \freeVec{\omega}{k}{\l} \big) = \Aop{k}{\l}{\L} \big( \sum\limits_{\simplex{k}{\l}{\coarseIdxone} \, \in \, \Kint_{\l}} \; \freeQ{c}{k}{\l}{\coarseIdxone} \, \freeQ{\FEECbasis}{k}{\l}{\coarseIdxone} \big) = \sum\limits_{\simplex{k}{\l}{\coarseIdxone} \, \in \, \Kint_{\l}} \; \freeQ{c}{k}{\l}{\coarseIdxone} \, \constrQ{\SDFbasis}{k}{\l}{\L}{\coarseIdxone}.
    \end{equation}
    Using Proposition~\ref{prop:no_support_at_bdry}, we find $\trace \big( \Aop{k}{\l}{\L} ( \freeVec{\omega}{k}{\l} ) \big) = 0$ because the sum only contains the basis functions associated to an interior simplex.
\end{proof}

\smallskip

Proposition~\ref{prop:no_support_at_bdry} implies that the only basis functions with non-zero trace are the ones associated with boundary simplices. Thus, the simplest way of enforcing vanishing trace BC is to omit these basis functions from the spaces. This is what lead to Definition~\ref{def:subdiv_spaces_with_vanishing_trace} in the main part. The following is a corollary of that definition.

\begin{corollary}
    $\SDFspaceZero{k}{\l}{\L} = \SDFspace{k}{\l}{\L} \cap \FEECspaceZero{k}{\L}$.
\end{corollary}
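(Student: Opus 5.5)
We prove the two inclusions separately, using that both sides are subspaces of $\FEECspace{k}{\L}$.

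For $\SDFspaceZero{k}{\l}{\L} \subset \SDFspace{k}{\l}{\L} \cap \FEECspaceZero{k}{\L}$, take an element of $\SDFspaceZero{k}{\l}{\L}$. By Definition~\ref{def:subdiv_spaces_with_vanishing_trace} it is a linear combination of basis functions $\constrQ{\SDFbasis}{k}{\l}{\L}{\coarseIdxone}$ with $\simplex{k}{\l}{\coarseIdxone} \in \Kint_{\l}$. It therefore lies in $\SDFspace{k}{\l}{\L}$ by Definition~\ref{def:alternative_char_for_subdiv_spaces}. By Proposition~\ref{prop:no_support_at_bdry} each such basis function has vanishing trace. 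Since the trace is linear, the combination lies in $\FEECspaceZero{k}{\L}$; this is exactly the computation in Proposition~\ref{prop:Wang_subdiv_preserves_zero_trace}. For $k=2$ the statement reduces to $\SDFspace{2}{\l}{\L}=\SDFspace{2}{\l}{\L}\cap\FEECspace{2}{\L}$, which is trivial because $\Fint_\l=\F_\l$ and $\FEECspaceZero{2}{\L}=\FEECspace{2}{\L}$.

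For the reverse inclusion with $k\in\{0,1\}$, let $\omega = \sum_\coarseIdxone \freeQ{c}{k}{\l}{\coarseIdxone}\constrQ{\SDFbasis}{k}{\l}{\L}{\coarseIdxone} \in \SDFspace{k}{\l}{\L}$ satisfy $\trace\,\omega = 0$. We must show that $\freeQ{c}{k}{\l}{\coarseIdxone}=0$ for every boundary simplex. Subtracting the interior part, which has zero trace by the first step, reduces this to $\omega_\partial=\sum_{\simplex{k}{\l}{\coarseIdxone}\in\partial\T_\l}\freeQ{c}{k}{\l}{\coarseIdxone}\constrQ{\SDFbasis}{k}{\l}{\L}{\coarseIdxone}$, which also has zero trace. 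Its trace on $\partial\T_\L$ is determined only by the fine boundary coefficients. By the boundary rules, as in the proof of Proposition~\ref{prop:no_support_at_bdry} (cf.\ Remark~\ref{remark:remark_vanishing_trace}), these coefficients are given by $\mathbf{B}\,\bar c_\partial$. Here $\bar c_\partial$ is the vector of coarse boundary coefficients and $\mathbf{B}$ is the restriction of $\Amat{k}{\l}{\L}$ to boundary rows and boundary columns, i.e.\ the accumulated one-dimensional boundary subdivision matrix along $\partial\T_\l$.

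Since the lowest-order traces of $\FEECspace{k}{\L}$ (nodal values for $k=0$, edge integrals for $k=1$) are unisolvent on $\partial\T_\L$, vanishing trace is equivalent to $\mathbf{B}\bar c_\partial=0$. The conclusion then follows if $\mathbf{B}$ has full column rank. This is the main obstacle. For $k=0$ the boundary rules of the Wang schemes reproduce the 1D cubic B-spline subdivision, whose accumulated matrix is injective, by the same full-rank fact invoked in the proof of Proposition~\ref{prop:subspace_of_Vk}. For $k=1$ we would argue via the commutation relation of Lemma~\ref{lemma:compatibility_condition_in_coordinates} restricted to boundary rows: the boundary $1$-form rule is the derivative of the boundary $0$-form rule, and the 1D boundary difference operator is surjective (or an isomorphism onto closed-loop exact forms, with the loop integral handled separately). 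This transfers injectivity from $k=0$ to $k=1$.

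Finally, the corollary needs $\SDFspaceZero{k}{\l}{\L}$ to be a genuine span of linearly independent functions. This holds because the full family $\constrVec{\Phi}{k}{\l}{\L}$ is linearly independent by the full column rank of $\Amat{k}{\l}{\L}$. With both inclusions established, equality follows.
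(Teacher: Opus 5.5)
\paragraph{Comparison with the paper.}
Your argument is correct in substance, and it goes further than the paper. The paper gives no separate proof: it presents the identity as an immediate consequence of Definition~\ref{def:subdiv_spaces_with_vanishing_trace} together with Proposition~\ref{prop:no_support_at_bdry}, i.e.\ interior basis functions have zero trace and ``the only basis functions with non-zero trace are the boundary ones''. That justifies only $\SDFspaceZero{k}{\l}{\L} \subset \SDFspace{k}{\l}{\L} \cap \FEECspaceZero{k}{\L}$. For the reverse inclusion one needs that no nontrivial combination of boundary basis functions has vanishing trace, i.e.\ that their traces are linearly independent, and the paper tacitly assumes this. You reduce it to injectivity of the boundary block $\mathbf{B}$ of $\Amat{k}{\l}{\L}$. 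Because boundary rows only see boundary columns, $\mathbf{B}$ is the product of the one-step 1D boundary matrices. This supplies exactly the missing ingredient. It buys a genuine proof of equality, at the price of using the explicit 1D boundary rules rather than just the support statement.

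\paragraph{Steps to tighten.}
First, full column rank of the 2D matrix, as asserted in the proof of Proposition~\ref{prop:subspace_of_Vk}, does not give full column rank of its boundary block. In the block form $\begin{pmatrix} \mathbf{B} & 0 \\ \mathbf{C} & \mathbf{E} \end{pmatrix}$ the block $\mathbf{C}$ can compensate a kernel of $\mathbf{B}$. Prove the 1D injectivity directly instead. For example, the even-vertex rows of the cubic B-spline boundary rule (weights $\tfrac18,\tfrac34,\tfrac18$, or $1$ at an interpolated corner) form a strictly diagonally dominant, hence invertible, square matrix.

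Second, the boundary of a 2D mesh is a union of closed loops. So the 1D difference operator $\mathbf{D}_\partial$ is never surjective, and the loop-integral case is the whole difficulty for $k=1$. You need two facts: the boundary $1$-form rule preserves circulation around each loop (each coarse edge distributes $\tfrac18,\tfrac38,\tfrac38,\tfrac18$, summing to $1$), and the boundary $0$-form rule reproduces constants. Then $\mathbf{B}^1 c = 0$ forces zero loop sums, so $c = \mathbf{D}_\partial a$. Commutation gives $\mathbf{D}_\partial \mathbf{B}^0 a = 0$, so $\mathbf{B}^0 a$ is constant on each loop. Injectivity and constant reproduction then make $a$ constant, hence $c=0$. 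Alternatively, simply check that the rows $(c_{i-1}+3c_i)/8$ form a strictly diagonally dominant circulant.

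Your final paragraph on linear independence is not needed for the set equality. You already show that every zero-trace representation has vanishing boundary coefficients.
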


The remainder of this appendix sets up operators and derives properties of the subdivision $k$-form spaces $\SDFspaceZero{k}{\l}{\L}$ with vanishing trace BC analogously to the regular spaces $\SDFspace{k}{\l}{\L}$. One of the main differences between these spaces are their respective numbers of dimensions.

\begin{proposition}
    \label{prop:dimension_of_reduced_spaces}
    The spaces $\SDFspaceZero{k}{\l}{\L}$ have the dimensions
    \begin{equation}
        \mathrm{dim} \big( \SDFspaceZero{k}{\l}{\L} \big) = \big\vert \Kint_{\l} \big\vert.
    \end{equation}
\end{proposition}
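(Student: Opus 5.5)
The space $\SDFspaceZero{k}{\l}{\L}$ is by Definition~\ref{def:subdiv_spaces_with_vanishing_trace} the span of the set $\constrVec{\SDFbasesZero}{k}{\l}{\L}$, which contains exactly one function $\constrQ{\SDFbasis}{k}{\l}{\L}{\coarseIdxone}$ per interior simplex $\simplex{k}{\l}{\coarseIdxone} \in \Kint_{\l}$. It therefore suffices to show that these $\vert \Kint_\l \vert$ functions are linearly independent. I will derive this from linear independence of the full constrained basis $\constrVec{\Phi}{k}{\l}{\L}$, which the paper has already established.

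\textbf{Step 1: the full constrained basis is linearly independent.} By Proposition~\ref{prop:subspace_of_Vk}(ii), $\mathrm{rank}\big(\Amat{k}{\l}{\L}\big) = \dimlk{\l}{k}$, so the subdivision matrix has full column rank. By Theorem~\ref{theo:equivalence_of_spaces}, the space $\SDFspace{k}{\l}{\L}$ equals $\SDFspace{k}{\L}{\l}$ and hence has dimension $\dimlk{\l}{k}$. It is spanned by the $\dimlk{\l}{k}$ functions $\constrQ{\SDFbasis}{k}{\l}{\L}{\coarseIdxone}$, so these functions are linearly independent.

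A more direct version of the same argument: suppose $\sum_\coarseIdxone c_\coarseIdxone \constrQ{\SDFbasis}{k}{\l}{\L}{\coarseIdxone} = 0$. Expanding in the basis $\freeQ{\FEECbasis}{k}{\L}{\fineIdxone}$ via Eq.~\eqref{eq_subdiv_kform} gives $\Amat{k}{\l}{\L}\, c = 0$, and full column rank forces $c = 0$.

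\textbf{Step 2: pass to the subset.} Any subset of a linearly independent set is linearly independent. Hence $\constrVec{\SDFbasesZero}{k}{\l}{\L}$ is a basis of its span, and
\[
\mathrm{dim}\big(\SDFspaceZero{k}{\l}{\L}\big) = \big\vert \Kint_{\l} \big\vert .
\]

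\textbf{Step 3: the case $k=2$.} Since $\Fint_\l = \F_\l$, the statement reduces to $\mathrm{dim} = \dimlk{\l}{2}$, which is already covered by Step~1.

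\textbf{Main obstacle.} The only real difficulty is the full-column-rank property of $\Amat{k}{\l}{\L}$, which the appendix proof of Proposition~\ref{prop:subspace_of_Vk} only asserts as "typically" holding. I will take it as given, as that proposition does. If a justification is wanted, I would use the interpolation property of the Wang schemes, namely that the coefficients on even vertices, and suitable combinations of coefficients on fine edges and faces, determine the coarse coefficients. This yields a left inverse of each $\Smat{k}{\l}{\l+1}$, and hence of the product $\Amat{k}{\l}{\L}$.
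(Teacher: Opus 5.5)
Your argument is correct and is essentially the paper's. The paper's one-line proof cites the definition together with linear independence of the functions in $\constrVec{\SDFbasesZero}{k}{\l}{\L}$; you supply that independence by restricting the full constrained basis, whose independence rests on the same assumed full column rank of $\Amat{k}{\l}{\L}$ from Proposition~\ref{prop:subspace_of_Vk}(ii).

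One caution about your optional aside: the Loop-based Wang schemes are approximating rather than interpolating, because even-vertex fine coefficients are weighted averages of coarse ones. A left inverse of $\Smat{k}{\l}{\l+1}$ would therefore have to come from invertibility of such averaging blocks, not from an interpolation property.
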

\begin{proof}
    The claim follows immediately from Definition~\ref{def:subdiv_spaces_with_vanishing_trace} and the fact that all basis functions in $\constrVec{\SDFbasesZero}{k}{\l}{\L}$ are linearly independent.
\end{proof}

The next definition establishes the subdivision operator for the spaces $\SDFspaceZero{k}{\l}{\L}$.

\begin{definition}
    \label{def:subdiv_op_for_BC_spaces}
    The restricted subdivision operator $\BCAop{k}{\l}{\L}$ is defined as
    \begin{equation}
        \BCAop{k}{\l}{\L} \coloneq \Aop{k}{\l}{\L} \Big|_{\FEECspaceZero{k}{\l}} \;\; : \FEECspaceZero{k}{\l} \to \SDFspaceZero{k}{\L}{\l}
    \end{equation}
    Its matrix expression $\BCAmat{k}{\l}{\L}$ can be obtained by deleting the columns of $\Amat{k}{\l}{\L}$ that correspond to boundary simplices $\K^k(\partial\T_\l)$.
\end{definition}
The definition above hinges on Assumption~\ref{ass:subdiv_preserves_vanishing_trace}. If the assumption did not hold, restricting the regular subdivision operator $\Aop{k}{\l}{\L}$ to $\FEECspaceZero{k}{\l}$ would not be sufficient to ensure that $\BCAop{k}{\l}{\L}$ maps to $\SDFspaceZero{k}{\L}{\l} \subset \FEECspaceZero{k}{\L}$ because it would introduce a non-zero trace.

The restricted subdivision operator allows us to define the basis exchange operator for subdivision $k$-form spaces with vanishing trace BC.

\begin{definition}
    \label{def:BE_for_BC_spaces}
    The restricted basis exchange operator $\BCBEop{k}{\l}{\l}{\L}$ is defined as
    \begin{equation}\label{rbe_iso}
        \BCBEop{k}{\l}{\l}{\L} \coloneq \dualAop{k}{\l}{\l}{\L} \Big|_{\FEECspaceZero{k}{\l}} \;\; : \FEECspaceZero{k}{\l} \to \SDFspaceZero{k}{\l}{\L}
    \end{equation}
    Its matrix expression is given by a $\vert \Kint_{\l} \vert \times \vert \Kint_{\l} \vert$ identity matrix.
\end{definition}

Note that $\BCBEop{k}{\l}{\l}{\L}$ introduces the subdivision matrix $\BCAmat{k}{\l}{\L}$ analogous to Theorem~\ref{theorem:consistency_of_BE_op} for $\dualAop{k}{\l}{\l}{\L}$ when we consider its action on the bases of $\FEECspaceZero{k}{\l}$ and $\FEECspaceZero{k}{\L}$.
Based on the restricted BE operator, we can next show that the spaces  $\SDFspaceZero{k}{\l}{\L}$ are the images of $\BCBEop{k}{\l}{\l}{\L}$.

\begin{proposition}
    \label{prop:BC_Aop_is_isomorphism}
    The spaces $\SDFspaceZero{k}{\l}{\L}$ satisfy $\SDFspaceZero{k}{\l}{\L} = \BCBEop{k}{\l}{\l}{\L} \big[ \FEECspaceZero{k}{\l} \big]$. Further, $\BCBEop{k}{\l}{\l}{\L}$ is an isomorphism
    in Eq.~\eqref{rbe_iso}.
\end{proposition}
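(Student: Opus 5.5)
The plan is to show two things in turn: that the image of $\BCBEop{k}{\l}{\l}{\L}$ is exactly $\SDFspaceZero{k}{\l}{\L}$, and then that the map is a bijection onto this image. Both steps rest on the interior bases in Eq.~\eqref{eq:BCFEEC_spaces_in_basis} and on the definition of $\dualAop{k}{\l}{\l}{\L}$ as a pure basis exchange (Definition~\ref{def:adjoint_subdiv_operator}).

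\textbf{Image.} Take an arbitrary $\freeVec{\omega}{k}{\l} \in \FEECspaceZero{k}{\l}$ and write it as $\sum_{\simplex{k}{\l}{\coarseIdxone} \in \Kint_\l} \freeQ{c}{k}{\l}{\coarseIdxone}\, \freeQ{\FEECbasis}{k}{\l}{\coarseIdxone}$. This is possible by Eq.~\eqref{eq:BCFEEC_spaces_in_basis}; for $k=2$ it is trivial because $\Kint_\l = \F_\l$. Since $\FEECspace{k}{\l} \equiv \SDFspace{k}{\l}{\l}$ and $\freeQ{\FEECbasis}{k}{\l}{\coarseIdxone} = \constrQ{\SDFbasis}{k}{\l}{\l}{\coarseIdxone}$, Definition~\ref{def:adjoint_subdiv_operator} applies. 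It leaves the coefficients unchanged and exchanges each $\freeQ{\FEECbasis}{k}{\l}{\coarseIdxone}$ for $\constrQ{\SDFbasis}{k}{\l}{\L}{\coarseIdxone}$. Hence
\[
\BCBEop{k}{\l}{\l}{\L}\, \freeVec{\omega}{k}{\l} = \sum_{\simplex{k}{\l}{\coarseIdxone} \in \Kint_\l} \freeQ{c}{k}{\l}{\coarseIdxone}\, \constrQ{\SDFbasis}{k}{\l}{\L}{\coarseIdxone} \in \mathrm{span}\big(\constrVec{\SDFbasesZero}{k}{\l}{\L}\big) = \SDFspaceZero{k}{\l}{\L},
\]
where the last equality is Definition~\ref{def:subdiv_spaces_with_vanishing_trace}. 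This gives the inclusion ``$\subset$''. For the reverse inclusion, any element of $\SDFspaceZero{k}{\l}{\L}$ is a linear combination $\sum_{\Kint_\l} a_\coarseIdxone \constrQ{\SDFbasis}{k}{\l}{\L}{\coarseIdxone}$. The same computation read backwards shows that it is the image of $\sum_{\Kint_\l} a_\coarseIdxone \freeQ{\FEECbasis}{k}{\l}{\coarseIdxone} \in \FEECspaceZero{k}{\l}$. This proves the set equality. Proposition~\ref{prop:Wang_subdiv_preserves_zero_trace} (Assumption~\ref{ass:subdiv_preserves_vanishing_trace}) together with Theorem~\ref{theorem:consistency_of_BE_op} also shows that this image lies in $\FEECspaceZero{k}{\L}$, as it must.

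\textbf{Isomorphism.} Linearity is inherited from $\dualAop{k}{\l}{\l}{\L}$, and surjectivity onto $\SDFspaceZero{k}{\l}{\L}$ was shown above. For injectivity, the cleanest route is by dimension. The domain has dimension $\vert\Kint_\l\vert$ by Eq.~\eqref{eq:BCFEEC_spaces_in_basis}, and the codomain has dimension $\vert\Kint_\l\vert$ by Proposition~\ref{prop:dimension_of_reduced_spaces}. A surjective linear map between spaces of equal finite dimension is bijective.

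Alternatively, one can argue directly. Suppose $\sum_{\Kint_\l} c_\coarseIdxone \constrQ{\SDFbasis}{k}{\l}{\L}{\coarseIdxone} = 0$. Applying $\auxincl$ turns this into $\BCAmat{k}{\l}{\L}\, c = 0$. The matrix $\BCAmat{k}{\l}{\L}$ is a column submatrix of $\Amat{k}{\l}{\L}$, which has full column rank (proof of Proposition~\ref{prop:subspace_of_Vk}), so $c = 0$.

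The only real obstacle is justifying linear independence of the interior constrained basis functions. This is the ingredient invoked, rather than proved, in Proposition~\ref{prop:dimension_of_reduced_spaces}, and it ultimately rests on the full-column-rank property of the subdivision matrices. I would state that property explicitly as the input and note that restricting to a subset of columns preserves it. With that in hand, the identity matrix representation in Definition~\ref{def:BE_for_BC_spaces} makes the isomorphism immediate.
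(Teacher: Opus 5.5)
Your proposal is correct and follows essentially the same route as the paper. The set equality comes directly from Definitions~\ref{def:adjoint_subdiv_operator} and~\ref{def:subdiv_spaces_with_vanishing_trace}, and bijectivity from the equal dimensions given by Eq.~\eqref{eq:BCFEEC_spaces_in_basis} and Proposition~\ref{prop:dimension_of_reduced_spaces} together with the identity-matrix action; your reduction of the linear independence of the interior basis functions to the full column rank of $\BCAmat{k}{\l}{\L}$ spells out an input the paper only asserts.
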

\begin{proof}
    The first claim follows immediately from Definitions~\ref{def:adjoint_subdiv_operator} and~\ref{def:subdiv_spaces_with_vanishing_trace}. Further, by Eq.~\eqref{eq:BCFEEC_spaces_in_basis} and Proposition~\ref{prop:dimension_of_reduced_spaces}, the spaces $\FEECspaceZero{k}{\l}$ and $\SDFspaceZero{k}{\l}{\L}$ have the same number of dimensions. Since $\BCBEop{k}{\l}{\l}{\L}$ acts by an identity matrix, it has no kernel and is an isomorphism.
\end{proof}

The spaces $\SDFspaceZero{k}{\l}{\L}$ share essential features with the spaces $\SDFspace{k}{\l}{\L}$. In particular, they reproduce the closedness under subdivision from Proposition~\ref{prop:subdiv_operator_refined} and the nestedness from Proposition~\ref{prop:uniform_spaces_nested}. The proofs typically follow the pattern of restricting $\Aop{k}{\l}{\L}$ to $\FEECspaceZero{k}{\l}$, repeating the argument of the proof for $\FEECspace{k}{\l}$ or $\SDFspace{k}{\l}{\L}$, and finally using Eq.~\eqref{eq:subdiv_preserves_vanishing_trace} to deduce that the trace still vanishes.

\smallskip

We have now established all the necessary prerequisites to show that the spaces $\SDFspaceZero{k}{\l}{\L}$ comprise complexes. The next lemmas present properties of $\auxextd$ that will help us in Proof~\ref{proof:complex_with_vanishing_trace} of Theorem~\ref{theo:complex_with_vanishing_trace}.

\begin{lemma}
    \label{lemma:auxextd_with_BCs}
    The derivative $\auxextd$ is a map
    \begin{equation}
        \auxextd: \SDFspaceZero{k}{\l}{\L} \to \SDFspaceZero{k+1}{\l}{\L}
    \end{equation}
    and satisfies $\auxextd \circ \auxextd = 0$.
\end{lemma}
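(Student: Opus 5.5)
The plan is to reduce the statement to the corresponding facts for the lowest-order FEEC spaces with vanishing trace, $\FEECspaceZero{k}{\l}$, using the coordinate description of $\auxextd$ from Lemma~\ref{lemma:d_tilde_action_in_coordinates}.

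Let $\constrVec{\omega}{k}{\l}{\L} \in \SDFspaceZero{k}{\l}{\L}$. By Proposition~\ref{prop:BC_Aop_is_isomorphism} we can write $\constrVec{\omega}{k}{\l}{\L} = \BCBEop{k}{\l}{\l}{\L}\, \freeVec{\omega}{k}{\l}$ for a unique $\freeVec{\omega}{k}{\l} \in \FEECspaceZero{k}{\l}$. Its coefficients $\freeQ{c}{k}{\l}{\coarseIdxtwo}$ vanish for all boundary simplices $\simplex{k}{\l}{\coarseIdxtwo} \in \K^k(\partial\T_\l)$. Since $\BCBEop{k}{\l}{\l}{\L}$ is the restriction of $\dualAop{k}{\l}{\l}{\L}$, Lemma~\ref{lemma:d_tilde_commuted_with_d}(i) gives
\begin{equation*}
\auxextd\, \constrVec{\omega}{k}{\l}{\L} = \dualAop{k+1}{\l}{\l}{\L}\, \extd\, \freeVec{\omega}{k}{\l}.
\end{equation*}
By Lemma~\ref{lemma:d_tilde_action_in_coordinates}, the right-hand side equals $\sum_{\coarseIdxone} \big(\Dmat{k}{\l}\, c\big)_{\coarseIdxone}\, \constrQ{\SDFbasis}{k+1}{\l}{\L}{\coarseIdxone}$.

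The key step, and the only real content, is to show that $\Dmat{k}{\l}$ maps coefficient vectors supported on interior $k$-simplices to coefficient vectors supported on interior $(k+1)$-simplices. Equivalently, $\extd\big(\FEECspaceZero{k}{\l}\big) \subset \FEECspaceZero{k+1}{\l}$. There are two routes.

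\emph{Analytic route.} The trace commutes with $\extd$, since the pullback along $j$ commutes with the exterior derivative. Hence $\trace(\extd \omega) = \extd\, \trace(\omega) = 0$, and by Eq.~\eqref{eq:BCFEEC_spaces_in_basis} the function $\extd\omega$ lies in the span of the interior basis functions.

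\emph{Combinatorial route.} Use Eq.~\eqref{Dmatrix} directly. For $k=1$ there is nothing to check, because $\Fint_\l = \F_\l$. For $k=0$, consider a boundary edge. Both of its vertices are boundary vertices, so their coefficients vanish, and hence the coefficient of that edge in $\Dmat{0}{\l}\,c$ vanishes.

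Either route shows that $\extd\freeVec{\omega}{k}{\l} \in \FEECspaceZero{k+1}{\l}$. Consequently
\begin{equation*}
\auxextd\,\constrVec{\omega}{k}{\l}{\L} = \BCBEop{k+1}{\l}{\l}{\L}\,\extd\freeVec{\omega}{k}{\l} \in \SDFspaceZero{k+1}{\l}{\L},
\end{equation*}
which proves the mapping property.

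For $\auxextd\circ\auxextd = 0$, $\auxextd$ acting on $\SDFspaceZero{k}{\l}{\L}$ is just the restriction of $\auxextd$ on $\SDFspace{k}{\l}{\L}$. The identity therefore follows immediately from Theorem~\ref{theo:uniform_de_rham_complex}. Alternatively, one can repeat the chain
\begin{equation*}
\auxextd\auxextd\,\BCBEop{0}{\l}{\l}{\L}\,\omega = \BCBEop{2}{\l}{\l}{\L}\,\extd\extd\,\omega = 0.
\end{equation*}

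I expect the main obstacle to be purely bookkeeping. One must make sure that the restricted BE operators commute with $\extd$. This holds because they are restrictions of commuting operators to subspaces that $\extd$ preserves, and that preservation is exactly the inclusion $\extd\big(\FEECspaceZero{k}{\l}\big) \subset \FEECspaceZero{k+1}{\l}$ established above. The vertex/edge incidence check settles this cleanly.
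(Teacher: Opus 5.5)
Your proof is correct and follows the paper's argument. Both use the coordinate form of $\auxextd$ from Lemma~\ref{lemma:d_tilde_action_in_coordinates} together with the incidence fact that $\Dmat{0}{\l}$ never couples interior vertices to boundary edges, with $k=1$ trivial because all faces are interior. The only minor difference is in $\auxextd\circ\auxextd=0$: the paper obtains it from the injective inclusion $\auxincl$ into $\FEECspaceZero{k}{\L}$, where $\extd\circ\extd=0$, whereas you restrict Theorem~\ref{theo:uniform_de_rham_complex} (or use the level-$\l$ cochain chain), which is equally valid.
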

\begin{proof}
    The claim follows from the observation that the excluded basis functions associated to boundary simplices do not occur in the sum after taking derivatives, see Lemma~\ref{lemma:d_tilde_action_in_coordinates}. The derivative of a basis function associated to an interior simplex can be represented using only basis functions associated to interior edges. Derivatives of basis functions associated to interior edges are not problematic since we regard all faces as being interior. The property $\auxextd \circ \auxextd = 0$ follows from inclusion into $\FEECspaceZero{k}{\L}$.
\end{proof}

\begin{lemma}
    \label{lemma:commutativity_with_BCs}
    The restricted basis exchange operators $\BCBEop{k}{\l}{\l}{\L}$ satisfy
    \begin{equation}
        \Big(\auxextd \circ \BCBEop{k}{\l}{\l}{\L}\Big) \Big|_{\FEECspaceZero{k}{\l}} = \Big(\BCBEop{k+1}{\l}{\l}{\L} \circ \extd\Big) \Big|_{\FEECspaceZero{k}{\l}}
    \end{equation}
     on the domain $\FEECspaceZero{k}{\l}$.
\end{lemma}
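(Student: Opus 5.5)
The plan is to reduce the statement to Lemma~\ref{lemma:d_tilde_commuted_with_d}\,(i) (with $\lone = \l$, $\ltwo = \L$), which gives $\auxextd \circ \dualAop{k}{\l}{\l}{\L} = \dualAop{k+1}{\l}{\l}{\L} \circ \extd$ on all of $\FEECspace{k}{\l}$. Restricted to $\FEECspaceZero{k}{\l}$, this identity should yield the claim. The only work is to check that both sides are well defined as maps out of $\FEECspaceZero{k}{\l}$ with values in the restricted spaces, so that the restricted operators $\BCBEop{k}{\l}{\l}{\L}$ and $\BCBEop{k+1}{\l}{\l}{\L}$ can replace the unrestricted ones.

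First, take $\freeVec{\omega}{k}{\l} \in \FEECspaceZero{k}{\l}$. By Definition~\ref{def:BE_for_BC_spaces}, $\BCBEop{k}{\l}{\l}{\L}\freeVec{\omega}{k}{\l} = \dualAop{k}{\l}{\l}{\L}\freeVec{\omega}{k}{\l}$. This lies in $\SDFspaceZero{k}{\l}{\L}$ by Proposition~\ref{prop:BC_Aop_is_isomorphism}. Lemma~\ref{lemma:auxextd_with_BCs} then shows that $\auxextd$ maps it into $\SDFspaceZero{k+1}{\l}{\L}$, so the left-hand side makes sense.

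Second, for the right-hand side I must show that $\extd$ maps $\FEECspaceZero{k}{\l}$ into $\FEECspaceZero{k+1}{\l}$. I would argue this in coordinates with the incidence matrix $\Dmat{k}{\l}$ from Eq.~\eqref{Dmatrix}. For $k=1$ it is trivial, since $\FEECspaceZero{2}{\l} = \FEECspace{2}{\l}$. For $k=0$, if all boundary-vertex coefficients vanish, then every boundary edge has both endpoints on the boundary, so its row of $\Dmat{0}{\l}$ only picks up zero coefficients. Hence $\extd\freeVec{\omega}{0}{\l}$ has vanishing boundary-edge coefficients and lies in $\FEECspaceZero{1}{\l}$ by Eq.~\eqref{eq:BCFEEC_spaces_in_basis}. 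Alternatively, one can note that the trace commutes with $\extd$. It follows that $\BCBEop{k+1}{\l}{\l}{\L}\extd\freeVec{\omega}{k}{\l} = \dualAop{k+1}{\l}{\l}{\L}\extd\freeVec{\omega}{k}{\l}$.

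Combining the two steps, both restricted compositions agree with the unrestricted ones on $\FEECspaceZero{k}{\l}$, and these are equal by Lemma~\ref{lemma:d_tilde_commuted_with_d}\,(i). As a coordinate cross-check, Lemma~\ref{lemma:d_tilde_action_in_coordinates} shows that both sides equal $\sum_{\coarseIdxone}\big(\sum_{\coarseIdxtwo}\Dmatij{k}{\l}{\coarseIdxone}{\coarseIdxtwo}\freeQ{c}{k}{\l}{\coarseIdxtwo}\big)\constrQ{\SDFbasis}{k+1}{\l}{\L}{\coarseIdxone}$, where the sums run only over interior simplices. The main (mild) obstacle is the second step: making sure that $\extd$ does not produce boundary-edge components, which is exactly the incidence observation above.
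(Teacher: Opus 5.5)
Your proposal is correct and follows essentially the same route as the paper. Both arguments drop the restriction on the left, use $\extd[\FEECspaceZero{k}{\l}] \subset \FEECspaceZero{k+1}{\l}$ to drop it on the right, and conclude from Lemma~\ref{lemma:d_tilde_commuted_with_d}(i). The only difference is that you prove this inclusion explicitly via the incidence matrix, using that boundary edges have boundary endpoints, whereas the paper simply cites that the vanishing-trace FEEC spaces form a differential complex.
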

\begin{proof}
    From Definition~\ref{def:BE_for_BC_spaces} follows that the claim is equivalent to
    \begin{equation}
        \Big( \auxextd \circ \dualAop{k}{\l}{\l}{\L} \Big|_{\FEECspaceZero{k}{\l}} \Big) \Big|_{\FEECspaceZero{k}{\l}} = \Big( \dualAop{k+1}{\l}{\l}{\L} \Big|_{\FEECspaceZero{k+1}{\l}} \circ \extd \Big) \Big|_{\FEECspaceZero{k}{\l}}.
    \end{equation}
    On the left, we can simply drop the restriction of $\dualAop{k}{\l}{\l}{\L}$ because the restriction already happened outside of the brackets. The right-hand side can be simplified using $\extd [ \FEECspaceZero{k}{\l} ] \subset \FEECspaceZero{k+1}{\l}$ since the FEEC spaces $\FEECspaceZero{k}{\l}$ form a differential complex. As a result, the image of $\extd$ is always in $\FEECspaceZero{k+1}{\l}$ and we can drop the restriction of $\dualAop{k+1}{\l}{\l}{\L}$.
    After both restrictions have been dropped from $\dualAop{k}{\l}{\l}{\L}$, the claim follows from Lemma~\ref{lemma:d_tilde_commuted_with_d}.
\end{proof}

The main Theorem~\ref{theo:complex_with_vanishing_trace} regarding the subdivision $k$-form spaces with vanishing trace BC claims that the spaces comprise discrete de Rham complexes. We now present the proof of the theorem.

\begin{numberedproof}[Proof of Theorem~\ref{theo:complex_with_vanishing_trace}]
    \label{proof:complex_with_vanishing_trace}
    The proof of this theorem is analogous to the one of Theorem~\ref{theo:uniform_de_rham_complex}. First, Lemma~\ref{lemma:auxextd_with_BCs} implies that the spaces $\SDFspaceZero{k}{\l}{\L}$ satisfy a differential complex.

    Next, we show that the differential complex is a relative de Rham complex. By Proposition~\ref{prop:BC_Aop_is_isomorphism} and Lemma~\ref{lemma:commutativity_with_BCs}, the restricted basis exchange operators $\BCBEop{k}{\l}{\l}{\L}$ are cochain isomorphisms between the complexes $\big( \FEECspaceZero{k}{\l}, \extd\big)$ and $\big( \SDFspaceZero{k}{\l}{\L}, \auxextd\big)$. Thus, $\BCBEop{k}{\l}{\l}{\L}$ induces an isomorphism on the cohomology groups of those complexes. The cohomology groups of the complex $\big( \SDFspaceZero{k}{\l}{\L}, \auxextd\big)$ are then isomorphic to the cohomology groups of the relative de Rham complex in Eq.~\eqref{eq:infinite_dim_relative_de_Rham}. Thus, we conclude that the subdivision $k$-form spaces with vanishing trace BC constitute a de Rham complex.
    \qed
\end{numberedproof}

\FloatBarrier

\section{Approximation of the physical domain}
\label{app:domain_approximation}

This appendix provides detailed information regarding the approximation of the domain described in Section~\ref{subsec:domain_approximation}. The algorithm enables us to account for the shrinking of the domain under subdivision as presented in Figure~\ref{subfig:Loop_refined_physical_domain}. The appendix focuses on the implementation of an algorithm that achieves a sufficiently good approximation of the domain $\M$ through Eq.~\eqref{mesh_construct}, i.e.
\begin{equation}
    \label{mesh_construct_appendix}
    \T^{\Loop}_\L \coloneqq \AopLoop{\lzero}{\L} \big( \T_{\lzero} \big) \approx \M,
\end{equation}
where $\T_\lzero$ denotes the first mesh in the hierarchy and is the output of the algorithm. The following notation was introduced in Section~\ref{subsec:domain_approximation}.
The mesh $\TFE$ is a mesh that approximates $\M$ as we would use it in standard FE methods.
The mesh hierarchy we are interested in starts at $\T_{\lzero}$ and ends with a finest mesh $\T_\L$ that approximates $\M$. $\AopLoop{\l}{\L}$ represents the Loop subdivision scheme \citep{Loop.1987} as discussed above. 

The algorithm also utilises the Whitney subdivision scheme.
The key feature of this subdivision scheme is that its invariants are exactly the piece-wise linear basis functions of FEEC, i.e. $\Aop{k, \text{Whit}}{\l}{\L}\freeQ{\FEECbasis}{k}{\l}{\coarseIdxone} = \freeQ{\FEECbasis}{k}{\l}{\coarseIdxone}$. This means that Whitney subdivision merely re-expresses the basis function $\freeQ{\FEECbasis}{k}{\l}{\coarseIdxone}$ in the finer basis $\freeVec{\FEECbases}{k}{\L}$ without changing it.
Since the barycentric coordinate functions are also piece-wise linear, Whitney mesh subdivision carries out barycentric quadrisection, i.e. new vertices are introduced exactly at the midpoints of edges, and so the boundary of the mesh does not change under Whitney refinement. Denoting the Whitney mesh subdivision operator by $\AopWhit{\l}{\L}$
(see Remark~\ref{remark_othersubdivschemes}), we can express this fact as
\begin{equation}
    \TFE \approx \M \;\; \implies \;\; \T_\L^{\Whit} \coloneqq \AopWhit{\lzero}{\L} \big(\TFE\big) \approx \M.
\end{equation}

A suitable initial mesh $\T_{\lzero}$ can be obtained using a weighted least squares method that minimizes the distance of the vertex positions of the fine Whitney mesh $\T_\L^{\Whit}$ and fine Loop mesh $\T^{\Loop}_\L$.
First, denote the positions of the vertices of a mesh $\T_\l$ by $\Vec{x}_\l \in \mathbb{R}^{\vert\V_\l \vert \times d}$, where $d$ is the dimension of the ambient space. In the following, we will assume $d=3$. We can then write
\begin{equation}
    \Vec{x}_\l = \Big[ \vertexpos{\l}{1} \;\; \vertexpos{\l}{2} \;\; \hdots \;\; \vertexpos{\l}{\vert \V_\l \vert}\Big]^T = \begin{bmatrix}
        (x^1_{\l})_{1} & (x^1_{\l})_{2} & \hdots & (x^1_{\l})_{\vert \V_\l \vert} \\[4pt]
        (x^2_{\l})_{1} & (x^2_{\l})_{2} & \hdots & (x^2_{\l})_{\vert \V_\l \vert} \\[4pt]
        (x^3_{\l})_{1} & (x^3_{\l})_{2} & \hdots & (x^3_{\l})_{\vert \V_\l \vert}
    \end{bmatrix}^T,
\end{equation}
where $\vertexpos{\l}{\coarseIdxone} = \big[ x^1_\coarseIdxone \;\; x^2_\coarseIdxone \;\; x^3_\coarseIdxone \big]$ denotes the position of the vertex $\vertex{\l}{\coarseIdxone}$. Further, we denote the vectors storing only the $n^{\text{th}}$ coordinate of the vertices by $\Vec{x}^{\;n}_{\l} = \big[ (x^n_{\l})_{1} \;\; (x^n_{\l})_{2} \;\; \hdots \;\; (x^n_{\l})_{\vert \V_\l \vert} \big]^T$ for $n\in\{ 1,2,3\}$.

We now proceed to set up the least squares method. To introduce the weighting, we define the matrix-induced norm
\begin{equation}
    \vvert{\vvert{\Vec{x}}}_W \coloneqq \sqrt{\Vec{x}^T \cdot W \cdot \Vec{x}},
\end{equation}
where $W$ is a diagonal matrix specified below. Let $\Vec{x}_{\lzero}$ and $\Vec{x}^{\,\Whit}_{\L}$ be the vertices of $\T_{\lzero}$ and $\T_{\L}^{\Whit}$, respectively.
The error $E$ of our mesh approximation is then measured through
\begin{equation}
    \label{eq:lsq_error}
    E\big(\Vec{x}^{\;n}_{\lzero}) \coloneqq \big\vert \big\vert \Vec{x}^{\,\Whit, n}_\L - \Amat{\Loop}{\lzero}{\L} \cdot \Vec{x}^{\;n}_{\lzero} \big\vert \big\vert^2_W \, ,
\end{equation}
where $\Amat{\Loop}{\lzero}{\L} $ is the matrix representation of $\AopLoop{\lzero}{\L}$.
This error achieves a minimum value iff
\begin{equation}
    \label{eq:normal_equation}
    \frac{\partial E}{\partial \Vec{x}^{\;n}_{\lzero}} = 0 \quad \Leftrightarrow \quad \big( \Amat{\Loop}{\lzero}{\L}\big)^T \cdot W \cdot \Amat{\Loop}{\lzero}{\L} \cdot \Vec{x}^{\;n}_{\lzero} = \big( \Amat{\Loop}{\lzero}{\L}\big)^T \cdot W \cdot \Vec{x}^{\,\Whit,n}_\L.
\end{equation}
By solving this equation for $n\in\{ 1,2,3 \}$, we obtain the coordinates $x^{\;n}_{\lzero}$ of the vertices of a mesh $\T_{\lzero}$ for which $\T_\L^\Loop \approx \M$. Note that we can carry out the least squares fit individually for each $n$ since subdivision also averages the coordinates $x^{\,n}$ separately.

\begin{figure}[]

    \sbox\twosubbox{%
      \resizebox{\dimexpr.95\textwidth-1em}{!}{%
        \includegraphics[height=1.05cm]{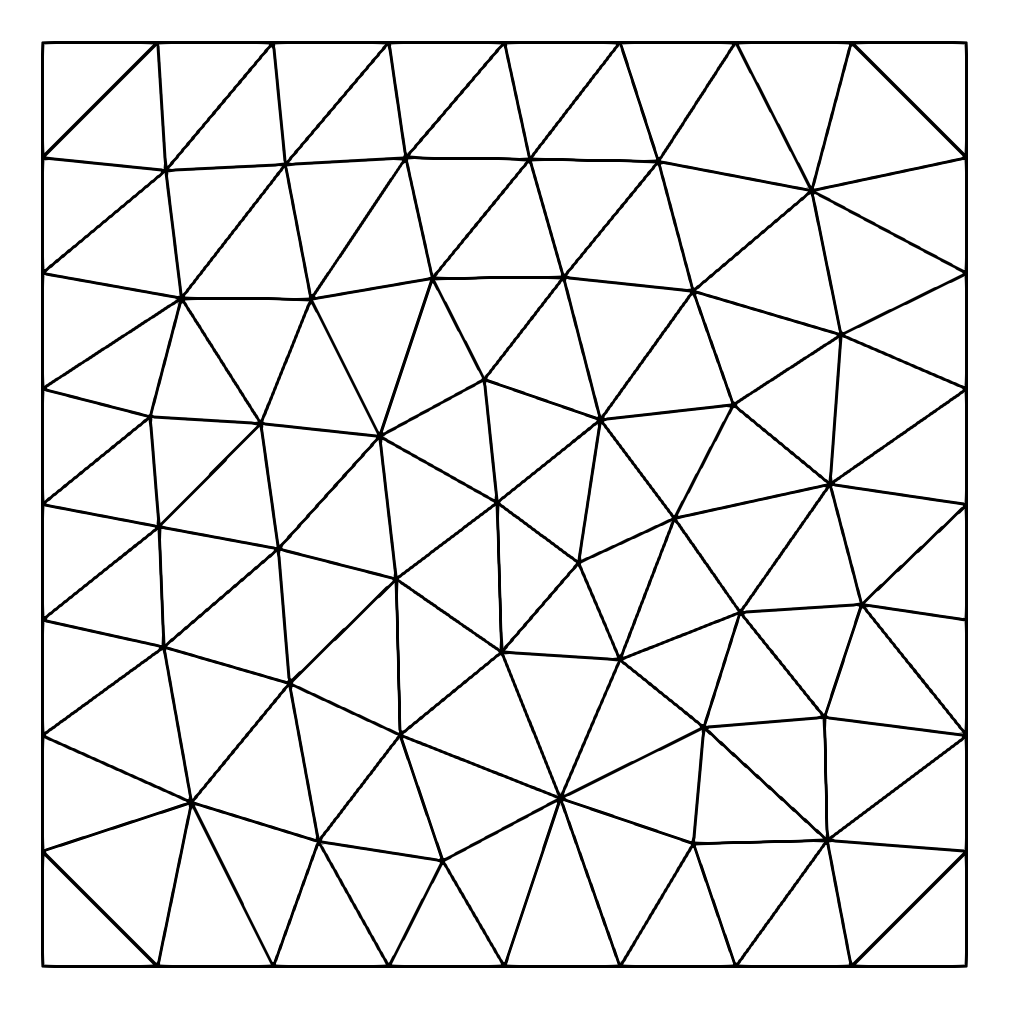}%
        \includegraphics[height=1.05cm]{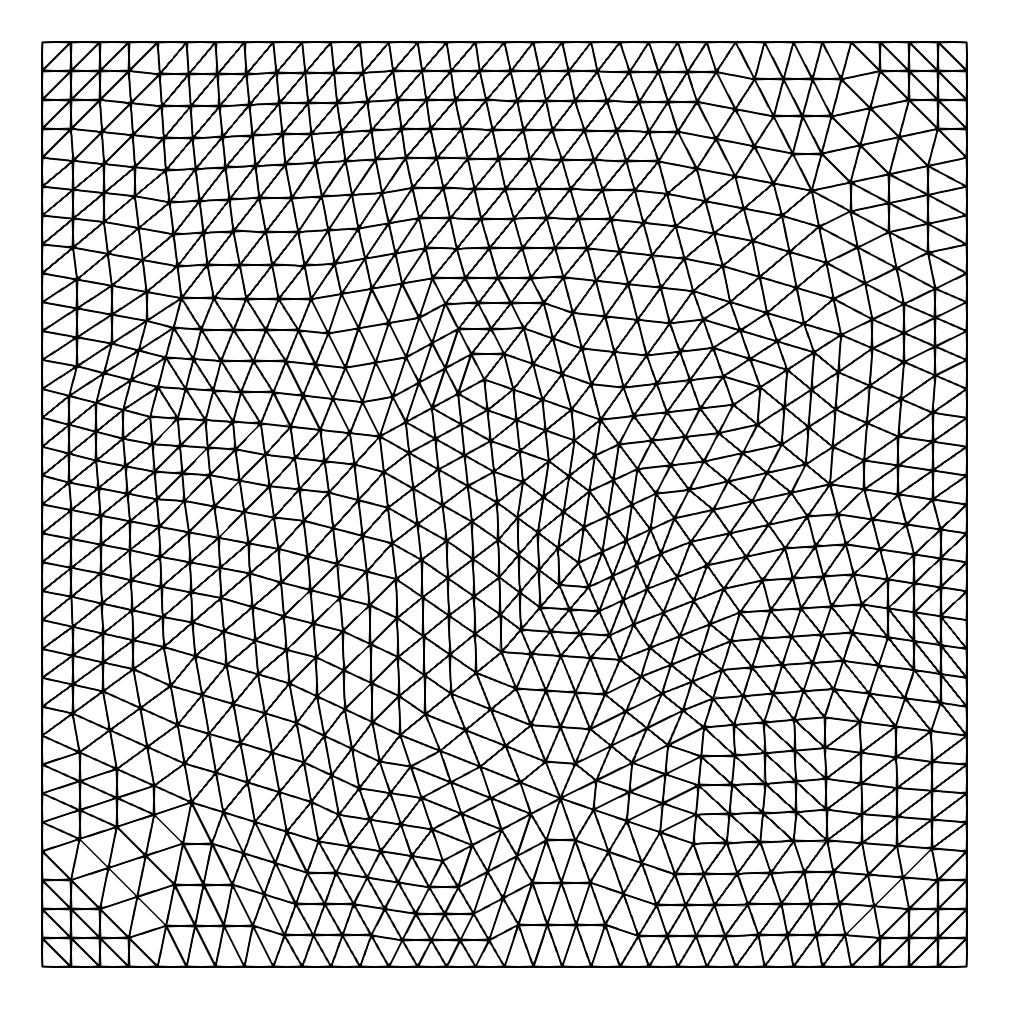}%
        \includegraphics[height=1.05cm]{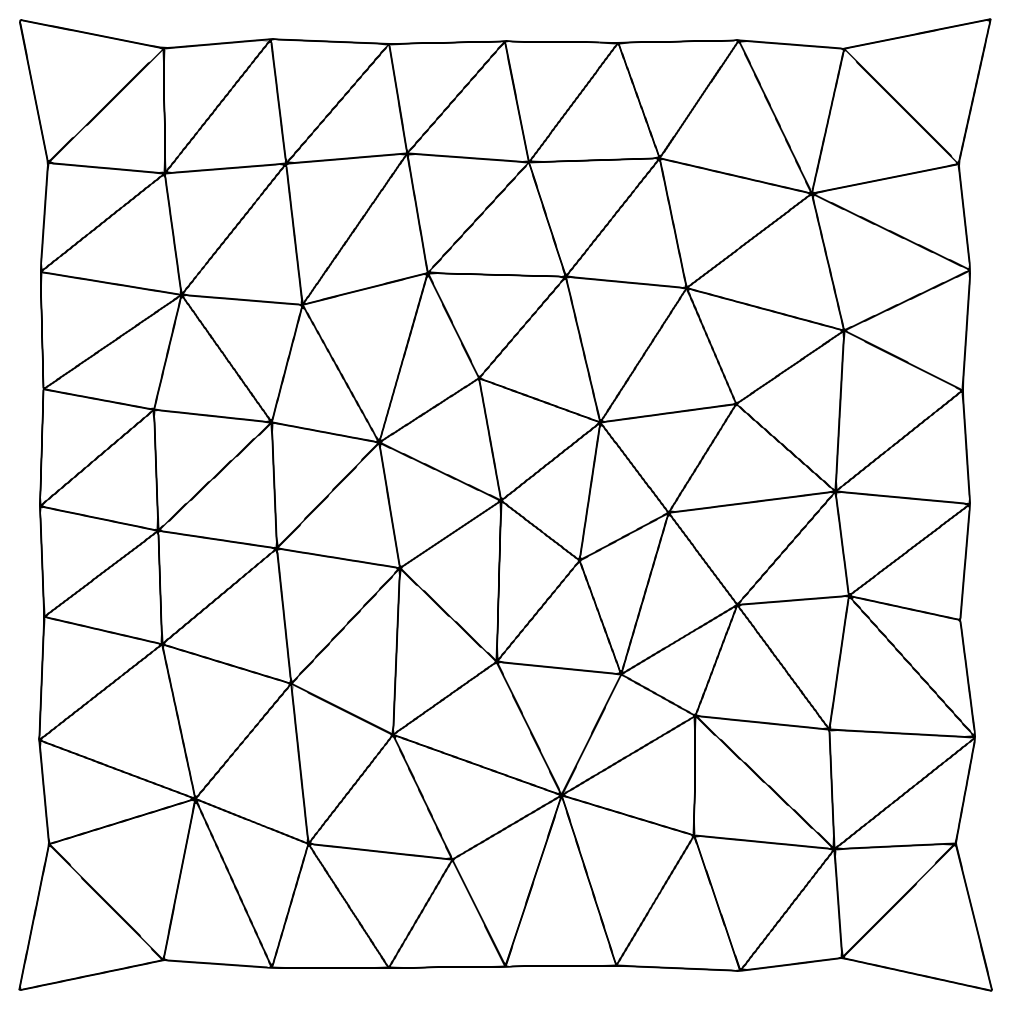}%
        \includegraphics[height=1.05cm]{Loop_refined_new_domain_surface_with_old_domain_mesh.png}
      }%
    }
    \setlength{\twosubht}{\ht\twosubbox}


    \centering

    \subcaptionbox{Reference mesh $\TFE$ of $\M$.\label{subfig:0_form_limit}}{%
      \includegraphics[height=\twosubht]{domain_mesh.png}%
    }\quad
    \subcaptionbox{Whitney refined mesh $\T^{\Whit}_{\L}$.\label{subfig:withney_form_limit}}{%
      \includegraphics[height=\twosubht]{Withney_refined_domain_mesh_wireframe.png}%
    }\quad
    \subcaptionbox{Least-squares optimal mesh $\T_{\lzero}$.\label{subfig:1_form_limit}}{%
      \includegraphics[height=\twosubht]{new_domain_wireframe.png}
    }\quad
    \subcaptionbox{Loop refined adjusted mesh $\T_{\L}^{\Loop}$.\label{subfig:2_form_limit}}{%
      \includegraphics[height=\twosubht]{Loop_refined_new_domain_surface_with_old_domain_mesh.png}
    }
    \caption{Depiction of the results of the steps involved in finding the least squares approximation of $\M$ on level $\L$, from left to right (cf. Alg.~\ref{algo:domain_approximation}).}
    \label{fig:domain_approximation_steps}
\end{figure}

The weights in $W$ are chosen to prioritise boundary vertices as their positions are particularly crucial to the overall shape of the mesh domain. For example, $\T_{\lzero}$ in Figure~\ref{subfig:1_form_limit} has been obtained by setting
\begin{equation}
    W_{\coarseIdxone\coarseIdxone} = \begin{cases}
        w_1 \;\;\ \text{if } \vertex{\L}{\coarseIdxone} \text{ is a corner vertex}, \\
        w_2 \;\;\ \text{if } \vertex{\L}{\coarseIdxone} \text{ is a boundary vertex but not at a corner}, \\
        w_3 \;\;\ \text{if } \vertex{\L}{\coarseIdxone} \text{ is an interior vertex created by refining a coarse vertex}, \\
        w_4 \;\;\ \text{if } \vertex{\L}{\coarseIdxone} \text{ is a newly created interior vertex},
    \end{cases}
\end{equation}
with $w_1 = 10$, $w_2 = 1$, $w_3 = 1$, $w_4 = 0.01$. These weights can be changed to capture different domains $\M$ more effectively.

Algorithm~\ref{algo:domain_approximation} summarises the steps involved in the approximation of the physical domain.
The result of each step is illustrated in Figure~\ref{fig:domain_approximation_steps}. In the following,
we will approximate $\M$ using Algorithm~\ref{algo:domain_approximation} such that $\T^{\Loop}_\L \approx \M$.

\begin{algorithm}[htb]
    \BlankLine
    \KwData{Physical domain $\M$ on which the PDE is defined}
    \KwResult{Initial mesh $\T_{\lzero}$ such that $\AopLoop{\lzero}{\L} \big(\T_{\lzero}\big) \approx \M$}
    \BlankLine
    \TitleOfAlgo{}
    \quad - obtain standard FE reference mesh $\TFE$ of $\M$ \\
    \quad - perform Whitney refinement: $\T_\L^{\Whit} = \AopWhit{\lzero}{L} \big(\TFE\big)$ \\
    \quad - find vertex positions $\Vec{x}_{\lzero}$ that minimise the error in Eq.~\eqref{eq:lsq_error} using Eq.~\eqref{eq:normal_equation} \\
    \quad - obtain $\T_{\lzero}$ by copying topology of $\TFE$ and setting the vertex positions to $\Vec{x}_{\lzero}$
    \caption{Approximation of the physical domain (as illustrated in Figure~\ref{fig:domain_approximation_steps})}
    \label{algo:domain_approximation}
\end{algorithm}

\begin{remark}
There are other approaches one could consider to obtain a mesh $\T_{\lzero}$ for which $\AopLoop{\lzero}{\L}\big(\T_{\lzero}\big) \approx \M$. For example, we could use Whitney subdivision for the mesh and Loop subdivision for the differential $0$-forms on that mesh. However, there is an issue with that. Let $\constrVec{X}{}{\lzero}{\L}$ be the coordinate function of $\T_\L^{\Whit} = \AopWhit{\lzero}{\L}\big(\T\big)$ and $\constrVec{\varphi}{}{\lzero}{\L}$ an associated chart. Then, consider
\begin{equation}
    \constrVec{\omega}{0}{\lzero}{\L} \circ \constrVec{\varphi}{}{\lzero}{\L} = \Big( \Aop{\Loop}{\lzero}{\L} \freeVec{\omega}{0}{\lzero} \Big) \circ \Big( \AopWhit{\lzero}{\L} \freeVec{\varphi}{}{\lzero} \Big).
\end{equation}
Letting $\L \to \infty$, we find that $\constrVec{\varphi}{}{\lzero}{\L}$ will stay in $\CG_1 \subset \mathcal{C}^0$ while the intention of subdivision is that the limit functions are in $\mathcal{C}^k$ for some $k > 0$. Thus, the Whitney parameterisation is not smooth enough and we sacrifice differentiability of the limit functions, cf. \citep{Loop.1987}). However, we experienced problematic behaviour in our simulations already for small $\L$.

Another approach is to follow the ideas of standard FE methods to find a geometry map in the space of Loop subdivision $0$-forms. This leads to the following weak problem formulation: Find an approximate coordinate function $\constrVec{X}{}{\lzero}{\L} \in \SDFspace{0}{\lzero}{\L}$ such that
\begin{equation}
    \big( \constrVec{X}{}{\lzero}{\L}, \, v \big) = \big( X, \, v \big) \qquad \forall v \in \SDFspace{0}{\lzero}{\L},
\end{equation}
where $X$ denotes the global coordinate function of the physical domain.
However, that also fails for a subtle reason: the basis functions of the subdivision space inherently rely on a fixed mesh $\T_{\lzero}$. Since this mesh is now the independent variable, our basis functions are also unknown during the projection and we cannot compute the inner products.
\end{remark}

\end{document}